\theoremstyle{plain}\newtheorem{Theorem}{Theorem}[section]
\theoremstyle{plain}\newtheorem{Corollary}[Theorem]{Corollary}
\theoremstyle{plain}\newtheorem{Lemma}[Theorem]{Lemma}
\theoremstyle{plain}\newtheorem{Definition}[Theorem]{Definition}
\theoremstyle{plain}\newtheorem{Proposition}[Theorem]{Proposition}
\theoremstyle{plain}
\theoremstyle{plain}\newtheorem{Condition}[Theorem]{Condition}
\theoremstyle{plain}\newtheorem*{Theorem*}{Theorem}
\theoremstyle{remark}\newtheorem{remark}[Theorem]{Remark}
\theoremstyle{remark}\newtheorem{eg}[Theorem]{Example}
\theoremstyle{plain}\newtheorem{question}[Theorem]{Question}
\theoremstyle{plain}
\DeclareMathOperator{\rank}{rank}
\DeclareMathOperator{\II}{I}
\DeclareMathOperator{\Imm}{Im}
\DeclareMathOperator{\AHI}{AHI}
\DeclareMathOperator{\muu}{\mu^{orb}}
\DeclareMathOperator{\SO}{SO}
\DeclareMathOperator{\Hom}{Hom}
\DeclareMathOperator{\id}{id}
\DeclareMathOperator{\Tor}{Tor}
\DeclareMathOperator{\Kh}{Kh}
\DeclareMathOperator{\Khr}{Khr}
\DeclareMathOperator{\lk}{lk}
\DeclareMathOperator{\KHI}{KHI}
\DeclareMathOperator{\interior}{interior}
\DeclareMathOperator{\len}{length}
\DeclareMathOperator{\HFK}{\widehat{HFK}}
\DeclareMathOperator{\HFL}{\widehat{HFL}}
\newcommand{\bC}{\mathbb{C}}
\newcommand{\bQ}{\mathbb{Q}}
\newcommand{\bR}{\mathbb{R}}
\newcommand{\bZ}{\mathbb{Z}}
\newcommand{\bF}{\mathbb{F}}
\author{Yi Xie}
\address{Beijing International Center for Mathematical Research, Peking University, Beijing 100871, China}
\email{yixie@pku.edu.cn}
\author{Boyu Zhang}
\address{Department of Mathematics, Princeton University, New Jersey 08544, USA}
\email{bz@math.princeton.edu}
\title{Classification of links with Khovanov homology of minimal rank}
\begin{document}
\begin{abstract}
If $L$ is an oriented link with $n$ components, then the rank of its Khovanov homology  is at least $2^n$. We classify all the links whose Khovanov homology with $\mathbb{Z}/2$-coefficients achieves this lower bound, and show that such links can be obtained by iterated connected sums and disjoint unions of Hopf links and unknots. This gives a positive answer to a question asked by Batson and Seed \cite{Kh-unlink}. 
\end{abstract}

\maketitle 
\setcounter{tocdepth}{1}
\tableofcontents

\section{Introduction}
Let $L$ be an oriented link in $S^3$ and $R$ be a ring. Khovanov homology \cite{Kh-Jones} assigns a bi-graded $R$-module $\Kh(L;R)$ to the link $L$. In this paper, we will take the coefficient ring to be $\bZ/2$.  The Euler characteristics of $\Kh(L;\bZ/2)$ recover the coefficients of the unreduced Jones polynomial of $L$. If $L$ has $n$ components, then the value of the unreduced Jones polynomial at $t=1$ equals $(-2)^n$, therefore
\begin{equation}\label{eqn_lower_bound_Kh}
\rank_{\bZ/2} \Kh(L;\bZ/2)\ge 2^n.
\end{equation}

The rank of $\Kh(L;\bZ/2)$ is independent of the orientation of $L$. 
If $L$ is the unlink, then $\rank_{\bZ/2} \Kh(L;\bZ/2)= 2^n.$ However, the unlink is not the only case that \eqref{eqn_lower_bound_Kh} achieves equality. For example, when $n=2$,  the Khovanov homology of the Hopf link has rank $4$. More generally, for arbitrary $n$, we have the following construction. In graph theory, a \emph{forest} is a simple graph without cycles. Given a forest $G$, define a link $L_G$ by placing an unknot at each vertex of $G$ and linking two unknots as a Hopf link whenever there is an edge connecting the corresponding vertices 
(see Figures \ref{fig_tree1}, \ref{fig_tree2}, \ref{fig_tree3} for examples). The link $L_G$ is called the \emph{forest of unknots} defined by $G$. Every forest of unknots can be obtained by iterated connected sums and disjoint unions of  Hopf links and unknots.
\begin{figure}  
  \includegraphics[width=0.8\linewidth]{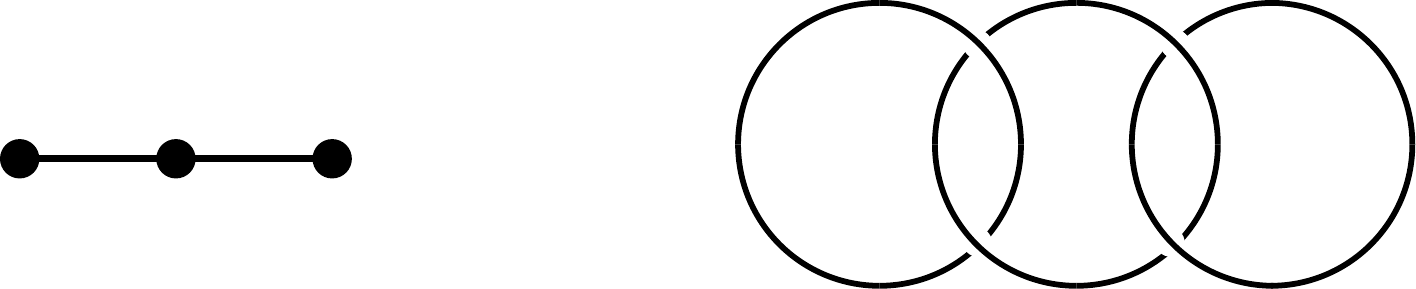}
  \caption{Example of a forest of unknots.}
  \label{fig_tree1}
  \vspace{\baselineskip}
  \includegraphics[width=0.8\linewidth]{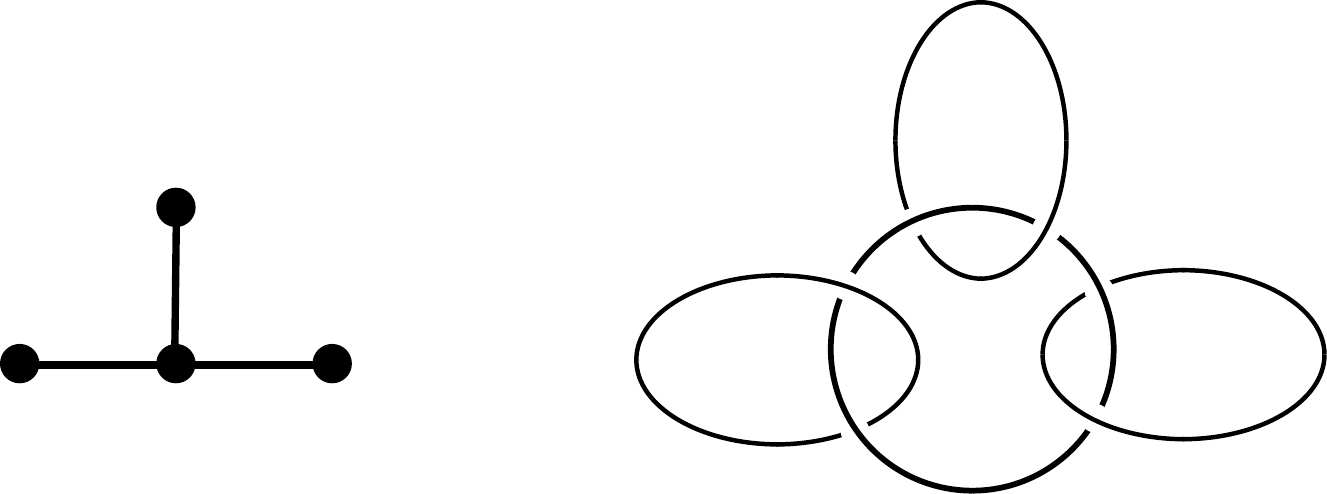}
  \caption{Example of a forest of unknots.}
  \label{fig_tree2}
  \vspace{\baselineskip}
  \includegraphics[width=0.8\linewidth]{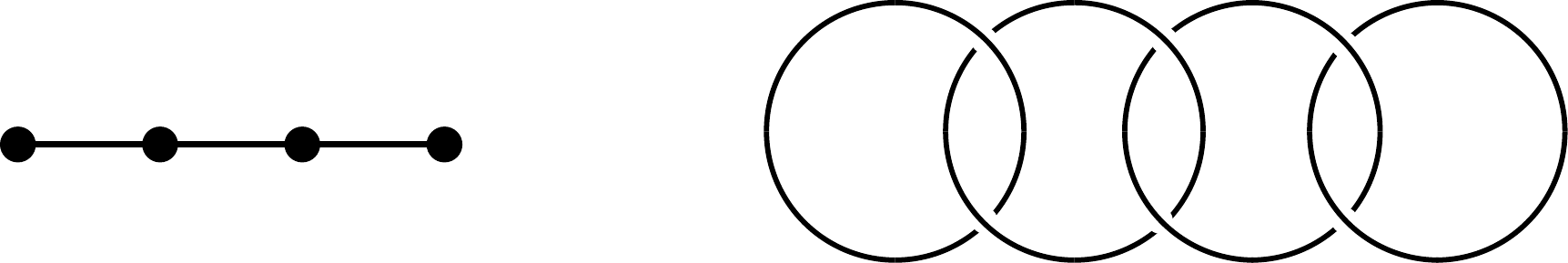}
  \caption{Example of a forest of unknots.}
  \label{fig_tree3}
\end{figure}
By \cite[Corollary 6.6]{AP:Kh_torsion} and the K\"unneth formula, if $L_G$ is a forest of unknots with $n$ components, then $\rank_{\bZ/2} \Kh(L_G;\bZ/2)= 2^n$.

The following question was asked by Batson and Seed:
\begin{question}[{\cite[Question 7.2]{Kh-unlink}}]\label{question-Batson-Seed}
Are forests of unknots the only $n$-component links with Khovanov homology of rank
$2^n$ over $\bZ/2$\emph{?}
\end{question}

Our main result gives an affirmative answer to the above question.
\begin{Theorem}\label{The_main_theorem}
If $L$ is an $n$-component link such that $\rank_{\bZ/2} \Kh(L;\bZ/2)= 2^n$, then $L$ is a forest of unknots.
\end{Theorem}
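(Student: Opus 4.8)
The proof will go by induction on the number of components $n$, using unreduced Khovanov homology together with the spectral sequences and long exact sequences available in the literature. For $n=1$, a link with $\rank_{\bZ/2}\Kh(L;\bZ/2)=2$ is an unknot; this is the unknot-detection theorem of Kronheimer--Mrowka (via the spectral sequence from reduced Khovanov homology to singular instanton homology, or equivalently via the rank-$1$ characterization of $\widehat{HFK}$ and the relation to knot Floer homology). So assume $n\ge 2$ and that the statement holds for all links with fewer components.

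\medskip

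\emph{Step 1: finding a crossing change or a reducible component.} The key tool is the unoriented skein long exact sequence for Khovanov homology: resolving a crossing in a diagram of $L$ gives a long exact sequence relating $\Kh(L)$, $\Kh(L_0)$, and $\Kh(L_1)$ (with a grading shift), where $L_0$ and $L_1$ are the two resolutions. One resolution has $n\pm 1$ components and the other has $n$ or $n\mp 1$ components depending on whether the crossing is a self-crossing of a component or a crossing between two different components. The rank bound \eqref{eqn_lower_bound_Kh} applied to the three terms, together with the hypothesis $\rank_{\bZ/2}\Kh(L)=2^n$, forces the connecting maps in the long exact sequence to have extremal rank; this rigidity should force each component to be unknotted and force strong restrictions on how the components link. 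Concretely I would first show that every component of $L$ is an unknot (apply the skein sequence at a self-crossing of a component $K$ and use that the total rank is exactly $2^n$, which combined with the bound on each resolution forces the self-crossing to be ``nugatory'' in the appropriate homological sense; iterating unknots each component).

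\medskip

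\emph{Step 2: splitting along a Hopf-sublink or a split component.} Having shown all components are unknots, I would analyze the linking pattern. The Batson--Seed spectral sequence (from $\Kh(L;\bZ/2)$ to $\Kh$ of the disjoint union obtained by forgetting a clasp, or to the unlink) gives $\rank\Kh(L)\ge 2^n$ with equality only if the spectral sequence degenerates in a controlled way. I would use this to show that if $L$ is not split, then it contains a Hopf sublink: pick two components with nonzero linking (or a clasp) and run the skein/clasp exact sequence; minimality of rank forces the two components to form exactly a Hopf link and to be ``connect-summed'' onto the rest, i.e. $L = L' \# (\text{Hopf})$ or $L = L' \sqcup (\text{unknot})$ for a link $L'$ with $n-1$ components which again has minimal-rank Khovanov homology (using the Künneth formula for $\Kh$ under connected sum and disjoint union, which is multiplicative over $\bZ/2$ up to the appropriate shift, so ranks multiply as $2^{n} = 2^{n-1}\cdot 2$). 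Apply the inductive hypothesis to $L'$.

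\medskip

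\emph{Main obstacle.} The hard part will be Step 1 and the first half of Step 2: extracting genuine topological conclusions (unknottedness of components, existence of a Hopf sublink, a connected-sum/disjoint-union splitting) from the numerical rigidity of the skein exact sequences. Exact sequences control ranks but not the maps, and in principle a component could be knotted while the skein sequence still balances; ruling this out seems to require an input beyond Khovanov homology itself --- most likely Kronheimer--Mrowka-type instanton or Heegaard Floer detection results (unknot detection, unlink detection of Batson--Seed as upgraded by the instanton-theoretic arguments, and Hopf-link/connected-sum detection). Marshalling these detection theorems and checking that the minimal-rank hypothesis feeds exactly the input they need (e.g.\ that some auxiliary link is fibered, or that some Floer group has the right rank) is where the real work lies. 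A secondary technical point is bookkeeping the bigradings and grading shifts in the skein and Künneth formulas carefully enough that ``rank $=2^n$'' is preserved through each reduction; this is routine but must be done to make the induction close.
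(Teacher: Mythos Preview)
Your broad outline---induction on $n$, first show every component is an unknot, then analyze the linking pattern---matches the paper's architecture. Step~1 is essentially right: the paper does this via the Batson--Seed rank inequality (which immediately gives $\rank_{\bZ/2}\Kh(K_i;\bZ/2)=2$ for each component $K_i$) together with Kronheimer--Mrowka unknot detection, rather than via skein sequences, but the conclusion is the same.

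The genuine gap is in Step~2. Your assertion that ``minimality of rank forces the two components to form exactly a Hopf link and to be connect-summed onto the rest'' is precisely the content of the theorem, and no amount of rank-counting in skein or Batson--Seed spectral sequences will produce it. Two separate difficulties are hidden here. First, you need that any two components have linking number in $\{0,\pm1\}$; the paper obtains this (Lemma~\ref{Lemma-Kh_bound_linking_number}) by showing $L$ is an exchangeably braided link, invoking Morton's formula for the multivariable Alexander polynomial of such links, and then bounding $\rank\HFK$ below via the Euler characteristic and above via Dowlin's $\Khr\Rightarrow\HFK$ spectral sequence. Second---and this is the bulk of the paper---even once all linking numbers lie in $\{0,\pm1\}$, the ``linking graph'' could contain a cycle rather than be a forest. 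Ruling out the cycle case (Condition~\ref{cond_cycle}) occupies Sections~\ref{sec_top_properties}--\ref{sec_nonexistence}: it requires annular instanton Floer homology with local coefficients to control Seifert surfaces of sublinks, a hands-on analysis of $\pi_1(\bR^3-L')$ to pin down the last component up to isotopy, and finally an explicit Jones-polynomial computation to derive a contradiction. Your proposal correctly senses that ``detection results beyond Khovanov homology'' are needed, but does not identify which ones or how they assemble; the connected-sum splitting you want to invoke is the \emph{output} of this machinery, not an available input.
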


In the special case that $L$ is an alternating link, Theorem \ref{The_main_theorem} was proved by Shumakovitch \cite[Lemma 3.3.C]{Shu:torsion_Kh}.
In fact, let $d(L)$ be the sum of the absolute values of the coefficients of the (reduced) Jones polynomial of $L$, 
then Shumakovitch \cite[Lemma 3.3.B]{Shu:torsion_Kh} proved that every  $n$-component alternating link $L$ with $d(L)=2^{n-1}$ is a forest of unknots. Notice that the rank of the reduced Khovanov homology $\Khr(L)$ is at least $d(L)$, and by \cite[Corollary 3.2.C]{Shu:torsion_Kh} we have 
\begin{equation}\label{eqn_relation_Kh_Khr}
\rank_{\bZ/2}\Kh(L;\bZ/2)=2\rank_{\bZ/2}\Khr(L;\bZ/2),
\end{equation}
therefore the statement above implies
Theorem \ref{The_main_theorem} for alternating links.
On the other hand, if $L$ is not required to be alternating, then it is possible to have $d(L)=2^{n-1}$ with $L$ not being a forest of unknots. An infinite family of such examples are constructed in \cite{EKT:trivial_Jones}.

Theorem \ref{The_main_theorem} is only stated for $\bZ/2$-coefficients because the proof relies on equation \eqref{eqn_relation_Kh_Khr}, which is only valid in characteristic 2. The result of Theorem \ref{The_main_theorem} would still hold for an arbitrary field $\bF$ by the same proof, if we further assumed that $\rank_\bF\Khr(L;\bF)=2^{n-1}$ for every base point. At the moment of writing, it is not clear to the authors whether the same result still holds if we only assume $\rank_{\mathbb{F}} \Kh(L;\mathbb{F})= 2^n$ for a field 
$\mathbb{F}\neq \bZ/2$.

The detection properties of Khovanov homology have been studied intensively in the past decade. The first breakthrough of this field was the landmark paper by Kronheimer and Mrowka \cite{KM:Kh-unknot}, which
proved that Khovanov homology detects the unknot. Since then, several other 
detection results have been proved.  The following list is a summary of related results in chronological order:
\begin{enumerate}
\item \cite{KM:Kh-unknot} The rank of the reduced Khovanov homology with $\bQ$-coefficients detects the unknot;
\item \cite{HN-unlink} The Khovanov homology with $\bZ/2$-coefficients, together with an extra module structure, detects the unlink;
\item \cite{Kh-unlink} The Khovanov homology with $\bZ/2$-coefficients detects the unlink;
\item \cite{BS} The rank of the reduced Khovanov homology with $\bQ$-coefficients detects the trefoil;
\item \cite{BSX} Both the Khovanov homology and the reduced Khovanov homology, with coefficients in either $\bZ$ or $\bZ/2$, detect the Hopf link.
\end{enumerate}

Theorem \ref{The_main_theorem} is a generalization of (2), (3) and (5) above. In fact, we have the following two corollaries of 
Theorem \ref{The_main_theorem}.

\begin{Corollary}
Suppose $L_1$ and $L_2$ are two oriented links with $n$ components and $L_1$ is a forest of unknots. If there exist $e,f\in\bZ$ such that
$$
\Kh^{i,j}(L_1;\bZ/2)\cong \Kh^{i+e,j+f}(L_2;\bZ/2)
$$
for all $i,j\in \bZ$,
then $L_2$ is isotopic to a forest of unknots whose graph has the same number of edges as the graph of  $L_1$. In particular, if
$$
\Kh^{i,j}(L_1;\bZ/2)\cong \Kh^{i,j}(L_2;\bZ/2)
$$
for all $i,j\in \bZ$, and if
$L_1$ is a Hopf link, or
the connected sum of two Hopf links, or the disjoint union of a Hopf link and the unlink, or the unlink, then
$L_2$ is isotopic to $L_1$. 
\end{Corollary}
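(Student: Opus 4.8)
My plan is to bootstrap everything from Theorem~\ref{The_main_theorem}, using the span of the Jones polynomial to recover the number of edges of the underlying graph.

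\textbf{Step 1: $L_2$ is a forest of unknots.} Write $L_1=L_G$ with $G$ a forest on $n$ vertices having $m$ edges, so $\rank_{\bZ/2}\Kh(L_1;\bZ/2)=2^n$. The grading-shift isomorphism $\Kh^{i,j}(L_1;\bZ/2)\cong\Kh^{i+e,j+f}(L_2;\bZ/2)$ forces $\rank_{\bZ/2}\Kh(L_2;\bZ/2)=2^n$, and since $L_2$ has $n$ components, Theorem~\ref{The_main_theorem} gives $L_2=L_{G'}$ for a forest $G'$ on $n$ vertices; call its number of edges $m'$.

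\textbf{Step 2: $m=m'$.} I claim that any forest of unknots $L_H$ with $\nu$ vertices and $\mu$ edges has $\operatorname{span}_t \hat V_{L_H}=\nu+\mu$, where $\hat V$ denotes the unreduced Jones polynomial. Granting this, note that $\hat V_L$ is the graded Euler characteristic of $\Kh(L;\bZ/2)$ — the Euler characteristic of the Khovanov complex is independent of the coefficient field — so the shift isomorphism yields $\hat V_{L_2}(t)=\pm t^{f/2}\hat V_{L_1}(t)$, whence $\operatorname{span}_t\hat V_{L_1}=\operatorname{span}_t\hat V_{L_2}$. Combining with the claim gives $n+m=n+m'$, i.e.\ $m=m'$, which is the first assertion of the corollary. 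To prove the claim, recall that $\hat V$ is multiplicative under disjoint union, that $\hat V_{K_1\# K_2}=\hat V_{K_1}\hat V_{K_2}/(-t^{1/2}-t^{-1/2})$, that the unknot has $\hat V=-t^{1/2}-t^{-1/2}$ of span $1$, and that the Hopf link has $\hat V$ of span $3$ (read off, for instance, from its Khovanov homology). Decompose $H$ into its connected (tree) components $T_1,\dots,T_c$ with $\mu_i$ edges, $\mu=\sum_i\mu_i$, $c=\nu-\mu$; realizing each $L_{T_i}$ as an iterated connected sum of $\mu_i$ Hopf links (the unknot when $\mu_i=0$) gives $\operatorname{span}_t\hat V_{L_{T_i}}=3\mu_i-(\mu_i-1)=2\mu_i+1$, and multiplicativity under disjoint union yields $\operatorname{span}_t\hat V_{L_H}=\sum_i(2\mu_i+1)=2\mu+c=\nu+\mu$.

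\textbf{Step 3: the special cases.} Now $e=f=0$, so by Steps 1--2, $L_2=L_{G'}$ with $G'$ a forest on $n$ vertices with the same number of edges as $G$. In each of the four listed cases, $G$ is the \emph{unique} forest on $n$ vertices with that number of edges — $0$ edges (the unlink), $n=2$ and one edge (the Hopf link), $n=3$ and two edges (connected sum of two Hopf links), one edge with all remaining vertices isolated (a Hopf link disjoint from an unlink) — so $G'\cong G$, and $L_2$ is assembled from the same graph as $L_1$. The only remaining freedom is the chirality of each Hopf-link piece, and this is visible in the \emph{bigraded} module $\Kh(\,\cdot\,;\bZ/2)$: the positive and negative Hopf links have Khovanov homologies that are mirror images, and a direct computation shows that the set of homological gradings occurring in $\Kh(\,\cdot\,;\bZ/2)$ distinguishes $H_+\# H_+$, $H_+\# H_-$, and $H_-\# H_-$. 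Since $\Kh(L_1;\bZ/2)\cong\Kh(L_2;\bZ/2)$ with no shift, the chiralities agree and $L_2$ is isotopic to $L_1$.

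The load-bearing part is Step 2: establishing $\operatorname{span}_t\hat V_{L_H}=\nu+\mu$ for forests of unknots and confirming that the Jones polynomial — and hence its span — is genuinely recovered from the $\bZ/2$-Khovanov homology up to a unit. A minor subtlety in Step 3 is that a connected sum of links is defined only after choosing the components to be joined; but the pieces here are Hopf links and unknots, which are symmetric enough that a forest of unknots depends only on its graph together with the list of chiralities, so matching the bigraded Khovanov homology really does force an isotopy.
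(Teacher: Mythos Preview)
Your proof is correct and follows the same overall architecture as the paper's: Theorem~\ref{The_main_theorem} forces $L_2$ to be a forest of unknots, and then the bigraded data recovers the edge count. The execution of Step~2 is genuinely different. The paper computes the two-variable Poincar\'e polynomial of $\Kh(L_G;\bZ/2)$ as $t^aq^b(q+q^{-1})^l(tq^2+t^{-1}q^{-2})^{n-l}$, with $l$ the number of tree components, and invokes unique factorization in $\bZ[t^{\pm1},q^{\pm1}]$ to read off the number of edges $n-l$. You instead pass to the one-variable Jones polynomial and use that its span equals $\nu+\mu$ for any forest of unknots with $\nu$ vertices and $\mu$ edges; this is more elementary (no UFD argument, only additivity of span under products) and already suffices since $\nu=n$ is known. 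The paper's route has the mild advantage that the factorization simultaneously exhibits $l$ and $n-l$ and the shift $(a,b)$, but both arguments extract the same invariant. Your Step~3 also makes explicit a point the paper's proof glosses over: once the graph is determined, one must still argue that the unshifted bigraded isomorphism pins down the signs of the linking numbers (the orientations), which you handle by checking that the homological support distinguishes the chiralities in each of the four small cases.
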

\begin{proof}
Suppose $T$ is a tree with
$k$ vertices and let $L_T$ be the forest of unknots given by $T$. If $L_T$ is oriented such that the linking numbers between the components of $L_T$ are all non-negative, then by \cite[Corollary 6.6]{AP:Kh_torsion}
we have
$$
P(L_T):=\sum_{i,j}t^iq^j\rank_{\bZ/2}\Kh(L_T;\bZ/2)=t^{k-1}q^{3(k-1)}(q+q^{-1})(tq^2+t^{-1}q^{-2})^{k-1}.
$$

Theorem \ref{The_main_theorem} implies $L_2$ is a forest of unknots. 
Let $G=T_1\sqcup\cdots\sqcup T_l$ be the graph of $L_2$, where $T_i$ is a tree with $k_i$ vertices and $n=\sum k_i$. Then the K\"unneth formula shows that
$$
P(L_G)=t^aq^b (q+q^{-1})^l(tq^2+t^{-1}q^{-2})^{n-l},
$$
where $a$ and $b$ are integers depending on the orientaion of $L_2$.
Since $(q+q^{-1})$ and $(tq^2+t^{-1}q^{-2})$ are irreducible polynomials in the unique factorization domain $\bZ[t,t^{-1},q,q^{-1}]$,
the value of $n-l$ is determined by $P(L_G)$. Therefore the first part of the corollary is proved. For the second part, notice that
in these four cases the graph for $L_1$ is uniquely determined by the number of edges. 
\end{proof}
 
 Given a link $L$ with $n$ components, one can equip $\Kh(L;\bZ/2)$ with a module structure over the ring
 $$
 R_n:=(\bZ/2)[X_1,\cdots, X_n]/(X_1^2,\cdots,X_n^2).
 $$
 For the definition of the module structure,
 the reader may refer to \cite[Section 2]{HN-unlink} and \cite[Section 3]{Khovanov-pattern}.   
\begin{Corollary}
Suppose $L_1$ and $L_2$ are two links with $n$ components, and suppose $L_1$ is a forest of unknots with graph $G_1$. 
If $\Kh(L_1;\bZ/2)$ is isomorphic 
to $\Kh(L_2;\bZ/2)$ as $R_n$-modules, then $L_2$ is isotopic to a forest of unknots with graph 
$G_2$ such that
\begin{itemize}
\item there is an one-to-one correspondence between the connected components of $G_1$ and the connected components of $G_2$;
\item the corresponding components of $G_1$ and $G_2$ have the same number of vertices.
\end{itemize}
If we further assume
the number of vertices of every connected component
of $G_1$ is less than or equal to $3$, then $L_2$ is isotopic to $L_1$ as unoriented links.
\end{Corollary}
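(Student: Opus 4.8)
The plan is to use Theorem~\ref{The_main_theorem} to reduce to a statement about forests, and then to read off the sizes of the trees from the module structure. First, an $R_n$-module isomorphism is in particular a $\bZ/2$-linear isomorphism, so the hypothesis gives $\rank_{\bZ/2}\Kh(L_2;\bZ/2)=\rank_{\bZ/2}\Kh(L_1;\bZ/2)=2^n$; by Theorem~\ref{The_main_theorem} the link $L_2$ is therefore a forest of unknots, with graph $G_2$ say. Write $G_1=T_1\sqcup\cdots\sqcup T_l$ and $G_2=T_1'\sqcup\cdots\sqcup T_{l'}'$ as disjoint unions of trees, so that $\sum_i|V(T_i)|=\sum_j|V(T_j')|=n$; the first two bullet points assert exactly that there is a bijection between the $T_i$ and the $T_j'$ preserving the number of vertices. (A relabeling of the components of $L_1$ or $L_2$ only changes the $R_n$-module structure by an automorphism of $R_n$, so it suffices to work with invariants of $R_n$-modules that are preserved by $R_n$-algebra automorphisms.)

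The key computation is to identify $\Kh(L_G;\bZ/2)$ as an $R_n$-module when $G$ is a forest of unknots. If $G=G'\sqcup G''$ then $L_G=L_{G'}\sqcup L_{G''}$, and since the K\"unneth formula for disjoint unions is compatible with the module structures, $\Kh(L_G;\bZ/2)\cong\Kh(L_{G'};\bZ/2)\otimes_{\bZ/2}\Kh(L_{G''};\bZ/2)$ as a module over $R_n\cong R_{n'}\otimes_{\bZ/2}R_{n''}$, where the two groups of variables are those labelled by the components of $G'$ and of $G''$. Iterating, $\Kh(L_G;\bZ/2)\cong\bigotimes_{i=1}^l\Kh(L_{T_i};\bZ/2)$ over $R_n\cong\bigotimes_{i=1}^l R_{V(T_i)}$, so it remains to compute the $R_{|V(T)|}$-module $\Kh(L_T;\bZ/2)$ for a tree $T$. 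I would do this by induction on $|V(T)|$: attaching a leaf at a vertex $v$ of $T$ turns $L_T$ into the connected sum of $L_T$ with a Hopf link along the component at $v$, and a connected-sum formula for Khovanov homology over $\bZ/2$ (expressing $\Kh$ of a connected sum in terms of the two summands together with the action of the variable of the gluing component) computes the new module from the old one. The outcome one wants is that the $R_{|V(T)|}$-module $\Kh(L_T;\bZ/2)$ is described explicitly and, in particular, depends only on $k:=|V(T)|$; call it $N_k$, so that $\Kh(L_G;\bZ/2)\cong\bigotimes_i N_{k_i}$ with $k_i=|V(T_i)|$.

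It then remains to recover the multiset $\{k_1,\dots,k_l\}$ from the isomorphism type of $\bigotimes_i N_{k_i}$. Attach to each finitely generated $R_m$-module $M$ the $\mathfrak m$-adic Hilbert polynomial $H_M(x)=\sum_{i\ge 0}\big(\dim_{\bZ/2}\mathfrak m^iM/\mathfrak m^{i+1}M\big)x^i\in\bZ[x]$, where $\mathfrak m$ is the maximal ideal; this is an invariant of $M$ under $R_m$-algebra automorphisms, and since the $\mathfrak m$-adic associated graded of $M\otimes_{\bZ/2}N$ over $R_{m'}\otimes_{\bZ/2}R_{m''}$ is the tensor product of the associated gradeds of $M$ and $N$, it is multiplicative: $H_{M\otimes N}(x)=H_M(x)H_N(x)$. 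Hence $H_{\Kh(L_G;\bZ/2)}(x)=\prod_i H_{N_{k_i}}(x)$, and one checks that the explicit polynomials $H_{N_k}(x)$ have the property that this product determines the multiset $\{k_i\}$ among all multisets of positive integers summing to $n$ --- the analogue here of the unique-factorization argument applied to the graded Poincar\'e polynomial in the proof of the previous corollary. (Alternatively, one may use that $R_n$ is Artinian local, so Krull--Schmidt holds, and recover the $k_i$ from the indecomposable decomposition of $\bigotimes_i N_{k_i}$.) This gives $\{k_i\}=\{k_j'\}$, hence the required bijection of components preserving vertex numbers, and in particular $l=l'$.

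For the last assertion, suppose every connected component of $G_1$ has at most three vertices. The only trees on one, two, or three vertices are a single vertex, a single edge, and the path on three vertices, so each $T_i$ is determined up to isomorphism by $|V(T_i)|$; since the multiset of vertex numbers of $G_2$ coincides with that of $G_1$ and each such number is at most three, every component of $G_2$ is likewise determined, and $G_1\cong G_2$ as graphs. Because the forest-of-unknots construction depends only on the isomorphism type of the graph, $L_1=L_{G_1}$ and $L_2=L_{G_2}$ are isotopic as unoriented links. The main obstacle is the inductive module computation: one must control precisely how the $R$-module structure of Khovanov homology behaves under connected sum with a Hopf link, with enough precision to evaluate the Hilbert polynomial of $\Kh(L_T;\bZ/2)$ (or to pin down its indecomposable summands), and then to verify that this invariant actually separates the relevant partitions --- note that coarser invariants, such as the Jones polynomial or the dimension of the socle, see only $n$ and $l$ and are therefore not sufficient.
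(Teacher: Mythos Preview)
Your strategy coincides with the paper's: invoke Theorem~\ref{The_main_theorem} to get that $L_2$ is a forest of unknots, compute the $R_k$-module $\Kh(L_T;\bZ/2)$ for a tree $T$ on $k$ vertices by induction on connected sums with a Hopf link (the paper carries this out via a K\"unneth spectral sequence over $(\bZ/2)[X]/(X^2)$ and obtains $N_k\cong[R_k/(X_1=\cdots=X_k)]^{\oplus 2^{k-1}}$), tensor over $\bZ/2$ for disjoint unions, and then extract the tree sizes from the resulting $R_n$-module.

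The gap is in your extraction step. Your proposed invariant, the $\mathfrak m$-adic Hilbert polynomial, does \emph{not} separate partitions: every $X_i$ acts on $N_k$ as the same nilpotent $X$, so $\mathfrak m N_k=XN_k$ has dimension $2^{k-1}$ and $\mathfrak m^2 N_k=0$, giving $H_{N_k}(x)=2^{k-1}(1+x)$ and hence $\prod_i H_{N_{k_i}}(x)=2^{n-l}(1+x)^l$. This sees only $n$ and $l$, precisely the failure mode you warn against at the end. Your Krull--Schmidt alternative does work, but a simpler invariant is implicit in the paper's computation: since $\Kh(L_G;\bZ/2)\cong(R_n/I_G)^{\oplus 2^{n-l}}$ with $I_G=(X_a-X_b:a,b\text{ in the same tree of }G)$, the annihilator of the module is exactly $I_G$, and this ideal recovers the partition of $\{1,\dots,n\}$ into trees directly. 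Your concern about $R_n$-algebra automorphisms is unnecessary here---the hypothesis is an $R_n$-module isomorphism for fixed labelings of the components, and isomorphic modules have equal annihilators---and would in fact be fatal if pursued: $\mathrm{Aut}(R_n)$ surjects onto $GL_n(\bZ/2)$ via linear substitutions of the $X_i$, and under $GL_n(\bZ/2)$ all the ideals $I_G$ with a fixed $l$ are equivalent, so only $l$ would survive.
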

\begin{proof}
Let $U_n$ be the $n$-component unlink and $H$ be the Hopf link, it is known that $\Kh(U_n;\bZ/2)\cong R_n$ and $\Kh(H;\bZ/2)\cong R_2/(X_1-X_2)\oplus R_2/(X_1-X_2) $. 
For $k\in \bZ^+$, let $H_{k-1}$ be a forest of unknots with $k$ components whose graph is a tree, then for $k\ge 2$, the link $H_{k-1}$ is given by a connected sum of $k-1$ Hopf links.
We use induction on $k$ to show that the Khovanov module of $H_{k-1}$ is given by
\begin{equation}\label{eq_Kh_module_Hk-1}
\Kh(H_{k-1};\bZ/2)\cong[R_{k}/(X_1=X_2=\cdots=X_k)]^{\oplus 2^k}.
\end{equation}
The cases of $k=1, 2$ follow from the formulas above.
 Suppose \eqref{eq_Kh_module_Hk-1} holds for $k=l$ with $l\ge 2$. To show that \eqref{eq_Kh_module_Hk-1} holds for $k=l+1$, write $H_l$ as a connected sum of $H_{l-1}$ and the Hopf link $H$, where $H_{l-1}$ is a connected sum of $l-1$ Hopf links. Label the components of $H_{l-1}$ by $1,2,\cdots,l$ and the components
of $H$ by $l,l+1$, such that $H_l$ is the connected sum along the components with label $l$. By the induction hypothesis, we have
\begin{align*}
\Kh(H_{l-1};\bZ/2)&\cong[R_{l}/(X_1=X_2=\cdots=X_l)]^{\oplus 2^l},
\\
\Kh(H;\bZ/2)&\cong[R'_{2}/(X_l=X_{l+1})]^{\oplus 2},
\end{align*}
where $R'_2:=(\bZ/2)[X_l,X_{l+1}]/(X_l^2,X_{l+1}^2)$. According to \cite[Proposition 3.3]{Khovanov-pattern}, we have
$$
C(H_l)=C(H_{l-1})\otimes_{R_1'} C(H),
$$
where $C(H_l), C(H_{l-1}), C(H)$ are the Khovanov chain complexes which are free modules over $R'_1:=(\bZ/2)[X_l]/(X_l^2)$. 
Moreover, the equation above respects
the actions of $X_1,\cdots, X_{l-1},X_{l+1}$ on both sides. 
Since $R'_1$ is not a principal ideal domain, we do not have a K\"unneth formula, but there is a K\"unneth spectral sequence
$$
E^2=\bigoplus_{j=0}^{\infty}\Tor_{R'_1}^j(\Kh(H_{l-1};\bZ/2), \Kh(H;\bZ/2))\Rightarrow \Kh(H_{l};\bZ/2).
$$ 
Since $\Kh(H_{l-1};\bZ/2)$ and $\Kh(H;\bZ/2)$ are free $R'_1$-modules, we have
$$
E^2=\Kh(H_{l-1};\bZ/2)\otimes_{R'_1} \Kh(H;\bZ/2).
$$
The knowledge of the $(\bZ/2)$-ranks of Khovanov homology shows that the spectral sequence collapses at the $E^2$-page.
Hence we have 
\begin{align*}
  \Kh(H_l;\bZ/2) &\cong \Kh(H_{l-1};\bZ/2)\otimes_{R'_1} \Kh(H;\bZ/2)
 \\
 & \cong  [R_{l+1}/(X_1=X_2=\cdots=X_{l+1})]^{\oplus 2^{l+1}}
\end{align*}
as $R_{l+1}$-modules. This completes the induction. The Khovanov module of the disjoint union of two links is the tensor product
of the Khovanov modules of the two links over $\bZ/2$. Theorem \ref{The_main_theorem} implies $L_2$ is a forest of unknots
with a graph $G_2$.
It is clear from the above discussion that  the module structure of $\Kh(L_2;\bZ/2)$ determines the number of vertices in each component of $G_2$,
hence the corollary is proved.
\end{proof}
\begin{remark}
Neither the bi-grading nor the module structure of Khovanov homology can distinguish the links given by Figure \ref{fig_tree2} 
and Figure \ref{fig_tree3}. 
\end{remark}

The proof of Theorem \ref{The_main_theorem} relies on Kronheimer-Mrowka's spectral sequence \cite{KM:Kh-unknot} 
and Batson-Seed's inequality comparing the ranks of the Khovanov homology of a link and its sublinks \cite{Kh-unlink}. Under the assumption of Theorem \ref{The_main_theorem}, Batson-Seed's inequality and Kronheimer-Mrowka's unknot detection theorem imply that all the components of $L$ are unknots. On the other hand,
Kronheimer-Mrowka's spectral sequence gives an upper bound on the rank of the instanton link invariant $\II^\natural(L)$. Since all the components of $L$ are unknots, $\II^\natural(L)$ is 
isomorphic to an annular instanton Floer homology introduced by the first author in \cite{AHI}. The annular 
instanton Floer homology carries a $\bZ$-grading (we call it the f-grading). In \cite{XZ:excision}, the authors showed that
this grading detects the generalized Thurston norm of surfaces with a meridian boundary, which allows us to extract topological information 
from $\II^\natural(L)$. The topological properties imply that either $L$ is a forest of unknots, or $L$ contains a sublink with a very specific configuration. We then use a 
direct computation of Khovanov homology 
and Jones polynomial to rule out the latter case.  
\\

Our method can also be used to prove new dectection results for some links with small Khovanov homology.
\begin{figure}  
  \includegraphics[width=0.4\linewidth]{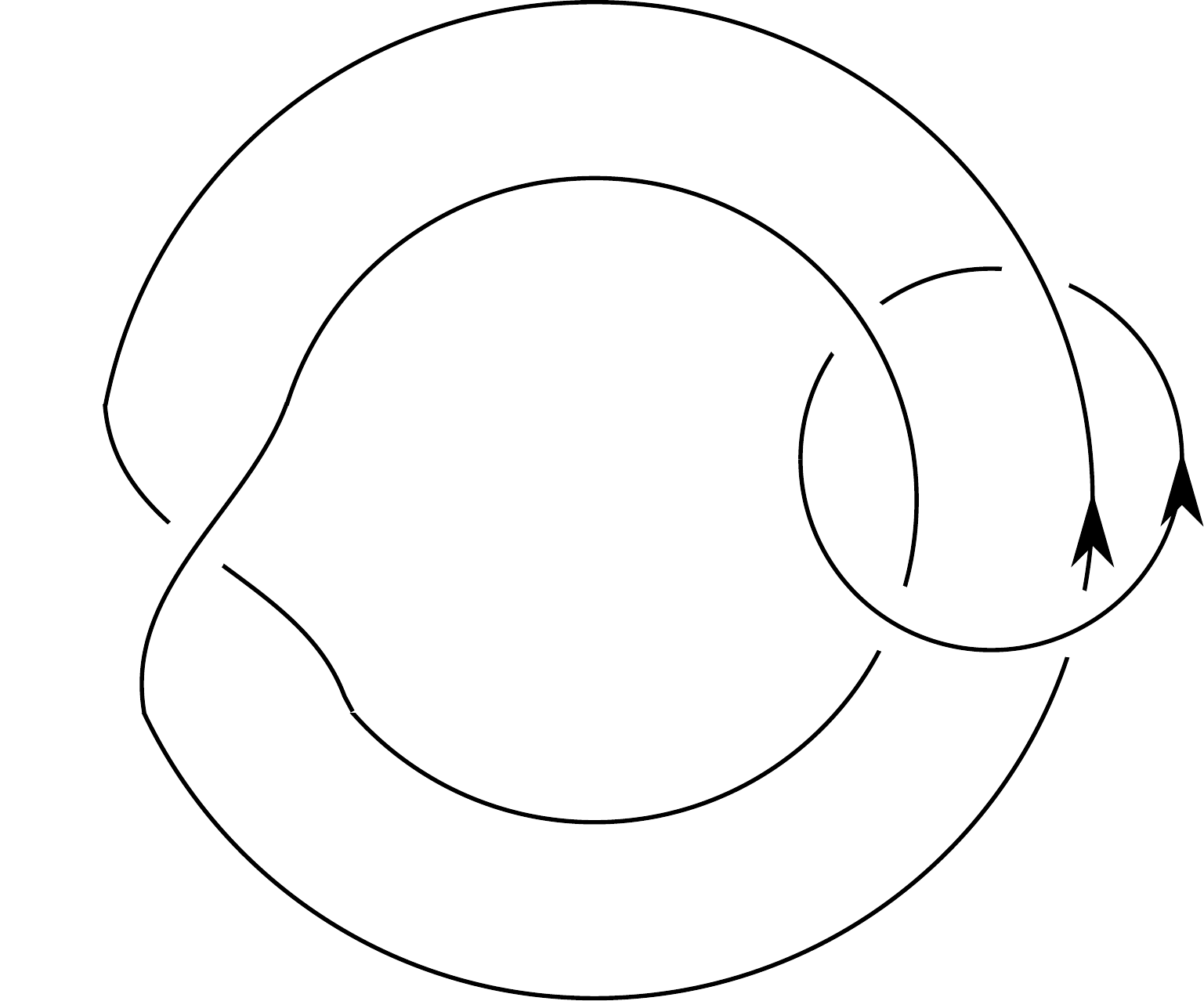}
  \caption{The link $L_1$.}
  \label{fig_braid_1}
 \end{figure}
\begin{Theorem}\label{thm_small_rank_detection_intro}
Let $L_1$ be the oriented link given by Figure \ref{fig_braid_1}, and $L_2$ be the disjoint union of a trefoil and an unknot. 
Let $L=K_1\cup K_2$ be a 2-component oriented link. Then
\begin{itemize}
\item[(a)]
If
$$
\Kh(L;\bZ/2)\cong \Kh(L_1;\bZ/2)
$$
as bi-graded abelian groups, then
$L$ is isotopic to $L_1$.

\item[(b)]
Let $q\in K_2$ and $p$ be a base point on the unknotted component of $L_2$. If
$$
\Khr(L,q;\bZ)\cong \Khr(L_2,p;\bZ)
$$
as bi-graded abelian groups, then
 $L$ splits into the disjoint union of a trefoil $K_1$ and an unknot $K_2$.
\end{itemize}
\end{Theorem}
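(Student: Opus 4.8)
The plan is to run the machinery behind Theorem~\ref{The_main_theorem}, feeding in the Khovanov data of $L_1$ and $L_2$ in place of the hypothesis $\rank_{\bZ/2}\Kh(L;\bZ/2)=2^n$. As a preliminary step I would compute $\Kh(L_1;\bZ/2)$ directly from the diagram in Figure~\ref{fig_braid_1}, which has only a few crossings, recording its total $\bZ/2$-rank and its bigrading; I would also record $\Khr(L_2,p;\bZ)$, which, because $p$ lies on the unknotted component of $L_2$, equals the unreduced Khovanov homology of the trefoil and hence is free of rank $4$ with a single $\bZ/2$ summand, in an explicit bigrading. By \eqref{eqn_relation_Kh_Khr} this already forces $\rank_{\bZ/2}\Kh(L;\bZ/2)=2\rank_{\bZ/2}\Khr(L,q;\bZ/2)=12$ in case (b).

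For part (a): applying Batson-Seed's inequality \cite{Kh-unlink} to the split union of $K_1$ and $K_2$ gives $\rank_{\bZ/2}\Kh(L;\bZ/2)\ge\rank_{\bZ/2}\Kh(K_1;\bZ/2)\cdot\rank_{\bZ/2}\Kh(K_2;\bZ/2)$. A nontrivial knot has reduced $\bZ/2$-Khovanov rank at least $3$, since the rank is odd and, by \cite{KM:Kh-unknot}, exceeds $1$, hence unreduced $\bZ/2$-Khovanov rank at least $6$ by \eqref{eqn_relation_Kh_Khr}; since the computation above shows $\rank_{\bZ/2}\Kh(L_1;\bZ/2)<12$ and both components of $L_1$ are unknots, both $K_1$ and $K_2$ must be unknots (were some component of $L_1$ knotted, the relative-annular argument of part (b) below would apply instead). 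From here I would argue exactly as for Theorem~\ref{The_main_theorem}: the Kronheimer-Mrowka spectral sequence bounds $\dim_{\bZ/2}\II^\natural(L)$ in terms of $\rank_{\bZ/2}\Kh(L;\bZ/2)$; because all components of $L$ are unknots, $\II^\natural(L)$ is identified with an annular instanton Floer homology via \cite{AHI}; and the f-grading together with the generalized Thurston norm detection of \cite{XZ:excision} converts this into a bound on the norm of surfaces in the complement of $L$ with meridional boundary. This leaves a short explicit list of 2-component links with unknotted components, bounded linking number, and bounded genus, and a direct computation of the Khovanov homology and Jones polynomial of each candidate eliminates all of them but $L_1$.

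For part (b): Batson-Seed's inequality now reads $12=\rank_{\bZ/2}\Kh(L;\bZ/2)\ge\rank_{\bZ/2}\Kh(K_1;\bZ/2)\cdot\rank_{\bZ/2}\Kh(K_2;\bZ/2)$, and since the $\bZ/2$-Khovanov rank of a knot is twice an odd number by \eqref{eqn_relation_Kh_Khr}, the only possibilities for the pair $(\rank_{\bZ/2}\Kh(K_1;\bZ/2),\rank_{\bZ/2}\Kh(K_2;\bZ/2))$ are $(2,2)$, $(2,6)$, and $(6,2)$; a factor $2$ forces an unknot by \cite{KM:Kh-unknot}, and a factor $6$ forces reduced $\bQ$-rank $3$, hence a trefoil by the trefoil rank-detection theorem \cite{BS}. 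I would exclude $(2,6)$ using the integral link-splitting spectral sequence of \cite{Kh-unlink} on the reduced complex: splitting the unknot $K_1$ off $L$ gives $\rank_{\bZ}\Khr(L,q;\bZ)\ge\rank_{\bZ}\Kh(K_1;\bZ)\cdot\rank_{\bZ}\Khr(K_2,q;\bZ)=2\cdot3=6$, contradicting $\rank_{\bZ}\Khr(L,q;\bZ)=\rank_{\bZ}\Khr(L_2,p;\bZ)=4$. I would exclude $(2,2)$ by running the instanton argument of part (a) once more: with both components unknots it again leaves a finite list of 2-component links with unknotted components, and a direct comparison of bigraded Khovanov homology shows that none of them has $\Khr(\cdot,q;\bZ)$ isomorphic to $\Khr(L_2,p;\bZ)$. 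Thus $K_1$ is a trefoil and $K_2$ an unknot, and it remains to prove $L=K_1\sqcup K_2$. For this I would bound a suitable instanton invariant of $L$ above via the Kronheimer-Mrowka spectral sequence, view it as an annular instanton Floer homology relative to the unknotted component $K_2$, so that $K_1$ becomes a knot in the solid torus $S^3\setminus N(K_2)$, and use \cite{XZ:excision} to show that its f-grading is concentrated in degree $0$, which forces $K_1$ to lie in a ball disjoint from $K_2$; comparing bigradings then identifies $L$ with $K_1\sqcup K_2$ up to orientation.

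The step I expect to be the main obstacle is the instanton side, just as for Theorem~\ref{The_main_theorem}. In part (a) the delicate point is to verify that the Thurston norm constraint coming from $\AHI(L)$ genuinely shrinks the list of candidate 2-component links to a finite set that can be dispatched by hand, rather than merely constraining the linking number. In part (b) the delicate point is that one component of $L$ is now genuinely knotted, so the identification $\II^\natural(L)\cong\AHI$ of \cite{AHI} does not apply verbatim: one must set up the annular instanton homology of $L$ relative to the distinguished unknotted component $K_2$, check that the Kronheimer-Mrowka spectral sequence still provides the required upper bound, and prove that concentration of the f-grading in degree $0$ is equivalent to splitness. Once the topological candidates are isolated, the remaining bigrading bookkeeping and Khovanov/Jones computations should be routine.
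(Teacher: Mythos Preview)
Your overall strategy is the right one and matches the paper, but several of the steps you leave vague are precisely where the paper brings in tools you do not name. In part (a) the paper does not rely on a generic ``Thurston norm constraint'' to produce a finite list. Instead it uses the \emph{graded} version of Batson--Seed (Theorem~\ref{Theorem_Kh_linking_number}, with the internal grading $h-q$) to force $\lk(K_1,K_2)\in\{1,2\}$, then invokes the parity of $\dim_\bC\AHI(K_1,j)$ coming from homotopy to a trivial braid closure (\cite[Section 4.4]{AHI}) together with the bound $\dim_\bC\AHI(K_1)\le 4$ to pin down the top f-grading. The braid-detection criterion of Proposition~\ref{AHI-braid-detection} then shows $K_1$ is the closure of a 1-braid or of a generator of $B_2$ in $S^3\setminus N(K_2)$, which leaves only the Hopf link and $L_1$. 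Without the graded Batson--Seed input and the parity argument, your ``short explicit list'' is not obviously finite, as you yourself flag.

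In part (b) there is a genuine divergence. Your exclusion of the case $(\rank\Kh(K_1),\rank\Kh(K_2))=(2,6)$ rests on a \emph{reduced} Batson--Seed inequality $\rank_\bZ\Khr(L,q;\bZ)\ge\rank_\bZ\Kh(K_1;\bZ)\cdot\rank_\bZ\Khr(K_2,q;\bZ)$; this is plausible but is not stated in \cite{Kh-unlink}, so you would need to check that their deformation of the differential descends to the reduced complex with basepoint on $K_2$. The paper avoids this entirely and instead proves directly that $K_2$ is an unknot using the local-coefficient machinery of Section~\ref{sec_local_coef}: the $\mathcal{R}$--rank of $\II^\natural(L,q;\Gamma_{K_1})$ is invariant under homotopies of $K_1$ (Corollary~\ref{LocallyFreePart-inst}), so replacing $K_1$ by an unknot and applying excision gives $\dim_\bC\II^\natural(K_2,q)\le 2$, hence $K_2$ is the unknot. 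For the final splitness step your idea is correct, but you still need two inputs you do not mention: graded Batson--Seed forces $\lk(K_1,K_2)=0$, and the lower bound $\dim_\bC\AHI(K_1,0)\ge\dim_\bC\AHI(\mathcal U_1,0)=2$ from Proposition~\ref{Prop-AHI-rank-inequality} (combined with the parity argument for $j\neq 0$) is what forces the top f-grading to be $0$.
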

By the trefoil detection result of Baldwin and Sivek \cite{BS}, the essential content of Part (b) is the splitness of $L$. Recently, Lipshitz and Sarkar proved that  
the splitness of a general link can be detected by the module structure of Khovanov homology \cite{LS-split}.

\subsection*{Acknowledgements}
Part of this work was done during 
the 2019 Summer Program \emph{Quantum Field Theory and Manifold Invariants} at PCMI. The authors would like to thank the organizers
for providing us with such a great environment to carry on this work. We would like to thank John Baldwin and Zhenkun Li for many helpful 
conversations. We also want to thank Nathan Dowlin, Peter Ozsv\'ath, and Zolt\'an Szab\'o for their help with knot Floer homology in the proof of Lemma \ref{Lemma-Kh_bound_linking_number}.

\section{Annular intanton Floer homology}

The singular instanton Floer homology theory was introduced by Kronheimer and Mrowka in \cite{KM:Kh-unknot,KM:YAFT}. Let 
$(Y,L,\omega)$ be a triple 
where $Y$ is a closed oriented 3-manifold, $L\subset Y$ is a link and  
$\omega\subset Y$ is an embedded 1-manifold such that $ \partial \omega=\omega\cap L$.
The triple $(Y,L,\omega)$ is called \emph{admissible} if there is 
an embedded closed surface $\Sigma\subset Y$ satisfying either one of the following conditions:
\begin{itemize}
  \item $\Sigma$ is disjoint from $L$ and the intersection number of $\omega$ and $\Sigma$ is odd,
  \item  The intersection number of $L$ and $\Sigma$ is odd. 
\end{itemize}
If $(Y,L,\omega)$ is admissible, the instanton Floer homology $\II(Y,L,\omega)$ is defined to be a Morse homology of the Chern-Simons functional on a certain space of
orbifold $\SO(3)$-connections over $Y$, where $Y$ is equipped with an orbifold structure with cone angle $\pi$ along $L$, and $\omega$
represents the second Stiefel-Whitney class of the $\SO(3)$-bundle. In this article, we will always take $\bC$-coefficients for instanton Floer 
homologies.

The homology group $\II(Y,L,\omega)$ carries a relative $\mathbb{Z}/4$-homological grading. Given an embedded closed surface $F\subset Y$, there is an operator $\muu(F)$ defined on $\II(Y,L,\omega)$ with degree $2$. For more details the reader may refer to, for example, \cite[Section 2]{XZ:excision}.

The rest of this section gives a brief review of the annular instanton Floer homology introduced in \cite{AHI}.
Let $L$ be a link in the solid torus $S^1\times D^2$. 
The annular instanton Floer homology $\AHI(L)$ is defined by the following procedure:
\begin{enumerate}
\item Let $\mathcal{K}_2$ be the product link $S^1\times \{p_1,p_2\}$ in $S^1\times D^2$, and let $u$ be an arc in $S^1\times D^2$ connecting $S^1\times\{p_1\}$ and $S^1\times\{p_2\}$;

\item Form the new link $L\cup \mathcal{K}_2$ in 
      \begin{equation*}
      S^1\times S^2=S^1\times D^2\cup_{S^1\times S^1} S^1\times D^2,
      \end{equation*}
     where $L$ lies in the first copy of $S^1\times D^2$, and $\mathcal{K}_2$ lies in the second copy. 
\item Define
     \begin{equation*}
      \AHI(L):=\II(S^1\times S^2, L\cup \mathcal{K}_2,u).
     \end{equation*}
\end{enumerate}
The vector space 
$\AHI(L)$ is equipped with an absolute $\mathbb{Z}$-grading (called the f-grading). 
By definition, the component of $\AHI(L)$ with f-degree $i$ is given by the generalized eigenspace of $\muu(S^2)$ for the eigenvalue $i$, and is denoted by $\AHI(L,i)$.
Since $\muu(S^2)$ has degree 2 with respect to the $\mathbb{Z}/4$-homological grading of $\AHI(L)$, the subspace
$\AHI(L,i)$ carries a $\mathbb{Z}/2$-homological grading, and we have
\begin{equation}\label{AHI-symmetry}
\AHI(L,i)\cong \AHI(L,-i).
\end{equation}
There is a product formula for split links in $S^1\times D^2$.
\begin{Proposition}[{\cite[Proposition 4.3]{AHI}}]\label{product-formula}
Suppose $L_1$ and $L_2$ are two links in $S^1\times D_1$ and $S^1\times D_2$ respectively, where $D_1$ and $D_2$ are disjoint sub-disks 
of $D^2$. Then we have
$$
\AHI(L_1\cup L_2)\cong \AHI(L_1)\otimes \AHI(L_2).
$$ 
Moreover, the isomorphism above is compatible with the f-gradings.
\end{Proposition}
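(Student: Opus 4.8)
The plan is to deduce the product formula from the excision theorem for singular instanton homology of Kronheimer and Mrowka (in the spirit of Floer's original excision theorem), applied to the disjoint union of the two closed-up triples computing $\AHI(L_1)$ and $\AHI(L_2)$.

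Recall that $\AHI(L_i)=\II(S^1\times S^2, L_i\cup\mathcal{K}_2, u)$, where $L_i$ lies in one solid-torus summand $V_{\mathrm{in}}$ of $S^1\times S^2$ while $\mathcal{K}_2=S^1\times\{p_1,p_2\}$ together with $u$ lies in the complementary summand $V_{\mathrm{out}}=S^1\times D_{\mathrm{out}}$. In the first copy I would choose a circle $c^{(1)}\subset S^2$ separating $p_1$ from $p_2$ so that the vertical torus $T^{(1)}=S^1\times c^{(1)}$ is disjoint from $L_1\cup\mathcal{K}_2$, has the strand $S^1\times\{p_1\}$ alone on one side and $L_1\cup(S^1\times\{p_2\})$ on the other, and meets $u$ transversely in a single point. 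In the second copy I would make the analogous choice with the roles of $p_1$ and $p_2$ interchanged, obtaining a torus $T^{(2)}$. Each $T^{(i)}$ is disjoint from the link and meets $\omega=u$ in an odd number of points, which is precisely the situation in which excision is permitted.

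Cutting the first copy along $T^{(1)}$ produces a ``universal piece'' $\mathbf{U}$ (a solid torus containing $S^1\times\{p_1\}$ and an arc of $u$) together with an ``$L_1$-piece'' $\mathbf{X}(L_1)$ (a solid torus containing $L_1$, the strand $S^1\times\{p_2\}$ and the complementary arc of $u$); cutting the second copy along $T^{(2)}$ produces $\mathbf{U}'$ and $\mathbf{Y}(L_2)$ analogously. The excision theorem identifies $\II$ of the disjoint union $(\text{copy }1)\sqcup(\text{copy }2)$ with $\II$ of the manifold obtained by cutting along $T^{(1)}\sqcup T^{(2)}$ and regluing with a swap; choosing the swap to pair $\mathbf{X}(L_1)$ with $\mathbf{Y}(L_2)$ and $\mathbf{U}$ with $\mathbf{U}'$, I would check directly that $\mathbf{X}(L_1)\cup\mathbf{Y}(L_2)$ is the triple $(S^1\times S^2, L_1\cup L_2\cup\mathcal{K}_2, u)$ computing $\AHI(L_1\cup L_2)$, while $\mathbf{U}\cup\mathbf{U}'$ is the triple computing $\AHI(\emptyset)$. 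Here one uses that excision reglues the product tori by product maps, so the reglued manifolds are again $S^1\times S^2$; that the two arcs of $u$ rejoin across the new torus into a single arc; and that $L_2$ and one strand of $\mathcal{K}_2$ may be isotoped across the gluing torus into the positions required by the definition of $\AHI$. Since $\II$ of a disjoint union is the tensor product, excision then gives
\begin{equation*}
\AHI(L_1)\otimes\AHI(L_2)\;\cong\;\AHI(L_1\cup L_2)\otimes\AHI(\emptyset).
\end{equation*}
Since $\AHI(\emptyset)$ has rank one over $\bC$, and is therefore supported in f-degree $0$ by the symmetry \eqref{AHI-symmetry}, this yields the asserted isomorphism of vector spaces.

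For the compatibility with f-gradings I would track the operator $\muu(S^2)$, $S^2=\{\pt\}\times S^2$, through the excision: the f-grading on $\AHI(L_1)\otimes\AHI(L_2)$ is by the sum of the eigenvalues of $\muu$ on the two tensor factors, and since $\muu$ is natural with respect to the cobordisms implementing excision it suffices to verify that, after cutting and regluing, the vertical spheres of the two input copies add up to the vertical sphere of the $\AHI(L_1\cup L_2)$-triple, with the sphere of the $\AHI(\emptyset)$-triple contributing $0$. I expect this bookkeeping to be the main obstacle: checking that the chosen tori satisfy the precise hypotheses of the singular excision theorem, that the reglued triples are literally (after the stated isotopies) the ones appearing in the definition of $\AHI$, and that the $\muu(S^2)$-eigenvalue grading is matched through the isomorphism — rather than any new analytic input, which excision itself supplies. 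The computation $\AHI(\emptyset)\cong\bC$ used above is a basic normalization fact which should follow directly from the definition (cf.\ \cite{AHI}).
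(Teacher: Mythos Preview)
The paper does not prove this proposition here; it is quoted from \cite[Proposition~4.3]{AHI}. However, the paper itself confirms (in the paragraph preceding Example~\ref{eg-Uk-Kl-Gamma}) that the torus excision theorem \cite[Theorem~5.6]{KM:Kh-unknot} is what underlies Proposition~\ref{product-formula}, so your approach is exactly the intended one. Your outline---cut each closed-up triple along a vertical torus meeting $u$ once, reglue with a swap to obtain the triple for $\AHI(L_1\cup L_2)$ together with the triple for $\AHI(\emptyset)$, and use $\AHI(\emptyset)\cong\bC$ (Example~\ref{eg-Uk-Kl})---is correct, and the points you flag as needing care (matching the endpoints of the cut arcs, isotoping the strands into standard position after regluing, and tracking the $\muu(S^2)$ class through the excision cobordism for the f-grading) are precisely the bookkeeping steps that constitute the actual argument.
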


In the following, we will use $\mathcal{U}_n$ to denote the unlink with $n$ components in $S^1\times D^2$, and use $\mathcal{K}_n$ to denote the closure
of the trivial braid with $n$ strands in $S^1\times D^2$. We will use $\mathcal{U}_k\cup \mathcal{K}_l$ to denote the union of $\mathcal{U}_k$ and $\mathcal{K}_l$ such that $\mathcal{U}_k$ is included in a solid $3$-ball disjoint from $\mathcal{K}_l$.
\begin{eg}[{\cite[Example 4.2]{AHI}}]\label{eg-Uk-Kl}
The critical set of the unperturbed Chern-Simons functional for $\AHI(\mathcal{U}_1)$ 
is diffeomorphic to $S^2$, and after perturbation the critical set consists of two points whose homological degrees differ by 2. 
Therefore there is no differential, and we have
$$
\AHI(\mathcal{U}_1)\cong \bC\oplus \bC.
$$
The vector space $\AHI(\mathcal{U}_1)$ is supported at the f-grading $0$.

The critical set for $\AHI(\mathcal{K}_1)$ consists of two points whose homological degrees differ by 2, 
hence there is no differential and 
$$
\AHI(\mathcal{K}_1)\cong  \bC\oplus \bC.
$$
The vector space $\AHI(\mathcal{K}_1)$ is supported at f-gradings $\pm 1$.

By Proposition \ref{product-formula}, we have
$$
\AHI(\mathcal{U}_k\cup \mathcal{K}_l)\cong \AHI(\mathcal{U}_1)^{\otimes k}\otimes  \AHI(\mathcal{K}_1)^{\otimes l},
$$
and the above isomorphism preserves the $f$-gradings. 

We also have $\AHI(\emptyset)\cong \mathbb{C}$,
because the critical set consists of a single point.
\end{eg}

\begin{Definition}\label{def_meridional_surface}
A properly embedded, connected, oriented 
surface $S\subset S^1\times D^2$ is called a \emph{meridional surface} if $\partial S$ is a meridian of $S^1\times D^2$. 
\end{Definition}
The annular instanton Floer homology detects the generalized Thurston norm of meridional surfaces.
\begin{Theorem}[{\cite[Theorem 8.2]{XZ:excision}}]\label{Theorem-2g+n}
Given a link $L$ in $S^1\times D^2$ and suppose $S$ is a meridional surface that intersects $L$ transversely. Let $g$ be the genus of $S$ 
and let $n:=|S\cap L|$. Suppose $S$ minimizes the value of $2g+n$ among meridional surfaces,
then we have
\begin{equation*}
\AHI(L,i)= 0
\end{equation*}
for all $|i|> 2g+n$, and
\begin{equation*}
\AHI(L,\pm(2g+n))\neq 0.
\end{equation*}
\end{Theorem}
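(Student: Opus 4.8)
The plan is to prove the two assertions separately: the vanishing $\AHI(L,i)=0$ for $|i|>2g+n$ via an adjunction inequality, and the non-vanishing $\AHI(L,\pm(2g+n))\neq 0$ via a sutured-manifold decomposition along $S$.

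For the vanishing statement, I would first cap off $S$ inside $S^1\times S^2$. A meridian of $S^1\times D^2$ bounds a disk in the complementary solid torus; choosing the arc $u$ to lie in a single slice $\{\theta_0\}\times D^2$ of the second copy of $S^1\times D^2$ and taking the capping disk $D=\{\theta_1\}\times D^2$ with $\theta_1\neq\theta_0$, the surface $\widehat S:=S\cup D$ is a closed connected surface of genus $g$, disjoint from $u$, meeting $L\cup\mathcal K_2$ transversely in $n+2$ points (the $n$ points of $S\cap L$ together with the two points of $D\cap\mathcal K_2$). Because $[S]$ is the generator of $H_2(S^1\times D^2,\partial)$ represented by a meridional disk, $\widehat S$ is homologous to $\{pt\}\times S^2$, so $\muu(\widehat S)=\muu(S^2)$ on $\AHI(L)=\II(S^1\times S^2,L\cup\mathcal K_2,u)$ and the f-grading is computed by the generalized eigenspaces of $\muu(\widehat S)$. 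The adjunction inequality for the $\muu$-operators in singular instanton homology (the orbifold counterpart of the Kronheimer--Mrowka genus bounds) then bounds the generalized eigenvalues of $\muu(\widehat S)$ in absolute value by $2g(\widehat S)-2+\#\big(\widehat S\cap(L\cup\mathcal K_2)\big)=2g+n$ whenever this number is positive, which is exactly the asserted vanishing. The remaining case $2g+n\le 0$ forces $g=0$ and $n=0$, so $L$ lies in a ball disjoint from a meridional disk; here $\widehat S$ is a sphere meeting the singular link twice, $\muu(\widehat S)$ is nilpotent, and $\AHI(L)$ is concentrated in f-grading $0$, again as claimed.

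For the non-vanishing at the extremal gradings, I would pass to sutured instanton homology. Using the excision theorem, $\AHI(L)$ is isomorphic, compatibly with the f-grading, to $\SHI$ of the annular sutured manifold $M$ obtained from $(S^1\times D^2)\setminus N(L)$ by placing two meridional sutures on $\partial(S^1\times D^2)$ and a pair of meridional sutures on each component of $\partial N(L)$, where the f-grading corresponds to the grading induced by the class of a meridional disk. After an isotopy making $S$ a decomposing surface adapted to the sutures, decompose $M$ along $S$ to obtain $M'$. The hypothesis that $S$ minimizes $2g+n$ over all meridional surfaces says precisely that $S$ is norm-minimizing in the relevant decorated sense, so the decomposition is taut; by the non-vanishing theorem for $\SHI$ of taut balanced sutured manifolds, $\SHI(M')\neq 0$. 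Tracking the f-grading through the excision isomorphism and through the surface-decomposition formula identifies $\SHI(M')$ with the direct summand of $\AHI(L)$ in the top f-grading $2g+n$; combined with the symmetry $\AHI(L,i)\cong\AHI(L,-i)$ of \eqref{AHI-symmetry}, this gives $\AHI(L,\pm(2g+n))\neq 0$.

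The main obstacle is this second half, and two points in it demand real care. The first is verifying that minimality of $2g+n$ makes the decomposition along $S$ taut: this is the analogue of Gabai's principle that norm-minimizing surfaces admit taut decompositions, but one must first set up the correct notion of tautness and of decorated Thurston norm for annular sutured manifolds (where the relevant complexity is the combined quantity $2g+n$), and then rule out any compression or norm-reducing modification on $M'$. The second is the grading bookkeeping: one must match the f-grading on $\AHI(L)$ with the grading induced by $S$ on $\SHI(M)$ under excision, and then show that the decomposition formula isolates the \emph{extremal} value of this grading with the correct additive normalization, so that the surviving summand sits in grading exactly $2g+n$ rather than some smaller value. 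Pinning down these shifts exactly, rather than up to an unknown constant, is where the bulk of the technical effort lies.
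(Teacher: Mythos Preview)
This theorem is not proved in the present paper: it is quoted verbatim as Theorem~8.2 of the authors' companion paper \cite{XZ:excision} and used here as a black box. There is therefore no proof in this paper to compare your attempt against.

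That said, your outline matches the standard architecture for such results and is consistent with how the current paper itself invokes the machinery of \cite{XZ:excision}. Your vanishing step---cap $S$ by a disk in the second solid torus to obtain a closed genus-$g$ surface $\widehat S$ homologous to $\{pt\}\times S^2$ meeting the singular locus in $n+2$ points, then apply the eigenvalue bound $|i|\le 2g(\widehat S)-2+\#(\widehat S\cap(L\cup\mathcal K_2))=2g+n$ for $\muu(\widehat S)=\muu(S^2)$---is exactly the mechanism the present paper uses elsewhere (see the appeal to \cite[Theorem~6.1]{XZ:excision} in the proof of Lemma~\ref{lem_remove_w2_from_u2_to_un_L1}). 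Your non-vanishing step via sutured instanton homology, decomposing along the norm-minimizing $S$ and invoking non-vanishing of $\SHI$ for taut balanced sutured manifolds, is the expected route; the two difficulties you flag (checking tautness of the decomposition under the $2g+n$-minimality hypothesis, and pinning down the grading shift so that the surviving summand sits exactly at $\pm(2g+n)$) are genuine and constitute the technical heart of the argument in \cite{XZ:excision}.
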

We also need the following result.
\begin{Proposition}[{\cite[Corollary 8.4]{XZ:excision}}]\label{AHI-braid-detection}
Let $L$ be a link in $S^1\times D^2$.
Then $L$ is isotopic to the closure of 
a braid with $n$ strands
if and only if the top f-grading of $\AHI(L)$ is $n$, and $\AHI(L,n)\cong \bC.$
\end{Proposition}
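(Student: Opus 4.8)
The strategy is to detect closed braids via the topology of meridional surfaces and to upgrade the non-vanishing conclusion of Theorem~\ref{Theorem-2g+n} to a rank-one statement in the extreme f-grading. I use throughout that $\AHI$ depends only on the underlying unoriented link and that, for a meridional surface $S$, the algebraic intersection $[S]\cdot[L]$ is the winding number $w$ of $L$ in $S^1\times D^2$ up to sign, so $|S\cap L|\ge|w|$ for every meridional $S$.

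\emph{The ``only if'' direction.} Suppose $L$ is isotopic to the closure of an $n$-strand braid $\beta$. Then $L$ is transverse to the disk fibration of $S^1\times D^2$, a meridian disk $D_0$ has $2g(D_0)+|D_0\cap L|=n$, and $w=n$ for the coherent orientation, so $2g(S)+|S\cap L|\ge n$ for every meridional $S$. Thus $D_0$ minimizes, the minimum is $n$, and Theorem~\ref{Theorem-2g+n} gives that the top f-grading of $\AHI(L)$ is $n$. To see $\AHI(L,n)\cong\bC$ I would use the finer output of the proof of Theorem~\ref{Theorem-2g+n}: the extreme f-grading $\AHI(L,\pm N)$, with $N=\min_S(2g(S)+|S\cap L|)$, is isomorphic to the sutured instanton homology $\SHI$ of the sutured manifold obtained by decomposing $(S^1\times D^2)\setminus\mathring{N}(L)$ along a minimizing meridional surface. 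Taking that surface to be $D_0$, the decomposed manifold is the complement of $\beta$ in $D^2\times I$ with its induced sutures, and I must check this has $\SHI\cong\bC$. For the trivial braid the complement is a product sutured manifold, so $\SHI\cong\bC$ --- equivalently $\AHI(\mathcal{K}_n,n)\cong\bC$ by Proposition~\ref{product-formula} and Example~\ref{eg-Uk-Kl}. For a general $\beta$, induct on the pair (braid index, crossing number) using the unoriented skein exact triangle at a crossing: one smoothing deletes the crossing and the other creates a turnback which after isotopy lowers the braid index, so the other two terms of the triangle have one-dimensional $\SHI$ by induction, which together with the vanishing of $\AHI$ above f-grading $n$ pins down $\AHI(L,n)\cong\bC$.

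\emph{The ``if'' direction.} Suppose the top f-grading of $\AHI(L)$ is $n$ and $\AHI(L,n)\cong\bC$. By Theorem~\ref{Theorem-2g+n} some meridional surface $S$ realizes $\min_S(2g(S)+|S\cap L|)=n$. Via the same identification, the hypothesis forces the sutured manifold obtained by decomposing $(S^1\times D^2)\setminus\mathring{N}(L)$ along $S$ (punctured along $S\cap L$) to have one-dimensional $\SHI$; as the decomposition is taut, the sutured-instanton analogue of Juh\'asz's product detection theorem shows it is a product, so $(S^1\times D^2)\setminus\mathring{N}(L)$ fibers over $S^1$ with fiber $S\setminus\mathring{N}(S\cap L)$. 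The fiber boundary circles on $\partial N(L)$ are meridians of $L$, so the fibration extends over $N(L)$ to a fibration of $S^1\times D^2$ over $S^1$ with fiber $S$; a connected fiber of such a fibration has fundamental group injecting into $\pi_1(S^1\times D^2)=\bZ$, so $S$ is a disk and $|S\cap L|=n$. Since any fibration of $S^1\times D^2$ over $S^1$ by disks is isotopic to the standard one (its monodromy is a boundary-fixing diffeomorphism of $D^2$, hence isotopic to the identity), an ambient isotopy carries $L$ to a link transverse to the standard meridian-disk fibration meeting each fiber in $n$ points, i.e.\ a closed $n$-strand braid.

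The crux is the Floer-theoretic input: identifying the extreme f-grading $\AHI(L,\pm N)$ with the sutured instanton homology of the complement decomposed along a minimizing meridional surface (the refinement of Theorem~\ref{Theorem-2g+n} from non-vanishing to an isomorphism), together with the computation that the sutured complement of an arbitrary braid in $D^2\times I$ has one-dimensional $\SHI$. The skein-triangle induction for the latter will require care with how the induced sutures and the tautness hypothesis behave under resolving a crossing; the remaining steps are formal or standard 3-manifold topology.
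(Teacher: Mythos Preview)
The paper does not prove this proposition; it is quoted from \cite[Corollary 8.4]{XZ:excision}. So there is no in-paper argument to compare against, and I will assess your proposal on its own.

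Your overall framework is the right one, and it is essentially how the result is established in \cite{XZ:excision}: identify the extreme f-grading $\AHI(L,\pm N)$ with the sutured instanton homology of the complement decomposed along a minimizing meridional surface, then use product detection for $\SHI$. The ``if'' direction is sound as written: the $\pi_1$ argument really does force the closed-up fiber to be a disk (a subgroup of $\bZ$ with quotient $\bZ$ is trivial), and the monodromy of a disk fibration is isotopic to the identity, so $L$ is a closed braid.

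The ``only if'' direction, however, has a genuine gap in your skein induction. Your claim that the $\infty$-resolution ``after isotopy lowers the braid index'' is false in the annular setting: the $\infty$-smoothing of a braid crossing produces a link $L_\infty$ with a local maximum and a local minimum in the $S^1$ direction, and this is \emph{not} in general isotopic in $S^1\times D^2$ to a closed braid on fewer strands (for instance, for the $2$-braid $\sigma_1$ one gets $\mathcal{U}_1$, which is not a braid closure at all). So your induction hypothesis does not apply to $L_\infty$. Moreover, even if both neighboring terms in the triangle were one-dimensional, an exact triangle $\cdots\to\bC\to\,?\,\to\bC\to\cdots$ does not determine the middle term, so the phrase ``pins down $\AHI(L,n)\cong\bC$'' is not justified.

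There are two clean fixes. The most direct one is to observe that you do not need any induction: for \emph{every} braid $\beta$, the complement of $\beta$ in $D^2\times I$ is a product sutured manifold. Indeed, the braid determines an ambient isotopy $\varphi_t$ of $D^2$ carrying the $n$ marked points along the strands, and $(x,t)\mapsto(\varphi_t^{-1}(x),t)$ identifies $(D^2\times I)\setminus N(\beta)$ with $(D^2\setminus\{n\text{ points}\})\times I$ with product sutures; hence $\SHI\cong\bC$ and $\AHI(\widehat\beta,n)\cong\bC$ immediately. Alternatively, if you want to keep the skein triangle, note that a meridian disk taken at a height strictly between the cup and cap of the $\infty$-smoothing meets $L_\infty$ in only $n-2$ points, so the top f-grading of $\AHI(L_\infty)$ is at most $n-2$; thus $\AHI(L_\infty,n)=0$, and the triangle at f-grading $n$ gives $\AHI(L,n)\cong\AHI(L_0,n)\cong\bC$ by induction on the crossing number alone.
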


Although the annular instanton Floer homology is defined for links in the solid torus, it can be used to study links in $S^3$.
Let $L$ be a link in $S^3$ and let $p$ be a base point on $L$. In \cite{KM:Kh-unknot}, Kronheimer and Mrowka defined the link invariant
$$
\II^\natural(L,p):=\II(S^3,L\cup m,u),
$$
where $m$ is a small meridian circle of $L$ around $p$ and $u$ is an arc joining $m$ and $p$. 
The following result is a consequence of the excision property of instanton Floer homology.
\begin{Proposition}[{\cite[Section 4.3]{AHI}}]\label{Prop-AHI=I^natural}
Suppose $L$ has an unknotted component $U$ and let $p\in U$. Let $N(U)$ be a tubular neighborhood of $U$,
then $L_0:=L-U$ is a link in the solid torus $S^3-N(U)$. We have
\begin{equation}
\AHI(L_0)\cong \II^\natural(L,p).
\end{equation}
\end{Proposition}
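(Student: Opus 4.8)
The plan is to deduce the isomorphism from the Floer excision theorem of Kronheimer--Mrowka \cite{KM:Kh-unknot,KM:YAFT}. Unwinding the definitions, $\II^\natural(L,p)=\II(S^3,L\cup m,u)$, where $m$ is a small meridian of $U$ based at $p$ and $u$ is a short arc from $m$ to $p$. Since $U$ is unknotted, both $N(U)$ and $W:=S^3\setminus N(U)$ are solid tori; the sublink $L_0=L\setminus U$ lies in $W$, while $U$, $m$ and $u$ all lie inside $N(U)$, where $(U,m)$ is a Hopf link and $u$ joins its two components. The goal is to convert this ``core $\cup$ meridian'' picture inside $N(U)$ into the ``two parallel cores'' picture $\mathcal{K}_2\subset S^1\times D^2$ from the definition of $\AHI$, at the expense of changing the ambient gluing from the genus-one Heegaard gluing (which produces $S^3$) to the product gluing (which produces $S^1\times S^2$).

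Concretely, I would pick two excision tori inside $(S^3,L\cup m,u)$: let $T_1$ be a torus parallel to $\partial N(U)$ that separates $U$ from $m$, and let $T_2=\partial N(m)$ be the boundary of a thin tubular neighborhood of $m$. Both tori are disjoint from $L\cup m$ and from each other, and each meets $u$ transversely in exactly one point, so each has odd intersection number with $\omega=u$. Cutting $S^3$ along $T_1\sqcup T_2$ yields three pieces: the solid torus $V_1\cong N(U)$ containing $U$ and an arc of $u$; the solid torus $V_2=N(m)$ containing $m$ and an arc of $u$; and $V_3=S^3\setminus(V_1\sqcup V_2)$, which is the Hopf link complement $T^2\times[0,1]$ and contains $L_0$ together with the remaining arc of $u$.

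Next I would apply Floer excision, which re-glues $V_1$ to the $T_2$-end of $V_3$ and $V_2$ to the $T_1$-end of $V_3$ along a diffeomorphism of the gluing torus compatible with the single $\omega$-marked point; the theorem asserts that the instanton homology is unchanged for any such re-gluing. Writing out the gluing curves --- using that in the Hopf complement $T^2\times I$ the meridian of one component is isotopic to the longitude of the other, whereas in $S^1\times S^2\setminus(S^1\times\{p_1,p_2\})$ the two fiber meridians agree under the product structure --- one checks that for a suitable choice of the excision diffeomorphism the reglued three-manifold is $S^1\times S^2$, the cores of $V_1$ and $V_2$ become the two components $S^1\times\{p_1\}$ and $S^1\times\{p_2\}$ of $\mathcal{K}_2$, the link $L_0$ sits in the complementary solid torus as in the definition of $\AHI$, and the three arcs of $u$ recombine into an arc joining the two components of $\mathcal{K}_2$. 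Thus excision identifies $\II(S^3,L\cup m,u)$ with $\II(S^1\times S^2,L_0\cup\mathcal{K}_2,u)=\AHI(L_0)$.

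The main obstacle is bookkeeping rather than anything conceptual: one must check that the excision diffeomorphism can be chosen to be simultaneously compatible with the orientations and $\omega$-data on $T_1$ and $T_2$ \emph{and} to realize the homological type needed to land on $S^1\times S^2$ rather than on $S^3$ again, and one must verify that the resulting embedding of $L_0\cup\mathcal{K}_2\cup u$ is isotopic to the standard one defining $\AHI(L_0)$ --- the latter using the fact that two fiberwise unknotted disjoint circles in $S^1\times S^2$ with complement $T^2\times I$ form the standard link $\mathcal{K}_2$ up to ambient isotopy.
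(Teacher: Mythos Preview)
Your sketch is correct: the isomorphism is indeed a consequence of torus excision, which is exactly what the paper invokes (citing \cite{AHI}). Your packaging --- two excision tori inside the single triple $(S^3,L\cup m,u)$ --- is a legitimate variant, though the version in \cite{AHI} (and mirrored in the paper's own excision arguments, e.g.\ the proof of \eqref{eq_Rrank_I_L'U}) instead pairs with the auxiliary triple $(S^1\times S^2,\mathcal{K}_2,v)$ and cuts one torus from each, exchanging a neighborhood of $U$ with a neighborhood of one component of $\mathcal{K}_2$; the excision isomorphism then reads as a tensor identity with the one-dimensional $\AHI(\emptyset)\cong\mathbb{C}$. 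Both routes require the same care in choosing the gluing diffeomorphism and tracking where $L_0$ lands.

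One point worth sharpening in your last paragraph: showing that the two cores assemble into a standard $\mathcal{K}_2$ is not quite enough on its own, since admissible choices of $\phi$ that all produce $S^1\times S^2$ can still differ by boundary Dehn twists and hence place $L_0$ in different annular positions in the complementary solid torus. Pinning down the correct $\phi$ --- equivalently, matching the middle piece $(V_3,L_0)$ with the complement of $\mathcal{K}_2$ in the $\AHI$ triple --- is part of the bookkeeping you flag, and it does go through.
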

The above isomorphism does not preserve the f-grading of $\AHI(L_0)$ since there is no such grading on $\II^\natural(L,p)$. 
Notice that a meridional surface in the solid torus $S^3-N(U)$ is a Seifert surface of $U$. 

\section{Local coefficients}
\label{sec_local_coef}

This section reviews the singular instanton Floer homology theory with local coefficients, which was introduced in \cite[Section 3.9]{KM:YAFT} (see also \cite[Section 3]{KM-Ras}). Let $\mathcal{B}(Y,L,\omega)$ be the space of gauge-equivalence classes of orbifold connections over $(Y,L,\omega)$.
Let $\mathcal{R}$ be the ring
$$
\mathcal{R}:=\mathbb{C}[t,t^{-1}],
$$
and suppose 
$$
\mu : \mathcal{B}(Y,L,\omega) \to \mathbb{R}/\mathbb{Z}
$$
is a continuous function. For each $a\in \mathcal{B}(Y,L,\omega)$, define a rank-$1$ free $\mathcal{R}$-module by the formal multiplication
$$
\Gamma^\mu_a:=t^{\tilde\mu(a)}\cdot\mathcal{R},
$$
where $\tilde\mu(a)$ is a lift of $\mu(a)$ in $\bR$.
Let $\text{Crit}(CS)$ be the set of critical points of the (perturbed) Chern-Simons functional $CS$, 
define a free $\mathcal{R}$-module $\mathbf{C}^\mu$ by
$$
\mathbf{C}^\mu:=\bigoplus_{\alpha\in \text{Crit}(CS)} \Gamma^\mu_\alpha.
$$
To make $\mathbf{C}^\mu$ a chain complex, we need to define a differential on it. 
For $\alpha,\beta\in \text{Crit}(CS)$, let $M_d(\alpha,\beta)$ be the $d$-dimensional moduli space of trajectories of $CS$ 
from $\alpha$ to $\beta$.  This space carries an $\mathbb{R}$-action and we denote the quotient space by
$$
\breve{M}_d(\alpha,\beta):=M_d(\alpha,\beta)/\mathbb{R}.
$$
A trajectory $z\in M_d(\alpha,\beta)$ determines
a path $p_z$ in $\mathcal{B}(Y,L,\omega)$ from $\alpha$ to $\beta$. The map
$$
\mu\circ p_z: [0,1]\to \mathbb{R}/\mathbb{Z}
$$
can be lifted to a map
$$
\widetilde{\mu\circ p_z}: [0,1]\to \mathbb{R}.
$$
Although $\widetilde{\mu\circ p_z}$ is not unique, the difference 
$$
\nu(z):=\widetilde{\mu\circ p_z}(1)-\widetilde{\mu\circ p_z}(0)
$$ is well-defined. 
 We define an $\mathcal{R}$-module homomorphism by
\begin{align*}
d^\alpha_\beta :\Gamma^\mu_\alpha&\to \Gamma^\mu_\beta \\
     t^s &\mapsto \sum_{[z]\in \breve{M}_1(\alpha,\beta)} \text{sign(z)}\cdot t^{s+\nu(z)} 
\end{align*}
The differential $D$ on $\mathbf{C}^\mu$ is then given by
$$
D:=\bigoplus_{\alpha, \beta\in \text{Crit}(CS) } d^\alpha_\beta,
$$ 
and the instanton Floer homology with local coefficients is defined by
\begin{equation}
\II(Y,L,\omega;\Gamma^\mu):=H^\ast(\mathbf{C}^\mu,D).
\end{equation}
If $F\subset Y$ is an embedded closed surface, 
the operator $\muu(F)$ can be defined in the setting with local coefficients. 
Roughly speaking, 
the surface $F$ defines a two dimensional cohomology class on $\mathcal{B}(Y,L,\omega)$ (see \cite[Section 2]{XZ:excision}), and its Poincar\'e dual is given by a linear combination of divisors on $\mathcal{B}(Y,L,\omega)$ as
$$
\sum a_i V_i, ~a_i\in \mathbb{Q}.
$$
There is a map from $M_d(\alpha,\beta)$ to $\mathcal{B}(Y,L,\omega)$ by restricting the trajectories at time $0$.
The divisors $V_i$'s are generic in the sense that they are transverse to 
the restriction map $M_d(\alpha,\beta)\to \mathcal{B}(Y,L,\omega) $ for all $\alpha,\beta\in \text{Crit}(CS)$ and $d\in \mathbb{N}$.
 We define an $\mathcal{R}$-module homomorphism by
\begin{align*}
f^\alpha_\beta : \Gamma^\mu_\alpha&\to \Gamma^\mu_\beta \\
  t^s &\mapsto \sum_i a_i\sum_{z\in M_2(\alpha,\beta)\cap V_i} \text{sign}(z) \cdot  t^{s+\nu(z)} 
\end{align*}
A standard argument shows that the map
$$
H:=\bigoplus_{\alpha, \beta\in \text{Crit}(CS) }f^\alpha_\beta :\mathbf{C}^\mu \to \mathbf{C}^\mu
$$
is a chain map. The map $H$ induces the operator $\muu(F)$ on  $\II(Y,L,\omega;\Gamma^\mu)$.

The tensor products
$$
\mathbf{C}^\mu \otimes_{\mathcal{R}} \mathcal{R}/(t-1), ~ D\otimes_{\mathcal{R}} \mathcal{R}/(t-1), ~ H\otimes_{\mathcal{R}} \mathcal{R}/(t-1)
$$
recover the ordinary Floer chain complex $(C,d)$ and the ordinary operator $\muu(F)$ on $\II(Y,L,\omega)$ with $\bC$-coefficients. 

Suppose there is a component $K\subset L$ such that $K\cap \omega=\emptyset$. Fix an orientation and a framing of $K$, we can define a continuous 
map
$$
\mu_K:\mathcal{B}\to U(1)=\mathbb{R}/\mathbb{Z}
$$
by taking the limit holonomy of the orbifold connections along the longitude of $K$. The map $\mu_K$ then gives a local system.
 The local systems defined by different framings of $K$ are isomorphic via multiplications by powers of $t$, therefore the choice of the framing is not important.
 More generally, suppose there is a sublink $L'=K_1\cup\cdots\cup K_l$ of $L$ such that $\omega\cap L'=\emptyset$, we can choose a framing for each $K_j$ and define the map $\mu_{K_j}$ as above, hence 
we obtain a local system $\Gamma$ associated with $L'$ defined by
$$
\mu_{L'}:=\mu_{K_1}\mu_{K_2}\cdots\mu_{K_l}.
$$
If $L'$ is the empty link, then $\mu_{L'}=1$, thus the local system $\Gamma$ is the trivial system with coefficient $\mathcal{R}$. In this case, we have
\begin{equation}\label{eq-constant-coefficients}
\II(Y,L,\omega;\Gamma)=\II(Y,L,\omega)\otimes_{\mathbb{C}}\mathcal{R}.
\end{equation}

Suppose $(Y_0,L_0,\omega_0)$ and $(Y_1,L_1,\omega_1)$ are two admissible triples with local systems $\Gamma_0$ and $\Gamma_1$ 
associated with oriented sublinks $L_0'\subset L_0$ and $L_1'\subset L_1$ respectively. Let 
$$
(W,S,\eta)=(W,S_0\sqcup S_1,\eta):(Y_0,L_0,\omega_0)\to (Y_1,L_1,\omega_1)
$$ 
be a cobordism such that $\partial S_0=L_0'\cup L_1'$ and $\eta\cap S_0=\emptyset$. 
Then $(W,S,\eta)$ induces a map
$$
\II(W,S,\eta): \II(Y_0,L_0,\omega_0;\Gamma_0) \to \II(Y_1,L_1,\omega_1;\Gamma_1).
$$
This makes the instanton Floer homology with local coefficients a functor.
By the definition of cobordism of triples, $S$ and $\eta$ are required to be embedded surfaces in $W$. We can also consider the situation where
$S$ is an immersed surface with transverse double points, as discussed in \cite[Section 5]{KM:YAFT} and \cite{Kr-ob}. In this situation, 
one can blow up $W$ at the self-intersection points of $S$ to  resolve the double points and 
obtain an ordinary cobordism  $(\widetilde{W}, \widetilde{S},\eta)$, and then define
$$
\II(W,S,\eta):= \II (\widetilde{W}, \widetilde{S},\eta).
$$ 

\begin{figure}  
\centering
  \includegraphics[width=0.8\linewidth]{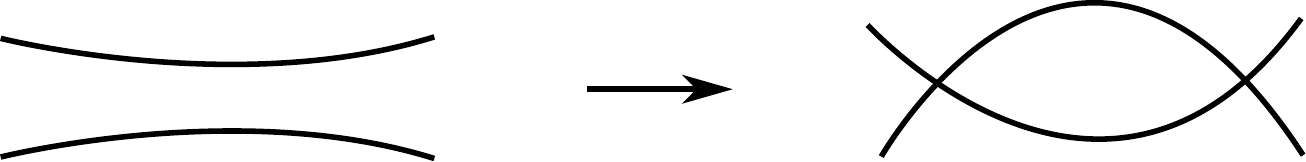}
  \caption{A schematic picture of the finger move.}
  \label{finger_move} 
\end{figure}

Now suppose $S=S_0\sqcup S_1$ and $S'=S_0'\sqcup S_1'$ are two immersed surfaces with transverse double points in $W$ such that $\eta\cap S=\eta\cap S'=\emptyset$, $\partial S = \partial S' = L_0\cup L_1$, and $\partial S_0=\partial S_0'=L_0'\cup L_1'$.
We consider the following 5 situations:
\begin{itemize}
\item[(i)]   $S'$ is obtained from $S$ by an ambient isotopy;

\item[(ii)]  $S_1'=S_1$, and $S_0'$ is obtained from $S_0$ by a twist move introducing a positive double point (see \cite[Section 1.3] {freedman1990Topology} for the definition of twist move);

\item[(iii)] $S_1'=S_1$, and $S_0'$ is obtained from $S_0$ by a twist move introducing a negative double point;

\item[(iv)]  $S_1'=S_1$, and $S_0'$ is obtained from $S_0$ by a finger move introducing two double points of opposite signs 
(see Figure \ref{finger_move} for a schematic picture and see \cite[Section 1.5] {freedman1990Topology} for the precise definition of finger move);

\item[(v)]  $S'$ is obtained from $S$ by a finger move introducing two double points of opposite signs in $S'_0\cap S'_1$.
\end{itemize}

\begin{Proposition}\label{Prop-moves-inst}
Let $(Y_0,L_0,\omega_0), (Y_1,L_1,\omega_1), W,S,S',\eta, \Gamma_0,\Gamma_1$ be as above, let
$$
\II(W,S,\eta): \II(Y_0,L_0,\omega_0;\Gamma_0) \to \II(Y_1,L_1,\omega_1;\Gamma_1)
$$
$$
\II(W,S',\eta): \II(Y_0,L_0,\omega_0;\Gamma_0) \to \II(Y_1,L_1,\omega_1;\Gamma_1)
$$
be the induced cobordism maps.
For the 5 cases listed above, the following equations hold respectively:
\begin{itemize}
\item[(i)]  $\II(W,S',\eta)= \II(W,S,\eta)$;

\item[(ii)]  $\II(W,S',\eta)= (1-t^2)\II(W,S,\eta)$;

\item[(iii)] $\II(W,S',\eta)= \II(W,S,\eta)$;

\item[(iv)]  $\II(W,S',\eta)=(1-t^2) \II(W,S,\eta)$;

\item[(v)]   $\II(W,S',\eta)=\theta(t) \II(W,S,\eta)$ for a universal non-zero polynomial $\theta(t)\in \mathcal{R}$.
\end{itemize}
\end{Proposition}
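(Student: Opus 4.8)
The plan is to reduce all five identities to one local input: the effect on a singular-instanton cobordism map of blowing up a double point of the singular surface, tracked through the local system.

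First I would dispose of (i). An ambient isotopy of $W$ carrying $S$ to $S'$ rel $\partial W$ and fixing $\eta$ also carries along the configuration space, the perturbed Chern--Simons functional, every instanton moduli space, and the holonomy function $\mu_{L'}$ that defines $\Gamma_0,\Gamma_1$; the resulting identification of the local systems is the tautological one, so the chain-level maps agree up to the standard chain homotopy and $\II(W,S',\eta)=\II(W,S,\eta)$, exactly as for $\bC$-coefficients. Concretely I would invoke the $\bC$-coefficient argument and note that every step of it respects the local systems.

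Next I would localize cases (ii)--(v) using the functoriality and composition law for local coefficients established above. In each of these cases $S$ and $S'$ differ only inside a ball $B$ in the interior of $W$ that is disjoint from $\eta$, and after an ambient isotopy of $W$ I may assume $B$ lies in a product collar $[0,1]\times(Y_1,L_1,\omega_1)$ of the outgoing end on which $S$ is a product. Then $(W,S',\eta)$ is the composite of $(W,S,\eta)$ with a model cobordism $(V,P',\eta')\colon(Y_1,L_1,\omega_1)\to(Y_1,L_1,\omega_1)$, where $V=[0,1]\times Y_1$, $\eta'=[0,1]\times\omega_1$, and $P'$ is the product surface $[0,1]\times L_1$ altered inside $B$ by the twist or finger move of the case at hand, so that
\[
\II(W,S',\eta)=\II(V,P',\eta')\circ\II(W,S,\eta).
\]
Since the unmodified product cobordism induces the identity, it remains only to identify each model map $\II(V,P',\eta')$ with multiplication by the asserted element of $\mathcal{R}$.

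Finally I would compute the model maps by resolution. By the definition of the cobordism map for an immersed surface, $\II(V,P',\eta')=\II(\widetilde V,\widetilde{P'},\eta')$, and blowing up $B$ at each double point of $P'$ identifies $(\widetilde V,\widetilde{P'})$ with the result of connect-summing $(V,[0,1]\times L_1)$ with one or two copies of $(\overline{\bC P}^2,\Sigma_i)$ along small $3$-spheres, where each $\Sigma_i$ is an embedded $2$-sphere recording the resolution data. Stretching the neck along these $3$-spheres and applying the gluing formula of \cite[Section 5]{KM:YAFT} and \cite{Kr-ob}, $\II(\widetilde V,\widetilde{P'},\eta')$ becomes multiplication by a universal element of $\mathcal{R}$ that depends only on the local model; running that computation with the local system amounts to weighting each instanton contribution by a power of $t$ that records how the resolution $2$-cycle pairs against the longitude function $\mu_{L'}$ (and, in case (v), against the part of $S$ bounding the non-distinguished components). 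This yields the factor $1-t^2$ for a positive self-double point of $S_0$ and $1$ for a negative one, which are (ii) and (iii); case (iv) follows by running the two-double-point version of the computation, which factors as the product of the positive and the negative contributions, giving $(1-t^2)\cdot 1$; and case (v) produces a universal scalar $\theta(t)\in\mathcal{R}$ by the same mechanism. I expect the main obstacle to be exactly this last step: pushing the local system through the blow-up and neck-stretching carefully enough to pin down the factors $1-t^2$ and $1$, and---in case (v)---to show that $\theta(t)$ is not the zero polynomial, which I would extract from the gluing formula by isolating an extremal-energy contribution that cannot be cancelled. The rest is a formal consequence of functoriality and the composition law.
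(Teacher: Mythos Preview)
Your treatment of (i)--(iv) matches the paper's: (i) is declared trivial, and (ii)--(iv) are taken directly from \cite[Proposition~3.1]{KM-Ras}, whose proof is exactly the blow-up/neck-stretching local analysis you describe. The localization-to-a-product-collar argument you sketch is a standard way to set this up.

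The genuine divergence is in (v), and specifically in showing $\theta(t)\neq 0$. Your proposed route---isolating an extremal-energy contribution in the gluing formula that cannot be cancelled---is plausible in principle but left undeveloped; it would require explicit control over the instanton moduli spaces near the mixed finger move, and it is not clear this is any easier than the original computation of $1-t^2$ in (iv). The paper avoids this entirely with an algebraic trick. It refines the monopole number into two pieces $l_0,l_1$ coming from $S_0$ and $S_1$ separately, and packages the singular Donaldson invariants into a two-variable polynomial $Q(W,S,\eta)(t_0,t_1)$. The gluing argument for a finger move between $S_0$ and $S_1$ then produces a universal two-variable factor $P(t_0,t_1)$, symmetric in $t_0,t_1$, with $\theta(t)=P(t,1)$. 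Collapsing to the diagonal recovers case (iv), so $P(t,t)=1-t^2$. Now if $\theta(t)=P(t,1)$ were zero, then by symmetry also $P(1,t)=0$, so $(t_0-1)(t_1-1)\mid P(t_0,t_1)$, whence $(t-1)^2\mid P(t,t)=1-t^2$, a contradiction. This extracts the non-vanishing of $\theta$ from the already-known case (iv) with no further geometric input, which your approach cannot do without a separate analytic argument.
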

\begin{proof}
Part (i) is trivial. (ii), (iii) and (iv) are from \cite[Proposition 3.1]{KM-Ras}. The proof of (v) is similar to (iv). We first
review Kronheimer and Mrowka's proof of (iv) briefly. For simplicity, consider a special case that $(W,S,\eta)$ 
is closed, thus it can be viewed as a cobordism from the empty set to the empty set. We also assume $S_1=S_1'=\emptyset$.
In this case, 
$$ \II(W,S,\eta)\in \Hom_{\mathcal{R}}(\mathcal{R},\mathcal{R})=\mathcal{R}$$ is the singular Donaldson invariants (from 0-dimensional moduli spaces) 
introduced by Kronheimer and Mrowka in 
\cite{KM:embedded-surfaces-II}. More precisely, we have
\begin{equation}\label{eq_singular_Donaldson_polynomial}
\II(W,S,\eta)=\sum_{k,l} q_{k,l}(W,S,\eta) t^{-l},
\end{equation}
where $q_{k,l}$ denotes the singular Donaldson invariant defined by
counting the number of points in the 0-dimensional moduli spaces over all orbifold bundles
 with instanton number $k$ and monopole number $l$. The restriction 
of an orbifold $\SO(3)$-bundle to $S$
has a reduction to $K\oplus \underline{\bR}$ where $K$ is an $\SO(2)$-bundle. By definition, the monopole number $l$ is given by
$$
l=-\frac12 e(K)[S].
$$
If $(W',S')$ is obtained from $(W,S)$ by a finger move, \cite[Proposition 3.1]{Kr-ob} used a gluing argument to prove that 
\begin{equation}\label{eq_qkl_finger_move}
q_{k,l}(W',S',\eta)= q_{k,l}(W,S,\eta)-q_{k-1,l+2}(W,S,\eta).
\end{equation}
 When $(W,S,\eta)$ is closed and $S_1=S_1'=\emptyset$, Part (iv) follows immediately from \eqref{eq_singular_Donaldson_polynomial} and \eqref{eq_qkl_finger_move}. 
Since the gluing argument only depends on the local structure of the finger move, it is straightforward to extend the argument to the general (relative) case. 

To prove Part (v), we first assume $(W,S,\eta)$ is closed and $S_1\neq \emptyset$. We give a refined definition of the monopole number $l$ by taking
$$
l_0:=-\frac12 e(K)[S_0], ~l_1:=-\frac12 e(K)[S_1].
$$
It is clear from the definition that $l=l_0+l_1$. With this definition, we have a refined singular Donaldson invariant
$q_{k,l_0,l_1}$. 
Similar to \eqref{eq_singular_Donaldson_polynomial}, we define the polynomial 
\begin{equation*}
Q(W,S,\eta)(t_0,t_1):=\sum_{k,l_0,l_1} q_{k,l_0,l_1}(W,S,\eta) t_0^{-l_0}t_1^{-l_1}.
\end{equation*}
If $S'$ is obtained by a finger move that introduces intersection points between $S_0$ and $S_1$, then the proof of \cite[Equation (23)]{Kr-ob}
shows that there exist universal constants $a_{i,j}\in \bZ$ such that
\begin{equation*}
 q_{k,l_0,l_1}(W',S',\eta)=\sum_{2|i+j}a_{i,j} q_{k-\frac{i+j}{2},l_0+i,l_1+j}(W,S,\eta).
\end{equation*}
By the Uhlenbeck compactness theorem, only finitely many $a_{i,j}$'s are non-zero. 
This implies that
\begin{equation}\label{eq_finger_move_Qt0t1}
Q(W',S',\eta)(t_0,t_1)=P(t_0,t_1)Q(W,S,\eta)(t_0,t_1)
\end{equation}
for a universal polynomial $P(t_0,t_1)\in \bC[t_0,t_0^{-1},t_1,t_1^{-1}]$. Notice that 
$$
q_{k,l}=\sum_{l_0+l_1=l}q_{k,l_0,l_1},
$$
therefore \eqref{eq_qkl_finger_move} implies 
\begin{equation}\label{eqn_P(t,t)}
P(t,t)=(1-t^2).
\end{equation}
We also have $P(t_0,t_1)=P(t_1,t_0)$ because there is no difference between the roles of $S_0$ and $S_1$ in the finger move.

We claim that
\begin{equation}\label{eqn_P(1,t)neq_0}
P(t,1)=P(1,t)\neq 0.
\end{equation}
In fact, suppose the contrary, then 
$$
(t_0-1)| P(t_0,t_1),~(t_1-1)|P(t_0,t_1),
$$
thus we have
$$
(t_0-1)(t_1-1)|P(t_0,t_1),
$$
therefore
$$
(t-1)(t-1)|P(t,t),
$$
which contradicts \eqref{eqn_P(t,t)}, hence the claim is proved. 

Now let $\theta(t):=P(t,1)$,
we have
$$
\II(W,S,\eta)(t)=Q(W,S,\eta)(t,1),
$$
therefore in the closed case, Part (v) of the proposition follows from \eqref{eq_finger_move_Qt0t1} and \eqref{eqn_P(1,t)neq_0}. 
By the gluing argument, the same result holds for the non-closed case.
\end{proof}

Suppose $(Y,L_0,\omega)$ is an admissible triple, and let $L_0'$ be a sublink of $L_0$ such that $L_0'\cap \omega = \emptyset$. Fix an orientation of $L_0'$. By the previous discussion, $L_0'$ defines a local system $\Gamma_0$ with coefficient $\mathcal{R}$.
Suppose $L_1$ is obtained from $L_0$ by a local crossing change in $Y-\omega$, where the crossing is either within $L_0'$
or between $L_0'$ and $L_0-L_0'$, and let $\Gamma_1$ be the local system of $(Y,L_1,\omega)$ associated with the image of $L_0'$ after the crossing change. 

The crossing change induces an immersed cobordism 
$S:L_0\to L_1$,
where $S$ is an immersed surface in $ [0,1]\times Y$ with one double point.
Reversing $S$, we obtain an immersed cobordism $\overline{S}:L_1\to L_0$ with one double point. The composition 
$S\cup \overline{S}\subset [0,2]\times Y$ can be obtained from the product cobordism $[0,2]\times L_0$ by a finger move decribed by case (iv) or
case (v) of Proposition \ref{Prop-moves-inst}. Therefore by Proposition \ref{Prop-moves-inst}, the map
\begin{equation}\label{Equation-L0-L0}
\II([0,2]\times Y, S\cup \overline{S}, [0,2]\times\omega):\II(Y,L_0,\omega_0;\Gamma_0)\to \II(Y,L_0,\omega_0;\Gamma_0)
\end{equation}
is equal to $(1-t^2)\id$ or $\theta(t)\id$. Similarly, the map
\begin{equation}\label{Equation-L1-L1}
\II([0,2]\times Y, \overline{S}\cup {S}, [0,2]\times\omega):\II(Y,L_1,\omega;\Gamma_1)\to \II(Y,L_1,\omega;\Gamma_1)
\end{equation}
is equal to $(1-t^2)\id$ or $\theta(t)\id$. As a consequence, we have the following result.
\begin{Proposition}\label{Prop-homotopy-inst}
Suppose $(Y,L_0,\omega)$ is an admissible triple and $L_0'\subset L_0$ is a sublink with $L_0\cap \omega=\emptyset$.
Fix an orientation on $L_0'$ and let $\Gamma_0$ be the local system of $(Y,L_0,\omega)$ defined by $L_0'$. Suppose $L_1'$ is an oriented link that is homotopic to $L_0'$ in $Y-\omega$ and is disjoint from $L_0-L_0'$. Let $L_1 := L_1' \cup  (L_0-L_0')$. Let $\Gamma_1$ be the local system of $(Y,L_1,\omega)$ defined by $L_1'$. Then we have
$$
T^{-1}\II(Y,L_0,\omega;\Gamma_0)\cong T^{-1}\II(Y,L_1,\omega;\Gamma_1),
$$
where $T$ is the multiplicative system generated by $(1-t^2)\theta(t)$.
\end{Proposition}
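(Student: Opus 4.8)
The plan is to reduce to the case in which $L_1$ is obtained from $L_0$ by a single elementary move, namely an ambient isotopy or a local crossing change of one of the two types treated in the discussion preceding the proposition, and then to exploit the "invertibility up to an element of $T$" supplied by \eqref{Equation-L0-L0} and \eqref{Equation-L1-L1}. \emph{Step 1 (decompose the homotopy).} I would put the homotopy $\{H_t\}_{t\in[0,1]}$ from $L_0'$ to $L_1'$ in $Y-\omega$ in general position as a path of maps $\coprod S^1\to Y-\omega$, with $L_0-L_0'$ regarded as a fixed auxiliary link. A generic such path is an embedding for all but finitely many times $t_1<\cdots<t_N$, at each of which $H_{t_k}$ has a single transverse double point, which is either a self-crossing of the moving link or a transverse intersection with $L_0-L_0'$. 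Between consecutive singular times the moving link varies by an isotopy inside $Y-\omega-(L_0-L_0')$, which by isotopy extension comes from an ambient isotopy of $Y$ fixing $\omega\cup(L_0-L_0')$; crossing a singular time changes it by a single crossing change, located either within the moving link or between the moving link and $L_0-L_0'$. This yields a finite chain of oriented links $L_0'=J_0,J_1,\dots,J_N=L_1'$, each disjoint from $\omega$ and from $L_0-L_0'$, so that $M_k:=J_k\cup(L_0-L_0')$ is related to $M_{k-1}$ by one such elementary move. Since each $J_k$ is homotopic to $L_0'$ in $Y-\omega$, its mod $2$ intersection number with any closed surface agrees with that of $L_0'$, so each $(Y,M_k,\omega)$ is again admissible and carries a well-defined local system $\Gamma_k$ associated with $J_k$; it therefore suffices to prove the statement when $L_1$ is obtained from $L_0$ by one elementary move.

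\emph{Step 2 (elementary moves and localization).} An ambient isotopy of $Y$ fixing $(L_0-L_0')\cup\omega$ induces, by functoriality of $\II(\,\cdot\,;\Gamma)$, an honest isomorphism $\II(Y,L_0,\omega;\Gamma_0)\cong\II(Y,L_1,\omega;\Gamma_1)$, and no localization is needed. For a single crossing change we are exactly in the situation set up before the proposition: the crossing change gives an immersed cobordism $(W,S,\eta)$ with $\eta=[0,1]\times\omega$, $S=S_0\sqcup S_1$, one double point, and $\partial S_0=L_0'\cup L_1'$, and its reverse $(W,\overline{S},\eta):L_1\to L_0$ is again of this form. By functoriality under stacking cobordisms, $\II([0,2]\times Y,S\cup\overline{S},[0,2]\times\omega)=\II(W,\overline{S},\eta)\circ\II(W,S,\eta)$, and by \eqref{Equation-L0-L0} this composite equals $c_0(t)\cdot\id$ with $c_0(t)\in\{\,1-t^2,\ \theta(t)\,\}\subset T$; symmetrically, \eqref{Equation-L1-L1} gives $\II(W,S,\eta)\circ\II(W,\overline{S},\eta)=c_1(t)\cdot\id$ with $c_1(t)\in T$. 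Tensoring with $T^{-1}\mathcal{R}$, these relations show that $T^{-1}\II(W,S,\eta)$ is invertible, with inverse $c_0(t)^{-1}\cdot T^{-1}\II(W,\overline{S},\eta)$, hence $T^{-1}\II(Y,L_0,\omega;\Gamma_0)\cong T^{-1}\II(Y,L_1,\omega;\Gamma_1)$. Composing the isomorphisms obtained for the successive moves $M_0\rightsquigarrow M_1\rightsquigarrow\cdots\rightsquigarrow M_N$ produces the asserted isomorphism.

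\emph{Main obstacle.} The only genuinely nontrivial point is Step 1: one must check that a generic homotopy of $L_0'$ in $Y-\omega$ really decomposes into ambient isotopies (rel the fixed sublink $L_0-L_0'$ and $\omega$) and isolated crossing changes of precisely the two types covered by cases (iv) and (v) of Proposition \ref{Prop-moves-inst}, and that admissibility and the attendant local systems persist at every intermediate stage. Once this general-position bookkeeping is in place, Steps 2 and 3 are a direct application of the computations already carried out in the paragraph preceding the proposition, together with the elementary fact that a cobordism map with a two-sided inverse up to multiplication by an element of $T$ becomes an isomorphism after inverting $T$.
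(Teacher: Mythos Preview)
Your proof is correct and follows essentially the same approach as the paper's: reduce to a single crossing change, then use \eqref{Equation-L0-L0} and \eqref{Equation-L1-L1} to see that the cobordism maps become mutually inverse after inverting $T$. The paper dispatches your Step~1 in a single sentence (``the link $L_1$ can be obtained from $L_0$ by a finite sequence of crossing changes\dots''), whereas you spell out the general-position argument and verify that admissibility and the local systems persist at each intermediate stage; this extra care is fine but not required by the paper's standard. (Minor remark: you refer to ``Steps~2 and~3'' at the end, but you only labeled Steps~1 and~2.)
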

\begin{proof}
Since $L_0'$ is homotopic to $L_1'$ in $Y-\omega$, the link $L_1$  can be obtained from $L_0$ by a finite sequence of crossing changes in $Y-\omega$, such that the sublink $L_0-L_0'$ remains fixed.
 Without loss of generality, we may assume that
$L_1$ is obtained from $L_0$ by one such crossing change. Let $S\subset Y\times [0,1]$ be the immersed cobordism from $L_0$ to $L_1$ given by the crossing change, and let $\overline S$ be the reverse of $S$.
By \eqref{Equation-L0-L0}, \eqref{Equation-L1-L1}, and the functoriality of the instanton Floer homology with local coefficients,
we have
$$
\II(Y\times [0,1],\overline{S}, \omega\times [0,1]) \circ \II(Y\times [0,1],S,\omega\times [0,1])= (1-t^2)\id ~\text{or}~ \theta(t)\id
$$
on $\II(Y,L_0,\omega;\Gamma_0)$,
and
$$
\II(Y\times [0,1],{S}, \omega\times [0,1]) \circ \II(Y\times [0,1],\overline{S},\omega\times [0,1])= (1-t^2)\id ~\text{or}~ \theta(t)\id.
$$
on $\II(Y,L_1,\omega;\Gamma_1)$. Therefore $T^{-1}\II(Y,L_0,\omega;\Gamma_0)$ and
$T^{-1}\II(Y,L_1,\omega;\Gamma_1)$ are isomorphic. 
\end{proof}

\begin{Corollary}\label{LocallyFreePart-inst}
Let $Y,\omega,L_0,L_1,\Gamma_0,\Gamma_1$ be as in Proposition \ref{Prop-homotopy-inst}, we have
$$
\rank_{\mathcal{R}}\II(Y,L_0,\omega;\Gamma_0)=\rank_{\mathcal{R}}\II(Y,L_1,\omega;\Gamma_1).\makeatletter\displaymath@qed
$$
\end{Corollary}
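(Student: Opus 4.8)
The plan is to deduce the corollary immediately from Proposition \ref{Prop-homotopy-inst}, using only the elementary fact that the $\mathcal{R}$-rank of a finitely generated module is unchanged under localization at a multiplicative system that does not contain $0$.

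First I would record the algebraic setup. The ring $\mathcal{R}=\mathbb{C}[t,t^{-1}]$, being a localization of $\mathbb{C}[t]$, is a principal ideal domain, so every finitely generated $\mathcal{R}$-module $M$ has a well-defined rank, namely $\rank_{\mathcal{R}}M=\dim_F\bigl(M\otimes_{\mathcal{R}}F\bigr)$, where $F=\mathbb{C}(t)$ is the fraction field (this uses that $F$ is flat over $\mathcal{R}$ and kills torsion). Since the Floer chain complex $\mathbf{C}^\mu$ is a free $\mathcal{R}$-module of finite rank — one summand $\Gamma^\mu_\alpha$ for each of the finitely many critical points of $CS$ — the homology groups $\II(Y,L_i,\omega;\Gamma_i)$ for $i=0,1$ are finitely generated $\mathcal{R}$-modules, so their ranks are defined.

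Next I would prove the localization fact: if $T\subset\mathcal{R}$ is a multiplicative system with $0\notin T$ and $M$ is a finitely generated $\mathcal{R}$-module, then $\rank_{T^{-1}\mathcal{R}}\bigl(T^{-1}M\bigr)=\rank_{\mathcal{R}}M$. Indeed, $T^{-1}\mathcal{R}$ is again a domain whose fraction field is still $F$, and flatness of localization gives $\bigl(T^{-1}M\bigr)\otimes_{T^{-1}\mathcal{R}}F\cong M\otimes_{\mathcal{R}}F$; hence both ranks equal $\dim_F\bigl(M\otimes_{\mathcal{R}}F\bigr)$.

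Finally I would apply these observations with $T$ the multiplicative system generated by $(1-t^2)\theta(t)$. By Proposition \ref{Prop-moves-inst}(v), $\theta(t)$ is a non-zero element of $\mathcal{R}$, so $(1-t^2)\theta(t)\neq 0$ and $0\notin T$. Proposition \ref{Prop-homotopy-inst} supplies an isomorphism of $T^{-1}\mathcal{R}$-modules $T^{-1}\II(Y,L_0,\omega;\Gamma_0)\cong T^{-1}\II(Y,L_1,\omega;\Gamma_1)$; taking $T^{-1}\mathcal{R}$-ranks of both sides and invoking the localization fact yields $\rank_{\mathcal{R}}\II(Y,L_0,\omega;\Gamma_0)=\rank_{\mathcal{R}}\II(Y,L_1,\omega;\Gamma_1)$. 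I do not expect any substantive obstacle here: all of the geometric content has already been absorbed into Proposition \ref{Prop-homotopy-inst}, and the only thing to check is that passing to the localization $T^{-1}\mathcal{R}$ does not change the rank of a finitely generated module.
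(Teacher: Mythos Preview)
Your proof is correct and matches the paper's intended argument: the paper simply records the corollary with a QED box and no further details, treating the passage from the localization isomorphism of Proposition~\ref{Prop-homotopy-inst} to equality of $\mathcal{R}$-ranks as immediate. Your write-up supplies exactly the routine justification (rank is computed over the fraction field $F=\mathbb{C}(t)$, which is also the fraction field of $T^{-1}\mathcal{R}$ since $0\notin T$) that the paper leaves implicit.
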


Given an oriented link $L$ in $S^1\times D^2$, we define the annular instanton Floer homology with local coefficients by
$$
\AHI(L;\Gamma):=\II(S^1\times S^2,L\cup \mathcal{K}_2,u;\Gamma),
$$
where $\Gamma$ is the local system associated with $L$. The operator $\muu(S^2)$ on $\AHI(L;\Gamma)$ is now an $\mathcal{R}$-module
homomorphism instead of a $\bC$-linear map, therefore $\AHI(L;\Gamma)$ no longer carries the f-grading. 
The torus excision theorem (\cite[Theorem 5.6]{KM:Kh-unknot}) still holds for instanton Floer homology with local coefficients, as long as the exicion surface is disjoint from the sublink defining the local system. Therefore,
Proposition \ref{product-formula} still holds for the annular instanton Floer homology with local coefficients, except that there is no f-grading anymore. 

\begin{eg}\label{eg-Uk-Kl-Gamma}
By Example \ref{eg-Uk-Kl}, the critical points of the (perturbed) Chern-Simons functional for 
$\AHI(\mathcal{U}_1)$ (or $\AHI(\mathcal{K}_1)$) consist of two points whose homological degrees differ by $2$. Therefore there are no differentials in the Floer chain complex, and we
have
$$
\AHI(\mathcal{U}_1;\Gamma)\cong \mathcal{R}\oplus \mathcal{R}, ~ \AHI(\mathcal{K}_1;\Gamma)\cong \mathcal{R}\oplus \mathcal{R}.
$$
By Proposition \ref{product-formula}, we have
$$
\AHI(\mathcal{U}_k\cup\mathcal{K}_l;\Gamma)\cong \mathcal{R}^{2^{k+l}}.
$$
\end{eg}

Proposition \ref{Prop-AHI=I^natural} follows from the torus excision theorem, therefore it also works in the case with local coefficients.  
Let $L\subset S^1\times D^2$ be a link with $n$ components and view $S^1\times D^2$ as the complement of a neighborhood of the unknot $U$ in $S^3$, let $p\in U$ and let $\Gamma_L$ be the local system associated with $L$, we have
$$
\AHI(L;\Gamma)\cong \II^\natural (L\cup U,p;\Gamma_L).
$$
Suppose the annular link $L$ has $n$ components, then the embedded image of $L$ in $S^3$ is homotopic to the embedded  image of $\mathcal{U}_n$ in $S^3$.
By Corollary \ref{LocallyFreePart-inst}, we have
$$
\rank_{\mathcal{R}}\II^\natural (L\cup U,p;\Gamma_L)=\rank_{\mathcal{R}}\II^\natural (\mathcal{U}_n\cup U,p;\Gamma_{\mathcal{U}_n}).
$$
By Proposition \ref{Prop-AHI=I^natural} again, we have
$$
\II^\natural (U_n\cup U,p;\Gamma)\cong \AHI(\mathcal{U}_n;\Gamma)\cong \mathcal{R}^{ 2^n}.
$$
In conclusion, we obtain
\begin{equation}
\rank_{\mathcal{R}}\AHI(L;\Gamma)=2^n.
\end{equation}
By the universal coefficient theorem, we have
\begin{equation}\label{eq-rank_lower_bound}
\rank_{\mathbb{C}}\II^\natural (L\cup U,p)  = \rank_{\mathbb{C}}\AHI(L)\ge \rank_{\mathcal{R}}\AHI(L;\Gamma)=2^n.
\end{equation}

\section{Limits of chain complexes}
This section discusses a simple observation from linear algebra and its consequences in instanton Floer homology.

Suppose $\{C_n\}_{n\in\bZ}$ is a sequence of finite dimensional complex vector spaces. For each $n\in \bZ$ and $k\ge 0$, let $\partial_n^{(k)}$ be a $\bC$-linear map from $C_n$ to $C_{n-1}$, let $f_n^{(k)}$ be an endomorphism of $C_n$. Suppose for each pair $(n,k)$, we have $\partial_n^{(k)}\circ \partial_{n+1}^{(k)} = 0$ and $\partial_n^{(k)}\circ f_n^{(k)} = f_{n-1}^{(k)} \circ \partial_{n}^{(k)}$. Moreover, suppose for each $n$, the limits 
$$\lim_{k\to\infty}f_n^{(k)}\text{ and }\lim_{k\to\infty} \partial_n^{(k)}$$
are convergent. Let 
\begin{align*}
\partial_n:= \lim_{k\to\infty} \partial_n^{(k)}, 
\qquad & 
f_n:= \lim_{k\to\infty} f_n^{(k)},
\\
H_n^{(k)}:=\ker \partial_n^{(k)} / \Imm\partial_{n+1}^{(k)}, 
\qquad & 
H_n:=\ker \partial_n/\Imm\partial_{n+1}. 
\end{align*}
The maps $f_n^{(k)}$ and $f_n$ induce maps on $H_n^{(k)}$ and $H_n$ respectively. For $\Lambda\subset\bC$, define $E_{n,\Lambda}^{(k)}\subset H_{n}^{(k)}$ to be the direct sum of the generalized eigenspaces of $f_n^{(k)}$ with eigenvalues in $\Lambda$. Similarly, define $E_{n,\Lambda}\subset H_{n}$ to be the direct sum of the generalized eigenspaces of $f_n$ with eigenvalues in $\Lambda$. 
\begin{Lemma}\label{chain-complex-limit}
Let $C_n$, $\partial_n^{(k)}$, $\partial_n$, $f_n^{(k)}$, $f_n$, $H_n^{(k)}$, $H_n$, $E_{n,\Lambda}^{(k)}$, and $E_{n,\Lambda}$  be as above.
\begin{enumerate}
\item If $\Lambda\subset \bC$ is a closed subset, then
$$
\dim E_{n,\Lambda} \ge \limsup_{k\to\infty} \dim E_{n,\Lambda}^{(k)}.
$$
\item For $\epsilon>0$, let $N(\Lambda,\epsilon)$ be the closed $\epsilon$-neighborhood of $\Lambda$. If $\Lambda$ is closed, then
$$
\dim E_{n,\Lambda} \ge \lim_{\epsilon\to 0}\limsup_{k\to\infty} \dim E_{n,N(\Lambda,\epsilon)}^{(k)}.
$$
\item
If we further assume that $\dim H_n^{(k)} = \dim H_n$ for all $k$, then 
$$
\dim E_{n,\Lambda} = \lim_{\epsilon\to 0}\limsup_{k\to\infty} \dim E_{n,N(\Lambda,\epsilon)}^{(k)}.
$$
\end{enumerate}

\end{Lemma}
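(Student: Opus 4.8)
The plan is to reduce everything to two standard facts about finite-dimensional spaces: (a) the dimension of the kernel of a matrix, and more generally the dimension of a generalized eigenspace, is lower semicontinuous in the matrix entries when we allow the eigenvalue set to be an open (or slightly enlarged closed) neighborhood, and upper semicontinuous when the eigenvalue set is closed; and (b) passing to the limit commutes with restriction to a subcomplex spanned by generalized eigenspaces, because $f_n^{(k)}$ is a chain map. First I would fix $n$ and, for each $k$, decompose $C_n$ (and $C_{n\pm1}$) according to the generalized eigenspaces of $f_n^{(k)}$; because $\partial^{(k)}$ commutes with $f^{(k)}$, the subcomplex supported on eigenvalues in $\Lambda$ (or in $N(\Lambda,\epsilon)$) is a genuine subcomplex, and $E_{n,\Lambda}^{(k)}$ is its homology in degree $n$. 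So $\dim E_{n,\Lambda}^{(k)} = \dim\ker(\partial_n^{(k)}|) - \dim\Imm(\partial_{n+1}^{(k)}|)$, where the bar denotes restriction to the relevant eigenspace summand.

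For part (1): let $\Lambda$ be closed. The total space $\bigoplus$ of generalized eigenspaces with eigenvalue in $\Lambda$ has dimension that is upper semicontinuous as $f_n^{(k)}\to f_n$ — eigenvalues can only collide or escape $\Lambda$ in the limit, not appear inside a closed set from outside — so $\dim(\text{eigenspace sum for }\Lambda\text{ of }f_n)\ge\limsup_k\dim(\text{eigenspace sum for }\Lambda\text{ of }f_n^{(k)})$ on each of $C_{n-1},C_n,C_{n+1}$. Then I would use that rank is lower semicontinuous: $\dim\Imm\partial_{n+1}\le\liminf\dim\Imm\partial_{n+1}^{(k)}$ after restricting to eigenspaces, but this needs care because the restriction depends on $k$. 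The cleaner route is: project $C_\bullet$ onto the $\Lambda$-eigenspace summand of $f_\bullet^{(k)}$ via the Riesz spectral projector $\pi_\bullet^{(k)}$, a polynomial in $f_\bullet^{(k)}$ given by a contour integral around $\Lambda$ (or around a finite set of points, since $C_n$ is finite-dimensional the relevant eigenvalues form a finite set). These projectors need not converge because eigenvalues may merge; this is exactly why parts (2) and (3) introduce the $\epsilon$-neighborhood. So for part (1) I would argue directly: $E_{n,\Lambda}^{(k)}$ is the homology of the complex $(\pi^{(k)}C_\bullet,\partial^{(k)})$, a subquotient structure; take any convergent subsequence of the $\pi^{(k)}$ (possible by compactness of the set of projectors of bounded rank) with limit $\pi_\infty$, which is a projector commuting with $f_n$ and with image inside the $\Lambda$-eigenspace of $f_n$ plus possibly the limiting merged eigenvalues — but since $\Lambda$ is closed those limits still lie in $\Lambda$. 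Then $\dim E_{n,\Lambda}^{(k)}\to\dim H_n(\pi_\infty C_\bullet,\partial)$ along the subsequence, wait — dimension of homology is not continuous. So instead: $\dim E_{n,\Lambda}^{(k)}=\dim\ker(\partial_n|)-\dim\Imm(\partial_{n+1}|)$ and I bound this by $\dim\ker(\partial_n|)\le\dim(\pi_n^{(k)}C_n)$, which does not immediately help. The correct clean argument: use that $E_{n,\Lambda}^{(k)}$ embeds (non-canonically) as a subspace of $\ker\partial_n^{(k)}$ intersected with the $\Lambda$-eigenspace, and the generalized eigenspace sum of $f_n$ on $H_n$ receives all limiting directions; I will spell this out via choosing a basis adapted to the Jordan form and taking limits of the chosen homology representatives.

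For part (2): replace $\Lambda$ by $N(\Lambda,\epsilon)$, whose boundary can be chosen (for generic small $\epsilon$) to avoid all eigenvalues of $f_n$; then the spectral projector $\pi_n^\epsilon(f)$ of $f$ for $N(\Lambda,\epsilon)$ is continuous near $f_n$, so $\pi_n^{\epsilon,(k)}\to\pi_n^\epsilon$, the subcomplexes converge, and — here is the one genuinely careful point — for a convergent sequence of chain complexes on a fixed finite-dimensional space, homology dimension is upper semicontinuous, giving $\dim H_n(\pi^\epsilon C_\bullet,\partial)\ge\limsup_k\dim E_{n,N(\Lambda,\epsilon)}^{(k)}$. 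Since $N(\Lambda,\epsilon)\downarrow\Lambda$, the left side is $\dim E_{n,N(\Lambda,\epsilon)}\ge\dim E_{n,\Lambda}$ for all $\epsilon$, and letting $\epsilon\to0$ (the eigenvalues of $f_n$ outside $\Lambda$ are a finite set bounded away from $\Lambda$) gives exactly the claim. Then part (1) follows from part (2) since $E_{n,N(\Lambda,\epsilon)}^{(k)}\supseteq E_{n,\Lambda}^{(k)}$.

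For part (3): the extra hypothesis $\dim H_n^{(k)}=\dim H_n$ forces the inequality to be an equality, by a conservation-of-dimension count. Write $H_n^{(k)}=\bigoplus_{\lambda}E_{n,\{\lambda\}}^{(k)}$ as $f_n^{(k)}$-generalized eigenspaces; as $k\to\infty$, by part (2) applied to each closed piece, no eigenspace mass is created, and $\sum_\lambda\dim E_{n,\{\lambda\}}^{(k)}=\dim H_n^{(k)}=\dim H_n=\sum_\mu\dim E_{n,\{\mu\}}$ is constant. If strictly more than $\limsup_k\dim E_{n,N(\Lambda,\epsilon)}^{(k)}$ dimensions landed in $E_{n,\Lambda}$ in the limit, then strictly fewer than the complementary amount landed in the complement $E_{n,\bC\setminus N(\Lambda,\epsilon)}$; but the complement is also (eventually, for the right $\epsilon$) a closed set to which part (2) applies, giving $\dim E_{n,\overline{\bC\setminus N(\Lambda,\epsilon)}}\ge\limsup_k\dim E_{n,\bC\setminus N(\Lambda,\epsilon)}^{(k)}$; adding the two inequalities and using that both sides sum to the constant $\dim H_n$ forces equality in each. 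Taking $\epsilon\to0$ finishes it.

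\textbf{Main obstacle.} The subtle point throughout is that spectral projectors do not converge when eigenvalues collide, so one cannot naively "take the limit of the subcomplex"; the $\epsilon$-enlargement in parts (2) and (3) is precisely the device that restores continuity of the projectors (choosing $\epsilon$ so $\partial N(\Lambda,\epsilon)$ misses $\mathrm{spec}(f_n)$), and the only real work is the upper semicontinuity of homology dimension under limits of differentials on a fixed finite-dimensional graded vector space — which itself is the elementary statement $\dim\ker\partial_n\ge\limsup\dim\ker\partial_n^{(k)}$ combined with $\dim\Imm\partial_{n+1}\le\liminf\dim\Imm\partial_{n+1}^{(k)}$ (rank is lower semicontinuous), so $\dim H_n\ge\limsup(\dim\ker\partial_n^{(k)}-\dim\Imm\partial_{n+1}^{(k)})=\limsup\dim H_n^{(k)}$.
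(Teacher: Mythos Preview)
Your strategy is sound and ultimately correct, but it is organized quite differently from the paper. You prove Part (2) first, using chain-level Riesz spectral projectors: for generic $\epsilon$ the contour $\partial N(\Lambda,\epsilon)$ misses $\mathrm{spec}(f_n|_{C_n})$, so the projectors $\pi^{(k),\epsilon}$ converge and the resulting subcomplexes converge; then upper semicontinuity of homology gives the bound, and Part (1) follows by shrinking $\epsilon$. The paper instead proves Part (1) directly and more elementarily: along a subsequence with $\dim\ker\partial_n^{(k)}$ and $\dim\Imm\partial_{n+1}^{(k)}$ constant, these subspaces converge in the Grassmannian, and the key observation is that $\lim_k \ker\partial_n^{(k)}\subset \ker\partial_n$ while $\lim_k \Imm\partial_{n+1}^{(k)}\supset \Imm\partial_{n+1}$, so the quotient $(\lim Z^{(k)})/(\lim B^{(k)})$ is a subquotient of $H_n$ carrying the limiting spectrum of $f_n^{(k)}$. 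Part (2) then follows trivially from Part (1). The paper's route avoids the Riesz calculus and the (mildly delicate) identification of varying subcomplexes with a fixed model, at the cost of needing the subquotient observation; your route is more functorial in flavor but demands a bit more machinery.

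For Part (3) both arguments are the same conservation count. One small point you gloss over: to get $\dim E_{n,N(\Lambda,\epsilon)}^{(k)}+\dim E_{n,\overline{\bC\setminus N(\Lambda,\epsilon)}}^{(k)}=\dim H_n^{(k)}$ for large $k$, you need that no eigenvalue of $f_n^{(k)}$ on $H_n^{(k)}$ lies on $\partial N(\Lambda,\epsilon)$. In your framework this is automatic (homology eigenvalues are a subset of chain-level eigenvalues of $f_n^{(k)}$ on $C_n$, which converge to those of $f_n$ and hence miss the chosen contour for large $k$), but it is worth saying. The paper handles the same point by applying Part (1) to the closed set $\partial N(\Lambda,\epsilon)$ itself, obtaining $\dim E_{n,\partial N(\Lambda,\epsilon)}^{(k)}=0$ eventually.
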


\begin{proof}
(1)
Let $Z_n:= \ker\partial_n^{(k)}$, $B_{n}^{(k)} := \Imm \partial_{n+1}^{(k)}$, $Z_n := \ker \partial_n$,  $B_n:=\Imm \partial_{n+1}$. 
Suppose there exists a closed set $\Lambda\subset \bC$ such that the statement of Part (1) does not hold.
After taking a subsequence, we may assume that the dimensions of $Z_n^{(k)}$ and $B_n^{(k)}$ are independent of $k$, and that they are convergent in the corresponding Grassmannians as $k\to\infty$.  
The spectrum of $f_n$ (with multiplicities) on 
\begin{equation}
\label{eqn_quotient_of_limits}
(\lim_{k\to\infty} Z_{n}^{(k)})/(\lim_{k\to\infty} B_{n}^{(k)})
\end{equation}
is the limit of the spectra of $f_n^{(k)}$ on $Z_{n}^{(k)}/B_{n}^{(k)}$ as $k\to \infty$. Let 
$$E_{n,\Lambda}'\subset (\lim_{k\to\infty} Z_{n}^{(k)})/(\lim_{k\to\infty} B_{n}^{(k)})$$
  be the direct sum of the generalized eigenspaces of $f_n^{(k)}$ for the eigenvalues in $\Lambda$. Since $\Lambda$ is closed, the previous argument implies
  $$
  \dim E_{n,\Lambda}' \ge \limsup_{k\to\infty} \dim E_{n,\Lambda}^{(k)}.
  $$  
  On the other hand, we have
\begin{align*}
Z_{n} &\supset \lim_{k\to\infty} Z_{n}^{(k)},
\\
B_{n} &\subset \lim_{k\to\infty} B_{n}^{(k)},
\end{align*}
therefore \eqref{eqn_quotient_of_limits} is a sub-quotient of $H_n = Z_n/B_n$, hence 
$$\dim E_{n,\Lambda} \ge \dim E_{n,\Lambda}' \ge \limsup_{k\to\infty} \dim E_{n,\Lambda}^{(k)},$$
 contradicting the assumption.

(2) Let $\epsilon_0$ be sufficiently small such that
$E_{n,N(\Lambda,\epsilon_0)} = E_{n,\Lambda}$. By Part (1), we have
\begin{align*}
& \dim E_{n,\Lambda} = \dim E_{n,N(\Lambda,\epsilon_0)} 
\\
\ge 
&  \limsup_{k\to\infty} \dim E_{n,N(\Lambda,\epsilon_0)}^{(k)}
\ge \lim_{\epsilon\to 0} \limsup_{k\to\infty} \dim E_{n,N(\Lambda,\epsilon)}^{(k)}.
\end{align*}

(3) Let $\epsilon_0$ be sufficiently small such that
$E_{n,N(\Lambda,\epsilon_0)} = E_{n,\Lambda}$. Suppose $\epsilon<\epsilon_0$, then $E_{n,N(\Lambda,\epsilon)} = E_{n,\Lambda}$. Let $\Lambda_1 := \overline{\bC-N(\Lambda,\epsilon)}$. By the condition on $\epsilon$, 
$$
E_{n,\partial N(\Lambda,\epsilon)} = E_{n,\partial \Lambda_1} = \{0\}.
$$
Hence by Part (1), for $k$ sufficiently large we have
$$
\dim E_{n,\partial N(\Lambda,\epsilon)}^{(k)}=\dim E_{n,\partial \Lambda_1}^{(k)}=0.
$$ 
Apply Part (1) again on $N(\Lambda,\epsilon)$ and $\Lambda_1$, we deduce that if $k$ is sufficiently large, then $\dim E_{n, N(\Lambda,\epsilon)} \ge \dim E_{n, N(\Lambda,\epsilon)}^{(k)}$, $\dim E_{n,\Lambda_1} \ge \dim E_{n,\Lambda_1}^{(k)}$. Therefore
\begin{align*}
\dim H_n & = \dim E_{n, N(\Lambda,\epsilon)} + \dim E_{n,\Lambda_1} 
\\
& \ge 
\dim E_{n, N(\Lambda,\epsilon)}^{(k)} + \dim E_{n,\Lambda_1}^{(k)}
\\
& = \dim H_n^{(k)}= \dim H_n.
\end{align*}
As a consequence, for $k$ sufficiently large,
$\dim E_{n, N(\Lambda,\epsilon)} = \dim E_{n, N(\Lambda,\epsilon)}^{(k)}$, hence 
$$
\dim E_{n,\Lambda} = \dim E_{n, N(\Lambda,\epsilon)} = \limsup_{k\to\infty} \dim E_{n,N(\Lambda,\epsilon)}^{(k)}.
$$
Since the above equation holds for all $\epsilon<\epsilon_0$, Part (3) of the lemma is proved.
\end{proof}

Recall that given an admissible triple $(Y,L,\omega)$ and a continuous function $\mu:\mathcal{B}(Y,L,\omega)\to \bR/\bZ$, there is a local system $\Gamma^\mu$ on $\mathcal{B}(Y,L,\omega)$ defined by $\mu$. The Floer chain complex 
$\mathbf{C}^\mu$ is a finitely generated free $\mathcal{R}$-module, where $\mathcal{R} = \bC[t,t^{-1}]$. The differential $D$ is an $\mathcal{R}$-endomorphism of $\mathbf{C}^\mu$.
For $h\in\bC-\{0\}$, define 
$$
(C_h,d_h):=(\mathbf{C}^\mu\otimes_\mathcal{R} \mathcal{R}/(t-h), D\otimes_\mathcal{R} \id_{\mathcal{R}/(t-h)} ).
$$
Notice that $\mathcal{R}/(t-h) \cong \bC$ via the map $t\mapsto h$, hence $(C_h,d_h)$ is a finite dimensional chain complex over $\bC$. Let $C:=\bC^{\rank_\mathcal{R}\mathbf{C}^\mu}$, we identify $C_h$ with $C$ using the above isomorphism. The differentials $d_h$ become a continuous family of linear maps on $C$.
Given an embedded surface $F\subset Y$,
define
$$
\muu(F)_h:=\muu(F)\otimes_{\mathcal{R}} \mathcal{R}/(t-h),
$$
then $\muu(F)_h$ is continuous with respect to $h$ and is a chain map on $(C,d_h)$. 
Therefore, for each $h\in \mathbb{C}-\{0\}$, the map $\muu(F)_h$ induces a map on the Floer homology 
$$
\II(Y,L,\omega;\Gamma^\mu\otimes_{\mathcal{R}}\mathcal{R}/(t-h))=H^\ast(C,d_h).
$$
To simplify notations, we will use $\Gamma^\mu(h)$ to denote $\Gamma^\mu\otimes_{\mathcal{R}}\mathcal{R}/(t-h)$ for the rest of this article. 
If $h=1$, then $\II(Y,L,\omega;\Gamma^\mu(1))$ is the ordinary instanton Floer homology without local coefficients, and $\muu(F)_1$ coincides with the ordinary $\mu$ map.

\begin{Proposition}\label{Prop-Gammah-homotopy_invariance}
Let $Y,\omega,L_0,L_1,L_0',L_1',\Gamma_0,\Gamma_1$ be as in Proposition \ref{Prop-homotopy-inst}. Let $\theta(t)$ be the polynomial given by Part (v) of Proposition \ref{Prop-moves-inst}. Suppose $h\in \bC-\{0\}$ satisfies 
\begin{equation}\label{eq_root_O}
(1-h^2)\theta(h)\neq 0,
\end{equation}
then we have
\begin{equation}\label{eqn_Gammah-homotopy_invariance}
\II(Y,L_0,\omega;\Gamma_0(h))\cong \II(Y,L_1,\omega;\Gamma_1(h)).
\end{equation}
Moreover, if $F\subset Y$ is a closed embedded surface in $Y$, then the isomorphism \eqref{eqn_Gammah-homotopy_invariance} intertwines with $\muu(F)_h$.
\end{Proposition}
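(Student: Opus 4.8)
The plan is to revisit the crossing-change argument behind Proposition \ref{Prop-homotopy-inst}, but to keep track of chain-level data so that it survives the specialization $t=h$ set up at the beginning of this section. As in the proof of Proposition \ref{Prop-homotopy-inst} we may assume that $L_1$ is obtained from $L_0$ by a single crossing change inside a ball $B\subset Y-\omega$, with the crossing lying within $L_0'$ or between $L_0'$ and $L_0-L_0'$, and with $L_0-L_0'$ fixed. Let $S\colon L_0\to L_1$ be the immersed cobordism in $Y\times[0,1]$ with one double point determined by the crossing change, and $\overline S\colon L_1\to L_0$ its reverse. Write $\mathbf C_0$ and $\mathbf C_1$ for the free $\mathcal R$-complexes computing $\II(Y,L_0,\omega;\Gamma_0)$ and $\II(Y,L_1,\omega;\Gamma_1)$; the local-coefficient cobordism maps are induced by $\mathcal R$-chain maps $\phi_S\colon\mathbf C_0\to\mathbf C_1$ and $\phi_{\overline S}\colon\mathbf C_1\to\mathbf C_0$, and for $h\in\bC-\{0\}$ we write $(\phi_S)_h:=\phi_S\otimes_{\mathcal R}\mathcal R/(t-h)$, a chain map between the specialized complexes computing $\II(Y,L_0,\omega;\Gamma_0(h))$ and $\II(Y,L_1,\omega;\Gamma_1(h))$.

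By the composition law for cobordism maps, $\phi_{\overline S}\circ\phi_S$ is chain homotopic to the map induced by the composite cobordism $S\cup\overline S\colon L_0\to L_0$ of \eqref{Equation-L0-L0}, which --- exactly as in the proof of Proposition \ref{Prop-homotopy-inst} --- is obtained from the product cobordism $[0,2]\times L_0$ by a finger move of type (iv) or (v) of Proposition \ref{Prop-moves-inst}; similarly $\phi_S\circ\phi_{\overline S}$ is chain homotopic to the map induced by $\overline S\cup S\colon L_1\to L_1$ of \eqref{Equation-L1-L1}. Hence there are chain homotopies
\[
\phi_{\overline S}\circ\phi_S\ \simeq\ c(t)\cdot\id_{\mathbf C_0},
\qquad
\phi_S\circ\phi_{\overline S}\ \simeq\ c'(t)\cdot\id_{\mathbf C_1},
\]
where each of $c(t),c'(t)$ is $1-t^2$ or $\theta(t)$. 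Tensoring these chain homotopies with $\mathcal R/(t-h)$ gives chain homotopies $(\phi_{\overline S})_h\circ(\phi_S)_h\simeq c(h)\cdot\id$ and $(\phi_S)_h\circ(\phi_{\overline S})_h\simeq c'(h)\cdot\id$. The hypothesis $(1-h^2)\theta(h)\neq 0$ ensures $c(h)\neq 0$ and $c'(h)\neq 0$, so on homology the induced maps $(\phi_{\overline S})_h\circ(\phi_S)_h$ and $(\phi_S)_h\circ(\phi_{\overline S})_h$ are multiplication by nonzero scalars; therefore $(\phi_S)_h$ induces an isomorphism on homology, which is the isomorphism \eqref{eqn_Gammah-homotopy_invariance}.

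It remains to check the compatibility with $\muu(F)_h$. The operator $\muu(F)$ is realized on $\mathbf C_0$ and $\mathbf C_1$ by $\mathcal R$-chain maps, with $\muu(F)_h$ the corresponding specializations. After an isotopy of $F$ inside $Y$ we may take $F$ disjoint from the ball $B$, so that $F\times[0,1]\subset Y\times[0,1]$ lies in the crossing-change cobordism, restricts to $F$ at both ends, and meets the singular surface $S$ only along $(F\cap L_0)\times[0,1]$. The standard naturality of the $\muu$-operator under cobordism maps (with $F$ carried through the cobordism by $F\times[0,1]$) then gives $\phi_S\circ\muu(F)\simeq\muu(F)\circ\phi_S$ as $\mathcal R$-chain maps; specializing at $t=h$ and passing to homology shows that the isomorphism induced by $(\phi_S)_h$ intertwines $\muu(F)_h$ on the two sides. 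Finally, the general case in which $L_1$ is reached from $L_0$ by a finite sequence of crossing changes follows by composing the isomorphisms obtained above, each of which intertwines $\muu(F)_h$.

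The step I expect to require the most care is the assertion that the finger-move identity of Proposition \ref{Prop-moves-inst} and the naturality of $\muu$ hold at the level of chain maps up to chain homotopy, and not merely on homology. This refinement is genuinely needed, since $\II(Y,L_i,\omega;\Gamma_i(h))$ is the homology of a base-changed complex rather than the base change of the homology (a $\Tor$ term could otherwise intervene); but it should be routine, because the cobordism and $\muu$ constructions of this section take place on the free complexes $\mathbf C^\mu$, and the finger-move identity comes from a purely local gluing of moduli spaces that applies verbatim to chain-level representatives.
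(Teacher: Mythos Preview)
Your argument is sound in outline but takes a genuinely different route from the paper. The paper never upgrades Proposition \ref{Prop-moves-inst} to the chain level. Instead it works purely with homology: since the elements of the multiplicative system $T$ generated by $(1-t^2)\theta(t)$ map to units in $\mathcal R/(t-h)$, one has $\mathcal R/(t-h)\cong (T^{-1}\mathcal R)/(t-h)$, and localization being exact gives
\[
\II(Y,L_i,\omega;\Gamma_i\otimes_{\mathcal R}T^{-1}\mathcal R)\ \cong\ T^{-1}\II(Y,L_i,\omega;\Gamma_i).
\]
Proposition \ref{Prop-homotopy-inst} then identifies the two sides for $i=0,1$, and because $T^{-1}\mathcal R$ is still a PID, the universal coefficient theorem transports this isomorphism to the specialization $\Gamma_i(h)$. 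The intertwining with $\muu(F)_h$ is then just the observation that the cobordism inducing the isomorphism carries $F$ to a homologous copy of itself, so the induced map already commutes with $\muu(F)$ on homology.

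The trade-off is this: your approach is conceptually more direct and gives an explicit isomorphism (namely the one induced by $(\phi_S)_h$), but it rests on the chain-level refinement you flag at the end, which is not proved in the paper and, while plausible, involves revisiting the blow-up and gluing arguments behind Proposition \ref{Prop-moves-inst}. The paper's localization-plus-UCT argument sidesteps this entirely: it needs only the homology-level statements already established, at the cost of a short detour through $T^{-1}\mathcal R$. If you want your proof to be self-contained given only what the paper proves, you would have to either supply that chain-level upgrade or adopt the paper's algebraic route.
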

\begin{proof}
Let $T\subset\mathcal{R}$ be the multiplicative system generated by $(1-t^2)\theta(t)$ as in Proposition \ref{Prop-homotopy-inst}.
By \eqref{eq_root_O}, the elements of $T$ have non-zero images in $\mathcal{R}/(t-h)\cong \bC$, hence $\mathcal{R}/(t-h)$ is isomorphic to $(T^{-1}\mathcal{R})/(t-h)$. Therefore, for $i=0,1$, we have
\begin{equation}\label{eqn_change_of_coef_by_tensor_prod}
\II(Y,L_i,\omega;\Gamma_i(h))\cong\II(Y,L_i,\omega;\Gamma_i\otimes_\mathcal{R} T^{-1}\mathcal{R}\otimes_{T^{-1}\mathcal{R}} T^{-1}\mathcal{R}/(t-h)  ).
\end{equation}
On the other hand, since localization is an exact functor, we have
$$
\II(Y,L_i,\omega;\Gamma_i\otimes_\mathcal{R} T^{-1}\mathcal{R})\cong T^{-1}\II(Y,L_i,\omega;\Gamma_i).
$$
Therefore by Proposition \ref{Prop-homotopy-inst},
\begin{equation}\label{eqn_inst_Floer_iso_local_coef_tensor_T^-1}
\II(Y,L_0,\omega;\Gamma_0\otimes_\mathcal{R} T^{-1}\mathcal{R})\cong \II(Y,L_1,\omega;\Gamma_1\otimes_\mathcal{R} T^{-1}\mathcal{R}).
\end{equation}
Since $\mathcal{R}$ is a principal ideal domain, the localization $T^{-1}\mathcal{R}$ is also a principal ideal domain, hence \eqref{eqn_Gammah-homotopy_invariance} follows from the universal coefficient theorem and the isomorphisms \eqref{eqn_change_of_coef_by_tensor_prod} and \eqref{eqn_inst_Floer_iso_local_coef_tensor_T^-1}.

It remains to prove that \eqref{eqn_Gammah-homotopy_invariance} intertwines with with $\muu(F)_h$. Since the isomorphism \eqref{eqn_inst_Floer_iso_local_coef_tensor_T^-1} is induced by a cobordism in which the two copies of the surface $F$ on the two ends are homologous, it intertwines the $\muu(F)_h$ on the in-coming end with the
$\muu(F)_h$ on the out-going end, hence the statement is proved.
\end{proof}

 Lemma \ref{chain-complex-limit} and Proposition \ref{Prop-Gammah-homotopy_invariance} have the following application.  

\begin{Proposition}\label{Prop-AHI-rank-inequality}
Suppose $L\subset S^1\times D^2$ is an oriented link such that every component of $L$
has winding number $0$ or $\pm 1$. Assume there are $k$ components with winding number $0$ and $l$ components
with winding number $\pm 1$,  then we have
\begin{equation*}
\dim_\bC\AHI(L,i)\ge \dim_{\bC} \AHI(\mathcal{U}_k\cup\mathcal{K}_l,i)
\end{equation*}
for all $i\in\mathbb{Z}$.
\end{Proposition}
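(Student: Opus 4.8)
The plan is to compare $\AHI(L)$ with $\AHI(\mathcal{U}_k\cup\mathcal{K}_l)$ by using a homotopy in $S^1\times D^2$ from $L$ to $\mathcal{U}_k\cup\mathcal{K}_l$, keeping track of f-gradings via the $\muu(S^2)$-eigenspace decomposition, and then applying the limit argument of Lemma \ref{chain-complex-limit}. First I would observe that since every component of $L$ has winding number $0$ or $\pm1$, the link $L$ is homotopic in $S^1\times D^2$ (through links, allowing self-intersections and crossing changes) to $\mathcal{U}_k\cup\mathcal{K}_l$: each winding-number-$0$ component can be pulled into a ball and unknotted, and each winding-number-$\pm1$ component can be straightened to a single trivial strand $\mathcal{K}_1$ (the orientation ambiguity $\pm1$ is harmless because the rank of $\AHI$ is orientation-independent, cf.\ the parenthetical $\pm1$ in the statement). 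This homotopy in $S^1\times D^2$ induces a homotopy of the link $L\cup\mathcal{K}_2$ in $S^1\times S^2$ that is disjoint from the sublink $\mathcal{K}_2$ defining the local system and from $u$; so Proposition \ref{Prop-homotopy-inst} and Proposition \ref{Prop-Gammah-homotopy_invariance} apply with $L'$ being the annular link and the surface $F = S^2$.

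Next, set up the limit. Pick a generic real ray $h\to 1^{-}$ (or more precisely a sequence $h_k\in\bC-\{0\}$ with $h_k\to 1$) avoiding the finitely many roots of $(1-t^2)\theta(t)$. For each such $h_k$, Proposition \ref{Prop-Gammah-homotopy_invariance} gives
$$
\II(S^1\times S^2, L\cup\mathcal{K}_2, u;\Gamma(h_k))\cong \II(S^1\times S^2, \mathcal{U}_k\cup\mathcal{K}_l\cup\mathcal{K}_2, u;\Gamma(h_k)),
$$
and this isomorphism intertwines $\muu(S^2)_{h_k}$ on the two sides. Hence for every closed $\Lambda\subset\bC$ the corresponding generalized $\muu(S^2)_{h_k}$-eigenspaces have equal dimension. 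Now apply Lemma \ref{chain-complex-limit} to the Floer chain complexes: the maps $d_{h_k}$ and $\muu(S^2)_{h_k}$ converge as $h_k\to1$ to the ordinary Floer differential and the ordinary $\muu(S^2)$ operator on $\AHI(L)$, respectively (and likewise on the $\mathcal{U}_k\cup\mathcal{K}_l$ side). Since $h=1$ satisfies \eqref{eq_root_O} as well, $\dim H_n^{(k)}=\dim H_n$ holds for the $\mathcal{U}_k\cup\mathcal{K}_l$ side, where the whole homology is computed by Example \ref{eg-Uk-Kl-Gamma} and Example \ref{eg-Uk-Kl}; so Part (3) of Lemma \ref{chain-complex-limit} applies on that side, while Part (1) applies on the $L$ side. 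Taking $\Lambda=\{i\}$ and using \eqref{AHI-symmetry} together with the fact that $\muu(S^2)$ has integer eigenvalues on $\AHI$, we obtain
$$
\dim_\bC\AHI(L,i)\ \geq\ \lim_{\epsilon\to0}\limsup_{k\to\infty}\dim E^{(k)}_{i,N(\{i\},\epsilon)}
= \lim_{\epsilon\to0}\limsup_{k\to\infty}\dim \widetilde E^{(k)}_{i,N(\{i\},\epsilon)}
= \dim_\bC\AHI(\mathcal{U}_k\cup\mathcal{K}_l,i),
$$
where $\widetilde E^{(k)}$ denotes the eigenspaces on the $\mathcal{U}_k\cup\mathcal{K}_l$ side and the middle equality is the equality of eigenspace dimensions for $h=h_k$ noted above.

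The step I expect to be the main obstacle is making the interplay between the local-coefficient homotopy invariance and the $\muu(S^2)$-grading fully rigorous: one must check that the eigenvalue spectrum of $\muu(S^2)_{h_k}$ genuinely converges (with multiplicity) to the spectrum of $\muu(S^2)_1$ on the respective homology groups, and that the $h_k$ can be chosen in $\bC-\{0\}$ clustering at $1$ so that both \eqref{eq_root_O} holds for each $h_k$ \emph{and} the convergence hypotheses of Lemma \ref{chain-complex-limit} are met. This is where the "limits of chain complexes" formalism of the previous section does the real work: the generators of $\mathbf{C}^\mu$ are fixed, only the structure maps $D$ and $H$ vary polynomially (hence continuously) in $t$, so specializing $t=h_k$ gives a genuinely continuous family $(C, d_{h_k}, \muu(S^2)_{h_k})$ with the required limits at $h=1$. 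Once this is in place, the inequality drops out of Lemma \ref{chain-complex-limit} as above, and one should double-check the bookkeeping that the extra fixed sublink $\mathcal{K}_2$ and arc $u$ play no role beyond defining the ambient instanton homology, since the homotopy and the surface $S^2$ are disjoint from them.
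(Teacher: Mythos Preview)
Your approach is the paper's approach, but two of your justifications are wrong and would make the argument fail as written.

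First, the local system $\Gamma$ in $\AHI(L;\Gamma)$ is associated with $L$, not with $\mathcal{K}_2$ (see the definition immediately preceding Example~\ref{eg-Uk-Kl-Gamma}). This matters: Proposition~\ref{Prop-homotopy-inst} only gives homotopy invariance when you homotope the sublink $L_0'$ that \emph{defines} the local system, keeping the complementary sublink $L_0-L_0'$ fixed; it says nothing about crossing changes purely within $L_0-L_0'$. With your identification ($\mathcal{K}_2$ defining $\Gamma$ and $L$ being homotoped) the proposition simply does not apply. Once you take the correct local system, defined by $L$, your homotopy of $L$ to $\mathcal{U}_k\cup\mathcal{K}_l$ inside $S^1\times D^2$ is exactly what is needed: it is a homotopy of $L_0'=L$ in $Y-\omega=(S^1\times S^2)-u$, disjoint from $L_0-L_0'=\mathcal{K}_2$.

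Second, the claim that ``$h=1$ satisfies \eqref{eq_root_O}'' is false: $(1-1^2)\theta(1)=0$. This is precisely why the limit argument is necessary at all. The correct reason Part~(3) of Lemma~\ref{chain-complex-limit} applies on the $\mathcal{U}_k\cup\mathcal{K}_l$ side is that $\AHI(\mathcal{U}_k\cup\mathcal{K}_l;\Gamma)\cong\mathcal{R}^{2^{k+l}}$ is a free $\mathcal{R}$-module (Example~\ref{eg-Uk-Kl-Gamma}), so by the universal coefficient theorem $\dim_\bC\AHI(\mathcal{U}_k\cup\mathcal{K}_l;\Gamma(h))=2^{k+l}$ for \emph{every} $h\in\bC-\{0\}$, including $h=1$; this is what gives $\dim H_n^{(k)}=\dim H_n$.

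A minor point: you write ``Part~(1) applies on the $L$ side'' but your displayed chain uses $\lim_{\epsilon\to 0}\limsup$ with neighborhoods $N(\{i\},\epsilon)$, which is Part~(2); the paper uses Part~(2) there as well.
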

\begin{proof}

For $\lambda\in \bC$, let $N(\lambda,\epsilon)$ be the closed $\epsilon$-neighborhood of $\lambda$ in $\bC$. Given a vector space $V$ over $\bC$, a linear map $f:V\to V$, and a subset $\Lambda\subset \mathbb{C}$, we use
$E(V,f,\Lambda)$ to denote the direct sum of the generalized eigenspaces of $f$ with eigenvalues in $\Lambda$.

Recall that $\AHI(L;\Gamma)$ is defined to be the instanton Floer homology 
$$\II(S^1\times S^2, L\cup \mathcal{K}_2,u;\Gamma),$$
where $\Gamma$ is the local coefficient system associated with $L$. For $h\in\bC-\{0\}$, recall that $\Gamma(h)$ is the local system over $\bC$ given by $\Gamma\otimes_\mathcal{R}\mathcal{R}/(t-h)$.
For every $i\in \bZ$, Part (2) of Lemma \ref{chain-complex-limit} and Proposition \ref{Prop-Gammah-homotopy_invariance} gives
\begin{align*}
\dim\AHI(L,i)&\ge \lim_{\epsilon\to 0}\limsup_{h\to 1} E(\AHI(L;\Gamma(h)), \muu(S^2)_h,N(i,\epsilon)) \\
&= \lim_{\epsilon\to 0}\limsup_{h\to 1} E(\AHI(\mathcal{U}_k\cup\mathcal{K}_l;\Gamma(h)), \muu(S^2)_h,N(i,\epsilon)).
\end{align*}

According to Example \ref{eg-Uk-Kl-Gamma}, $\AHI(\mathcal{U}_k\cup\mathcal{K}_l;\Gamma)$ is a free $\mathcal{R}$-module of rank $2^{k+l}$.
By the universal coefficient theorem, $\dim_\bC \AHI(\mathcal{U}_k\cup\mathcal{K}_l;\Gamma(h))=2^{k+l}$ for all $h\in \bC-\{0\}$. Therefore Part (c) of Proposition \ref{chain-complex-limit} gives 
\begin{equation*}
 \lim_{\epsilon\to 0}\limsup_{h\to 1} E(\AHI(\mathcal{U}_k\cup\mathcal{K}_l;\Gamma(h)), \muu(F)_h,N(i,\epsilon))=
 \dim_\bC \AHI(\mathcal{U}_k\cup\mathcal{K}_l,i),
\end{equation*}
and the proposition is proved.
\end{proof}

\begin{Corollary}\label{Cor-minimal_rank-meridional_surface}
Suppose $L\subset S^1\times D^2$ is an oriented link such that every component of $L$
has winding number $0$ or $\pm 1$. Assume there are $k$ components with winding number $0$ and $l$ components
with winding number $\pm 1$. Moreover, assume 
\begin{equation}\label{eqn_assume_AHI_L_minimal_rank}
\dim_\bC \AHI(L)=2^{k+l}.
\end{equation}
 Then there exists a meridional disk $S$ in $S^1\times D^2$, such that $S$  intersects every component of $L$ with  winding number $\pm 1$ transeversely at one point, and $S$ is disjoint from every  component of $L$ with
winding number 0.
\end{Corollary}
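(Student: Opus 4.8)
The plan is to promote the f-graded rank inequality of Proposition \ref{Prop-AHI-rank-inequality} to an equality using the hypothesis \eqref{eqn_assume_AHI_L_minimal_rank}, extract the top f-grading of $\AHI(L)$, feed this value into the Thurston-norm detection result Theorem \ref{Theorem-2g+n}, and conclude with an elementary intersection-number argument on a minimizing meridional surface.

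First I would record the f-graded ranks of the model link. By Example \ref{eg-Uk-Kl} and Proposition \ref{product-formula}, $\AHI(\mathcal{U}_k\cup\mathcal{K}_l)\cong\AHI(\mathcal{U}_1)^{\otimes k}\otimes\AHI(\mathcal{K}_1)^{\otimes l}$ as f-graded vector spaces; since $\AHI(\mathcal{U}_1)$ sits in f-grading $0$ with dimension $2$ and $\AHI(\mathcal{K}_1)$ has one-dimensional pieces in f-gradings $\pm1$, the space $\AHI(\mathcal{U}_k\cup\mathcal{K}_l)$ is supported in f-gradings $-l,\dots,l$, has total dimension $2^{k+l}$, and $\dim_\bC\AHI(\mathcal{U}_k\cup\mathcal{K}_l,l)=2^k\neq0$. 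Proposition \ref{Prop-AHI-rank-inequality} gives $\dim_\bC\AHI(L,i)\ge\dim_\bC\AHI(\mathcal{U}_k\cup\mathcal{K}_l,i)$ for all $i$; summing over $i$ and using that the f-grading is an honest $\bZ$-grading, so $\dim_\bC\AHI(L)=\sum_i\dim_\bC\AHI(L,i)=2^{k+l}$, forces equality in every f-grading. Hence $\AHI(L,i)=0$ for $|i|>l$ while $\AHI(L,\pm l)\neq0$, so the top f-grading of $\AHI(L)$ is exactly $l$.

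Next, choose a meridional surface $S$ transverse to $L$ that minimizes $2g+n$, where $g$ is the genus of $S$ and $n=|S\cap L|$ (a minimizer exists since $2g+n\ge0$). By Theorem \ref{Theorem-2g+n} this minimal value equals the top f-grading of $\AHI(L)$, so $2g+n=l$. The topological input is that a connected meridional surface is homologous rel boundary to $\{\pt\}\times D^2$: it has the same boundary class in $H_1(\partial(S^1\times D^2))$, and the boundary map $H_2(S^1\times D^2,\partial(S^1\times D^2))\to H_1(\partial(S^1\times D^2))$ is injective because $H_2(S^1\times D^2)=0$. Consequently the algebraic intersection number of $S$ with a component $K$ of $L$ equals the winding number $w(K)$ up to a sign independent of $K$, so $|S\cap K|\ge|w(K)|$ for every component and therefore $n=\sum_K|S\cap K|\ge\sum_K|w(K)|=l$. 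Together with $2g+n=l$ and $g\ge0$ this forces $g=0$ and $n=l$; then $\sum_K|S\cap K|=l=\sum_K|w(K)|$ combined with the termwise bounds forces $|S\cap K|=|w(K)|$ for each $K$. Thus $S$ is a meridional disk meeting each winding-number-$\pm1$ component in exactly one transverse point and disjoint from each winding-number-$0$ component, which is the claim.

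I do not expect a genuinely hard step here: the substance is already carried by Proposition \ref{Prop-AHI-rank-inequality} and Theorem \ref{Theorem-2g+n}. The points that need care are the f-graded rank computation for $\AHI(\mathcal{U}_k\cup\mathcal{K}_l)$ (which legitimizes passing from the total rank to the per-grading ranks), the homological identification of $[S]$ that produces the per-component inequality $|S\cap K|\ge|w(K)|$, and the degenerate case $l=0$, where one simply notes that a minimizing meridional surface then has $2g+n=0$, hence is a disk disjoint from $L$, so the conclusion holds vacuously.
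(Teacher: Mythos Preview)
Your proof is correct and follows essentially the same approach as the paper: promote the per-grading inequality of Proposition \ref{Prop-AHI-rank-inequality} to equality using the total rank hypothesis, read off the top f-grading $l$, invoke Theorem \ref{Theorem-2g+n} to get a minimizing meridional surface with $2g+n=l$, and then use the per-component intersection bound $|S\cap K|\ge |w(K)|$ to force $g=0$ and the correct intersection pattern. If anything, you supply more detail than the paper does---the homological justification for $|S\cap K|\ge|w(K)|$, the explicit f-graded computation for $\mathcal{U}_k\cup\mathcal{K}_l$, and the remark on $l=0$---where the paper simply asserts that ``every component with a non-zero winding number must intersect $S$'' and moves on.
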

\begin{proof}
By Example \ref{eg-Uk-Kl-Gamma}, $\dim_\bC \AHI(\mathcal{U}_k\cup\mathcal{K}_l)=2^{k+l}$, therefore \eqref{eqn_assume_AHI_L_minimal_rank} and Proposition \ref{Prop-AHI-rank-inequality} imply
\begin{equation*}
\dim_\bC\AHI(L,i)= \dim_\bC \AHI(\mathcal{U}_k\cup\mathcal{K}_l,i)
\end{equation*}
for all $i\in\mathbb{Z}$.
The top f-grading of $\AHI(\mathcal{U}_k\cup\mathcal{K}_l,i)$ is $l$.
By Theorem \ref{Theorem-2g+n}, there exists a meridional surface $S$ with genus $g$, such that $S$ intersects $L$ transversely at $n$ points, and $2g+n=l$.
On the other hand, every component with a non-zero winding number must intersect $S$, therefore we have $g=0$ and $n=l$,
and the surface $S$ is the desired meridional disk.
\end{proof}

\section{Linking numbers and forests of unknots}



This section proves a weaker version of Theorem \ref{The_main_theorem}:
\begin{Theorem}\label{Thm-forest_detection_linking_number}
Suppose $L=K_1\cup\cdots \cup K_n$ is an oriented link with $n$ components in $S^3$ such that
\begin{enumerate}
\item $\rank_{{\mathbb{Z}/2}} \Kh(L;{\mathbb{Z}/2})=2^n$;
\item there exists a forest of unknots $L_G=K_1'\cup\cdots \cup K_n'$ so that
$$
\lk(K_i,K_j)=\lk(K_i',K_j')~\text{for all}~ i\neq j.
$$
\end{enumerate}
Then $L$ is isotopic to $L_G$.
\end{Theorem}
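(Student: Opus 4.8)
The plan is to argue by induction on the number of components $n$, at each stage stripping off a vertex of $G$ of degree $\le 1$ (every nonempty finite forest has one). The cases $n\le 1$ are immediate: if $\rank_{\mathbb{Z}/2}\Kh(L;\mathbb{Z}/2)=2$ then $\rank_{\mathbb{Z}/2}\Khr(L;\mathbb{Z}/2)=1$ by \eqref{eqn_relation_Kh_Khr}, so $L$ is an unknot by \cite{KM:Kh-unknot}. For the inductive step, first iterate the Batson--Seed inequality \cite{Kh-unlink} to get $\rank_{\mathbb{Z}/2}\Kh(L;\mathbb{Z}/2)\ge 2^{n-1}\rank_{\mathbb{Z}/2}\Kh(K_i;\mathbb{Z}/2)$ for each $i$; with hypothesis (1) this forces every $\rank_{\mathbb{Z}/2}\Kh(K_i;\mathbb{Z}/2)=2$, hence (again by \cite{KM:Kh-unknot}) every $K_i$ is an unknot. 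Now fix a vertex $v$ of $G$ of degree $\le 1$, relabel so $v=v_n$, and set $V:=S^3-N(K_n)$, which is a solid torus since $K_n$ is unknotted, and $L_0:=L-K_n\subset V$. The winding number of $K_i$ in $V$ equals $\lk(K_i,K_n)=\lk(K_i',K_n')\in\{0,\pm 1\}$ by hypothesis (2), so $L_0$ has $l:=\deg v$ components of winding number $\pm 1$ and $k:=n-1-l$ of winding number $0$.

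The analytic heart of the argument is that $\AHI(L_0)$ attains its minimal possible rank. By Proposition \ref{Prop-AHI=I^natural} and \eqref{eq-rank_lower_bound}, $\dim_{\bC}\AHI(L_0)=\dim_{\bC}\II^\natural(L,p)\ge 2^{n-1}$ for a basepoint $p\in K_n$; on the other hand, Kronheimer--Mrowka's spectral sequence \cite{KM:Kh-unknot}, together with \eqref{eqn_relation_Kh_Khr}, hypothesis (1), and a universal coefficient comparison, bounds $\dim_{\bC}\II^\natural(L,p)$ above by $\rank_{\mathbb{Z}/2}\Khr(L;\mathbb{Z}/2)=2^{n-1}$. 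Hence $\dim_{\bC}\AHI(L_0)=2^{n-1}=2^{k+l}$, and Corollary \ref{Cor-minimal_rank-meridional_surface} applies: there is a meridional disk $S\subset V$ meeting each winding-number-$\pm 1$ component of $L_0$ transversely once and disjoint from the winding-number-$0$ components. Capping $S$ off through $N(K_n)$ yields a disk $\widehat S\subset S^3$ with $\partial\widehat S=K_n$ and $\widehat S\cap(L-K_n)=S\cap(L-K_n)$.

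If $\deg v=0$, then $\widehat S$ is disjoint from $L-K_n$, so $K_n$ bounds a disk in the complement of $L-K_n$ and therefore splits off: $L\cong K_n\sqcup(L-K_n)$. By the K\"unneth formula $\rank_{\mathbb{Z}/2}\Kh(L-K_n;\mathbb{Z}/2)=2^{n-1}$, and the linking numbers of $L-K_n$ agree with those of the forest of unknots on $G-v_n$, so the inductive hypothesis gives $L-K_n\cong L_{G-v_n}$; hence $L\cong L_{G-v_n}\sqcup(\text{unknot})=L_G$. If $\deg v=1$ with unique neighbor $v_{n-1}$, then $\widehat S$ meets $K_{n-1}$ once and is disjoint from $K_1,\dots,K_{n-2}$; deleting a small subdisk of $\widehat S$ about the intersection point leaves an annulus in $S^3-(L-K_n)$ running from $K_n$ to a small meridian $\mu$ of $K_{n-1}$. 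Thus $K_n$ is isotopic to $\mu$ in $S^3-(L-K_n)$, and isotopy extension gives $L\cong(L-K_n)\cup\mu$. Since $\rank_{\mathbb{Z}/2}\Kh(L-K_n;\mathbb{Z}/2)=2^{n-1}$ by Batson--Seed and the linking numbers of $L-K_n$ match those of the forest of unknots on $G-v_n$, the inductive hypothesis gives $L-K_n\cong L_{G-v_n}$, carrying $\mu$ to a small meridian of the component corresponding to $v_{n-1}$; as adjoining such a meridian is exactly the operation of attaching a leaf, we conclude $L\cong L_G$.

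The main obstacle is the second paragraph: not any single cited result, but correctly threading the chain ``minimal $\Kh$-rank $\Rightarrow$ minimal $\II^\natural$-rank $\Rightarrow$ minimal $\AHI(L_0)$-rank $\Rightarrow$ existence of the controlling meridional disk'', which forces the hypothesis $\rank_{\mathbb{Z}/2}\Kh(L;\mathbb{Z}/2)=2^n$ to interact correctly with \eqref{eqn_relation_Kh_Khr}, with Kronheimer--Mrowka's spectral sequence, and with the excision identification $\AHI(L_0)\cong\II^\natural(L,p)$. Once that meridional disk is produced, the remaining topology---splitting off an unknot that bounds a disjoint disk, recognizing a meridian from a disjoint annulus, and extending ambient isotopies---is routine, and the induction closes because deleting a leaf or an isolated vertex of $G$ returns a forest and corresponds on the link side to deleting a split unknot or a small meridional component.
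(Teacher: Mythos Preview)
Your proof is correct and follows essentially the same approach as the paper: induction on $n$, peeling off a vertex of degree $\le 1$, using Batson--Seed to force all components to be unknots and the sublink $L-K_n$ to have minimal Khovanov rank, then threading the chain $\Kh\to\Khr\to\II^\natural\to\AHI$ to apply Corollary~\ref{Cor-minimal_rank-meridional_surface} and extract the controlling meridional disk. The paper packages the intermediate steps as Propositions~\ref{Prop_Kh_rank_sublink}, \ref{Prop-miminal_rank_unknot}, and \ref{Prop-rank_I^natural}, but the content is identical; your disk-to-annulus isotopy argument for shrinking $K_n$ to a meridian is slightly more explicit than the paper's ``shrink $K_n$ via $S$'', and in the split case you invoke K\"unneth rather than Batson--Seed again, but these are cosmetic differences.
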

Before starting the proof, we need to make some preparations. Notice that
Batson and Seed's work \cite{Kh-unlink} implies the following useful result. 
\begin{Proposition}[\cite{Kh-unlink}]\label{Prop_Kh_rank_sublink}
Suppose $L$ is a link in $S^3$ with $n$ components and 
$$\rank_{{\mathbb{Z}/2}}\Kh(L;{\mathbb{Z}/2})=2^n,$$ 
then we have
$\rank_{{\mathbb{Z}/2}}\Kh(L_0;{\mathbb{Z}/2})=2^{|L_0|}$
for every sub-link $L_0$ of $L$, where $|L_0|$ is the number of components of $L_0$.
\end{Proposition}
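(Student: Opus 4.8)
The plan is to deduce this as a short formal consequence of Batson and Seed's link splitting spectral sequence \cite{Kh-unlink}. Recall that, given a link $L$ together with a partition of its components into sublinks, Batson and Seed construct a spectral sequence whose $E_1$-page is (a regrading of) $\Kh(L;\bZ/2)$ and whose $E_\infty$-page is the Khovanov homology of the split link obtained by isotoping the chosen sublinks into disjoint balls. In particular, applying this to the two-block partition $\{L_0,\,L\setminus L_0\}$ yields
\[
\rank_{\bZ/2}\Kh(L;\bZ/2)\ \ge\ \rank_{\bZ/2}\Kh\bigl(L_0\sqcup(L\setminus L_0);\bZ/2\bigr).
\]

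Next I would apply the K\"unneth formula to the right-hand side to factor it as $\rank_{\bZ/2}\Kh(L_0;\bZ/2)\cdot\rank_{\bZ/2}\Kh(L\setminus L_0;\bZ/2)$, and then invoke the universal lower bound \eqref{eqn_lower_bound_Kh} on each factor: writing $m:=|L_0|$, we have $\rank_{\bZ/2}\Kh(L_0;\bZ/2)\ge 2^m$ and $\rank_{\bZ/2}\Kh(L\setminus L_0;\bZ/2)\ge 2^{n-m}$. Combining these with the hypothesis $\rank_{\bZ/2}\Kh(L;\bZ/2)=2^n$ gives
\[
2^n\ \ge\ \rank_{\bZ/2}\Kh(L_0;\bZ/2)\cdot\rank_{\bZ/2}\Kh(L\setminus L_0;\bZ/2)\ \ge\ 2^m\cdot 2^{\,n-m}\ =\ 2^n,
\]
so every inequality is forced to be an equality; in particular $\rank_{\bZ/2}\Kh(L_0;\bZ/2)\le 2^n/2^{\,n-m}=2^m$, and together with the reverse inequality this gives $\rank_{\bZ/2}\Kh(L_0;\bZ/2)=2^m$, as desired.

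Since the rank of Khovanov homology is independent of the orientation, there is nothing to check about how $L_0$ and $L\setminus L_0$ inherit orientations. I do not expect a serious obstacle here: the whole argument is a brief combination of the spectral sequence, K\"unneth, and the elementary bound \eqref{eqn_lower_bound_Kh}. The only point requiring care — and the closest thing to a ``hard part'' — is making sure one cites Batson and Seed's construction in the form that splits off an \emph{arbitrary} sublink rather than a single component; alternatively, one can sidestep this by iterating their one-component splitting, peeling the components of $L\setminus L_0$ off one at a time, since each such step is a rank-non-increasing operation and one arrives at the same inequality $\rank_{\bZ/2}\Kh(L;\bZ/2)\ge\rank_{\bZ/2}\Kh(L_0\sqcup(L\setminus L_0);\bZ/2)$.
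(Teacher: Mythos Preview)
Your argument is correct and is essentially the same as the paper's: both use the Batson--Seed rank inequality $\rank_{\bZ/2}\Kh(L;\bZ/2)\ge\rank_{\bZ/2}\Kh(L_0;\bZ/2)\cdot\rank_{\bZ/2}\Kh(L\setminus L_0;\bZ/2)$ and then squeeze using the lower bound \eqref{eqn_lower_bound_Kh} on each factor. The paper simply cites \cite[Theorem 1.1]{Kh-unlink} for the product-form inequality directly, whereas you spell out the spectral sequence and K\"unneth steps separately; there is no substantive difference.
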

\begin{proof}
Suppose $L=K_1\cup\cdots\cup K_n$.
Let $I$ be a subset of $\{1,\cdots,n\}$ with $|I|$ components.
By \cite[Theorem 1.1]{Kh-unlink} (cf. the proof of \cite[Proposition 7.1]{Kh-unlink}), we have
\begin{align*}
2^n&=\rank_{\mathbb{Z}/2} \Kh(K;{\mathbb{Z}/2})
\\
&\ge \rank_{\mathbb{Z}/2} \Kh(\bigcup_{i\notin I}K_i;{\mathbb{Z}/2})\cdot \rank_{\mathbb{Z}/2} \Kh(\bigcup_{i\in I}K_i;{\mathbb{Z}/2})\\
&\ge 2^{n-|I|}\cdot\rank_{\mathbb{Z}/2}\Kh(\bigcup_{i\in I}K_i;{\mathbb{Z}/2}) \ge 2^n.
\end{align*}
Hence the inequalities above achieve equality, and we have
$$
 \rank_{\mathbb{Z}/2}\Kh(\bigcup_{i\in I}K_i;{\mathbb{Z}/2})=2^{|I|}. \phantom\qedhere\makeatletter\displaymath@qed
$$
\end{proof}
The above result together with Kronheimer-Mrowka's unknot detection theorem in \cite{KM:Kh-unknot} imply the following proposition.
\begin{Proposition}[{\cite[Proposition 7.1]{Kh-unlink}}]\label{Prop-miminal_rank_unknot}
Suppose $L$ is a link in $S^3$ with $n$ components and 
$$\rank_{{\mathbb{Z}/2}}\Kh(L;{\mathbb{Z}/2})=2^n,
$$ 
then each component of $L$
is an unknot. \qed
\end{Proposition}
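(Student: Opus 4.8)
The plan is to deduce the statement directly from Kronheimer--Mrowka's unknot detection theorem, applied to each component of $L$ separately, feeding it the rank input provided by Proposition \ref{Prop_Kh_rank_sublink}.

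First I would apply Proposition \ref{Prop_Kh_rank_sublink} to the $1$-component sublink $K_i \subset L$ for each $i$; this gives $\rank_{\mathbb{Z}/2} \Kh(K_i;\mathbb{Z}/2) = 2$. It then remains to observe that the unknot is the only knot with this property. By \eqref{eqn_relation_Kh_Khr} we obtain $\rank_{\mathbb{Z}/2} \Khr(K_i;\mathbb{Z}/2) = 1$, so the universal coefficient theorem gives $\rank_{\mathbb{Q}} \Khr(K_i;\mathbb{Q}) \le 1$; since the graded Euler characteristic of $\Khr(K_i;\mathbb{Q})$ is the (reduced) Jones polynomial of $K_i$, which is a nonzero Laurent polynomial, we conclude $\rank_{\mathbb{Q}} \Khr(K_i;\mathbb{Q}) = 1$. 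By \cite{KM:Kh-unknot}, the rank of reduced Khovanov homology over $\mathbb{Q}$ detects the unknot, so $K_i$ is unknotted; as $i$ was arbitrary, every component of $L$ is an unknot.

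There is no substantive obstacle in this argument: all the depth sits in the two cited inputs, namely Batson--Seed's rank inequality (which already underlies Proposition \ref{Prop_Kh_rank_sublink}) and the unknot detection theorem, and the only bookkeeping required is the change of coefficients from $\mathbb{Z}/2$ to $\mathbb{Q}$ effected by \eqref{eqn_relation_Kh_Khr} together with the universal coefficient theorem. Alternatively, one could bypass even that step by invoking the unlink detection theorem of \cite{Kh-unlink} for each $K_i$, since the $1$-component case of unlink detection is exactly unknot detection.
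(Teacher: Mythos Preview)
Your argument is correct and is precisely the approach the paper takes: the text preceding the proposition states that it follows from Proposition \ref{Prop_Kh_rank_sublink} together with Kronheimer--Mrowka's unknot detection theorem, and you have simply spelled out the coefficient-change step via \eqref{eqn_relation_Kh_Khr} and the universal coefficient theorem.
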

\begin{Proposition}\label{Prop-rank_I^natural}
Suppose $L$ is a link in $S^3$ with $n$ components and 
$$\rank_{{\mathbb{Z}/2}}\Kh(L;{\mathbb{Z}/2})=2^n,$$ then for every point $p\in L$, we have
$
\dim_\mathbb{C}\II^\natural(L,p)=2^{n-1}.
$
\end{Proposition}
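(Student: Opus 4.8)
The plan is to sandwich $\dim_{\bC}\II^\natural(L,p)$ between $2^{n-1}$ from above and $2^{n-1}$ from below, using tools already assembled in the previous sections.

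For the upper bound I would invoke Kronheimer and Mrowka's spectral sequence from \cite{KM:Kh-unknot}, whose $E_2$-page is the reduced Khovanov homology (of the mirror, which has the same reduced rank) with $\bZ/2$-coefficients and which converges to $\II^\natural(L,p)$. Combined with the universal coefficient theorem, which is needed to pass from $\bZ/2$-coefficients on the Khovanov side to $\bC$-coefficients on the instanton side, this yields
$$
\dim_{\bC}\II^\natural(L,p)\le \rank_{\bZ/2}\Khr(L;\bZ/2).
$$
By equation \eqref{eqn_relation_Kh_Khr} together with the hypothesis $\rank_{\bZ/2}\Kh(L;\bZ/2)=2^n$, the right-hand side equals $2^{n-1}$, so $\dim_{\bC}\II^\natural(L,p)\le 2^{n-1}$. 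Observe that \eqref{eqn_relation_Kh_Khr} already encodes the fact that in characteristic $2$ the rank of reduced Khovanov homology does not depend on the marked point, so this estimate is valid for every choice of $p$.

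For the lower bound I would use Proposition \ref{Prop-miminal_rank_unknot}, which under the hypothesis forces every component of $L$ to be an unknot; in particular the component $U$ containing $p$ is unknotted. Removing a tubular neighborhood $N(U)$ exhibits $L_0:=L-U$ as an $(n-1)$-component link in the solid torus $S^3-N(U)$, and Proposition \ref{Prop-AHI=I^natural} identifies $\AHI(L_0)\cong\II^\natural(L,p)$. Applying the rank estimate \eqref{eq-rank_lower_bound} to the annular link $L_0$ gives $\dim_{\bC}\AHI(L_0)\ge 2^{n-1}$, hence $\dim_{\bC}\II^\natural(L,p)\ge 2^{n-1}$. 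Combining the two inequalities yields the claimed equality.

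Each of the two estimates is a short consequence of results already available, so I do not anticipate a serious obstacle. The one point requiring care is the coefficient bookkeeping in the upper bound: one must check that the $\bZ/2$-Khovanov input of Kronheimer--Mrowka's spectral sequence genuinely dominates the $\bC$-dimension of $\II^\natural(L,p)$ through the universal coefficient theorem, and that \eqref{eqn_relation_Kh_Khr} is being used precisely in the base-point-independent form valid in characteristic $2$.
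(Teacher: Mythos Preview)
Your proposal is correct and follows essentially the same route as the paper: the upper bound via Kronheimer--Mrowka's spectral sequence together with \eqref{eqn_relation_Kh_Khr}, and the lower bound via Proposition~\ref{Prop-miminal_rank_unknot} and \eqref{eq-rank_lower_bound}. The paper makes the coefficient passage slightly more explicit by routing through $\bQ$ (i.e.\ $\rank_{\bQ}\Khr(\overline{L},p;\bQ)\le\rank_{\bZ/2}\Khr(L,p;\bZ/2)$ and then $\rank_{\bZ}\II^\natural(L,p;\bZ)\le\rank_{\bZ}\Khr(\overline{L},p;\bZ)$), which is exactly the ``coefficient bookkeeping'' you flagged as the one point needing care.
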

\begin{proof}
Given a point $p\in L$, we use $\Khr(L,p)$ to denote the reduced Khovanov homology with base point $p$. By \cite[Corollary 3.2.C]{Shu:torsion_Kh},
$$
\rank_{\mathbb{Z}/2}\Khr(L,p;{\mathbb{Z}/2})=\frac12 \rank_{{\mathbb{Z}/2}}\Kh(L;{\mathbb{Z}/2})=2^{n-1}.
$$
By the universal coefficient theorem, 
$$
\rank_\mathbb{Q}\Khr(L,p;\mathbb{Q})\le \rank_{\mathbb{Z}/2}\Khr(L,p;{\mathbb{Z}/2})=2^{n-1}
$$
Let $\overline{L}$ be the mirror image of $L$. By \cite[Corollary 11]{Kh-Jones},
$$
\rank_\mathbb{Q}\Khr(\overline{L},p;\mathbb{Q})=\rank_\mathbb{Q}\Khr(L,p;\mathbb{Q})=2^{n-1}
$$
Using Kronheimer-Mrowka's spectral sequence (\cite[Theorem 8.2]{KM:Kh-unknot}) whose $E_2$-page is $\Khr(\overline{L},p;\mathbb{Z})$
and which converges to $\II^\natural(L,p;\mathbb{Z})$, we obtain 
$$
\dim_\mathbb{C}\II^\natural(L,p)=\rank_\mathbb{Z}\II^\natural(L,p;\mathbb{Z})\le \rank_\mathbb{Z}\Khr(\overline{L},p;\mathbb{Z})=2^{n-1}.
$$
On the other hand, Proposition \ref{Prop-miminal_rank_unknot} and \eqref{eq-rank_lower_bound} imply
 that $\dim_\mathbb{C}\II^\natural(L,p)\ge 2^{n-1}$.
Therefore we obtain $\dim_\mathbb{C}\II^\natural(L,p)= 2^{n-1}$.
\end{proof}

\begin{proof}[Proof of Theorem \ref{Thm-forest_detection_linking_number}]
We prove the theorem by induction on $n$. 
When $n=1$, it is the unknot-detection theorem of Kronheimer and Mrowka \cite{KM:Kh-unknot}. 

Assume the theorem holds if the number of components is smaller than $n$. 
Since $G$ is a forest, we can find a vertex of $G$ with degree less than or equal to $1$. We discuss two cases.

Case 1: There is a vertex of $G$ with degree $1$.
Without loss of generality, assume this vertex corresponds to the component $K'_n$ of $L_G$. 
By the assumption of Theorem \ref{Thm-forest_detection_linking_number}, there exists $i\in\{1,\cdots,n-1\}$ such that
$\lk(K_i,K_n)=\pm 1$ and $\lk(K_j,K_n)=0$ when $1\le j \le n-1, j\neq i$. 

Pick a base point $p\in K_n$ and use $L'$ to denote $K_1\cup\cdots \cup K_{n-1}$.
According to Proposition \ref{Prop-miminal_rank_unknot}, $K_n$ is an unknot. Let $N(K_n)$ be a tubular neighborhood of $K_n$, then 
$L'$ can be viewed as a link in the solid torus $S^3-N(K_n)$. By Proposition \ref{Prop-AHI=I^natural} and Proposition \ref{Prop-rank_I^natural}
 we have
$$
\AHI(L')\cong \II^\natural(L,p)\cong \mathbb{C}^{2^{n-1}}
$$
According to Proposition \ref{Cor-minimal_rank-meridional_surface}, we can find a meridional disk $S$ in the solid torus 
$S^3-N(K_n)$ which intersects $K_i$ at a single point and is disjoint from the other components. The meridional disk $S$ is a Seifert disk of $K_n$. 
By the induction hypothesis, $L'$ is a forest of unknots. We can shrink $K_n$
via $S$ to a small meridian of $K_i$. Therefore $L$ is also a forest of unknots. Since the  linking numbers uniquely
determine a forest of unknots, we conclude that $L$ is isotopic to $L_G$.

Case 2: There is a vertex of $G$ with degree $0$. Without loss of generality, assume this vertex corresponds to the component $K'_n$ of $L_G$. 
By the assumption of Theorem \ref{Thm-forest_detection_linking_number}, we have $\lk(K_j,K_n)=0$ for all $1\le j \le n-1$. Let $L' := K_1\cup\cdots\cup K_{n-1}$, let $N(K_n)$ be a tubular neighborhood of $K_n$. We can view $L'$ as a link in the solid torus $S^3-N(K_n)$, and the same argument as above gives $\AHI(L')\cong \bC^{2^{n-1}}$. By Proposition \ref{Cor-minimal_rank-meridional_surface}, we can find a meridional disk $S$ in the solid torus 
$S^3-N(K_n)$ which is disjoint from $L'$. Therefore $L$ is the disjoint union of $L'$ and the unknot, and the result follows from the induction hypothesis on $L'$.
\end{proof}

\section{The case of 2-component links}
\subsection{The linking numbers of 2-component links}
This subsection proves that Condition (2) of Theorem \ref{Thm-forest_detection_linking_number} is implied by Condition (1) when $n=2$. The main result of this subsection is the following lemma.
\begin{Lemma}\label{Lemma-Kh_bound_linking_number}
Suppose $L=K_1\cup K_2$ is a link with 2 components such that 
$$\rank_{\mathbb{Z}/2}\Kh(L;{\mathbb{Z}/2})=4,$$ then
$|\lk(K_1,K_2)|\le 1.$
\end{Lemma}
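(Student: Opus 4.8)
The plan is to use the instanton-theoretic machinery already assembled, together with the annular braid-detection result, to pin down the linking number. By Proposition 5.5 and Proposition 4.3, the hypothesis $\rank_{\bZ/2}\Kh(L;\bZ/2)=4$ forces every component of $L$ to be an unknot and $\dim_\bC \II^\natural(L,p)=2$ for any base point $p$. Choosing $p\in K_1$ and writing $N(K_1)$ for a tubular neighborhood of the unknot $K_1$, we may regard $K_2$ as a knot in the solid torus $V:=S^3-N(K_1)$, and Proposition 2.12 gives $\AHI(K_2)\cong \II^\natural(L,p)\cong \bC^2$. The winding number of $K_2$ in $V$ equals $\lk(K_1,K_2)=:w$. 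If $|w|\le 1$ we are done, so the goal is to derive a contradiction from $|w|\ge 2$.

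First I would treat the case $w=0$ as automatically fine, and reduce to $|w|\ge 2$; note that the winding number is not a priori $0$ or $\pm 1$, so Corollary 4.5 does not apply directly. Instead I would appeal to Theorem 2.9 (the generalized-Thurston-norm detection): let $S\subset V$ be a meridional surface minimizing $2g+n$, where $n=|S\cap K_2|$. Since $S$ is a Seifert disk of $K_1$ and $K_2$ has winding number $w$, every meridional surface meets $K_2$ in at least $|w|$ points algebraically, so $n\ge |w|\ge 2$, hence $2g+n\ge 2$ and $\AHI(K_2,\pm(2g+n))\ne 0$ with $2g+n\ge 2$. Because $\AHI(K_2,i)\cong\AHI(K_2,-i)$ and $\dim_\bC\AHI(K_2)=2$, the only possibility is $\dim_\bC\AHI(K_2,2g+n)=\dim_\bC\AHI(K_2,-(2g+n))=1$ and all other f-gradings vanish; in particular $2g+n$ is the top f-grading and $\AHI(K_2,2g+n)\cong\bC$. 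By Proposition 2.10, this means $K_2$ is isotopic (in $V$) to the closure of a braid on $2g+n\ge 2$ strands. But the closure of a braid on $m$ strands in the solid torus has winding number exactly $m$ (all strands oriented coherently) or at most $m$ in absolute value; more importantly, the $\AHI$-rank of $\mathcal{K}_m$ is $2^m$ by Example 2.3, whereas $\AHI(K_2)$ has rank $2$. Hence $2g+n=1$, contradicting $2g+n\ge 2$.

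The cleanest way to package the last step: $K_2$ is the closure of an $m$-strand braid with $m=2g+n\ge 2$, so by the braid-detection Proposition 2.10 combined with the rank computation, $\dim_\bC\AHI(K_2)\ge \dim_\bC\AHI(\mathcal{K}_m,m)$; but in fact a braid closure on $m\ge 2$ strands that is itself connected and whose annular homology has total rank $2$ is impossible, since the top f-grading piece already accounts for rank $1$ on each of the two extreme gradings $\pm m$ and symmetry plus the rank bound leave no room — yet a genuine $m$-braid closure with $m\ge 2$ has nontrivial homology in intermediate f-gradings (for instance $\mathcal{K}_m$ has rank $\binom{m}{j}$-type contributions). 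I would make this precise by invoking Proposition 4.4 / Corollary 4.5: if $K_2$ had winding number $w$ with $|w|=m\ge 2$ and realized the braid closure on $m$ strands, then $\AHI(K_2)$ would have rank at least $2^m\ge 4>2$, a contradiction. Therefore $|w|=|\lk(K_1,K_2)|\le 1$.

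The main obstacle is the step showing that $\AHI(K_2)\cong\bC^2$ forces $2g+n\le 1$ — i.e.\ correctly extracting, from the two-dimensionality and the $i\leftrightarrow-i$ symmetry of $\AHI(K_2,i)$, that the minimal meridional $2g+n$ cannot be $\ge 2$. One has to be careful that Theorem 2.9 only guarantees nonvanishing at $\pm(2g+n)$, not that this is the whole story; the symmetry and the rank bound together do pin it down, but the argument that a braid-closure on $\ge 2$ strands cannot have annular homology of rank $2$ needs the explicit computation $\dim_\bC\AHI(\mathcal{K}_m)=2^m$ from Example 2.3 together with homotopy-invariance of the $\mathcal{R}$-rank (Corollary 3.8), so that $\dim_\bC\AHI(K_2)\ge 2^m$. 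I expect this to be where the real content lies; everything else is assembling the cited propositions.
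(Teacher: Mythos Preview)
Your argument correctly establishes the paper's first step (its lemma on exchangeable braids): $K_2$ is the closure of an $m$-braid with axis $K_1$, and $\AHI(K_2)\cong\bC\oplus\bC$ is concentrated in f-degrees $\pm m$. Your suspicion about the final step is right: this is where the argument breaks. The inequality $\dim_\bC\AHI(K_2)\ge 2^m$ does not follow from homotopy invariance of the $\mathcal{R}$-rank. That argument (end of Section~3) gives $\rank_{\mathcal{R}}\AHI(L;\Gamma)=2^{|L|}$ where $|L|$ is the number of \emph{components}, not the number of braid strands; since $K_2$ is a single knot you recover only $\dim_\bC\AHI(K_2)\ge 2$, which you already have. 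A connected $m$-braid closure is not homotopic in the solid torus to $\mathcal{K}_m$ (the latter has $m$ components), so no comparison to $\AHI(\mathcal{K}_m)$ is available. For the same reason the graded rank inequality $\dim_\bC\AHI(L,i)\ge\dim_\bC\AHI(\mathcal{U}_k\cup\mathcal{K}_l,i)$ cannot be invoked: its hypothesis requires every component to have winding number $0$ or $\pm1$, which is precisely the conclusion you are after. Nothing in the instanton package rules out a one-component $m$-braid closure with $\AHI$ of rank~$2$.

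The paper closes the argument by a genuinely different route. Applying the same $\AHI$ reasoning with the roles of $K_1$ and $K_2$ reversed shows that $L$ is \emph{exchangeably braided}: each component is a braid closure with the other as axis. Morton's theorem then expresses the multivariable Alexander polynomial $\Delta_L(x,y)$ as the characteristic polynomial of a reduced Burau matrix, once for each braid presentation; the paper shows these two expansions force $(x-1)(y-1)\Delta_L(x,y)$ to have strictly more than four monomials when $|\lk(K_1,K_2)|\ge 2$. The contradiction then comes from Heegaard Floer homology: Dowlin's spectral sequence gives $\rank_\bQ\HFK(L;\bQ)\le 2\rank_\bQ\Khr(L;\bQ)\le 4$, while $\rank_\bQ\HFL(L;\bQ)$ is bounded below by the number of monomials in its graded Euler characteristic $(x-1)(y-1)\Delta_L(x,y)$. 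So the missing idea lies outside the instanton toolkit you have in hand.
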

Combining this lemma with Theorem \ref{Thm-forest_detection_linking_number}, we have the following  corollary.
\begin{Corollary}
Suppose $L$ is a link with 2 components and $\rank_{\mathbb{Z}/2}\Kh(L;{\mathbb{Z}/2})=4$, then
$L$ is either the 2-component unlink or the Hopf link.
\end{Corollary}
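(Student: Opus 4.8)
The plan is to deduce this statement immediately from Lemma~\ref{Lemma-Kh_bound_linking_number} together with Theorem~\ref{Thm-forest_detection_linking_number}, so that no essentially new argument is required. First I would fix an orientation on $L=K_1\cup K_2$; since $\rank_{\mathbb{Z}/2}\Kh(L;\mathbb{Z}/2)$ does not depend on the orientation, the hypothesis $\rank_{\mathbb{Z}/2}\Kh(L;\mathbb{Z}/2)=4=2^2$ is unaffected. Applying Lemma~\ref{Lemma-Kh_bound_linking_number} gives $|\lk(K_1,K_2)|\le 1$, i.e. $\lk(K_1,K_2)\in\{-1,0,1\}$, which splits the argument into two cases.

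If $\lk(K_1,K_2)=0$, I would take $L_G$ to be the $2$-component unlink, regarded as the forest of unknots whose graph is the edgeless graph on two vertices. Its two components have linking number $0$, so Condition~(2) of Theorem~\ref{Thm-forest_detection_linking_number} holds, and Condition~(1) holds by hypothesis; hence $L$ is isotopic to the $2$-component unlink. If $\lk(K_1,K_2)=\pm 1$, I would instead take $L_G$ to be a Hopf link, regarded as the forest of unknots whose graph is a single edge on two vertices; after choosing the orientation of one of its components appropriately, its components have linking number equal to $\lk(K_1,K_2)$, so Condition~(2) is again satisfied and Theorem~\ref{Thm-forest_detection_linking_number} gives that $L$ is isotopic to that Hopf link. (The only bookkeeping point here is the trivial observation that the only forests on two vertices are the edgeless graph and the single edge, corresponding precisely to the unlink and the Hopf link, and that the two linking-number values these realize exhaust the range allowed by Lemma~\ref{Lemma-Kh_bound_linking_number}.)

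I do not expect a genuine obstacle at this step: all the substantive content sits in Lemma~\ref{Lemma-Kh_bound_linking_number} (the linking-number bound, whose proof relies on knot Floer homology) and in Theorem~\ref{Thm-forest_detection_linking_number} (forest detection under the linking-number hypothesis, which in turn uses the annular instanton machinery of the earlier sections). The deduction above is purely formal, so the corollary should follow in a few lines once those two inputs are in hand.
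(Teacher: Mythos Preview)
Your proposal is correct and matches the paper's approach exactly: the paper states this corollary as an immediate combination of Lemma~\ref{Lemma-Kh_bound_linking_number} and Theorem~\ref{Thm-forest_detection_linking_number}, precisely as you deduce it.
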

We start the proof of Lemma \ref{Lemma-Kh_bound_linking_number} with the following lemma.
\begin{Lemma}\label{lem_exchangeable_braid}
Suppose $L=K_1\cup K_2$ satisfies the assumption of Lemma \ref{Lemma-Kh_bound_linking_number}, and suppose $L$ is not the unlink. Then $K_1$ is an unknot, and $K_2$ is a braid closure with axis $K_1$. Similarly, $K_2$ is an unknot, and $K_1$ is a braid closure with axis $K_2$.
\end{Lemma}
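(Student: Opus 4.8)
The plan is to feed the rank computation of Proposition \ref{Prop-rank_I^natural} into the f-graded structure of the annular instanton homology and then apply the braid-detection criterion of Proposition \ref{AHI-braid-detection}. First I would use Proposition \ref{Prop-miminal_rank_unknot} to conclude that $K_1$ and $K_2$ are both unknots. Fix a tubular neighborhood $N(K_1)$ and regard $K_2$ as an annular link in the solid torus $V:=S^3-N(K_1)$. For a base point $p\in K_1$, Proposition \ref{Prop-AHI=I^natural} gives $\AHI(K_2)\cong \II^\natural(L,p)$, and Proposition \ref{Prop-rank_I^natural} gives $\dim_{\bC}\II^\natural(L,p)=2^{2-1}=2$; hence $\dim_{\bC}\AHI(K_2)=2$.

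Next I would pin down the f-grading. A meridional surface in $V$ is a Seifert surface of $K_1$; let $b$ be the minimum of $2g(S)+|S\cap K_2|$ over all meridional surfaces $S$ transverse to $K_2$. By Theorem \ref{Theorem-2g+n}, $\AHI(K_2,i)=0$ for $|i|>b$ while $\AHI(K_2,\pm b)\neq 0$. If $b=0$, the minimizer is a meridional disk disjoint from $K_2$, which places $K_2$ in a ball disjoint from $K_1$; since both components are unknots this forces $L$ to be the $2$-component unlink, contrary to hypothesis, so $b\ge 1$. Now combine the symmetry $\AHI(K_2,b)\cong \AHI(K_2,-b)$ from \eqref{AHI-symmetry} with $\dim_{\bC}\AHI(K_2)=2$ and $b\ge 1$: the graded pieces in degrees $b$ and $-b$ are distinct and each nonzero, so each is $1$-dimensional and all other graded pieces vanish. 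In particular the top f-grading of $\AHI(K_2)$ is $b$ with $\AHI(K_2,b)\cong\bC$, so Proposition \ref{AHI-braid-detection} shows that $K_2$ is isotopic to the closure of a $b$-strand braid with axis $K_1$. Exchanging the roles of $K_1$ and $K_2$ and repeating the argument gives that $K_1$ is a braid closure with axis $K_2$.

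The argument is essentially bookkeeping of f-graded dimensions, so I do not expect a serious obstacle. The one place that requires care is excluding the case $b=0$: this is exactly where the hypothesis that $L$ is not the unlink is used, and it relies on having already identified both components as unknots, so that a split $L$ must be the unlink.
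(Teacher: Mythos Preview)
Your proposal is correct and follows essentially the same line as the paper's proof: both use Proposition~\ref{Prop-miminal_rank_unknot} to get unknottedness, Propositions~\ref{Prop-AHI=I^natural} and~\ref{Prop-rank_I^natural} to obtain $\dim_{\bC}\AHI(K_2)=2$, Theorem~\ref{Theorem-2g+n} to rule out top f-degree $0$ via the non-unlink hypothesis, and Proposition~\ref{AHI-braid-detection} to conclude. Your explicit invocation of the symmetry~\eqref{AHI-symmetry} to pin down the graded pieces is a slight elaboration of what the paper leaves implicit, but the argument is the same.
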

\begin{proof}
Proposition \ref{Prop-miminal_rank_unknot} implies that
$K_1$ and $K_2$ are both unknots. Let $N(K_1)$ be a tubular neighborhood of $K_1$, then $K_2$ is a knot in the solid torus $S^3-N(K_1)$. 
Proposition \ref{Prop-rank_I^natural} yields
$$
\dim_\bC\II^\natural(L,p)=2 
$$
for every $p\in L$.
By Proposition \ref{Prop-AHI=I^natural},
\begin{equation}\label{AHI_rank_2}
\dim_\bC\AHI(K_2)=\dim_\bC\II^\natural(L,p)=2 .
\end{equation}
If $\AHI(K_2)$ is supported at f-degree $0$, then by Theorem \ref{Theorem-2g+n}, 
there exists a meridional disk which is disjoint from $K_i$. This means $K_i$ is included in a 3-ball in the solid torus $S^3-N(K_1)$, hence $K_1$ and
$K_2$ are split, therefore the link $L$ is the unlink, contradicting the assumption. 
Therefore $\AHI(K_2)$ is supported at f-degrees $\pm l$ for $l>0$. By \eqref{AHI_rank_2}, we have $\AHI(K_2,\pm l)\cong\mathbb{C}$ and $\AHI(K_2)$ vanishes at
all the other f-degrees. According to Proposition \ref{AHI-braid-detection}, $K_2$ is the closure of an $l$-braid in $S^3-N(K_{1})$.

The same argument for $S^3-N(K_2)$ proves the second half of the lemma.
\end{proof}
\begin{remark}
 A link described by the conclusion of Lemma \ref{lem_exchangeable_braid} is called an 
 \emph{exchangeably braided link}. This concept was first introduced and studied by
  Morton in \cite{Mor-braid}. 
\end{remark}

Let $l>1$ be an integer. Recall that the braid group $B_l$ is given by
\begin{equation*}
B_l=\langle \sigma_1,\cdots,\sigma_{l-1}|\sigma_i\sigma_{i+1}\sigma_i=  \sigma_{i+1}\sigma_{i}\sigma_{i+1},~
      \sigma_i\sigma_j=\sigma_j\sigma_i~(j-i\ge 2)~           \rangle.
\end{equation*}

The reduced Burau representation (see \cite{Birman}) is a group homomorphism
\begin{equation*}
\rho:B_l\to GL(l-1,\mathbb{Z}[t,t^{-1}])
\end{equation*}
defined by
\begin{equation*}
\rho(\sigma_i):=
\begin{pmatrix}
I_{i-2} &   &    &   &\\
        & 1  &  0  & 0  &\\
        & t  & -t  & 1  & \\
        &  0 &  0  &  1 & \\
        &   &    &   & I_{l-i-2}
\end{pmatrix}, ~ 2\le i\le l-2,
\end{equation*}
\begin{equation*}
\rho(\sigma_1):=
\begin{pmatrix}
 -t  & 1  & \\
   0  &  1 & \\
    &   & I_{l-3}
\end{pmatrix}, ~
\rho(\sigma_{n-1}):=
\begin{pmatrix}
 I_{l-3} &   & \\
     & 1  &0 \\
    &  t & -t
\end{pmatrix} 
\end{equation*}
for $l>2$, while for $l=2$ it is defined by $\rho(\sigma_1):= (-t)$. Notice that for every $\beta\in B_l$, there exists an integer $a$ such that 
\begin{equation}\label{eqn_det_rho_beta_power_of_t}
\det(\rho(\beta))=\pm t^a.
\end{equation}
We also need the follow result by Morton.
\begin{Theorem}[{\cite[Theorem 3]{Mor-braid}}]\label{Morton-Alexander-braid}
Suppose $L=U\cup \widehat{\beta}$ is a 2-component link where $U$ is the unknot and
$\widehat{\beta}$ is the closure of a braid $\beta\in B_l$ with axis $U$.
Then the multi-variable Alexander polynomial $\Delta_L(x,t)$ of $L$ is
given by
\begin{equation*}
\Delta(x,t)\doteq\det(xI-\rho(\beta)(t))
\end{equation*}
where $x$ and $t$ are variables corresponding to $U$ and $\hat{\beta}$ respectively.
\end{Theorem}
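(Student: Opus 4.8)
The plan is to compute the Alexander module of the link exterior $X := S^3 \setminus L$ directly from the braid structure of $L$. Since $U$ is an unknot, $V := S^3 \setminus N(U)$ is a solid torus and $\widehat{\beta} \subset V$ is the closure of $\beta \in B_l$ with axis $U$; slicing $V$ along its meridian disks and removing $\widehat{\beta}$ turns each slice into the $l$-punctured disk $D_l := D^2 \setminus \{l \text{ points}\}$, with monodromy around the $S^1$-factor given by the braid homeomorphism $\phi_\beta$. Hence $X$ is the mapping torus of $\phi_\beta \colon D_l \to D_l$, and
\[
\pi_1(X) \cong F_l \rtimes_{\alpha_\beta} \bZ = \langle\, x_1,\dots,x_l,\tau \mid \tau x_i \tau^{-1} = \alpha_\beta(x_i),\ i = 1,\dots,l \,\rangle ,
\]
where $x_1,\dots,x_l$ are meridians of the $l$ strands of $\widehat{\beta}$, $\tau$ is the loop around the $S^1$-factor, and $\alpha_\beta \in \mathrm{Aut}(F_l)$ is the Artin automorphism of $\beta$. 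Under abelianization $\pi_1(X) \to H_1(X) = \bZ^2$, each $x_i$ maps to the meridian class $t$ of $\widehat{\beta}$ while $\tau$ maps to the meridian class $x$ of $U$ (it links $U$ once), which already pins down which variable is which.

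Next I would extract the Alexander module $H_1(\widetilde{X})$ of the maximal abelian cover $\widetilde{X}$. The fibration $X \to S^1$ dual to $x$ has fiber $D_l$, so its infinite cyclic cover is $D_l \times \bR$; pulling back along the further homomorphism $F_l = \pi_1(D_l) \to \bZ$, $x_i \mapsto 1$, gives the connected cyclic cover $\widehat{D}_l \to D_l$, and one checks that $\widetilde{X} \simeq \widehat{D}_l \times \bR$, with the deck action of $\langle t \rangle$ coming from $\widehat{D}_l \to D_l$ and the deck action of $\langle x \rangle$ given by a lift $\widehat{\phi}_\beta$ of $\phi_\beta$. The cellular chain complex of $\widehat{D}_l$ over $\bZ[t^{\pm 1}]$ has one $0$-cell $v$ and $l$ $1$-cells $e_i$ with $\partial e_i = (t-1)v$, so $H_1(\widehat{D}_l) \cong \{\sum a_i e_i : \sum a_i = 0\} \cong \bZ[t^{\pm 1}]^{l-1}$; this is the reduced Burau module, on which $\widehat{\phi}_{\beta\,*}$ acts as $\rho(\beta)(t)$. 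Therefore, as a module over $\Lambda := \bZ[x^{\pm 1}, t^{\pm 1}]$,
\[
H_1(\widetilde{X}) \cong \Lambda^{l-1} \big/ \bigl(xI - \rho(\beta)(t)\bigr)\Lambda^{l-1} .
\]

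Since this is a presentation by a square matrix, the zeroth Fitting ideal of $H_1(\widetilde{X})$ is generated by $\det\bigl(xI - \rho(\beta)(t)\bigr)$, and by the standard identification of the multivariable Alexander polynomial of a $2$-component link with the order of its Alexander module, $\Delta_L(x,t) \doteq \det\bigl(xI - \rho(\beta)(t)\bigr)$, with $x$ and $t$ the variables of $U$ and $\widehat{\beta}$. (A small point of care: whether one uses $H_1(\widetilde{X})$ or the relative module $H_1(\widetilde{X},\ast)$; the two give the same answer up to a unit $\pm x^a t^b$ because the order ideal of the augmentation ideal $(x-1,t-1)\subset\Lambda$ is all of $\Lambda$.)

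The delicate step will be the middle one: making the \emph{reduced} Burau representation appear with exactly the normalization in the statement. A direct Fox-calculus computation from the presentation above produces an $l\times(l+1)$ Alexander matrix whose $x_j$-block is $xI$ minus the \emph{un}reduced Burau matrix and whose $\tau$-column is $(1-t)(1,\dots,1)^{\mathsf T}$; collapsing it to $xI - \rho(\beta)(t)$ requires precisely the right elementary operations, which is why I would organize the computation through the cover $\widehat{D}_l$ instead. The remaining bookkeeping is to track orientations and framings so that the roles of $x$, $t$ and the monomial ambiguity in $\doteq$ match Theorem~\ref{Morton-Alexander-braid} on the nose.
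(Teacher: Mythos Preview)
The paper does not give a proof of this theorem; it is simply quoted from Morton's paper \cite{Mor-braid} and then applied in Lemma~\ref{Lemma_Delta_xt_lower_bound}. There is therefore nothing in the paper to compare your argument against.

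For what it is worth, your sketch is the standard covering-space proof of Morton's result and is correct in outline: the link exterior is the mapping torus of $\phi_\beta$ on $D_l$, its universal abelian cover is homotopy equivalent to the infinite cyclic cover $\widehat{D}_l$ of the punctured disk, and the deck group acts on $H_1(\widehat{D}_l;\bZ[t^{\pm 1}])\cong\bZ[t^{\pm 1}]^{l-1}$ by the reduced Burau matrix $\rho(\beta)$ in the $t$-variable and by the monodromy shift in the $x$-variable, giving the square presentation matrix $xI-\rho(\beta)(t)$ whose determinant is the order of the Alexander module. The only genuine bookkeeping is verifying that your chosen basis of $H_1(\widehat{D}_l)$ reproduces Birman's matrices $\rho(\sigma_i)$ as written (rather than their inverses or transposes), but any discrepancy there changes the answer only by a unit $\pm x^a t^b$ and is absorbed by $\doteq$.
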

\begin{remark}
The sign ``$\doteq$'' in Theorem \ref{Morton-Alexander-braid} means the two sides are equal up to a multiplication by $\pm x^a t^b$. This is necessary because
 the multi-variable Alexander polynomial is only defined up to a multiplication by $\pm x^a t^b$.
\end{remark}

\begin{Lemma}\label{Lemma_Delta_xt_lower_bound}
Suppose $L=K_1\cup K_2$ is an exchangeably braided link with linking number $l\ge 2$. Let $\Delta_L(x,y)$ be the multi-variable Alexander polynomial of $L$. Then the expansion of the Laurent polynomial 
$(x-1)(y-1)\Delta_L(x,y)$
has (strictly) more than $4$ terms.
\end{Lemma}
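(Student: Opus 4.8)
The plan is to combine Morton's formula (Theorem~\ref{Morton-Alexander-braid}) with a direct inspection of the extreme coefficients of $\Delta_L$ in one variable. Write $L=K_1\cup K_2$ with $K_1$ the braid axis and $K_2=\widehat\beta$ the closure of $\beta\in B_l$, so that $\lk(K_1,K_2)=l$. Since the number of monomials in $P(x,y):=(x-1)(y-1)\Delta_L(x,y)$ is unchanged when $\Delta_L$ is multiplied by a unit $\pm x^ay^b$, I will fix once and for all the representative $\Delta_L(x,y):=\det(xI-\rho(\beta)(y))$ provided by Theorem~\ref{Morton-Alexander-braid}; this is \emph{monic} of degree $l-1$ as a polynomial in $x$, and its constant-in-$x$ term is $\det(-\rho(\beta)(y))$, which by \eqref{eqn_det_rho_beta_power_of_t} equals $\pm y^{e}$ for some $e\in\bZ$.

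First I would read off the two extreme ``$x$-columns'' of $P$. The coefficient of $x^{l}$ in $(x-1)\Delta_L$ is the leading coefficient $1$ of $\Delta_L$ in $x$, so the $x^{l}$-column of $P=(y-1)(x-1)\Delta_L$ consists precisely of the monomials $x^{l}y$ and $-x^{l}$; similarly the coefficient of $x^{0}$ in $(x-1)\Delta_L$ is $\mp y^{e}$, so the $x^{0}$-column of $P$ consists precisely of $\mp y^{e+1}$ and $\pm y^{e}$. As $l\ge2$, these four monomials are pairwise distinct, so $P$ has at least four terms, with equality \emph{only if} every $x$-column of $P$ of degree strictly between $0$ and $l$ vanishes.

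The remaining step is to exclude this degenerate case. If all intermediate $x$-columns of $P$ vanish, then so do those of $(x-1)\Delta_L$ (we are working in the integral domain $\bZ[x^{\pm1},y^{\pm1}]$), whence $(x-1)\Delta_L(x,y)=x^{l}\mp y^{e}$ and $\Delta_L(x,y)=(x^{l}\mp y^{e})/(x-1)$. Since $\Delta_L$ is an honest Laurent polynomial, $x-1$ must divide $x^{l}\mp y^{e}$ in $\bZ[x,y^{\pm1}]$; substituting $x=1$ forces the sign to be $-$ and $e=0$, so $\Delta_L(x,y)=1+x+\cdots+x^{l-1}$ is independent of $y$. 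But $L$ is exchangeably braided (Lemma~\ref{lem_exchangeable_braid}), so $K_1$ is also the closure of some $\beta'\in B_l$ with axis the unknot $K_2$; applying Theorem~\ref{Morton-Alexander-braid} in this presentation shows $\Delta_L$ is, up to a unit, a polynomial in $y$ of degree $l-1\ge1$ with nonzero constant term, i.e.\ its $y$-spread equals $l-1>0$, contradicting the $y$-independence of $1+x+\cdots+x^{l-1}$. Hence the intermediate columns cannot all vanish and $P$ has strictly more than four terms.

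I expect the only real care to be in handling the ``$\doteq$'' ambiguity: one must check that pinning $\Delta_L$ down as the specific polynomial $\det(xI-\rho(\beta)(y))$ stays compatible with invoking the dual Morton presentation $\Delta_L\doteq\det(yI-\rho(\beta')(x))$ in the last paragraph. If one prefers to avoid the second application of Morton's theorem, the degenerate case can instead be excluded using the Torres condition $\Delta_L(1,y)\doteq(y^{l}-1)/(y-1)$, which is non-constant for $l\ge2$ because both components of $L$ are unknots; I would use whichever of the two formulations is cleaner in context.
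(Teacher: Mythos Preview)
Your proof is correct and takes essentially the same approach as the paper: use Morton's formula in both presentations of the exchangeably braided link to control the extreme coefficients of $\Delta_L$, read off four monomials from the extreme $x$-columns of $(x-1)(y-1)\Delta_L$, and in the degenerate case deduce $\Delta_L\doteq 1+x+\cdots+x^{l-1}$, contradicting the $y$-spread forced by the dual presentation. The paper plugs $x=1$ into $(x-1)(y-1)\Delta_L$ where you instead argue via divisibility of $x^l\mp y^e$ by $x-1$, and it does not mention the Torres alternative, but these are cosmetic differences.
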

\begin{proof}
Without loss of generality, assume  $x$ and $y$ are the variables corresponding to $K_1$ and $K_2$ respectively.
Let $\beta\in B_l$ be the braid whose closure is isotopic to $K_2$ as a link in the solid torus $S^3-K_1$.
By \eqref{eqn_det_rho_beta_power_of_t} and Theorem \ref{Morton-Alexander-braid}, we have
    \begin{align}
    \Delta_L(x,y)&\doteq (-1)^{l-1}\det(\rho(\beta)(y))+ f_1(y)x+\cdots+f_{l-2}(y)x^{l-2}+ x^{l-1}  \nonumber \\ 
                 &= \pm y^a + f_1(y)x+\cdots+f_{l-2}(y)x^{l-2}+ x^{l-1}  \label{eq-Delta-xt}
    \end{align} 
  for some $a\in \mathbb{Z},f_i\in \mathbb{Z}[y,y^{-1}]$.

Switching the roles of $K_1$ and $K_2$, we have
\begin{equation}\label{eq-Delta-tx}
 \Delta_L(x,y)\doteq \pm x^b + g_1(x)y+\cdots+g_{l-2}(x)y^{l-2}+ y^{l-1}
 \end{equation}
 for some $b\in \mathbb{Z},g_i(x)\in \mathbb{Z}[x,x^{-1}]$.

By \eqref{eq-Delta-xt}, we have
\begin{equation*}
(y-1)\Delta_L(x,y)\doteq \pm(y-1)y^a + (y-1)f_1(y)x+\cdots+(y-1)f_{l-2}(y)x^{l-2}+ (y-1)x^{l-1},
\end{equation*}
hence we have the following expansion in order of increasing powers of $x$:
\begin{equation*}
(x-1)(y-1)\Delta_L(x,y)\doteq \pm(y-1)y^a + h_1(y)x+\cdots+ h_{l-1}(y)x^{l-1} + (y-1)x^l.
\end{equation*}
The right-hand-side has at least $4$ terms after expansion, which come from the lowest and highest powers of $x$.
Suppose it has only $4$ terms in  total, then all the terms in between must vanish, thus we have
\begin{equation} \label{eqn_(x-1)(y-1)Delta_four_terms}
(x-1)(y-1)\Delta_L(x,y)\doteq \pm(y-1)y^a + (y-1)x^l.
\end{equation}
Plugging in $x=1$, we have
\begin{equation*}
0= \pm(y-1)y^a + (y-1),
\end{equation*}
therefore $a=0$, and \eqref{eqn_(x-1)(y-1)Delta_four_terms} gives
\begin{equation*}
\Delta_L(x,y)\doteq \frac{ -(y-1) + (y-1)x^l}{(x-1)(y-1)}=1+x+\cdots +x^{l-1}
\end{equation*}
which contradicts \eqref{eq-Delta-tx} when $l\ge 2$.
\end{proof}

\begin{proof}[Proof of Lemma \ref{Lemma-Kh_bound_linking_number}]
Suppose $l\ge 2$.
We use $\HFK$ and $\HFL$ to denote 
the Heegaard knot Floer homology \cite{OS:HFK,Ras:HFK} and link Floer homology \cite{OS:HFL} respectively. 
The link Floer homology was originally defined only for ${\mathbb{Z}/2}$-coefficients, and was generalized to $\bZ$-coefficients in \cite{Sar-HFL}.
It is known that 
$$
\rank_{\bQ} \HFK(L;\bQ)=\rank_{\bQ}\HFL(L;\bQ),
$$ 
but $\HFL$ carries more refined gradings.

By \cite[Corollary 1.7]{Dowlin}, we have
\begin{equation}\label{eq_HFK<Kh}
\rank_\bQ \HFK(L;\bQ)\le 2\rank \Khr(L;\bQ)\le 2\rank \Khr(L;{\mathbb{Z}/2})=4.
\end{equation}
On the other hand, let $\Delta_L(x,y)$ be the multi-variable Alexander polynomial of $L$, it was proved in \cite{OS:HFL} that the graded Euler characteristic of $\HFL(L;\bQ)$ satisfies 
$$
\chi(\HFL(L;\bQ))\doteq (x-1)(y-1)\Delta_L(x,y).
$$ 
By Lemma \ref{Lemma_Delta_xt_lower_bound}, we have
$$
\rank_\bQ \HFK(L;\bQ)=\rank_\bQ \HFL(L;\bQ)> 4
$$
which contradictions \eqref{eq_HFK<Kh}.
\end{proof}

We introduce the following condition on a link $L\subset S^3$:
\begin{Condition}
\label{cond_cycle}
~

\begin{itemize}
\item[(1)] $L$ has $n\ge 3$ connected components,
\item[(2)] the rank of $\Kh(L;{\mathbb{Z}/2})$ is $2^n$,
\item[(3)] the components of $L$ can be arranged as a sequence $K_1,\cdots,K_n$, 
such that the linking number of $K_i$ and $K_j$ ($i\neq j$) is $\pm 1$ when $|i-j|=1$ or $n-1$, and is zero otherwise.
\end{itemize}
\end{Condition}
 
Theorem \ref{Thm-forest_detection_linking_number} and Lemma \ref{Lemma-Kh_bound_linking_number} have the following consequence.

\begin{Lemma} \label{thm_modulo_cond_cycle}
If $L_0$ is an $m$-component link with $\rank_{\bZ/2} \Kh(L_0;\bZ/2)= 2^m$, then either $L_0$ is a forest of unknots, or $L_0$ contains a sublink $L$ satisfying Condition \ref{cond_cycle}.
\end{Lemma}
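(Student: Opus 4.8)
The plan is to reduce the statement to a question about the "linking graph" of $L_0$ and then apply Theorem~\ref{Thm-forest_detection_linking_number} and Lemma~\ref{Lemma-Kh_bound_linking_number}. First, I would define a graph $\Gamma$ whose vertices are the components $K_1,\dots,K_m$ of $L_0$, with an edge between $K_i$ and $K_j$ whenever $\lk(K_i,K_j)\neq 0$. By Proposition~\ref{Prop_Kh_rank_sublink}, every sublink $L'$ of $L_0$ satisfies $\rank_{\bZ/2}\Kh(L';\bZ/2)=2^{|L'|}$; in particular, applying Lemma~\ref{Lemma-Kh_bound_linking_number} to every 2-component sublink shows that $|\lk(K_i,K_j)|\le 1$ for all $i\neq j$, so every edge of $\Gamma$ records a linking number of exactly $\pm 1$.

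Next I would do a case analysis on whether $\Gamma$ contains a cycle. If $\Gamma$ is a forest, then $L_0$ has a linking graph that coincides with that of some forest of unknots $L_G$ (choosing $L_G$ to be the forest of unknots associated to $\Gamma$, with appropriate orientations so that the linking numbers match signs). Then Condition~(1) of Theorem~\ref{Thm-forest_detection_linking_number} holds by hypothesis, and Condition~(2) holds by construction, so Theorem~\ref{Thm-forest_detection_linking_number} gives that $L_0$ is isotopic to $L_G$, hence is a forest of unknots. If instead $\Gamma$ contains a cycle, pick a shortest cycle in $\Gamma$, say on vertices $K_{i_1},\dots,K_{i_n}$ with $n\ge 3$ (a shortest cycle has no chords, by minimality). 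Let $L:=K_{i_1}\cup\cdots\cup K_{i_n}$ be the corresponding sublink. By Proposition~\ref{Prop_Kh_rank_sublink} we have $\rank_{\bZ/2}\Kh(L;\bZ/2)=2^n$, which is Condition~\ref{cond_cycle}(2); Condition~\ref{cond_cycle}(1) is $n\ge 3$; and Condition~\ref{cond_cycle}(3) holds because, along a chordless cycle, consecutive components have linking number $\pm 1$ and non-consecutive components of the cycle have linking number $0$ (no chords), after relabelling the $K_{i_j}$ cyclically as $K_1,\dots,K_n$. Thus $L$ is a sublink of $L_0$ satisfying Condition~\ref{cond_cycle}.

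The main subtlety — rather than a deep obstacle — is bookkeeping with signs and orientations: Condition~\ref{cond_cycle} and the definition of "forest of unknots" only constrain linking numbers up to sign, and one must be careful that "the linking graph is a forest" really does let us match $L_0$ against an \emph{oriented} forest of unknots $L_G$ as required by hypothesis~(2) of Theorem~\ref{Thm-forest_detection_linking_number}. This is harmless because a forest of unknots $L_G$ can be oriented to realize any prescribed $\pm 1$ assignment of linking numbers on the edges of its defining forest (reversing the orientation of a leaf component flips exactly the linking numbers along its single incident edge, and one can induct from the leaves inward). The other point to check is that a shortest cycle in a graph is automatically chordless, which is immediate: a chord would produce a strictly shorter cycle. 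With these observations, the proof assembles directly from the cited results, and no further computation is needed.
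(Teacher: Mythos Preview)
Your proposal is correct and follows essentially the same approach as the paper: build the linking graph, use Proposition~\ref{Prop_Kh_rank_sublink} and Lemma~\ref{Lemma-Kh_bound_linking_number} to ensure all linking numbers are $0$ or $\pm 1$, apply Theorem~\ref{Thm-forest_detection_linking_number} if the graph is a forest, and otherwise take a shortest cycle to produce the sublink satisfying Condition~\ref{cond_cycle}. Your additional remarks on orienting the forest of unknots to match signs and on shortest cycles being chordless simply make explicit two points the paper leaves implicit.
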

 
\begin{proof}
Let $K_1,\cdots,K_m$ be the components of $L_0$. By Proposition \ref{Prop_Kh_rank_sublink} and Lemma \ref{Lemma-Kh_bound_linking_number}, for each pair $i\neq j$, the linking number of $K_i$ and $K_j$ is equal to $0$ or $\pm1$. Let $G$ be a simple graph with $m$ vertices $p_1,\cdots,p_m$, such that $p_i$ and $p_j$ are connected by an edge if and only if $|\lk(K_i,K_j)|=\pm 1$. If $G$ is a forest, then Theorem \ref{Thm-forest_detection_linking_number} implies that $L_0$ is a forest of unknots. If $G$ contains a cycle, then the vertices of the shortest cycle of $G$ corresponds to a sublink of $L_0$ satisfying Condition \ref{cond_cycle}.
\end{proof}

The next subsection gives a proof of Theorem \ref{thm_small_rank_detection_intro}.  
After that, the rest of this article is devoted to proving that there is no link satisfying Condition \ref{cond_cycle}, therefore Theorem \ref{The_main_theorem} will follow from Lemma \ref{thm_modulo_cond_cycle}.

\subsection{Some 2-component links with small Khovanov homology}
This subsection gives a proof of Theorem \ref{thm_small_rank_detection_intro}, and shows that the 
bi-graded Khovanov homology detects some simple 2-component links other than the unlink and the Hopf link. The results of this subsection will not be used in the proof of Theorem \ref{The_main_theorem}.

Recall that the internal grading of the Khovanov homology of a link $L$ is introduced in \cite[Section 2]{Kh-unlink} as $h-q$, where 
$h$ is the homological grading and $q$ is the  quantum grading. 
 The following is a special case of a more general result due to Batson and Seed.
\begin{Theorem}[{\cite[Corollary 4.4]{Kh-unlink}}]\label{Theorem_Kh_linking_number}
Suppose $L=K_1\cup K_2$ is a 2-component oriented link. Then we have
$$
\rank_{\mathbb{F}}^{l}\Kh(L;\mathbb{F})\ge \rank_{\mathbb{F}}^{l+2\lk(K_1,K_2)} (\Kh(K_1;\mathbb{F})\otimes \Kh(K_2;\mathbb{F}))
$$
where $\mathbb{F}$ is an arbitrary field and $\rank^k$ denotes the rank of the summand with internal grading $k$.
\end{Theorem}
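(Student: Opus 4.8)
The plan is to reconstruct Batson and Seed's link-splitting spectral sequence and to keep careful track of the internal grading $i=h-q$ throughout. First I would set up the deformed complex. Following \cite{Kh-unlink}, work over the ring $\bF[u]$ and deform the Khovanov differential $d_0$ on the cube of resolutions of a diagram of $L$ to $d_u=d_0+u\,\delta$, where $\delta$ is a correction term supported at the crossings between a strand of $K_1$ and a strand of $K_2$. One verifies $d_u^2=0$ (which reduces to the local identities $d_0\delta+\delta d_0=0$ and $\delta^2=0$ in the Khovanov Frobenius algebra), and one assigns $u$ a $q$-degree so that $d_u$ is homogeneous for the bigrading $(h,q)$; then $(C(L)\otimes\bF[u],d_u)$ stays $(h,q)$-bigraded, and in particular carries the internal grading $i=h-q$.

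Next I would analyze the two specializations. Reducing mod $u$ recovers $(C(L),d_0)$, so the $u$-adic filtration on $(C(L)\otimes\bF[u],d_u)$ yields a spectral sequence with $E_1$-page $\Kh(L;\bF)$ (carrying its bigrading) whose differential $d_r$ on each page is homogeneous of a fixed $(h,q)$-bidegree. Inverting $u$, I would prove — by induction on the number of inter-component crossings, the inductive step being a local delooping move that removes one such crossing — that $(C(L)\otimes\bF[u,u^{-1}],d_u)$ is chain homotopy equivalent, up to an overall $(h,q)$-shift, to the Khovanov complex of the split link $K_1\sqcup K_2$; over the field $\bF$ the K\"unneth formula then gives $\Kh(K_1\sqcup K_2;\bF)\cong\Kh(K_1;\bF)\otimes_{\bF}\Kh(K_2;\bF)$. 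Hence the limit $E_\infty$ is the associated graded of the bigraded $\bF[u]$-module $H(C(L)\otimes\bF[u],d_u)$, whose free part is, up to the overall shift, $\Kh(K_1\sqcup K_2;\bF)$; so $E_\infty$ is simultaneously a bigraded subquotient of $E_1=\Kh(L;\bF)$ and contains a copy of the shifted $\Kh(K_1\sqcup K_2;\bF)$.

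The remaining input is the grading bookkeeping that fixes the shift: tracking the $(h,q)$-shift through the delooping induction, one finds that each inter-component crossing contributes its sign to $\lk(K_1,K_2)$ and a matching unit to the internal-grading shift, so the total shift is $2\lk(K_1,K_2)$. Putting the pieces together: since every $d_r$ is $(h,q)$-homogeneous, $E_\infty^{h,q}$ is a subquotient of $E_1^{h,q}=\Kh^{h,q}(L;\bF)$, and summing along the diagonal $h-q=l$ gives
$$
\rank_{\bF}^{l}\Kh(L;\bF)\ \ge\ \rank^{l}E_\infty\ \ge\ \rank_{\bF}^{l+2\lk(K_1,K_2)}\bigl(\Kh(K_1;\bF)\otimes\Kh(K_2;\bF)\bigr),
$$
which is the assertion.

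The hard part will be the second step: showing that inverting $u$ really gives the split-link complex is the technical core of \cite{Kh-unlink} and rests on a careful local analysis at the inter-component crossings, and one must control how the three gradings ($h$, $q$, and the $u$-weight) interact precisely enough that the total shift is exactly $2\lk(K_1,K_2)$. Apart from this, the argument uses only the K\"unneth formula and elementary structure of graded $\bF[u]$-modules, so no assumption on $\bF$ is needed beyond the statement.
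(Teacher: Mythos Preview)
The paper does not prove this theorem; it is quoted verbatim as \cite[Corollary 4.4]{Kh-unlink} and used as a black box. Your sketch is a faithful reconstruction of Batson and Seed's own argument: deform the Khovanov differential at the inter-component crossings, filter by powers of the deformation parameter to get a spectral sequence from $\Kh(L;\bF)$, and identify the localized complex with the split link to obtain the rank inequality, the shift $2\lk(K_1,K_2)$ coming from the accumulated sign contributions of the inter-component crossings.

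One minor inaccuracy: you assert that $d_u$ can be made homogeneous for the full $(h,q)$-bigrading and that each page differential $d_r$ is $(h,q)$-homogeneous. This is not quite right---the perturbation $\delta$ does not have the same $(h,q)$-bidegree as $d_0$, and what actually survives is homogeneity in the single internal grading $l=h-q$. That is precisely why the statement is phrased in terms of $l$ rather than the full bigrading. This does not affect the logic of your argument, since the conclusion only concerns $l$, but you should adjust the claim accordingly.
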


Let $L_1$ be the oriented link given in Figure \ref{fig_braid_1}, then $L_1$  
is isotopic to the link L4a1 in the Thistlethwaite link table. 
 Its Khovanov homology is given by 
\begin{equation*}
\Kh(L_1;\mathbb{Z})=\bZ_{(0)}\oplus \bZ_{(1)}\oplus \bZ_{(2)}\oplus (\bZ/2)_{(3)}\oplus \bZ_{(4)}^2\oplus \bZ_{(6)},
\end{equation*}
where the subscripts represent the internal gradings. 
\begin{Theorem}\label{Theorem_Kh_detect_L4a1}
Let $L=K_1\cup K_2$ be a 2-component oriented link. Suppose we have 
$$
\Kh(L;\bZ/2)\cong \Kh(L_1;\bZ/2)
$$
as bi-graded abelian groups, then
$L$ is isotopic to $L_1$.
\end{Theorem}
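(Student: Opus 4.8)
The plan is to follow the strategy of Lemma \ref{lem_exchangeable_braid}, adapted to rank $8$: first show that both components of $L$ are unknots, then read off the linking number from the bigraded Khovanov homology, and finally use the annular instanton invariant to identify $L$ exactly.

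That $K_1$ and $K_2$ are unknots follows as in Proposition \ref{Prop-miminal_rank_unknot}. Since $\rank_{\bZ/2}\Kh(L;\bZ/2)=\rank_{\bZ/2}\Kh(L_1;\bZ/2)=8$, the sublink inequality of \cite{Kh-unlink} used in Proposition \ref{Prop_Kh_rank_sublink} gives $\rank_{\bZ/2}\Kh(K_i;\bZ/2)\le 4$, hence $\rank_{\bZ/2}\Khr(K_i;\bZ/2)\le 2$ by \eqref{eqn_relation_Kh_Khr}; this rank is odd, because the graded Euler characteristic of $\Khr(K_i;\bZ/2)$ is the reduced Jones polynomial of a knot, which is nonzero at $q=1$, so the rank equals $1$ and \cite{KM:Kh-unknot} applies. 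For the linking number, the graded Euler characteristic of $\Kh(L;\bZ/2)$ is the unreduced Jones polynomial, so $\hat V_L=\hat V_{L_1}$; since $V_L'(1)=-3\,\lk(K_1,K_2)$ for any $2$-component link, this forces $\lk(K_1,K_2)=\lk(K_1',K_2')=:\lambda$, where $K_1'\cup K_2'=L_1$ and $|\lambda|=2$. (Alternatively $|\lambda|$ can be extracted from Theorem \ref{Theorem_Kh_linking_number}.)

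Now fix $p\in K_1$ and view $K_2$ as a knot of winding number $\lambda$ in the solid torus $S^3\setminus N(K_1)$. Proposition \ref{Prop-AHI=I^natural}, together with the Kronheimer--Mrowka spectral sequence, the universal coefficient theorem, mirror symmetry of reduced Khovanov homology, and \eqref{eqn_relation_Kh_Khr}, gives
$$
\dim_\bC\AHI(K_2)=\dim_\bC\II^\natural(L,p)\le\rank_\bZ\Khr(\overline L,p;\bZ)\le\rank_{\bZ/2}\Khr(L,p;\bZ/2)=4 .
$$
Moreover $\rank_{\mathcal R}\AHI(K_2;\Gamma)=2$ by \eqref{eq-rank_lower_bound}, so the universal coefficient theorem over the principal ideal domain $\mathcal R$ shows $\dim_\bC\AHI(K_2)$ is even, whence $\dim_\bC\AHI(K_2)\in\{2,4\}$. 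Let $M$ be the top f-grading of $\AHI(K_2)$; by Theorem \ref{Theorem-2g+n} we have $\AHI(K_2,\pm M)\neq 0$, $M\ge|\lambda|=2$, and $M$ is even. If $\dim_\bC\AHI(K_2,M)=1$, then Proposition \ref{AHI-braid-detection} shows $K_2$ is the closure of an $M$-strand braid with axis $K_1$, so its winding number equals $M$; hence $M=2$, $K_2=\hat\beta$ for some $\beta\in B_2\cong\bZ$, and since $K_2$ is an unknot, $\beta=\sigma_1^{\pm1}$. Then $L$ is the $(2,\pm4)$-torus link, and the bigrading of $\Kh(\cdot;\bZ/2)$ — supported in nonnegative versus nonpositive homological gradings — picks out $L\simeq L_1$.

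The hard part is the remaining possibility $\dim_\bC\AHI(K_2,M)\ge 2$. Here $M\ge 4$ (if $M=2$, then Theorem \ref{Theorem-2g+n} and Proposition \ref{AHI-braid-detection} would force $\dim_\bC\AHI(K_2,2)=1$), and then $\dim_\bC\AHI(K_2)=4$ with $\AHI(K_2)\cong\bC^2_{(M)}\oplus\bC^2_{(-M)}$; the coarse dimension bound does not by itself exclude such a configuration. I expect to rule it out by extracting finer information: running the symmetric argument for $K_1\subset S^3\setminus N(K_2)$, exploiting the precise internal gradings of $\Kh(L_1;\bZ/2)$ via Theorem \ref{Theorem_Kh_linking_number}, and/or bounding the multivariable Alexander polynomial of $L$ through $\rank_\bQ\HFL(L;\bQ)\le 2\rank_{\bZ/2}\Khr(L;\bZ/2)=8$ along the lines of Lemma \ref{Lemma_Delta_xt_lower_bound}, so as to leave no room for a braid axis on more than two strands. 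Everything else is a routine assembly of results already established.
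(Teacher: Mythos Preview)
Your argument has a genuine gap in exactly the place you flag as ``the hard part,'' and the tools you list do not close it. The paper's proof handles this case with a parity argument you have not used: by \cite[Section 4.4]{AHI}, since $K_2$ is homotopic in the solid torus to the closure $\hat\beta_2$ of the trivial $2$-braid, one has
\[
\dim_\bC\AHI(K_2,j)\equiv\dim_\bC\AHI(\hat\beta_2,j)\pmod 2\qquad\text{for every }j.
\]
Since $\AHI(\hat\beta_2)=\AHI(\mathcal K_2)\cong\bC_{(-2)}\oplus\bC^2_{(0)}\oplus\bC_{(2)}$ (Example~\ref{eg-Uk-Kl}), this gives that $\dim_\bC\AHI(K_2,2)$ is \emph{odd} and $\dim_\bC\AHI(K_2,j)$ is \emph{even} for every $j>2$. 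Together with the symmetry $\AHI(K_2,j)\cong\AHI(K_2,-j)$ and your bound $\dim_\bC\AHI(K_2)\le 4$, this immediately forces the top f-grading to be $2$ with $\dim_\bC\AHI(K_2,2)=1$, and Proposition~\ref{AHI-braid-detection} finishes the job. No symmetric argument, no $\HFL$ computation, no further use of internal gradings is needed.

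Two specific errors in your write-up: first, your parenthetical ``if $M=2$, then Theorem~\ref{Theorem-2g+n} and Proposition~\ref{AHI-braid-detection} would force $\dim_\bC\AHI(K_2,2)=1$'' is circular --- Proposition~\ref{AHI-braid-detection} is an equivalence, and neither it nor Theorem~\ref{Theorem-2g+n} bounds the dimension at the top grading from above. Second, the claim that $\dim_\bC\AHI(K_2)$ is even because $\rank_{\mathcal R}\AHI(K_2;\Gamma)=2$ is unjustified: the universal coefficient theorem over $\mathcal R$ gives a lower bound, not a parity constraint, and there could be odd-dimensional torsion contributions at $t=1$. Your extraction of $|\lk|=2$ from the Jones polynomial is cleaner than the paper's internal-grading argument (which only narrows it to $1$ or $2$), but this shortcut does not help with the main difficulty.
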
 
\begin{proof}
To simplify notation, we use $\bF$ to denote the field $\bZ/2$.
By \eqref{eq_Kh_L4a1} and the universal coefficient theorem, we have
\begin{equation}\label{eq_Kh_L4a1}
\Kh(L;\bF)=\bF_{(0)}\oplus \bF_{(1)}\oplus \bF_{(2)}^2\oplus \bF_{(3)}\oplus \bF_{(4)}^2\oplus \bF_{(6)},
\end{equation}
where the subscripts represent the internal grading.
By Theorem \ref{Theorem_Kh_linking_number}, we have
$$
8=\rank_\bF\Kh(L;\bF)\ge \rank_\bF\Kh(K_1;\bF) \cdot \rank_\bF\Kh(K_2;\bF),
$$
hence we have $\rank_\bF\Kh(K_i;\bF)\le 4$. On the other hand, we have $\rank_\bF\Kh(K_i;\bF)=2 \rank_\bF\Khr(K_i;\bF)$, and
$\rank_\bF\Khr(K_i;\bF)$ is always odd for knots. Therefore 
\begin{align*}
 \rank_\bF\Kh(K_1;\bF) &=\rank_\bF\Kh(K_2;\bF)=2,
 \\
 \rank_\bF\Khr(K_1;\bF) &=\rank_\bF\Khr(K_2;\bF)=1.
\end{align*}
By Kronheimer-Mrowka's unknot detection theorem, both $K_1$ and $K_2$ are unknots. Hence
$$
\Kh(K_1;\bF)\otimes \Kh(K_2;\bF)=\bF_{(-2)}\oplus \bF_{(0)}^2\oplus \bF_{(2)}.
$$
By Theorem \ref{Theorem_Kh_linking_number} and \eqref{eq_Kh_L4a1}, we have $\lk(K_1,K_2)=1~\text{or}~2$,
hence $K_1$ is homotopic to the closure of a 1-braid or a 2-braid in the solid torus $S^3-N(K_2)$. 
Let $l:=\lk(K_1,K_2)$, and let $\hat{\beta}_l$ be the closure of the corresponding 1-braid or 2-braid.
By \cite[Section 4.4]{AHI}, 
\begin{equation}\label{eq_parity_AHI_odd}
\dim_\bC\AHI(K_1,l)\equiv\dim_\bC\AHI(\hat{\beta}_l,l)= 1 \mod~2 ,
\end{equation}
and
\begin{equation}\label{eq_parity_AHI_even}
\dim_\bC\AHI(K_1,j)\equiv\dim_\bC\AHI(\hat{\beta}_i,j)= 0 \mod~2 ~\text{ if}~j>l.
\end{equation}
By Proposition \ref{Prop-AHI=I^natural} and Kronheimer-Mrowka's spectral sequence (\cite[Theorem 8.2]{KM:Kh-unknot}), we have 
\begin{equation}\label{eq_rank_inequal_AHI}
4=\rank_\bF\Khr(L_1,p;\bF) \ge \dim_\bC\AHI(K_1),
\end{equation}
where $p\in K_2$.

If $l=1$, we have $\AHI(K_1,\pm 1)=\bC$ by \eqref{eq_parity_AHI_odd} and \eqref{eq_rank_inequal_AHI}. By \eqref{eq_parity_AHI_even} and \eqref{eq_rank_inequal_AHI}, we have $\AHI(K_1,j)= 0$ for all $j>l$. Hence the top f-grading of $\AHI(K_1)$ is 1 and $K_1$ is the closure of a 1-braid by
Theorem \ref{AHI-braid-detection}.
This implies that $L$ is a Hopf link, whose Khovanov homology is different from $\Kh(L_1)$, which yields a contradiction. 
Hence we must have $l=2$.
By  similar arguments as above, we obtain
 $\AHI(K_1,\pm 2)=\bC$ and the top f-grading of $\AHI(K_1)$ is 2. Therefore $K_1$ is the closure of a 2-braid. Since $K_1$ is an unknot,
this 2-braid must be given by a generator of the 2-braid group. The proof is then completed by directly checking all the possiblities. 
\end{proof}

Now we prove the second part of Theorem \ref{thm_small_rank_detection_intro}.
Let $T$ be the left-handed trefoil, we have
$$
\Kh(T;\bZ)\cong \bZ_{(-3,-9)}\oplus\bZ_{(-2,-5)}\oplus \bZ_{(0,-3)}\oplus \bZ_{(0,-1)} \oplus (\bZ/2)_{(-2,-7)},
$$
where the subscripts represent the $(h,q)$-bigrading.
Let $L_2$ be the disjoint union of $T$ and an unknot $U$, then we have
$$
\Khr(L_2,p;\bZ)\cong \Kh(T;\bZ),
$$
where the base point $p\in U$. 
\begin{Theorem}
Let $L=K_1\cup K_2$ be a 2-component link with a base point $q\in K_2$. Suppose we have 
$$
\Khr(L,q;\bZ)\cong \Khr(L_2,p;\bZ)
$$
as bi-graded abelian groups. Then the link
 $L$ splits into a left-handed trefoil $K_1$ and an unknot $K_2$.
\end{Theorem}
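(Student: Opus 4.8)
The plan is to imitate the proof of Theorem~\ref{Theorem_Kh_detect_L4a1}, with the left-handed trefoil $T$ playing the role of one of the two unknots there, and then to supplement it with an argument forcing $L$ to be split.

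First I would convert the hypothesis into rank data. Since $\Khr(L_2,p;\bZ)\cong\Kh(T;\bZ)\cong\bZ^4\oplus\bZ/2$, the universal coefficient theorem gives $\Khr(L;\bZ/2)\cong(\bZ/2)^6$; this is independent of the base point, so $\rank_{\bZ/2}\Kh(L;\bZ/2)=12$ by~\eqref{eqn_relation_Kh_Khr}, while $\rank_{\bQ}\Khr(L,q;\bQ)=4$. Moreover, over $\bZ/2$ one has $\Kh(L;\bZ/2)\cong\Khr(L;\bZ/2)\otimes(\bZ/2_{(0,1)}\oplus\bZ/2_{(0,-1)})$ by~\cite{Shu:torsion_Kh}, so the bigraded group $\Kh(L;\bZ/2)$ is completely determined: it agrees with $\Kh(T;\bZ/2)\otimes\Kh(U;\bZ/2)$ and is supported in homological gradings $-3,-2,0$. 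Batson--Seed's inequality (Theorem~\ref{Theorem_Kh_linking_number}) then gives $\rank_{\bZ/2}\Khr(K_1;\bZ/2)\cdot\rank_{\bZ/2}\Khr(K_2;\bZ/2)\le 3$; since the reduced rank of a knot is odd, the pair of these reduced ranks is $(1,1)$, $(1,3)$ or $(3,1)$, and by Kronheimer--Mrowka's unknot detector and Baldwin--Sivek's trefoil detector~\cite{BS}, in every case at least one of $K_1,K_2$ is an unknot and a component of reduced rank $3$ is a trefoil.

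Next I would invoke the instanton machinery. Kronheimer--Mrowka's spectral sequence together with the mirror symmetry of rational reduced Khovanov homology yields $\dim_{\bC}\II^\natural(L,q)\le\rank_{\bQ}\Khr(\bar L,q;\bQ)=4$. Whenever an unknotted component $U$ carries the base point (introduce an auxiliary one if necessary), Proposition~\ref{Prop-AHI=I^natural} identifies $\II^\natural$ with $\AHI$ of the remaining component $K$ viewed in the solid torus $S^3-N(U)$, whose winding number is $|\lk(K_1,K_2)|$. Combining $\dim_{\bC}\AHI(K)\ge 2$ (from~\eqref{eq-rank_lower_bound}) with Theorem~\ref{Theorem-2g+n}, Proposition~\ref{AHI-braid-detection} and Corollary~\ref{Cor-minimal_rank-meridional_surface}, I would argue that either $K$ lies in a ball disjoint from $U$, so $L$ is split, or $K$ is the closure of a nontrivial braid about $U$; in the latter case Morton's Theorem~\ref{Morton-Alexander-braid} constrains the multivariable Alexander polynomial $\Delta_L$, and running the same estimate as in Lemma~\ref{Lemma_Delta_xt_lower_bound} through the link Floer homology~\cite{OS:HFL} and Dowlin's inequality~\cite{Dowlin}, as in the proof of Lemma~\ref{Lemma-Kh_bound_linking_number}, makes $\rank_{\bQ}\HFK(L;\bQ)$ and hence $\rank_{\bZ/2}\Khr(L;\bZ/2)$ too large, a contradiction. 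Thus $L$ is split, and the finitely many remaining split configurations (two unknots, or an unknot and a trefoil with $q$ on the trefoil) are eliminated by comparing their bigraded Khovanov homology, computed via the K\"unneth formula, with the distribution fixed in the first step. Concretely, writing $L=K\sqcup K'$ with $q\in K$, the K\"unneth formula gives $\Khr(L,q;\bZ)\cong\Kh(K';\bZ)\otimes_{\bZ}\Khr(K,q;\bZ)$; matching this with $\bZ^4\oplus\bZ/2$ forces $K$ to be an unknot and $K'$ a trefoil, so $\Khr(L,q;\bZ)\cong\Kh(K';\bZ)$, and comparing $(h,q)$-bigradings with $\Kh(T;\bZ)$ — using that the two trefoils are mirror images — identifies $K'$ (which must be $K_1$) as the left-handed trefoil. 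Alternatively, once a trefoil component is present one has $\rank_{\bZ/2}\Kh(L;\bZ/2)=\rank_{\bZ/2}\Kh(K_1;\bZ/2)\cdot\rank_{\bZ/2}\Kh(K_2;\bZ/2)$, so Theorem~\ref{Theorem_Kh_linking_number} is sharp in every internal grading, which pins $\lk(K_1,K_2)$ to $0$ and fixes the handedness directly.

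The hard part is the step producing a \emph{geometric} splitting from the bound $\dim_{\bC}\II^\natural(L,q)\le 4$. In the situation of Theorem~\ref{Theorem_Kh_detect_L4a1} the relevant annular homology had rank exactly $2$, so Proposition~\ref{AHI-braid-detection} applied on the nose; here that rank may be $3$ or $4$, so one must combine the Thurston-norm detection of Theorem~\ref{Theorem-2g+n} with the Alexander-polynomial estimates to exclude the intermediate possibilities, and one must also push the base-point bookkeeping through the case in which $q$ lies on the non-unknotted component. This, rather than the identification of the summands, is where the real work lies.
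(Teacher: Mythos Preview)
You correctly single out the base-point issue as ``where the real work lies'', but you do not actually resolve it, and your proposed fix (``introduce an auxiliary one if necessary'') does not work. The hypothesis fixes $\Khr(L,q;\bZ)$ only for $q\in K_2$; over $\bZ$ or $\bQ$ reduced Khovanov homology of a link genuinely depends on the base point, so you have no bound of the form $\dim_\bC\II^\natural(L,q')\le 4$ for $q'\in K_1$. In the case your Batson--Seed estimate labels $(1,3)$, namely $K_1$ an unknot and $K_2$ a trefoil, this leaves you with no route into Proposition~\ref{Prop-AHI=I^natural}. The paper closes this gap with the local-coefficient machinery of Section~\ref{sec_local_coef}: with $\Gamma$ the local system attached to $K_1$, one has
\[
4\ \ge\ \dim_\bC\II^\natural(L,q)\ \ge\ \rank_{\mathcal R}\II^\natural(L,q;\Gamma)
\ \stackrel{\text{Cor.~\ref{LocallyFreePart-inst}}}{=}\ \rank_{\mathcal R}\II^\natural(K_2\cup U,q;\Gamma_U)
\ \stackrel{\text{excision}}{=}\ 2\dim_\bC\II^\natural(K_2,q),
\]
so $\dim_\bC\II^\natural(K_2,q)\le 2$, hence $=1$ by parity, and $K_2$ is the unknot by \cite{KM:Kh-unknot}. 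This is the missing idea; nothing in your outline substitutes for it.

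Once $K_2$ is unknotted, your sketch and the paper's proof diverge in a second place. Your dichotomy ``either $K$ lies in a ball or $K$ is a braid closure'' does not follow from Theorem~\ref{Theorem-2g+n} and Proposition~\ref{AHI-braid-detection} alone: the bound $\dim_\bC\AHI(K_1)\le 4$ allows $\dim_\bC\AHI(K_1,l)=2$ at the top f-grading, which is neither split nor a braid closure; and Corollary~\ref{Cor-minimal_rank-meridional_surface} needs the winding number to be $0$ or $\pm 1$, which you have not established. Your appeal to Lemma~\ref{Lemma_Delta_xt_lower_bound} also requires $L$ to be \emph{exchangeably} braided, i.e.\ a braid in both directions, which you have not shown. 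The paper avoids all of this: it uses the internal-grading form of Batson--Seed (Theorem~\ref{Theorem_Kh_linking_number}) against the explicit profile of $\Kh(L;\bZ/2)$ to pin down $\lk(K_1,K_2)$ in each case, and then, when $K_1$ is a trefoil and $\lk=0$, combines the parity statement $\dim_\bC\AHI(K_1,j)\equiv 0\pmod 2$ for $j\neq 0$ with Proposition~\ref{Prop-AHI-rank-inequality} to force the top f-grading of $\AHI(K_1)$ to be $0$, whence Theorem~\ref{Theorem-2g+n} produces a Seifert disk for $K_2$ disjoint from $K_1$ and $L$ splits.
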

\begin{proof}
By Kronheimer-Mrowka's spectral sequence (\cite[Theorem 8.2]{KM:Kh-unknot}), we have
\begin{equation}
\label{eqn_Khr_rank_no_more_than_4_trefoil_unknot}
4=\rank_{\bZ}\Khr(L,q;\bZ)\ge \dim_\bC\II^\natural(L,q).
\end{equation}
Let $\Gamma$ be the local system associated with $K_1$. We have
$$
\dim_\bC\II^\natural(L,q)\ge \rank_{\mathcal{R}}\II^\natural(L,q;\Gamma).
$$
By Proposition \ref{LocallyFreePart-inst}, we have
$$
\rank_{\mathcal{R}}\II^\natural(L,q;\Gamma)=\rank_{\mathcal{R}}\II^\natural(K_2\cup U,q;\Gamma).
$$
By excision, we have
$$
\rank_{\mathcal{R}}\II^\natural(K_2\cup U,q;\Gamma)=2\dim_\bC\II^\natural(K_2,q).
$$
Hence $\dim_\bC\II^\natural(K_2,q)\le 2 $. We have $\dim_\bC\II^\natural(K_2,q)$ is always odd since crossing-changes do not change
the parity of $\II^\natural$ and $\II^\natural(\text{unknot})\cong\bC$. Hence $K_2$ is the unknot by \cite[Proposition 1.4]{KM:Kh-unknot}.
Let $\bF=\bZ/2$.
The universal coefficient theorem implies
\begin{align*}
\Khr(L,q;\bF)&\cong \Khr(L_2,p;\bF)\\
&\cong \bF_{(-3,-9)}\oplus\bF_{(-2,-5)}\oplus \bF_{(0,-3)}\oplus \bF_{(0,-1)} \oplus \bF_{(-2,-7)}\oplus \bF_{(-3,-7)} \\
&\cong \bF_{(6)}\oplus \bF_{(5)}\oplus \bF_{(4)}\oplus\bF_{(3)}^2\oplus \bF_{(1)},
\end{align*}
where the subscripts in the second last row represent the $(h,q)$-bigrading, and the subscripts in the  last row represent the internal grading. 
According to \cite[Corollary 3.2.C]{Shu:torsion_Kh}, we have
$$
\Kh^{i,j}(L;\bF)\cong  \Khr^{i,j-1}(L,q;\bF)\oplus \Khr^{i,j+1}(L,q;\bF).
$$
Hence
\begin{equation}
\label{eqn_Kho_internal_grading_Z/2_trefoil_unknot}
\Kh(L;\bF)\cong \bF_{(0)}\oplus \bF_{(2)}^2\oplus \bF_{(3)}\oplus \bF_{(4)}^3\oplus \bF_{(5)}^2\oplus \bF_{(6)}\oplus \bF_{(7)}.
\end{equation}
By Theorem \ref{Theorem_Kh_linking_number}, we have 
$$
12=\rank_{\bF}\Kh(L;\bF)\ge \rank_{\bF}\Kh(K_1;\bF) \rank_{\bF}\Kh(K_2;\bF)=2\rank_{\bF}\Kh(K_1;\bF).
$$
Hence $\rank_{\bF}\Kh(K_1;\bF)\le 6$ and $\rank_{\bF}\Khr(K_1;\bF)\le 3$.
Kronheimer-Mrowka's unknot detection theorem \cite{KM:Kh-unknot} and Baldwin-Sivek's trefoil detection theorem \cite{BS} imply $K_1$
is either an unknot or a trefoil. 

Suppose $K_1$ is an unknot. Then we have $\lk(K_1,K_2)=1~\text{or}~2$ by Theorem \ref{Theorem_Kh_linking_number} and \eqref{eqn_Kho_internal_grading_Z/2_trefoil_unknot}.
Now $K_1$ is a link in the solid torus $S^3-N(K_2)$ with winding number 1 or 2,
and by Proposition \ref{Prop-AHI=I^natural},
$$
4\ge \dim_\bC \AHI(K_1).
$$
 The argument in the proof of Theorem \ref{Theorem_Kh_detect_L4a1} shows that
$K_1$ is either the closure of the 1-braid or the closure of a generator of the 2-braid group. In either case, $\Kh(L)$ is
not isomorphic to $\Kh(L_2)$, which is a contradiction.

By the discussion above, $K_1$ must be a trefoil, hence Theorem \ref{Theorem_Kh_linking_number} and \eqref{eqn_Kho_internal_grading_Z/2_trefoil_unknot} implies
$\lk(K_1,K_2)=0$. Hence $K_1$ is homotopic to the unknot in $S^3-N(K_2)$. By \cite[Section 4.4]{AHI}, we have
$\dim_\bC\AHI(K_1,j)$ is even for all $|j|>0$. 
We also have $\dim_\bC\AHI(K_1,0)\ge \dim_\bC\AHI(\mathcal{U}_1,0)=2$ by Proposition \ref{Prop-AHI-rank-inequality}. 
By \eqref{eqn_Khr_rank_no_more_than_4_trefoil_unknot}, we have $\dim_\bC\AHI(K_1)=\dim_\bC\II^\natural(L,q) \le 4$, hence the argument above implies that the top f-grading of $\AHI(K_1)$ is $0$. By Theorem \ref{Theorem-2g+n},
$K_2$ has a Seifert disk which is disjoint from $K_1$, hence $L$ is a split link. 
\end{proof}

\section{Topological properties from instanton Floer homology}\label{sec_top_properties}

From now on, let $L=K_1\cup\cdots\cup K_n$ be a hypothetical link that satisfies Condition \ref{cond_cycle}. The goal is to deduce a contradiction from Condition \ref{cond_cycle}. This section derives several topological properties of $L$ using instanton Floer homology.

\subsection{Seifert surfaces of $K_i$}

\begin{Proposition}
\label{prop_embedded_disks}
For each $K_i$, there exists an embedded disk $D_i$ such that 
\begin{enumerate}
\item $\partial D_i = K_i$, 
\item for each $j\neq i$, if $|i-j|=1$ or $n-1$, then $D_i$ intersects $K_j$ transversely at one point; otherwise, the disk $D_i$ is disjoint from $K_j$.
\end{enumerate}
\end{Proposition}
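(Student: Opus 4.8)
The plan is to deduce Proposition~\ref{prop_embedded_disks} from Corollary~\ref{Cor-minimal_rank-meridional_surface}, applied to the solid torus complement of $K_i$. By Proposition~\ref{Prop-miminal_rank_unknot} and part~(2) of Condition~\ref{cond_cycle}, each $K_i$ is an unknot; so I would fix a tubular neighborhood $N(K_i)$ and set $L' := L - K_i$, viewed as a link in the solid torus $S^3 - N(K_i)$. The winding number of a component $K_j$ of $L'$ in this solid torus is $\lk(K_i,K_j)$, so part~(3) of Condition~\ref{cond_cycle} tells us that exactly two components of $L'$ have winding number $\pm1$ — precisely the two neighbors $K_j$ with $|i-j|=1$ or $n-1$ — while the remaining $n-3$ components have winding number $0$. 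In particular every component of $L'$ has winding number $0$ or $\pm1$.

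Next I would check the rank hypothesis needed to invoke Corollary~\ref{Cor-minimal_rank-meridional_surface}. Choosing a base point $p\in K_i$, Proposition~\ref{Prop-AHI=I^natural} gives $\AHI(L')\cong\II^\natural(L,p)$, and Proposition~\ref{Prop-rank_I^natural} (which applies by Condition~\ref{cond_cycle}(2)) gives $\dim_{\bC}\II^\natural(L,p)=2^{n-1}$. Since $L'$ has $k=n-3$ components of winding number $0$ and $l=2$ of winding number $\pm1$, this equals $2^{k+l}$, so hypothesis \eqref{eqn_assume_AHI_L_minimal_rank} holds for $L'$. Corollary~\ref{Cor-minimal_rank-meridional_surface} then produces a meridional disk $S\subset S^3-N(K_i)$ that meets each of the two winding-number-$\pm1$ components of $L'$ transversely in a single point and is disjoint from the other $n-3$ components.

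Finally I would turn $S$ into the disk $D_i$ required by the proposition. As noted after Proposition~\ref{Prop-AHI=I^natural}, a meridional surface in $S^3-N(K_i)$ is a Seifert surface of $K_i$; concretely, $\partial S$ is a longitude of $K_i$ on $\partial N(K_i)$, and capping $S$ off with the annulus swept out by an isotopy of this longitude to the core $K_i$ inside $N(K_i)$ yields an embedded disk $D_i$ with $\partial D_i = K_i$. Because $N(K_i)$ is disjoint from every $K_j$ with $j\neq i$, the intersection pattern of $D_i$ with the other components is identical to that of $S$, which is exactly conclusion~(2).

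The argument is a short assembly of results already established; the only point demanding a little care is matching the winding numbers of the components of $L'$ in $S^3-N(K_i)$ with the linking numbers dictated by Condition~\ref{cond_cycle} and checking that $2^{k+l}=2^{n-1}$ in every case (including $n=3$, where $k=0$). I do not expect a genuine obstacle here.
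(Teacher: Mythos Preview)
Your proposal is correct and follows essentially the same route as the paper: show $K_i$ is unknotted via Proposition~\ref{Prop-miminal_rank_unknot}, compute $\dim_\bC\AHI(L-K_i)=2^{n-1}$ using Propositions~\ref{Prop-AHI=I^natural} and~\ref{Prop-rank_I^natural}, apply Corollary~\ref{Cor-minimal_rank-meridional_surface} to obtain the meridional disk, and then extend it across $N(K_i)$ to a Seifert disk of $K_i$. Your write-up is in fact a bit more explicit than the paper's in verifying the winding-number hypotheses of Corollary~\ref{Cor-minimal_rank-meridional_surface} and in describing the capping-off step.
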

\begin{proof}
Pick a base point $p\in K_i$. By Proposition \ref{Prop-rank_I^natural}, we have $\dim\II^\natural(L,p)=2^{n-1}$. By Proposition 
\ref{Prop-miminal_rank_unknot}, $K_i$ is an unknot. Let $N(K_i)$ be a tubular neighborhood of $K_i$, and view $L-K_i$ as a link in the solid torus $S^3-N(K_i)$.
By Proposition \ref{Prop-AHI=I^natural},
$$
\dim \AHI(L-K_i)=\II^\natural(L,p)=2^{n-1}.
$$
By Corollary \ref{Cor-minimal_rank-meridional_surface}, there exists a meridional disk $\hat D_i$ which 
is disjoint from $K_j$ if $|i-j|\neq 1$ or $n-1$ and  intersects $K_j$ transversely at one point if $|i-j|= 1$ or $n-1$.
The meridional disk $\hat D_i$ extends to the desired Seifert disk of $K_i$.
\end{proof}

\begin{Definition}
\label{def_admissible_D}
Let $D_1,\cdots,D_n$ be a sequence of immersed disks in $\bR^3$ such that $\partial D_i = K_i$ for all $i$. We say that the sequence $D_1,\cdots,D_n$ is \emph{generic}, if every self-intersection point of $\sqcup D_i$ is locally diffeomorphic to one of the following models in $\bR^3$ at $(0,0,0)$: 
\begin{enumerate}
\item the intersection of $\{(x,y,z)|z=0, y\ge 0\}$ and the $yz$-plane,
\item the intersection of the $xy$-plane and the $yz$-plane, 
\item the intersection of the $xy$-, $yz$-, and $xz$-planes. 
\end{enumerate}

If $D=(D_1,\cdots,D_n)$ is generic, let $\Sigma_1(D)$, $\Sigma_2(D)$, $\Sigma_3(D)$ be the set of self-intersection points described by (1), (2), (3) above respectively.
\end{Definition}

\begin{Definition}
If $D=(D_1,\cdots,D_n)$ is generic, define the \emph{complexity} of $D$ to be the number of components of $\Sigma_2(D)$.
\end{Definition}

Notice that if $D$ is generic, then
the complexity of $D$ is greater than or equal to $\frac12 \#\Sigma_1(D)$ which is at least $n$.

\begin{Definition}
We say that the sequence $D = (D_1,\cdots, D_n)$ is \emph{admissible}, if 
\begin{enumerate}
\item  $D$ is generic,
\item $\#\Sigma_1(D)=2n$,
\item every point in $\Sigma_3(D)$ is contained in at least two different disks in $D$.
\end{enumerate}

\end{Definition}

\begin{remark}
In the definitions above, the disks $\{D_i\}$ are only required to be immersed. 
Condition (2) in the definition above is equivalent to the following statement: for each $j\neq i$, if $|i-j|=1$ or $n-1$, then $D_i$ intersects $K_j$ transversely at one point; otherwise, the immersed disk $D_i$ is disjoint from $K_j$. Moreover, the interior of $D_i$ is disjoint from $K_i$.
\end{remark}

\begin{Proposition}
\label{prop_complexity_n}
There exists a sequence of disks $D=(D_1,\cdots,D_n)$, such that $\partial D_i = K_i$ for all $i$, and $D$ is admissible with complexity $n$. 
\end{Proposition}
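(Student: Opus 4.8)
The plan is to start from the embedded disks $D_i$ produced by Proposition~\ref{prop_embedded_disks} and then modify them, one pair at a time, to arrange that all intersections among the $D_i$ are of the combinatorially simplest possible type. First I would make the collection $D=(D_1,\dots,D_n)$ generic in the sense of Definition~\ref{def_admissible_D} by a small perturbation; this is automatic and costs nothing, since genericity is an open dense condition among immersed surfaces with prescribed boundary. After the perturbation the disks $D_i$ are still embedded individually (we have only perturbed the way different disks meet each other), so $\Sigma_3(D)$ is empty and condition~(3) of admissibility holds vacuously, and each $D_i$ meets $K_j$ (for $j\ne i$) transversely in exactly one point when $|i-j|=1$ or $n-1$ and is disjoint from $K_j$ otherwise, because that was already true of the $D_i$ from Proposition~\ref{prop_embedded_disks} and this is preserved by the perturbation. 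In particular $\#\Sigma_1(D)=2n$, so condition~(2) of admissibility also holds. The only thing that can fail is minimality of the complexity: $D$ has complexity at least $n$ always, but after the perturbation the double-curve set $\Sigma_2(D)$ (the curves where two disks cross) may have more than $n$ components.

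The heart of the argument is therefore to reduce the complexity to exactly $n$. The count $\#\Sigma_1(D)=2n$ records that there are $n$ "forced" clasp intersections, one for each edge of the cycle graph of Condition~\ref{cond_cycle}: for each adjacent pair $\{i,j\}$ the boundary $K_j$ pierces $D_i$ once and $K_i$ pierces $D_j$ once, and each such piercing is the endpoint of a double curve of $D_i\cap D_j$. Each of these $n$ forced piercings contributes one double curve component; these $n$ components are unavoidable. Every other component of $\Sigma_2(D)$ is a closed double curve (a circle where two disks cross with no boundary involvement) or a double arc whose two ends lie on the boundary knots in a "removable" way, and the goal is to eliminate all of these. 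I would do this by the standard innermost-disk / Haken-irreducibility style surgery: given a closed double curve $\gamma\subset D_i\cap D_j$, it bounds a subdisk $\Delta\subset D_i$ that is innermost among such curves; cutting $D_j$ along $\gamma$ and regluing two parallel copies of $\Delta$ replaces $D_j$ by a new immersed disk with the same boundary $K_j$ and strictly fewer double-curve components with $D_i$, at the cost only of possibly introducing new triple points of the admissible type (2)/(3), which can then be pushed off or absorbed. One must check that this surgery keeps $D$ generic, does not change $\Sigma_1(D)$, and terminates; termination follows because the total number of components of $\Sigma_2(D)$ strictly decreases. For double arcs with both endpoints on the same $K_j$ or arising from an over/under finger, an analogous innermost-arc surgery along a bigon in one of the disks removes the arc.

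The main obstacle I anticipate is bookkeeping the interaction between the three types of singular points in Definition~\ref{def_admissible_D} during these surgeries: a finger-move or innermost-disk surgery on $D_j$ against $D_i$ generically creates transverse triple points where a third disk $D_k$ passes through, and one has to verify these are always of type $\Sigma_3$ lying in at least two disks (so that admissibility condition~(3) survives), rather than producing new type-$\Sigma_1$ points that would spoil the count $\#\Sigma_1(D)=2n$. The key point is that the surgeries are performed in a small neighborhood of an arc or disk lying in the \emph{interior} of the $D_i$, away from the knots $K_i$, so no new boundary piercings are created and $\#\Sigma_1(D)$ is genuinely invariant; and any triple point created is, by construction, a point where the two parallel copies of the surgery disk meet a third sheet, hence automatically lies on at least two disks. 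Once every closed double curve and every removable double arc has been eliminated, $\Sigma_2(D)$ consists of exactly the $n$ forced arcs, so the complexity is $n$, and $D$ is admissible as required. $\qed$
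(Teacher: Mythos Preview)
Your overall strategy matches the paper's: start from the embedded Seifert disks of Proposition~\ref{prop_embedded_disks}, perturb to a generic configuration, and then surger away the extra components of $\Sigma_2$. Two points need correction.

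First, a small slip: after perturbation, $\Sigma_3(D)$ is \emph{not} empty in general. Three embedded disks in general position do meet in isolated triple points. Admissibility condition~(3) nevertheless holds, because each such triple point lies on three \emph{distinct} disks; your conclusion is right, the stated reason is not. Relatedly, note that whenever $\Sigma_3 \neq \emptyset$ the complexity already exceeds $n$ (each triple point is the endpoint of six arcs of $\Sigma_2$), so triple points must be eliminated too, not just closed double curves.

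Second, and this is the substantive gap: your compression surgery need not decrease the \emph{total} complexity. You replace $D_j$ by $(D_j\setminus \Delta')\cup \Delta_+$, where $\Delta_+$ is a push-off of the innermost disk $\Delta\subset D_i$. This removes $\gamma$ from $D_i\cap D_j$, but $\Delta_+$ can meet a third disk $D_k$ in curves parallel to $\Delta\cap D_k$, and these are \emph{new} components of $\Sigma_2(D)$. Your innermost hypothesis only says there are no closed curves of $D_i\cap D_j$ inside $\Delta$; it says nothing about $\Delta\cap D_k$ for $k\neq j$. So ``strictly fewer double-curve components with $D_i$'' is correct, but the total count can go up.

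The paper sidesteps this with a different surgery and a minimality argument. Rather than compressing, it \emph{swaps} the two subdisks: set $D_i'=(D_i\setminus B_1)\cup B_2$ and $D_j'=(D_j\setminus B_2)\cup B_1$, then smooth along $\gamma$. Because $D_i'\cup D_j'=D_i\cup D_j$ as subsets of $\mathbb{R}^3$ before smoothing, no new double curves are introduced anywhere; smoothing then deletes $\gamma$ and resolves the triple points on it, so the complexity strictly drops. The price is that $D_i',D_j'$ may acquire self-intersections; the paper handles this by working throughout with a minimal-complexity admissible configuration and first proving (by a separate surgery argument on self-intersection circles) that in such a configuration every $D_i$ is already embedded. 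The swap then gives the contradiction if the complexity exceeds $n$.
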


\begin{proof}
By Proposition \ref{prop_embedded_disks}, there exists a sequence of disks $\hat D=(\hat D_1,\cdots \hat D_n)$ such that for all $i$, $\hat D_i$ is embedded, $\partial \hat D_i = K_i$, and $\#\Sigma_1(\hat D) = 2n$. Perturb $\hat D_1,\cdots \hat D_n$ such that they are generic. Since all the disks are embedded, every point in $\Sigma_3(\hat D)$ is contained in three different disks. 
Therefore $\hat D$ is admissible, hence admissible configurations exist.
Let $D=(D_1,\cdots,D_n)$ be an admissible configuration with minimal complexity. 

We first show that all the $D_i$'s are embedded. Suppose there exists $i$ such that $D_i$ is not embedded, then by admissibility, $D_i$ does not have triple self-intersections, and the self-intersection locus of $D_i$ is a disjoint union of circles. Let $\gamma\subset D_i$ be a circle in the self-intersection of $D_i$.

Let $B^2$ be the unit disk in $\bR^2$, and let $f_i : B^2\to \bR^3$ be an immersion that parametrizes $D_i$. Then $f_i^{-1}(\gamma)$ is a double cover of $\gamma$. There are three possibilities:

Case 1. $f_i^{-1}(\gamma)$ is a disjoint union of two circles, and they bound disjoint disks $B_1$ and $B_2$. In this case, take a diffeomorphism $\iota$ from $B_1$ to $B_2$, such that 
$$
(f_i\circ \iota)|_{\partial B_1} = f_i|_{\partial B_1}.
$$
Define 
$$
f_i'(p) := \begin{cases}
     f_i(p) & \text{ if } p\notin B_1\cup B_2 ,\\
     f_i(\iota(p)) & \text{ if } p\in B_1 ,\\
     f_i(\iota^{-1}(p)) & \text{ if } p\in B_2 .\\         
     \end{cases}
$$
By smoothing $f_i'$, we obtain an immersed disk with the same boundary as $D_i$ but has fewer self-intersection components. Figure \ref{fig_local_surgery} shows a local picture of $f_i'$ after the smoothing. Replacing $D_i$ by the image of the smoothed $f_i'$ decreases the complexity of $D$ and preserves the admissibility condition. 
\begin{figure}  
  \includegraphics[width=0.8\linewidth]{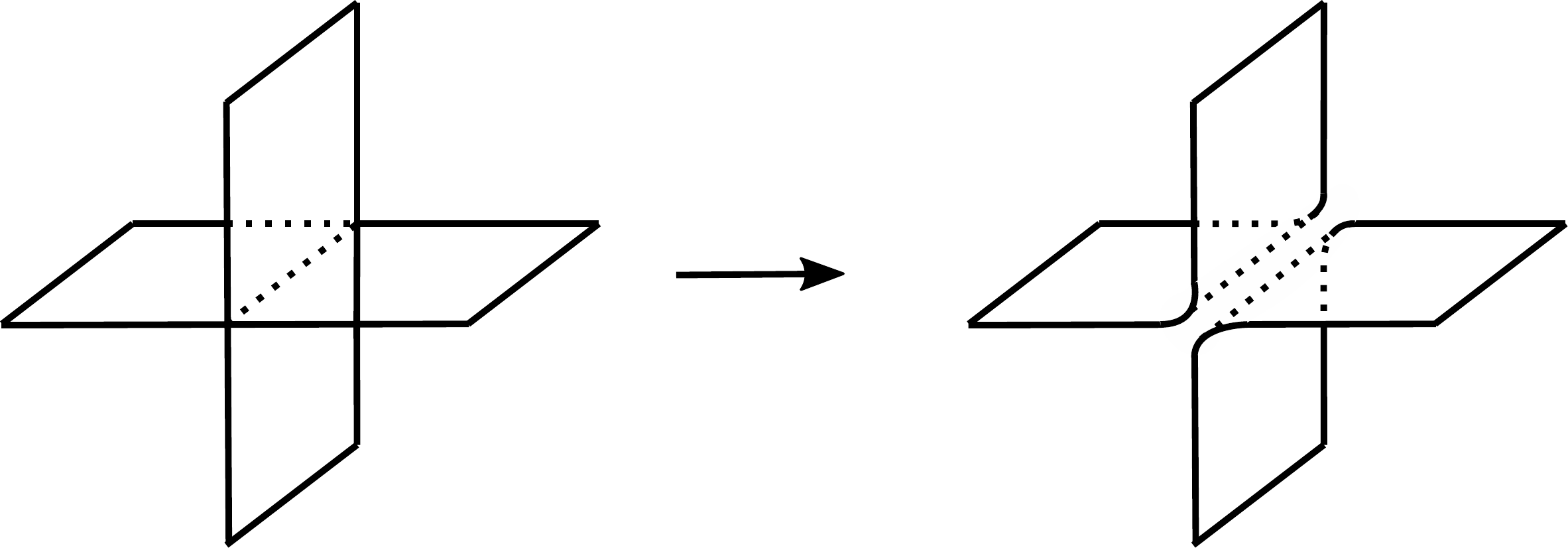}
  \caption{The local construction of $f_i'$ after smoothing.}
  \label{fig_local_surgery}
\end{figure}

Case 2. $f_i^{-1}(\gamma)$ is a disjoint union of two circles, and they bound disks $B_1$ and $B_2$ with $B_1\supset B_2$. In this case, take a diffeomorphism $\iota$ from $B_1$ to $B_2$, such that 
$$
(f_i\circ \iota)|_{\partial B_1} = f_i|_{\partial B_1}.
$$
Define 
$$
f_i'(p) := \begin{cases}
     f_i(p) & \text{ if } p\notin B_1,\\
     f_i(\iota(p)) & \text{ if } p\in B_1 .\\         
     \end{cases}
$$
Replacing $f_i$ by the smoothed version of $f_i'$ gives an admissible configuration with smaller complexity.

Case 3. $f_i^{-1}(\gamma)$ is one circle, and it bounds a disk $B_1$. In this case, $f_i|_{\partial B_1}$ is a non-trivial covering map. Take a diffeomorphism $\iota$ from $B_1$ to $B_1$, such that its restriction to $\partial B_1$ is the deck transformation.
Define 
$$
f_i'(p) := \begin{cases}
     f_i(p) & \text{ if } p\notin B_1,\\
     f_i(\iota(p)) & \text{ if } p\in B_1 .\\         
     \end{cases}
$$
Replacing $f_i$ by the smoothed version of $f_i'$ gives an admissible configuration with smaller complexity.

Since $D$ is assumed to have minimal complexity among admissible configurations, we conclude that $D_i$ has to be embedded.

Now we show that the complexity of $D$ is $n$. In fact, since all the disks $D_i$ are embedded, the intersection of $D_i$ and $D_j$ $(i\neq j)$ is a disjoint union of compact $1$-manifolds possibly with boundary. 
If the complexity of $D$ is greater than $n$, then there exists $i\neq j$ such that the intersection of $D_i$ and $D_j$ contains a circle $\gamma$. The circle $\gamma$ bounds a disk $B_1$ in $D_i$, and bounds a disk $B_2$ in $D_j$. Let
\begin{align*}
D_i' &:= (D_i - B_i) \cup B_j,\\
D_j' &:= (D_j - B_j) \cup B_i.
\end{align*}
Replace $D_i$, $D_j$ by $D_i'$ and $D_j'$ and smooth the corners, we obtain a generic configuration with smaller complexity. Since neither $D_i'$ nor $D_j'$ has triple self-intersection points, the new configuration is still admissible, contradicting the definition of $D$.
In conclusion, the complexity of $D$ is $n$.
\end{proof}

Let $L':=K_1\cup\cdots \cup K_{n-1}$. 
Proposition \ref{prop_complexity_n} has the following corollary.

\begin{figure}  
  \includegraphics[width=0.8\linewidth]{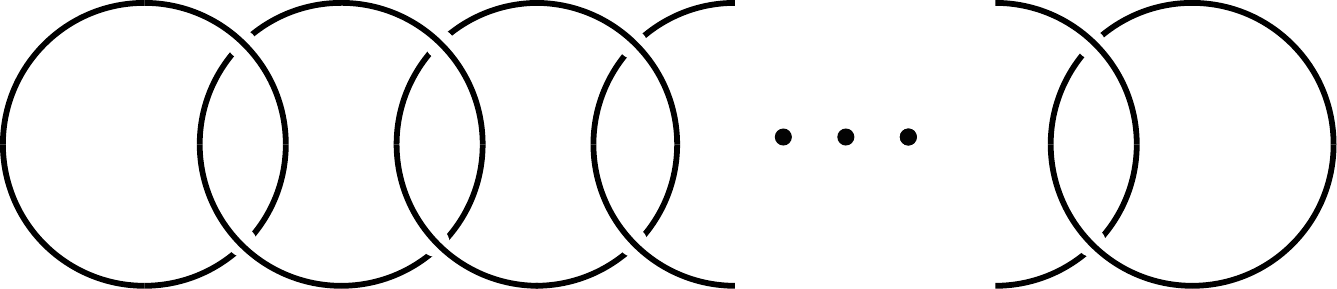}
  \caption{The link $L'$.}
  \label{fig_standard_link} 
  \vspace{\baselineskip}
  \includegraphics[width=0.8\linewidth]{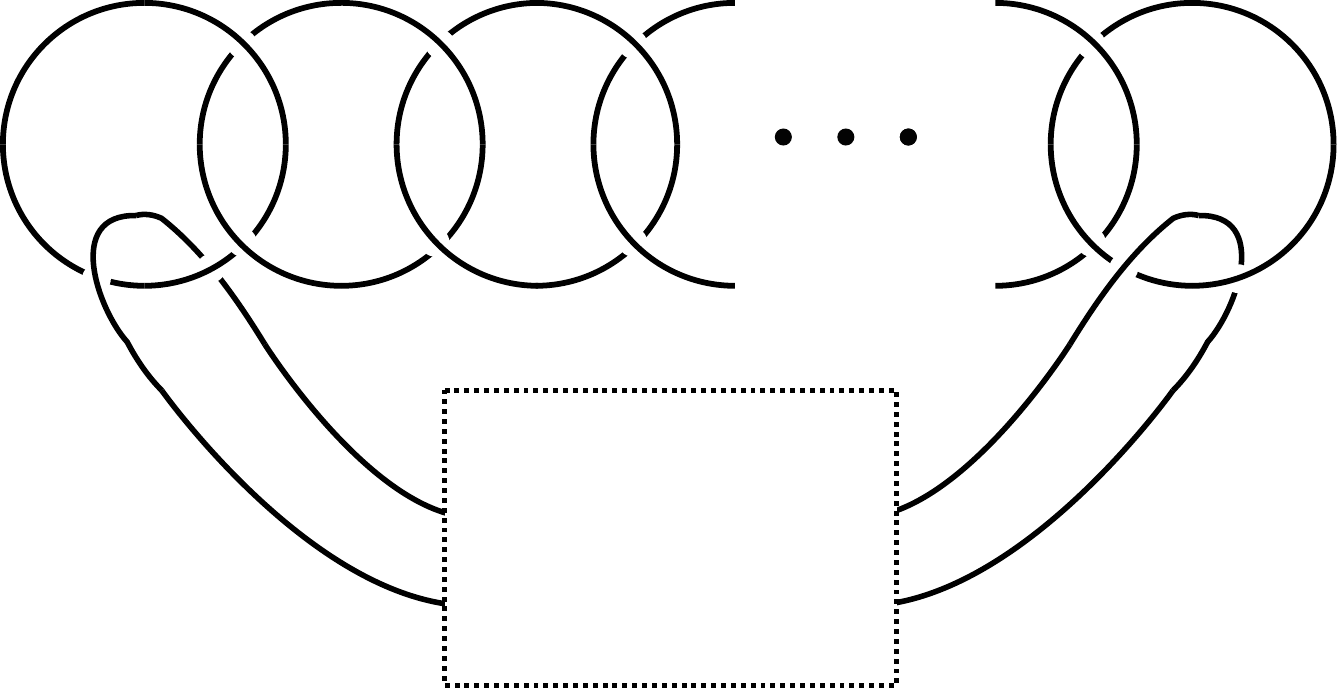}
  \caption{The link $L$.}
  \label{fig_cable}
\end{figure}

\begin{Corollary}
\label{cor_cable}
The link $L'$ is a connected sum of $n-2$ Hopf links as given by Figure \ref{fig_standard_link}. 
The link $L$ has a diagram described by Figure \ref{fig_cable}.
\end{Corollary}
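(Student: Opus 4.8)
The plan is to extract the precise intersection pattern of the disks furnished by Proposition \ref{prop_complexity_n}, then to recognize $L'$ as the standard chain by an induction that peels off one disk at a time, and finally to glue in the last disk $D_n$ to recognize $L$ itself.

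First I would record what ``complexity exactly $n$'' forces. By Proposition \ref{prop_complexity_n} there is an admissible sequence $D=(D_1,\dots,D_n)$ of complexity $n$, and, as shown in its proof, every $D_i$ is embedded. Then $\Sigma_2(D)=\bigcup_{i<j}D_i\cap D_j$ is a compact $1$-manifold; each endpoint of $\Sigma_2(D)$ lies on some $K_i=\partial D_i$ and hence is a point of $\Sigma_1(D)$, while by the local model (1) of Definition \ref{def_admissible_D} each point of $\Sigma_1(D)$ is an endpoint of exactly one component of $\Sigma_2(D)$. Since $\#\Sigma_1(D)=2n$, it follows that $\Sigma_2(D)$ consists of exactly $n$ arcs and no circles. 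As each cyclically adjacent pair $(K_i,K_j)$ contributes at least one arc (because $K_i$ meets $D_j$), the pattern is forced: $D_i\cap D_j$ is a single arc when $|i-j|\in\{1,n-1\}$ and is empty otherwise, and the two endpoints of that arc are the transverse points $K_i\cap D_j$ and $K_j\cap D_i$.

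Next I would prove, by induction on $m$, the following statement: whenever disjoint knots $K_1,\dots,K_m$ bound embedded disks $D_1,\dots,D_m$ with $D_i\cap D_j$ a single arc (with endpoints $K_i\cap D_j$ and $K_j\cap D_i$) for $|i-j|=1$ and empty otherwise, then $K_1\cup\dots\cup K_m$ is the standard chain of $m-1$ Hopf links. Applying this with $m=n-1$ --- the pattern on $D_1,\dots,D_{n-1}$ is precisely of this type, since $|i-j|=n-1$ is impossible among indices $1,\dots,n-1$ --- then gives the first assertion and Figure \ref{fig_standard_link}. The base case $m=1$ is trivial. For the inductive step one peels off the end disk $D_m$: it is embedded, disjoint from $D_1,\dots,D_{m-2}$ and from $K_1,\dots,K_{m-2}$, and meets $K_{m-1}$ transversely in the single point $q=K_{m-1}\cap D_m$ lying in the interior of $D_m$. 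Using the embedded disk $D_m$ one isotopes $K_m$ --- dragging $D_m$ along and keeping $K_1\cup\dots\cup K_{m-1}$ fixed --- to an arbitrarily small circle clasping the strand of $K_{m-1}$ through $q$; equivalently, a regular neighborhood of $D_m$ is an unknotted ball disjoint from $K_1\cup\dots\cup K_{m-2}$, meeting $K_{m-1}$ in a trivial arc, and containing $K_m$ in the standard clasp position. Hence $K_1\cup\dots\cup K_m\cong(K_1\cup\dots\cup K_{m-1})\#_{K_{m-1}}H$ with $H$ the Hopf link, and the induction hypothesis applied to $K_1,\dots,K_{m-1}$ with the disks $D_1,\dots,D_{m-1}$ finishes the step.

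Finally, with $L'$ placed in this standard position, I would bring in $K_n$ together with $D_n$. By the first step $D_n$ is embedded, is disjoint from $D_2,\dots,D_{n-2}$ and from $K_2,\dots,K_{n-2}$, and meets each of $D_1$ and $D_{n-1}$ in a single arc; so a regular neighborhood of $D_n$ is an unknotted ball, disjoint from $K_2\cup\dots\cup K_{n-2}$, meeting $L'$ in one trivial sub-arc of $K_1$ and one trivial sub-arc of $K_{n-1}$, and inside it $K_n$ is an unknot clasping both of these strands in the standard local picture. Reading off the complementary tangle from the fact that $L'$ is the standard chain then identifies $L$ with the link of Figure \ref{fig_cable}: the chain $K_1\cup\dots\cup K_{n-1}$ together with an extra unknot $K_n$ linking the two end components $K_1$ and $K_{n-1}$ once each. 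The step I expect to be the main obstacle is precisely this last identification --- showing that attaching the clasping circle $K_n$ cannot introduce any hidden knotting or linking, i.e.\ that the ball $N(D_n)$ and the two clasped strands sit rigidly, up to isotopy, relative to the standard chain $L'$. This rigidity should follow from $D_n$ being embedded and avoiding the middle disks and components entirely, so that the only residual freedom --- which sub-arcs of $K_1$ and $K_{n-1}$ are clasped --- does not change the isotopy type of $L$.
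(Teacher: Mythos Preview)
Your proof is correct and follows essentially the same approach as the paper's own proof, which is quite terse: the paper simply asserts that properties (1)--(3) of the disks $D_1,\dots,D_{n-1}$ follow from Proposition \ref{prop_complexity_n} and that the desired isotopies ``follow'' from these, whereas you carefully derive the intersection pattern from the complexity count and spell out the induction peeling off one disk at a time. Your identification of the final step---placing $K_n$ via the ball $N(D_n)$---as the delicate point is apt; the paper glosses over this just as much, and your argument (embedded $D_n$ disjoint from the middle disks forces the standard clasp) is adequate for the form claimed in Figure \ref{fig_cable}.
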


\begin{proof}
By Proposition \ref{prop_complexity_n}, there exists a sequence of disks $D_1,\cdots,D_{n-1}$, such that (1) $D_i$ is embedded and $\partial D_i = K_i$ for $i=1,\cdots,n-1$, (2) if $|i-j|=1$, the disks $D_i$ and $D_j$ intersect at an arc, (3) if $i\neq j$ and $|i-j|\neq 1$, the disks $D_i$ and $D_j$ are disjoint.  It follows that $L'$ is isotopic to a connected sum of $n-2$ Hopf links as given by Figure \ref{fig_standard_link}. Moreover, we may choose the sequence of disks $D_1,\cdots,D_{n-1}$ in such a way that $K_n$ bounds an embedded disk $D_n$ which intersects $D_1$ and $D_{n-1}$ respectively at an arc, and is disjoint from $D_2\cup\cdots \cup D_{n-2}$, hence $L$ is isotopic to a diagram described by Figure \ref{fig_cable}.
\end{proof}

\subsection{Seifert surfaces of $L'$}

We recall the following property of fibered links.

\begin{Lemma}
\label{lem_connect_sum_fiber}
Suppose $L_1$ and $L_2$ are two oriented fibered links with oriented Seifert surfaces $S_1$ and $S_2$ respectively. Let $f_1:S_1\to S_1$ and $f_2:S_2\to S_2$ be the monodromies. Take $p_1\in L_1$, $p_2\in L_2$, and form the connected sum $L_1\# L_2$ and the boundary connected sum $S_1\#_b S_2$ with respect to $(p_1,p_2)$. Then $L_1\# L_2$ is a fibered link with  Seifert surface $S_1\#_b S_2$ and monodromy $f_1\#_b f_2$.
\end{Lemma}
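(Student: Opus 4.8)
The statement is the standard fact that fibered links are closed under connected sum, with the monodromy of the connected sum being the ``boundary connected sum'' of the two monodromies. I would prove it by directly exhibiting the fibration on the complement of $L_1 \# L_2$ from the two given fibrations, using the fact that a connected sum is performed along a decomposing sphere meeting each link in two points.

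\textbf{Setup via the decomposing sphere.}
First I would recall the construction of $L_1 \# L_2$ with respect to the marked points $p_i \in L_i$: choose balls $B_i \subset S^3$ such that $(B_i, B_i \cap L_i)$ is a trivial ball-arc pair containing a small arc $\alpha_i$ through $p_i$, and glue $S^3 \smallsetminus \mathrm{int}(B_1)$ to $S^3 \smallsetminus \mathrm{int}(B_2)$ along the boundary spheres, matching the two points of $\partial \alpha_1$ with those of $\partial \alpha_2$. The resulting sphere $\mathbb{S} \subset S^3$ meets $L_1 \# L_2$ transversely in two points, and cutting along $\mathbb{S}$ recovers the two ball-arc complements, each of which is (link complement of $L_i$) minus a collar of $\alpha_i$. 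I would set up the Seifert surface the same way: $S_1 \#_b S_2$ is obtained by removing from each $S_i$ a half-disk neighborhood of $p_i$ in $S_i$ (meeting $\partial S_i = L_i$ in a subarc containing $p_i$) and gluing the resulting surfaces-with-corners along the boundary arc created by the removal.

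\textbf{Building the fibration.}
The key step is to arrange the two given fibrations $\phi_i : S^3 \smallsetminus N(L_i) \to S^1$ so that near $p_i$ they look standard and agree after gluing. Concretely: isotope so that in the ball-arc pair $(B_i, \alpha_i)$ the fibration $\phi_i$ restricted to $B_i \smallsetminus N(\alpha_i)$ is the ``trivial'' open-book fibration $(D^2 \smallsetminus 0) \times [0,1] \to S^1$ projecting to the angular coordinate of $D^2$, with the pages being half-disks $\times [0,1]$ and the half-disk neighborhood of $p_i$ in $S_i$ being one such page piece. This is possible because any fiber meeting the meridian disk of $\alpha_i$ is isotopic to a standard one, and the fibration near a point of the binding is locally the model open book. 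Now when we glue $B_1 \smallsetminus \mathrm{int}(B_1')$-type pieces together along $\mathbb{S}$, the two model fibrations match along the cylinder $\mathbb{S} \smallsetminus N(L_1\# L_2)$ (an annulus), and we get a well-defined map $\phi : S^3 \smallsetminus N(L_1 \# L_2) \to S^1$ whose restriction to each side agrees with (a reparametrization of) $\phi_i$. One checks $\phi$ is a submersion: it is a submersion on each piece, and on the overlapping collar of $\mathbb{S}$ both descriptions are the standard model, so there is no problem. Hence $\phi$ is a fibration over $S^1$, its fiber over $1 \in S^1$ is exactly $S_1 \#_b S_2$ (the two half-pages glued along the arc), and the binding is $L_1 \# L_2$.

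\textbf{Identifying the monodromy.}
Finally, I would compute the monodromy. The monodromy is the first-return map of the flow of a vector field lifting $\partial_\theta$ on $S^1$, chosen to be supported away from a neighborhood of the binding and chosen so that on each piece it restricts to the return-flow computing $f_i$. Because the gluing region (the collar of $\mathbb{S}$) carries the product fibration, one can pick the lifted vector field there to be the constant ``$[0,1]$-direction'' field, i.e. the monodromy is the identity on the annular gluing region. Thus the first-return map on $S_1 \#_b S_2$ is $f_1$ on the part coming from $S_1$, $f_2$ on the part coming from $S_2$, and the identity near the arc along which they are glued; this is precisely the definition of $f_1 \#_b f_2$. (Here I am using that $f_i$ can be taken to be the identity near $p_i$, which is standard: the monodromy of a fibered link is well-defined up to isotopy rel $\partial$, and one may always normalize it to be the identity on a collar of any chosen point of the binding.)

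\textbf{Main obstacle.}
The only genuinely delicate point is the first one: making the two fibrations literally agree on the gluing sphere, i.e. normalizing each $\phi_i$ to the standard open-book model in the ball-arc pair $(B_i,\alpha_i)$. This requires knowing that near a binding component the fibration is locally equivalent to the model open book $D^2\smallsetminus 0 \to S^1$ (true since the binding is the transverse boundary of the pages and the pages are smooth there), together with an isotopy-extension argument to straighten a neighborhood of the arc $\alpha_i$ inside a single page. Everything after that — checking the submersion condition across the seam and reading off the monodromy — is routine, since both sides of the seam carry the product model.
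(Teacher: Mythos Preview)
Your argument is correct and complete; the only subtlety you flag (normalizing each fibration to the model open book near the binding point) is genuine but standard, and you handle it appropriately.

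Your route, however, is different from the paper's. The paper argues via the \emph{closed} mapping-torus construction: for a surface $S$ and monodromy $f$ fixing a collar of $\partial S$, set
\[
\mathcal{M}_f := S\times[0,1]/\bigl((x,0)\sim(f(x),1),\ (x,t_1)\sim(x,t_2)\text{ for }x\in\partial S\bigr),
\]
so that ``$L$ is fibered with fiber $S$ and monodromy $f$'' becomes the single statement $\mathcal{M}_f\cong S^3$ with binding $\partial S\mapsto L$. The entire proof then reduces to the observation
\[
\mathcal{M}_{f_1\#_b f_2}\;\cong\;\mathcal{M}_{f_1}\,\#\,\mathcal{M}_{f_2}\;\cong\;S^3\#S^3\;\cong\;S^3,
\]
with binding $\partial(S_1\#_b S_2)\mapsto L_1\#L_2$. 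This is exactly your gluing, but packaged at the level of closed $3$-manifolds rather than link complements: the decomposing $2$-sphere you build by hand is, in the paper's language, the image of the half-disk strip $(\text{gluing arc})\times[0,1]$ together with the collapsed boundary. What the paper's approach buys is brevity---three lines instead of a page---at the cost of leaving that $\mathcal{M}_{f_1\#_b f_2}\cong\mathcal{M}_{f_1}\#\mathcal{M}_{f_2}$ step implicit. Your approach buys transparency: you actually exhibit the fibration and the monodromy-computing flow, so nothing is left to the reader.
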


\begin{proof}
Given a compact surface $S$ with boundary, and given a diffeomorphism $f:S\to S$ that restricts to the identity on a neighborhood of $\partial S$, define
$$
\mathcal{M}_f := S\times [0,1] / \sim,
$$
where $\sim$ is defined by 
$(x,0)\sim (f(x),1)$ for $x\in S$, and $(x,t_1)\sim (x,t_2)$ for $x\in \partial S$, $t_1, t_2\in [0,1]$. By the assumptions of the lemma,
\begin{align*}
\mathcal{M}_{f_1} & \cong S^3, \\
\mathcal{M}_{f_2} & \cong S^3,
\end{align*}
and the images of $\partial S_1$ and $\partial S_2$ are isotopic to $L_1$ and $L_2$ respectively.
Therefore,
$$
\mathcal{M}_{f_1\#_b f_2} \cong \mathcal{M}_{f_1} \# \mathcal{M}_{f_2}\cong S^3,
$$
and the image of $\partial (S_1\#_b S_2)$ is isotopic to $L_1\# L_2$.
\end{proof}

\begin{figure}  
  \includegraphics[width=0.3\linewidth]{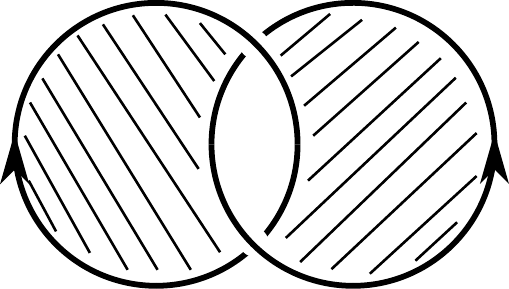}
  \caption{Seifert surface of the Hopf link with linking number $-1$.}
  \label{fig_hopf1} 
  \vspace{\baselineskip}
  \includegraphics[width=0.3\linewidth]{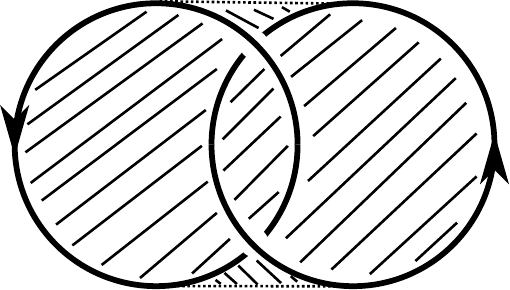}
  \caption{Seifert surface of the Hopf link with linking number $1$.}
  \label{fig_hopf2}
\end{figure}

Notice that the Hopf link is fibered. Depending on the orientations of the components, the corresponding Seifert surface is given by Figure \ref{fig_hopf1} or Figure \ref{fig_hopf2}. Both Seifert surfaces are diffeomorphic to the annulus, and the monodromies are  Dehn twists along the core circles. 

\begin{figure}  
  \includegraphics[width=0.8\linewidth]{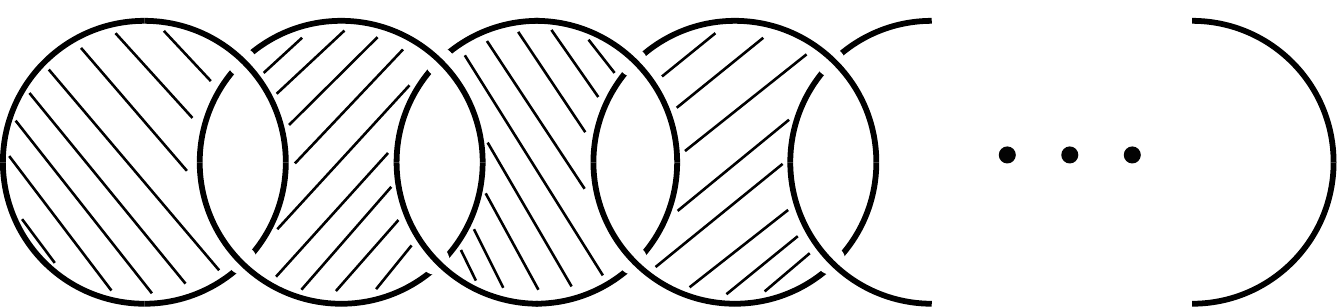}
  \caption{The Seifert surface $S_1$.}
  \label{fig_fiber1}
  \vspace{\baselineskip}
  \includegraphics[width=0.8\linewidth]{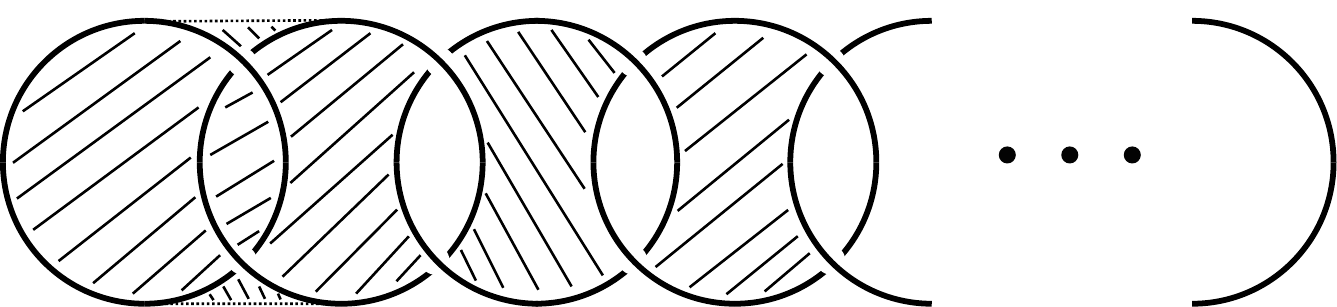}
  \caption{The Seifert surface $S_2$.}
  \label{fig_fiber2}
\end{figure}

Let $S_1$ and $S_2$ be the Seifert surfaces of $L'$ given by Figure \ref{fig_fiber1} and Figure \ref{fig_fiber2} respectively.
By Lemma \ref{lem_connect_sum_fiber}, the link $L'$ is fibered with respect to both $S_1$ and $S_2$. For each $j=1,2$, endow the components $K_1,\cdots,K_{n-1}$ with the boundary orientation of $S_j$ and choose an arbitrary orientation for $K_n$, then the algebraic intersection number of $K_n$ and $S_j$ is equal to the sum of linking numbers $\sum_{i=1}^{n-1} \lk(K_n, K_i)$. Therefore, part (3) of Condition \ref{cond_cycle} implies that there exists exactly one element $j\in \{1,2\}$ such that the algebraic intersection number of $S_j$ and $K_n$ is zero. The main result of this subsection is the following proposition.

\begin{Proposition}\label{Prop_disjoint_Seifert_surface}
Suppose $j\in\{1,2\}$ and the algebraic intersection number of $K_n$ with $S_j$ is zero. Then there exists a knot $K_n'$, such that $K_n'$ is disjoint from $S_j$, and $K_n$ is isotopic to $K_n'$ in $\bR^3-L'$. 
\end{Proposition}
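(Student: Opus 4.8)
The plan is to exploit that $L'$ is fibered with fiber $S_j$, which gives us two structural facts used throughout: $S_j$ is incompressible in $M := S^3\setminus L'$, and $M$ is irreducible (since $L'$ is a non-split link). By Proposition \ref{Prop-miminal_rank_unknot} the component $K_n$ is an unknot, and by Corollary \ref{cor_cable} it bounds an embedded disk $D_n\subset S^3$ meeting $L'$ transversely in exactly two points, one on $K_1$ and one on $K_{n-1}$; call these punctures $p$ and $p'$. The goal is to isotope $D_n$ inside $S^3$, keeping $p,p'$ fixed, keeping $D_n$ transverse to $L'$, and keeping $\partial D_n=K_n$ away from $L'$, so that at the end $K_n=\partial D_n$ is disjoint from $S_j$; the supporting ambient isotopy then restricts to an isotopy of $K_n$ in $\mathbb{R}^3\setminus L'$, and we take $K_n'$ to be the resulting knot.

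First I would record that $D_n\cap S_j$ is a properly embedded $1$-manifold in $D_n$ whose boundary lies on $K_n\cup\{p,p'\}$, and that, because $S_j$ is a Seifert surface of $L'$, in a neighbourhood of each of $p,p'$ the disk $D_n$ meets $S_j$ along a short arc emanating from that puncture; the hypothesis that the algebraic intersection number of $K_n$ with $S_j$ is zero says precisely that these two arc-germs carry opposite local signs. I would then isotope $D_n$ (fixing $p,p'$, staying transverse to $L'$) so as to minimize the number of components of $D_n\cap S_j$, and analyze the minimal configuration. Closed components are removed by the standard innermost-disk argument: an innermost circle of $D_n\cap S_j$ bounds a disk in $D_n$ with interior disjoint from $S_j$, hence (incompressibility of $S_j$) bounds a disk in $S_j$, and the resulting sphere bounds a ball (irreducibility of $M$), across which $D_n$ can be isotoped to delete the circle. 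Arcs of $D_n\cap S_j$ having both endpoints on $K_n$ are removed by an outermost-arc argument: an outermost such arc cuts a bigon-type disk off $D_n$ with interior off $S_j$, giving an isotopy of $D_n$ that pushes a subarc of $K_n$ across $S_j$ and lowers the component count.

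After these reductions $D_n\cap S_j$ consists only of arcs incident to $p$ or $p'$, and the sign observation forces the remaining configuration to be a single arc running from $p$ to $p'$; in particular no component of $D_n\cap S_j$ meets $K_n$, so $K_n=\partial D_n$ is disjoint from $S_j$, as desired. The hard part will be exactly this last reduction — ruling out the configuration consisting of two arcs, one joining $K_n$ to $p$ and one joining $K_n$ to $p'$, which neither of the previous moves removes. To handle it I would pass to the cut-open manifold $M\setminus\setminus S_j\cong S_j\times[0,1]$ (a product precisely because $S_j$ is a fiber): the two intersection points of $K_n$ with $S_j$ carry opposite signs, so they cut $K_n$ into two arcs, one of which lies in $S_j\times[0,1]$ with \emph{both} endpoints on the same copy $S_j\times\{0\}$; combining this arc with the two pieces of $D_n$ it cobounds produces a further disk along which $D_n$ can be isotoped to reduce $D_n\cap S_j$, contradicting minimality. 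This step, together with the routine bookkeeping needed to keep $p,p'$ fixed and $D_n$ embedded throughout the innermost/outermost moves, is where essentially all of the work lies.
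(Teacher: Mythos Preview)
Your approach is fundamentally different from the paper's, and there is a genuine gap in your ``hard part'' (Step 5). The paper does \emph{not} prove this proposition by a direct innermost/outermost argument on $D_n\cap S_j$. Instead it uses instanton Floer homology with local coefficients to compare $K_n$ with a trivial unknot $U$: after performing surgeries along $L'$ to pass to $S^1\times S^2$, Lemma~\ref{lem_eigenspace_BKn=BU} shows the generalized eigenspaces of $\mu^{\mathrm{orb}}(S^2)$ agree for the two links, and Corollary~\ref{cor_instanton_detects_seifert_genus_Kn} together with Lemma~\ref{lem_instanton_detects_seifert_genus_U} translate this into the statement that the minimal genus of a connected Seifert surface of $L'$ disjoint from $K_n$ is zero. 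Uniqueness of minimal-genus Seifert surfaces for fibered links then gives the isotopy to $S_j$. Crucially, this argument consumes the full hypothesis $\rank_{\mathbb{Z}/2}\Kh(L;\mathbb{Z}/2)=2^n$ a second time, via $\dim_{\mathbb{C}}\II^\natural(L,p)=2^{n-1}$ in equation~\eqref{eq_upper_bound_I_L_earring}; the existence of $D_n$ alone (Corollary~\ref{cor_cable}) is not the only place that hypothesis enters.

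Concretely, your Step~5 does not go through as written. In case~(b) the two arcs $\alpha_1,\alpha_2\subset D_n\cap S_j$ each run from an interior point ($p$ or $p'$) to a boundary point ($q_1$ or $q_2$) of the disk $D_n$; such arcs do \emph{not} separate $D_n$, so $D_n\setminus(\alpha_1\cup\alpha_2)$ is a single connected piece, not ``two pieces'' as you assert. When you cut $M$ along $S_j$ and track this piece into $S_j\times[0,1]$, its boundary is an eight-segment cycle containing \emph{both} arcs $\gamma_1,\gamma_2$ of $K_n$, both copies $\alpha_i^{0},\alpha_i^{1}$ of each $\alpha_i$, and two meridional arcs near $p,p'$ on $\partial S_j\times[0,1]$ --- not at all the shape of a Whitney disk for cancelling $q_1,q_2$. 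The observation that $\gamma_2$ has both endpoints on $S_j\times\{0\}$ is correct, but in a $3$-manifold this only gives a \emph{homotopy} of $\gamma_2$ into $S_j\times\{0\}$ (via projection); there is no reason $\gamma_2$ cannot be knotted rel endpoints in $S_j\times[0,1]$, in which case no embedded Whitney disk exists and your reduction fails. You have not explained how the residual disk $D_n$ rules this out, and I do not see how to extract such a disk from $\tilde D_n$ without additional input. This is exactly the point at which the paper brings in the Floer-theoretic machinery.
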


Before proving Proposition \ref{Prop_disjoint_Seifert_surface}, we need to prove some results on instanton Floer homology.
Let $U$ be an unknot included in a 3-ball which is disjoint from $L'$,
let $m_i$ be a small meridian circle around $K_i$ ($1\le i\le n-1$) and $u_i$ be a small arc joining $K_i$ and $m_i$.  
\begin{Lemma}\label{lem_dim_inst_L'_earrings}
We have
\begin{equation}\label{eq_rank_I_L'_earring}
\dim_\bC \II(S^3,L'\cup\bigcup_{i=1}^{n-1}m_i,\sum_{i=1}^{n-1}u_i)=2^{2n-4},
\end{equation}
\begin{equation}\label{eq_rank_I_L_earring}
\dim_\bC  \II(S^3,L\cup\bigcup_{i=1}^{n-1}m_i,\sum_{i=1}^{n-1}u_i)=2^{2n-3},
\end{equation}
and
\begin{equation}\label{eq_Rrank_I_L'U}
 \II(S^3,L'\cup\bigcup_{i=1}^{n-1}m_i\cup U,\sum_{i=1}^{n-1}u_i;\Gamma_U)\cong\mathcal{R}^{2^{2n-3}},
\end{equation}
where $\Gamma_U$ is the local system associated with $U$.
\end{Lemma}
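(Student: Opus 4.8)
The plan is to reduce everything to unlinks and unknots in a solid torus by excision, and to compute the resulting instanton homology by hand using the product formula and the earring-trick computations. The key observation is that $L'$ is, by Corollary \ref{cor_cable}, a connected sum of $n-2$ Hopf links (equivalently, a forest of unknots whose graph is a path on $n-1$ vertices), so each $K_i$ is an unknot and, after an isotopy, $L'$ is the ``standard'' chain link of Figure \ref{fig_standard_link}. I would first record the basic building blocks: for an unknot $U_0$ with meridian $m$ and joining arc $u$, the triple $(S^3, U_0\cup m, u)$ has $\dim_\bC \II = 2$ (this is $\II^\natural$ of the unknot, which Kronheimer--Mrowka computed to be $\bC^{2}$ — more precisely $\dim_\bC \II^\natural(\text{unknot}) = 1$, so $\dim_\bC \II(S^3,U_0\cup m,u)$ is read off accordingly). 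Then, using the torus excision theorem of \cite[Theorem 5.6]{KM:Kh-unknot} along tori separating the Hopf-link summands, I would show that $\II(S^3, L'\cup \bigcup m_i, \sum u_i)$ splits as a tensor product of contributions from the $n-2$ Hopf summands and the endpoint unknots, exactly as in the proof of Proposition \ref{product-formula}.

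More concretely, the plan for \eqref{eq_rank_I_L'_earring}: view $L'$ as $K_1 \# K_2 \# \cdots$ along meridional spheres (each connected-sum sphere is a $2$-sphere meeting $L'$ transversely in two points, so excision along the associated torus applies after doubling). Alternatively, and this is cleaner, use Proposition \ref{Prop-AHI=I^natural} repeatedly: put a meridian $m_i$ on each $K_i$; delete a neighborhood of $K_{n-1}$ (an unknot), so that $L' - K_{n-1}$ together with all the earrings $m_i$ becomes a link in a solid torus; by \cite[Section 4.3]{AHI} the resulting instanton homology is an annular instanton homology of a link in $S^1\times D^2$ whose underlying isotopy class (after the homotopy-invariance of Corollary \ref{LocallyFreePart-inst}, applied with the local system turned off via \eqref{eq-constant-coefficients}, or directly by unknotting) is a union $\mathcal{U}_k \cup \mathcal{K}_l \cup (\text{earrings})$. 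Each earring $m_i$ contributes a tensor factor $\AHI$ of a small meridian, which by Example \ref{eg-Uk-Kl} is $\bC^2$; the chain $K_1\cup\cdots\cup K_{n-2}$ contributes braid-closure factors; and one bookkeeping count gives $2^{2n-4}$. I expect the exponent $2n-4$ to arise as: $2^{n-1}$ from the earrings, times $2^{n-3}$ from the chain of unknots minus the two we account for via the solid-torus reduction — the exact split should be pinned down by the product formula (Proposition \ref{product-formula}) together with Example \ref{eg-Uk-Kl}, and cross-checked against Proposition \ref{Prop-rank_I^natural}.

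For \eqref{eq_rank_I_L_earring} the same strategy applies with $L = L' \cup K_n$; since $K_n$ is an unknot meeting the picture as in Figure \ref{fig_cable}, adding it (and no earring on it) doubles the rank, giving $2 \cdot 2^{2n-4} = 2^{2n-3}$ — I would verify this by the product/excision formula, isolating $K_n$ by an excision torus. For \eqref{eq_Rrank_I_L'U}, the disjoint unknot $U$ in a $3$-ball contributes, with its local system $\Gamma_U$, a free $\mathcal{R}$-module of rank $2$ by Example \ref{eg-Uk-Kl-Gamma}; by the product formula for split links together with the fact that the local system is supported on the split component $U$ (so the torus excision theorem with local coefficients applies, the excision surface being disjoint from $U$), one gets $\II(\cdots;\Gamma_U) \cong \mathcal{R}^{2}\otimes_{\mathcal R}(\text{rank }2^{2n-4}\text{ free module over }\bC)\otimes_\bC\mathcal R \cong \mathcal{R}^{2^{2n-3}}$. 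Freeness over $\mathcal{R}$ follows because $U$ is split, so the local system is trivial in the sense of \eqref{eq-constant-coefficients} after we recognize that the whole complex is a free module extension of the $\bC$-complex of $L'\cup\bigcup m_i$. The main obstacle will be getting the product/excision decomposition exactly right with all the earrings present — making sure each meridian $m_i$ really does factor off as an independent $\bC^2$ (one needs the excision torus to separate $m_i$ cleanly from the rest, which requires choosing the tori carefully given the chain structure of $L'$) — and keeping the exponent bookkeeping consistent with the already-established rank $\dim_\bC\II^\natural(L,p) = 2^{n-1}$ from Proposition \ref{Prop-rank_I^natural}.
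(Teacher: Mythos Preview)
Your approach has genuine gaps in the first two equations that I don't think can be repaired along the lines you sketch.

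For \eqref{eq_rank_I_L'_earring}, you want to use the product formula (Proposition \ref{product-formula}) or torus excision to split $L'$ with its earrings into tensor factors. But the product formula is for \emph{split} links, and $L'$ is a chain of Hopf links: each $K_i$ links $K_{i\pm 1}$, and each $m_i$ links $K_i$. There is no torus in $S^3$ separating these pieces. Your fallback---``the underlying isotopy class (after the homotopy-invariance of Corollary \ref{LocallyFreePart-inst}, applied with the local system turned off via \eqref{eq-constant-coefficients})''---does not work either: Corollary \ref{LocallyFreePart-inst} gives homotopy invariance only for the $\mathcal{R}$-rank with a nontrivial local system on the sublink being homotoped; with the trivial local system the instanton homology is emphatically not a homotopy invariant, and you cannot unlink the chain for free. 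The paper instead gets the \emph{upper} bound by applying the unoriented skein exact triangle at a crossing between $m_i$ and $K_i$ (this removes one earring at the cost of a factor of $2$), iterating down to $\II^\natural(L',p)$, and then invoking Proposition \ref{Prop-rank_I^natural}. The \emph{lower} bound comes from identifying the earring-decorated instanton homology with $\KHI(L')$ (via \cite{Xie-earring}) and using the Alexander polynomial: the graded Euler characteristic of $\KHI$ is $\pm(t^{1/2}-t^{-1/2})^{n-2}\Delta_{L'}(t)$, and since $|\Delta_{L'}(-1)|=2^{n-2}$ this forces $\dim\KHI(L')\ge 2^{2n-4}$.

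For \eqref{eq_rank_I_L_earring}, you propose to ``isolate $K_n$ by an excision torus'' so that adding $K_n$ doubles the rank. But the whole point of Condition \ref{cond_cycle} is that $K_n$ links $K_1$ and $K_{n-1}$ nontrivially, so no such torus exists. The paper gets the upper bound by the same earring-removal argument applied to $L$, and the lower bound by putting the local system $\Gamma_{K_n}$ on $K_n$, using Corollary \ref{LocallyFreePart-inst} to homotope $K_n$ to the split unknot $U$, and then invoking the already-proved \eqref{eq_Rrank_I_L'U}.

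Your plan for \eqref{eq_Rrank_I_L'U} is essentially right and matches the paper: since $U$ is genuinely split from $L'\cup\bigcup m_i$, torus excision (with the local system supported on $U$, hence disjoint from the excision torus) lets you tensor off a factor of $\AHI(U';\Gamma)\cong\mathcal{R}^2$ against the constant-coefficient $\II$ of the rest, which is $\mathcal{R}^{2^{2n-4}}$ by \eqref{eq_rank_I_L'_earring} and \eqref{eq-constant-coefficients}.
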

\begin{proof}
Pick a crossing between $m_1$ and $K_1$ and apply Kronheimer-Mrowka's unoriented skein exact triangle in \cite[Section 6]{KM:Kh-unknot}, we obtain a 3-cyclic exact sequence
\begin{align*}
 \cdots&\to  \II(S^3,L'\cup\bigcup_{i=1}^{n-1}m_i,\sum_{i=1}^{n-1}u_i)\to \II(S^3,L'\cup\bigcup_{i=2}^{n-1}m_i,\sum_{i=2}^{n-1}u_i) \\
       &\to \II(S^3,L'\cup\bigcup_{i=2}^{n-1}m_i,\sum_{i=2}^{n-1}u_i)\to \cdots
\end{align*}
See \cite[Section 3]{Xie-earring} for more details. The above exact triangle implies
$$
\dim_\bC  \II(S^3,L'\cup\bigcup_{i=1}^{n-1}m_i,\sum_{i=1}^{n-1}u_i) \le 2
\dim_\bC   \II(S^3,L'\cup\bigcup_{i=2}^{n-1}m_i,\sum_{i=2}^{n-1}u_i).
$$ 
Repeating this argument to the other meridians, we obtain
\begin{align*}
& \dim_\bC  \II(S^3,L'\cup\bigcup_{i=1}^{n-1}m_i,\sum_{i=1}^{n-1}u_i)
\\
\le ~ 
& 2^{n-2}
 \dim_\bC  \II(S^3,L'\cup m_{n-1},u_{n-1})=2^{n-2}\dim_\bC \II^\natural(L', p).
\end{align*}
By Proposition \ref{Prop_Kh_rank_sublink} and Proposition \ref{Prop-rank_I^natural},
$$
\dim_\bC \II^\natural(L', p)=2^{n-2},
$$
therefore
\begin{equation}\label{eq_upper_bound_I_L'_earring}
\dim_\bC  \II(S^3,L'\cup\bigcup_{i=1}^{n-1}m_i,\sum_{i=1}^{n-1}u_i)\le 2^{2(n-2)}.
\end{equation}
A similar earrings-removal argument yields 
\begin{equation}\label{eq_upper_bound_I_L_earring}
\dim_\bC  \II(S^3,L\cup\bigcup_{i=1}^{n-1}m_i,\sum_{i=1}^{n-1}u_i)\le 2^{n-2}\dim_\bC \II^\natural(L, p)= 2^{2n-3}.
\end{equation}
We recall some properties of the instanton knot Floer homology $\KHI$ for oriented links, which was introduced in \cite[Definition 2.4]{KM:Alexander}. Given an oriented link $M\subset S^3$, the homology group $\KHI(M)$
carries an Alexander $\bZ$-grading and a homological $\bZ/2$-grading. The rank of $\KHI(M)$ does not depend on the orientation of $M$.
 We use $\KHI(M,i)$ to denote the summand of $\KHI(M)$ with Alexander degree $i$, and use
$\chi(\KHI(M,i))$ to denote its Euler characteristic with respect to the homological grading. Recall that we always take coefficients in $\bC$ for instanton Floer homology in this article.
According to \cite[Theorem 3.6 and (14)]{KM:Alexander}, we have 
$$
\sum_i \chi(\KHI(M,i))t^i=\pm (t^{1/2}-t^{-1/2})^{|M|-1}\Delta_{M}(t)
$$
where $\Delta_{M}(t)$ denotes the single-variable Alexander polynomial of $M$. 
Notice that the Alexander polynomial for $L'$ satisfies $|\Delta_{L'}(-1)|=2^{n-2}$ for every orientation of $L'$. Therefore, taking $M=L'$, we have
$$
\dim_\bC\KHI(L')\ge 2^{n-2} |\Delta_{L'}(-1)| = 2^{2n-4}.
$$
By \cite[Proposition 5.1]{Xie-earring}, we have
\begin{equation}\label{eq_lower_bound_I_L'_earring}
\dim_\bC \II(S^3,L'\cup\bigcup_{i=1}^{n-1}m_i,\sum_{i=1}^{n-1}u_i)=\dim_\bC \KHI(L')\ge 2^{2n-4}.
\end{equation}
Equations \eqref{eq_upper_bound_I_L'_earring} and \eqref{eq_lower_bound_I_L'_earring} imply \eqref{eq_rank_I_L'_earring}.

Consider the two admissible triples 
$$
(S^3,L'\cup\bigcup_{i=1}^{n-1}m_i\cup U,\sum_{i=1}^{n-1}u_i)
,~(S^1\times S^2, S^1\times \{p_1,p_2\},v)
$$ 
where $v$ is an arc joining the two components of $S^1\times \{p_1,p_2\}$.
Let $N(K_1)$ be a small tubular neighborhood of $K_1$ in the first triple, and deform $U$ into $N(K_1)$ by an isotopy. Let $N(S^1\times \{p_1\})$ be a small tubular neighborhood of $S^1\times \{p_1\}$ in the second triple.
Cut out $ N(K_1)$ and $ N(S^1\times \{p_1\})$, exchange them, and glue back, we obtain two new 
triples
$$
(S^3,L'\cup\bigcup_{i=1}^{n-1}m_i,\sum_{i=1}^{n-1}u_i)
,~(S^1\times S^2, S^1\times \{p_1,p_2\}\cup U',v),
$$ 
where $U'$ is an unknot included in a $3$-ball disjoint from $S^1\times\{p_1,p_2\}$.
By the torus excision theorem and
and the definition of $\AHI$, we have
\begin{align}
& \II(S^3,L'\cup\bigcup_{i=1}^{n-1}m_i\cup U,\sum_{i=1}^{n-1}u_i;\Gamma_U)\otimes_\mathcal{R}\AHI(\emptyset;\Gamma)
\nonumber
\\ \cong & \II(S^3,L'\cup\bigcup_{i=1}^{n-1}m_i,\sum_{i=1}^{n-1}u_i;\Gamma_\emptyset)\otimes_\mathcal{R}\AHI(U';\Gamma),
\label{eqn_L'_U_local_system_torus_excision}
\end{align}
where  $\Gamma_\emptyset$ is the trivial local system with coefficient $\mathcal{R}$.
By \eqref{eq-constant-coefficients} and Example \ref{eg-Uk-Kl-Gamma}, $\AHI(\emptyset;\Gamma)\cong \mathcal{R}$ and $\AHI(U';\Gamma)\cong \mathcal{R}^2$.
By \eqref{eq-constant-coefficients} and \eqref{eq_rank_I_L'_earring},
$$
\II(S^3,L'\cup\bigcup_{i=1}^{n-1}m_i,\sum_{i=1}^{n-1}u_i;\Gamma_\emptyset)\cong 
\II(S^3,L'\cup\bigcup_{i=1}^{n-1}m_i,\sum_{i=1}^{n-1}u_i)\otimes_{\mathbb{C}}\mathcal{R}\cong
\mathcal{R}^{ 2^{2n-4}}.
$$
Therefore by \eqref{eqn_L'_U_local_system_torus_excision}, we have
$$
\II(S^3,L'\cup\bigcup_{i=1}^{n-1}m_i\cup U,\sum_{i=1}^{n-1}u_i;\Gamma_U)\cong 
\mathcal{R}^{ 2^{2n-4}}\otimes_\mathcal{R}\mathcal{R}^{ 2}\cong \mathcal{R}^{ 2^{2n-3}}
$$
This completes the proof of \eqref{eq_Rrank_I_L'U}.

Let $\Gamma_{K_n}$ be the local system on 
$\mathcal{R}(S^3,L\cup\bigcup_{i=1}^{n-1}m_i,\sum_{i=1}^{n-1}u_i)$ associated with $K_n$.
By Corollary \ref{LocallyFreePart-inst} and the universal coefficient theorem, we have
\begin{align*}
\dim_\bC \II(S^3,L\cup\bigcup_{i=1}^{n-1}m_i,\sum_{i=1}^{n-1}u_i)&\ge
\rank_\mathcal{R} \II(S^3,L\cup\bigcup_{i=1}^{n-1}m_i,\sum_{i=1}^{n-1}u_i;\Gamma_{K_n})  \\
&=\rank_\mathcal{R} \II(S^3,L'\cup\bigcup_{i=1}^{n-1}m_i\cup U,\sum_{i=1}^{n-1}u_i;\Gamma_{U}) \\
&=2^{2n-3}.
\end{align*}
The above inequality together with \eqref{eq_upper_bound_I_L_earring} implies \eqref{eq_rank_I_L_earring}.
\end{proof}

Choose $j\in\{1,2\}$ such that the algebraic intersection number of $S_j$ and $K_n$ is zero. Choose an orientation of $S_j$, and endow $L'$ with the boundary orientation. For each $i=1,\cdots,n-1$, let $N(K_i)$ be a sufficiently small tubular neighborhood of $K_i$ that is disjoint from $m_i$, and apply a Dehn surgery on $N(K_i)$ that glues the meridian of $N(K_i)$ to $S_j\cap \partial N(K_i)$.  
Since $S^3-L'$ is fibered over $S^1$ with fiber $S_j$, the manifold obtained from the Dehn surgeries is fibered over $S^1$ with fiber $S^2$. Since the orientation-preserving mapping class group of $S^2$ is trivial, the resulting manifold is diffeomorphic to $S^1\times S^2$ with the product fibration. Let $\hat K_1,\cdots, \hat K_n, \hat m_1,\cdots, \hat m_{n-1},\hat U$ be the images of $K_1,\cdots,K_n,m_1,\cdots,m_{n-1},U$ respectively after the surgery. Let $\hat L' := \hat K_1\cup\cdots\cup \hat K_{n-1}$ be the image of $L'$, let $m:=m_1\cup\cdots \cup m_{n-1}$ be the union of the earrings, and let $\hat m:=\hat m_1\cup\cdots\cup \hat m_{n-1}$ be the image of $m$. We further require that the surgery on $N(K_i)$ fixes $u_i\cap \partial N(K_i)$ for all $i=1,\cdots,n-1$, hence the image of $u_i$ is an arc connecting $\hat K_i$ and $\hat m_i$, and we denote the image of $u_i$ by $\hat u_i$. 

Given a $\bC$-vector space $V$, a linear map $f:V\to V$, and $\lambda\in \mathbb{C}$, we will use
$E(V,f,\lambda)$ to denote the generalized eigenspace of $f$ with eigenvalue $\lambda$. 
\begin{Lemma}\label{lem_eigenspace_BKn=BU}
For all $\lambda\in \bC$,
we have
\begin{align*}
& \dim_\bC E(\II(S^1\times S^2, \hat L'\cup \hat m \cup \hat K_n,\sum_{i=1}^{n-1}\hat u_i),\muu(S^2),\lambda)\\
\cong ~
& \dim_\bC E(\II(S^1\times S^2, \hat L' \cup  \hat m \cup \hat U,\sum_{i=1}^{n-1}\hat u_i),\muu(S^2),\lambda),
\end{align*}
where the operator $\muu(S^2)$ is the $\mu$-map defined by $\{p\}\times S^2\subset S^1\times S^2$ for an arbitrary $p\in S^1$.
\end{Lemma}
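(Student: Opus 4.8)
The plan is to adapt the proof of Proposition~\ref{Prop-AHI-rank-inequality}: introduce the local systems $\Gamma_{\hat K_n}$ and $\Gamma_{\hat U}$ associated with the sublinks $\hat K_n$ and $\hat U$, use the homotopy invariance with local coefficients (Proposition~\ref{Prop-Gammah-homotopy_invariance}) to identify the two instanton homology groups after a generic deformation of the local-coefficient parameter, and then recover the $\muu(S^2)$-eigenspace dimensions at $t=1$ via Lemma~\ref{chain-complex-limit}. Throughout, write $\omega:=\sum_{i=1}^{n-1}\hat u_i$, $A:=\II(S^1\times S^2,\hat L'\cup\hat m\cup\hat K_n,\omega)$, and $B:=\II(S^1\times S^2,\hat L'\cup\hat m\cup\hat U,\omega)$.

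The geometric input is that $\hat K_n$ is homotopic to $\hat U$ inside $(S^1\times S^2)-\omega$. Since $K_n$ is disjoint from each $N(K_i)$, and $m_i$, $u_i$ may be chosen inside $N(K_i)$, both $\hat K_n$ and $\hat U$ are disjoint from $\hat L'\cup\hat m\cup\omega$. Because $\omega$ is a disjoint union of arcs, $\pi_1\big((S^1\times S^2)-\omega\big)\cong\pi_1(S^1\times S^2)\cong\bZ$, and in this group the class of $\hat K_n$ is detected by its algebraic intersection number with the fibre $\{p\}\times S^2$. That intersection number equals the algebraic intersection number of $K_n$ with $S_j$, which vanishes by hypothesis. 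Hence $\hat K_n$ and the unknot $\hat U$ are both null-homotopic in $(S^1\times S^2)-\omega$, so they are homotopic there.

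I would then apply Proposition~\ref{Prop-Gammah-homotopy_invariance} with $L_0=\hat L'\cup\hat m\cup\hat K_n$, $L_0'=\hat K_n$, $L_1'=\hat U$, and the closed embedded surface $\{p\}\times S^2$: for each $h\in\bC-\{0\}$ with $(1-h^2)\theta(h)\neq 0$ there is an isomorphism
\[
\II(S^1\times S^2,\hat L'\cup\hat m\cup\hat K_n,\omega;\Gamma_{\hat K_n}(h))\ \cong\ \II(S^1\times S^2,\hat L'\cup\hat m\cup\hat U,\omega;\Gamma_{\hat U}(h))
\]
intertwining the two copies of $\muu(S^2)_h$. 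As $\hat U$ is a split unknot disjoint from $\hat L'\cup\hat m\cup\omega$, the right-hand side is a free $\mathcal{R}$-module, isomorphic to $\II(S^1\times S^2,\hat L'\cup\hat m,\omega)\otimes_{\bC}\mathcal{R}^{2}$ with $\muu(S^2)$ acting as $\muu(S^2)\otimes\id$; in particular $\dim_\bC\II(S^1\times S^2,\hat L'\cup\hat m\cup\hat U,\omega;\Gamma_{\hat U}(h))$ is independent of $h$. Now apply Lemma~\ref{chain-complex-limit} to the Floer chain complexes (forgetting the $\bZ/4$-grading), with $\partial^{(k)}$ the differentials $d_{h_k}$ and $f^{(k)}$ the maps $\muu(S^2)_{h_k}$ for a sequence $h_k\to1$ avoiding the finitely many roots of $(1-t^2)\theta(t)$. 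Part~(2) on the $\hat K_n$-side, the displayed isomorphism for each $h_k$, and part~(3) on the $\hat U$-side (where the total dimension is constant) give, for every $\lambda\in\bC$,
\[
\dim_\bC E(A,\muu(S^2),\lambda)\ \ge\ \lim_{\epsilon\to0}\,\limsup_{k}\,\dim_\bC E\big(\II(S^1\times S^2,\hat L'\cup\hat m\cup\hat U,\omega;\Gamma_{\hat U}(h_k)),\,\muu(S^2)_{h_k},\,N(\lambda,\epsilon)\big)=\dim_\bC E(B,\muu(S^2),\lambda).
\]

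To promote these inequalities to equalities it suffices to verify $\dim_\bC A=\dim_\bC B$, for then summing over $\lambda$ forces equality in each eigenvalue. The split-unknot description gives $\dim_\bC B=2\dim_\bC\II(S^1\times S^2,\hat L'\cup\hat m,\omega)$, and I would pin down this quantity and $\dim_\bC A$ by relating the two surgered triples back to $(S^3,L'\cup\bigcup_i m_i,\sum_i u_i)$ and $(S^3,L\cup\bigcup_i m_i,\sum_i u_i)$ through the Dehn-surgery and torus-excision arguments of Lemma~\ref{lem_dim_inst_L'_earrings}, so that \eqref{eq_rank_I_L'_earring} and \eqref{eq_rank_I_L_earring} yield $\dim_\bC A=\dim_\bC B$; alternatively one may argue directly that $\II(S^1\times S^2,\hat L'\cup\hat m\cup\hat K_n,\omega;\Gamma_{\hat K_n})$ is torsion-free over $\mathcal{R}$. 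Matching the total dimensions so that both inequalities collapse to equalities is the step I expect to demand the most care; everything else is a routine application of the tools already developed.
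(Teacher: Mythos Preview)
Your proposal is correct and follows essentially the same approach as the paper: both arguments establish the homotopy $\hat K_n\simeq\hat U$ in $(S^1\times S^2)-\omega$, invoke Proposition~\ref{Prop-Gammah-homotopy_invariance} to identify the instanton groups with $\muu(S^2)_h$-action for generic $h$, and then pass to $h=1$ via Lemma~\ref{chain-complex-limit} together with the dimension computations of Lemma~\ref{lem_dim_inst_L'_earrings} (transported through torus excision, giving $\dim_\bC A=\dim_\bC B=2^{2n-3}$). The only cosmetic difference is that the paper applies Part~(3) of Lemma~\ref{chain-complex-limit} on \emph{both} sides after verifying constant dimension, whereas you use Part~(2) on the $\hat K_n$-side and Part~(3) on the $\hat U$-side and then match total dimensions---these are equivalent packagings of the same computation.
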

\begin{proof}
By the torus excision theorem
and Lemma \ref{lem_dim_inst_L'_earrings}, we have
\begin{equation}\label{eq_rank_I_BKn}
\II(S^1\times S^2, \hat L'\cup \hat m \cup \hat K_n,\sum_{i=1}^{n-1}\hat u_i)\cong \II(S^3,L\cup m,\sum_{i=1}^{n-1}u_i)\cong \bC^{2^{2n-3}},
\end{equation}
and
\begin{equation}\label{eq_Rrank_BU}
\II(S^1\times S^2, \hat L' \cup  \hat m  \cup \hat U,\sum_{i=1}^{n-1}\hat u_i;\Gamma_{\hat U})\cong 
\II(S^3,L'\cup m \cup U,\sum_{i=1}^{n-1}u_i;\Gamma_U)\cong \mathcal{R}^{ 2^{2n-3}},
\end{equation}
where $\Gamma_{\hat U}$ is the local system associated with $\hat U$, and $\Gamma_U$ is the local system associated with $ U$.
Since the algebraic intersection number of $K_n$ and $S_j$ is zero, we conclude that $\hat K_n$ is homotopic to $\hat U$ in $S^1\times S^2-\bigcup_{i=1}^{n-1} \hat u_i$. Let $\Gamma_{\hat K_n}$ be the local system associated with $\hat K_n$. By Proposition \ref{Prop-Gammah-homotopy_invariance},
we have
\begin{equation}
\label{eqn_BKn=BU_with_local_system}
\II(S^1\times S^2, \hat L'\cup \hat m \cup \hat K_n,\sum_{i=1}^{n-1}\hat u_i; \Gamma_{\hat K_n}(h))\cong \II(S^1\times S^2, \hat L' \cup  \hat m  \cup \hat U,\sum_{i=1}^{n-1}\hat u_i; \Gamma_{\hat U}(h))
\end{equation} 
for every $h\in \bC-\{0\}$ satisfying $(1-h^2)\theta(h)\neq 0$, and this isomorphism commutes with $\muu(S^2)$. As a consequence, for every $\lambda\in \bC$ and $h\in \bC-\{0\}$ satisfying $(1-h^2)\theta(h)\neq 0$, we have
\begin{align}
& \dim_\bC E(\II(S^1\times S^2, \hat L'\cup \hat m \cup \hat K_n,\sum_{i=1}^{n-1}\hat u_i; \hat \Gamma_{\hat K_n}(h)),\muu(S^2),\lambda)\nonumber \\
\cong ~
& \dim_\bC E(\II(S^1\times S^2, \hat L' \cup  \hat m \cup \hat U,\sum_{i=1}^{n-1}\hat u_i;\Gamma_{\hat U}(h)),\muu(S^2),\lambda). \label{eqn_eigenspace_BKn=BU_with_local_system}
\end{align}
When $h(1-h^2)\theta(h)\neq 0$, the universal coefficient theorem and \eqref{eq_Rrank_BU}, \eqref{eqn_BKn=BU_with_local_system} imply
\begin{equation*}
\II(S^1\times S^2, \hat L'\cup \hat m \cup \hat K_n,\sum_{i=1}^{n-1}\hat u_i;\Gamma_{\hat K_n}(h))\cong \II(S^1\times S^2, \hat L' \cup  \hat m  \cup \hat U,\sum_{i=1}^{n-1}\hat u_i; \Gamma_{\hat U}(h))
\cong \bC^{2^{2n-3}}.
\end{equation*} 
On the other hand, notice that when $h=1$, the local systems $\Gamma_{\hat K_n}(h)$ and $\Gamma_{\hat U}(h)$ become the trivial system with coefficient $\bC$, hence by \eqref{eq_rank_I_BKn} and \eqref{eq_Rrank_BU}, 
\begin{equation*}
\II(S^1\times S^2, \hat L'\cup \hat m \cup \hat K_n,\sum_{i=1}^{n-1}\hat u_i;\Gamma_{\hat K_n}(1))\cong \II(S^1\times S^2, \hat L' \cup  \hat m  \cup \hat U,\sum_{i=1}^{n-1}\hat u_i; \Gamma_{\hat U}(1))
\cong \bC^{2^{2n-3}}.
\end{equation*} 
Therefore the desired result follows from \eqref{eqn_eigenspace_BKn=BU_with_local_system} by taking the limit $h\to 1$ and invoking Part (3) of Proposition \ref{chain-complex-limit} .
\end{proof}

Notices that $\hat L'\cup \hat m$ is a braid in $S^1\times S^2$. In fact, the projection of $\hat L'\cup \hat m$ to $S^1$ is a diffeomorphism on each component. Therefore, after an isotopy, we may write $S^1\times S^2$ as $A_0\cup_{S^1\times S^1} A_1$, where $A_0,A_1$ are diffeomorphic to $S^1\times D^2$, such that
\begin{enumerate}
\item $\hat K_1$, $\hat m_1$ are included in $A_0$ and are given by $S^1\times \{p_1\}$ and $S^2\times \{p_2\}$ with $p_1,p_2\in D^2$,
\item $\hat u_1$ is an arc connecting $\hat K_1$ and $\hat m_1$, and $\hat u_1$ is included in $A_0$,
\item $\hat K_2,\cdots,\hat K_n,\hat m_2,\cdots,\hat m_{n-1},\hat U$ are included in $A_1$.
\end{enumerate}

 Let 
\begin{equation}\label{eqn_define_L0_with_earrings}
\mathcal{L}_0:= \bigcup_{i=2}^{n-1} \hat K_i\cup \bigcup_{i=2}^{n-1} \hat m_i \cup \hat K_n,
\end{equation}
\begin{equation}\label{eqn_define_L1_with_earrings}
\mathcal{L}_1:= \bigcup_{i=2}^{n-1} \hat K_i\cup \bigcup_{i=2}^{n-1} \hat m_i \cup \hat U,
\end{equation}
then $\mathcal{L}_0$ and $\mathcal{L}_1$ are two annular links in $A_1$. By the definition of annular instanton Floer homology, we have
\begin{align}
\AHI(\mathcal{L}_0) & \cong \II(S^1\times S^2,\hat L'\cup \hat m \cup \hat K_n,\hat u_1), 
\label{eqn_AHI_L0}
\\
\AHI(\mathcal{L}_1) & \cong \II(S^1\times S^2,\hat L'\cup \hat m \cup \hat U,\hat u_1).
\label{eqn_AHI_L1}
\end{align}

\begin{Lemma}\label{lem_remove_w2_from_u2_to_un_L1}
Assume there exists a connected oriented Seifert surface $S\subset S^3$ of $L'$, such that $S$ is compatible with the orientation of $L'$, and $S$ has genus $g$ and is disjoint from $K_n$. Then we have
\begin{align*}
& \dim_\bC E(\II(S^1\times S^2, \hat L'\cup \hat m \cup \hat K_n,\sum_{i=1}^{n-1}\hat u_i),\muu(S^2),2g+2n-4) \\
= ~ 
& \dim_\bC E(\II(S^1\times S^2, \hat L'\cup \hat m \cup \hat K_n, \hat u_1),\muu(S^2),2g+2n-4),
\end{align*}
and
$$
E(\II(S^1\times S^2, \hat L'\cup \hat m \cup \hat K_n,\sum_{i=1}^{n-1}\hat u_i),\muu(S^2),i)=0 
$$
for all integers $i>2g+2n-4$.
\end{Lemma}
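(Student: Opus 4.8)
\emph{The plan.} The strategy is to delete the marking arcs $\hat u_2,\dots,\hat u_{n-1}$ from the bending curve one at a time---passing from $\sum_{i=1}^{n-1}\hat u_i$ down to $\hat u_1$---by the earring-removal technique of \cite{Xie-earring} (i.e.\ Kronheimer--Mrowka's unoriented skein exact triangle \cite[Section 6]{KM:Kh-unknot} applied at a crossing of $\hat m_i$ with $\hat K_i$), keeping track throughout of the generalized eigenspaces of $\muu(S^2)$. The Seifert surface $S$ from the hypothesis supplies an a priori upper bound of $2g+2n-4$ on the f-gradings that can occur, which both anchors the top of the induction and gives the claimed vanishing.

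\emph{Step 1: the auxiliary surface and the f-grading bound.} I would first push $S$ through the $n-1$ Dehn surgeries used to build $S^1\times S^2$. Since $S$ and the fiber surface $S_j$ induce the same (Seifert) framing on each $K_i$, the boundary curves $S\cap\partial N(K_i)$ are meridians of the surgery solid tori, so $S$ caps off to a \emph{closed} surface $\hat S\subset S^1\times S^2$; it has genus $g$, is disjoint from $\hat L'$ and from $\hat K_n$, meets each $\hat m_i$ transversely once, and can be isotoped off $\hat u_1\cup\cdots\cup\hat u_{n-1}$. One checks $[\hat S]=[\{p\}\times S^2]$ in $H_2(S^1\times S^2)$, hence $\muu(\hat S)=\muu(S^2)$. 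Applying the adjunction-type eigenvalue inequality for $\muu$ of a closed genus-$g$ surface (equivalently, Theorem~\ref{Theorem-2g+n} after cutting $\hat S$ into a meridional surface of a solid torus), the $\muu(S^2)$-eigenvalues on $\II(S^1\times S^2,\hat L'\cup\hat m\cup\hat K_n,\omega)$ are bounded in absolute value by $2g+2n-4$, uniformly for every $\omega$ built from a subset of $\{\hat u_1,\dots,\hat u_{n-1}\}$ and for the intermediate configurations appearing in Step 2. In particular $E(\II(\ldots,\textstyle\sum_{i=1}^{n-1}\hat u_i),\muu(S^2),i)=0$ for $i>2g+2n-4$, which is the second assertion.

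\emph{Step 2: the induction.} For $r=1,\dots,n-1$ put $A_r:=\II(S^1\times S^2,\hat L'\cup\hat m\cup\hat K_n,\sum_{i=1}^{r}\hat u_i)$; by \eqref{eqn_AHI_L0}, $A_1=\AHI(\mathcal{L}_0)$, and $A_{n-1}$ is the group in the statement. For $r\ge 2$, resolving a crossing of $\hat m_r$ with $\hat K_r$ gives an exact triangle relating $A_r$ to $A_{r-1}$ (the resolution that deletes the earring $\hat m_r\cup\hat u_r$) and to an auxiliary group $A'_r$ in which $\hat m_r$ is absorbed into $\hat K_r$. Since $\{p\}\times S^2$ can be pushed off the crossing ball, the triangle is $\muu(S^2)$-equivariant and splits along generalized eigenspaces, so $\dim E(A_r,\muu(S^2),\lambda)\le\dim E(A_{r-1},\muu(S^2),\lambda)+\dim E(A'_r,\muu(S^2),\lambda)$, together with the two companion inequalities from the other legs. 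A surface derived from $\hat S$ (modified locally near the resolved crossing) should show that $A'_r$ vanishes in f-grading $\ge 2g+2n-4$, so at $\lambda=2g+2n-4$ the triangle forces $\dim E(A_r,\cdot,\lambda)=\dim E(A_{r-1},\cdot,\lambda)$. Running $r$ from $n-1$ down to $2$ then yields $\dim E(A_{n-1},\muu(S^2),2g+2n-4)=\dim E(A_1,\muu(S^2),2g+2n-4)$, the first assertion. (Alternatively, one may finish Step 2 by a dimension count: \eqref{eq_rank_I_BKn}, \eqref{eqn_AHI_L0} and Proposition~\ref{Prop-AHI-rank-inequality} pin down $\dim A_{n-1}=\dim A_1=2^{2n-3}$, after which the eigenspace inequalities collapse to equalities in every degree, in the spirit of the proof of Proposition~\ref{Prop-AHI-rank-inequality}.)

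\emph{Main obstacle.} The delicate part is the f-grading bookkeeping in Step 2: one must check that neither skein resolution of $\hat m_r$ against $\hat K_r$ raises the top $\muu(S^2)$-eigenvalue past $2g+2n-4$, and that the ``merge'' term $A'_r$ is genuinely empty in top f-grading. This demands exhibiting, in each resolved $3$-manifold, an explicit genus-$g$ surface coming from $\hat S$ together with its exact intersection number with the resolved link, and feeding the numerics into Theorem~\ref{Theorem-2g+n}; getting the constant $2g+2n-4$ to land on the nose, and verifying that admissibility persists throughout the induction, is where the real work lies.
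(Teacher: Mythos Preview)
There is a genuine gap in Step 2. The earring-removal exact triangle of \cite{Xie-earring} resolves a crossing between $\hat m_r$ and $\hat K_r$ and thereby deletes the entire earring $(\hat m_r,\hat u_r)$ from the triple; as recalled in the proof of Lemma~\ref{lem_dim_inst_L'_earrings}, \emph{both} resolutions yield the same group. But in your definition, $A_{r-1}$ keeps $\hat m_r$ in the link and differs from $A_r$ only in the $1$-cycle $\omega$, so the triangle simply does not connect $A_r$ to $A_{r-1}$. If you instead redefine $A_r$ so that the missing meridians are also removed from the link, the bottom of the induction is no longer $\II(S^1\times S^2,\hat L'\cup\hat m\cup\hat K_n,\hat u_1)$, which is the group appearing in the lemma and in \eqref{eqn_AHI_L0}. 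Even granting that, the triangle then reads $A_{r-1}\to A_{r-1}\to A_r\to A_{r-1}$, which at a fixed eigenvalue only gives $\dim E(A_r,\lambda)\le 2\dim E(A_{r-1},\lambda)$; there is no ``merge term $A'_r$ that vanishes in top f-grading'', and nothing in your sketch controls the rank of the self-map on $E(A_{r-1},\lambda)$ needed to force equality. The alternative dimension-count does not help either: knowing $\dim A_{n-1}=\dim A_1$ says nothing about the individual eigenspaces. A smaller but related slip occurs in Step~1: the capped surface $\hat S$ is \emph{not} disjoint from $\hat L'$; the capping disks meet each core $\hat K_i$ once, so $\hat S\cap(\hat L'\cup\hat m)$ has $2n-2$ points, not $n-1$.

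The paper's argument is structurally different and bypasses any induction on the $\hat u_i$'s. One first inserts an auxiliary strand $\hat K_0$ parallel to $\hat K_1$ so that $\hat S$ meets the link in $2n-1$ points (odd, as required for singular excision). The eigenvalue bound \cite[Theorem~6.1]{XZ:excision} then gives the range $|\lambda|\le 2g+2n-3$ on the enlarged group, and a torus excision removes $\hat K_0$, shifting by $1$ and yielding both the vanishing above $2g+2n-4$ and the identifications \eqref{eqn_remove_w2_from_u2_to_un_1}, \eqref{eqn_remove_w2_from_u2_to_un_2}. The equality of top eigenspaces for the two choices of $\omega$ is then obtained in one stroke by the singular excision theorem \cite[Theorem~6.4]{XZ:excision} along $\hat S$ together with \cite[Proposition~6.7]{XZ:excision}, which shows that the top generalized eigenspace is independent of which arcs $\hat u_i$ are placed on $\hat S$.
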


\begin{proof}
After an isotopy, we may assume that $S$ intersects each $m_i$ transversely at one point. 
The image of $S-\bigcup_{i=1}^{n-1}N(K_i)$ in $S^1\times S^2$ is a connected surface with $n-1$ boundary components, where the boundary components are give by the meridians of $\hat K_1,\cdots,\hat K_{n-1}$. Therefore we can glue disks to the boundary of the image of $S-\bigcup_{i=1}^{n-1}N(K_i)$ and obtain a connected closed surface in $S^1\times S^2$ with genus $g$, that is disjoint from $\hat K_n$ and intersects each of $\hat K_1,\cdots,\hat K_{n-1},\hat m_1,\cdots,\hat m_{n-1}$ transversely at one point. Denote this surface by $\hat S$. After a further isotopy, 
we may assume that 
the arcs $\hat m_1,\cdots,\hat m_{n-1}$ lie on $\hat S$. 

Recall that  $\hat K_1$ and $\hat m_1$ are contained in $A_0\cong S^1\times D^2$ and are given by $S^1\times \{p_1\}$ and $S^2\times \{p_2\}$ for $p_1,p_2\in D^2$. Take a point $p_0\in D^2-\{p_1,p_2\}$, and let $\hat K_0\subset A_0$ be the knot $S^1\times \{p_0\}$. After a further isotopy, we may assume that $\hat S$ intersects $\hat K_0$ transversely at one point.
Let $c$ be a simple closed curve on $D^2$ such that $p_0,p_1$ are inside of $c$ and $p_2$ is outside. let $T_1\subset A_0$ be the torus given by $T_1 := S^1\times c$.

Notice that $\hat S$ is homologous to the slice of $S^2$ in $S^1\times S^2$, therefore we have $\muu(\hat S) = \muu(S^2)$. The surface $\hat S$ intersects $\hat L'\cup \hat m \cup \hat K_n\cup \hat K_0$ transversely at $2n-1$ points.
Apply \cite[Theorem 6.1]{XZ:excision} to the surface $\hat S$, we deduce that the set of eigenvalues of $ \muu(S^2)$ on 
$$\II(S^1\times S^2, \hat L'\cup \hat m \cup \hat K_n\cup \hat K_1,\sum_{i=1}^{n-1}\hat u_i)$$ 
is included in 
$$
\{-(2g+2n-3), -(2g+2n-5),\cdots,(2g+2n-5),(2g+2n-3)\}.
$$

Consider the triple $(S^1\times S^2,S^1\times \{q_1,q_2\}, v)$, where $q_1,q_2\in S^2$ and $v$ is an arc connecting $S^1\times \{q_1\}$ and $S^1\times \{q_2\}$. Let $T_2$ be a torus given by the boundary of a tubular neighborhood of $S^1\times \{q_1\}$. Recall that $T_1\subset A_0$ is the torus $S^1\times c$ as defined above.
Applying the torus excision on the triple
$$(S^1\times S^2, \hat L'\cup \hat m \cup \hat K_n\cup \hat K_0,\sum_{i=1}^{n-1}\hat u_i) \sqcup (S^1\times S^2,S^1\times \{q_1,q_2\}, v)$$ along $T_1\cup T_2$ yields
 \begin{align*}
& \dim_\bC E(\II(S^1\times S^2, \hat L'\cup \hat m \cup \hat K_n\cup \hat K_0,\sum_{i=1}^{n-1}\hat u_i),\muu(S^2),\lambda) \\
= ~ 
& \dim_\bC E(\II(S^1\times S^2, \hat L'\cup \hat m \cup \hat K_n,\sum_{i=1}^{n-1}\hat u_i),\muu(S^2),\lambda-1)
\\
& + \dim_\bC E(\II(S^1\times S^2, \hat L'\cup \hat m \cup \hat K_n,\sum_{i=1}^{n-1}\hat u_i),\muu(S^2),\lambda+1)
\end{align*}
for all $\lambda\in \bC$.
Therefore we have
 \begin{align}
& \dim_\bC E(\II(S^1\times S^2, \hat L'\cup \hat m \cup \hat K_n,\sum_{i=1}^{n-1}\hat u_i),\muu(S^2),2g+2n-4)\nonumber \\
= ~ 
& \dim_\bC E(\II(S^1\times S^2, \hat L'\cup \hat m \cup \hat K_n\cup \hat K_0,\sum_{i=1}^{n-1}\hat u_i),\muu(S^2),2g+2n-3).
\label{eqn_remove_w2_from_u2_to_un_1}
\end{align}
and
\begin{equation}\label{eqn_Seifert-surface-bound-eigenvalue}
\dim_\bC E(\II(S^1\times S^2, \hat L'\cup \hat m \cup \hat K_n,\sum_{i=1}^{n-1}\hat u_i),\muu(S^2),i)=0.
\end{equation}
for all integers $i>2g+2n-4$.

Similarly, applying torus excision on the triple
$$(S^1\times S^2, \hat L'\cup \hat m \cup \hat K_n\cup \hat K_0,\hat u_1) \sqcup (S^1\times S^2,S^1\times \{q_1,q_2\}, v)$$ along $T_1\cup T_2$ yields
\begin{align}
& \dim_\bC E(\II(S^1\times S^2, \hat L'\cup \hat m \cup \hat K_n\cup \hat K_0,\hat u_1),\muu(S^2),2g+2n-3) 
\nonumber
\\
= ~ 
& \dim_\bC E(\II(S^1\times S^2, \hat L'\cup \hat m \cup \hat K_n,\hat u_1),\muu(S^2),2g+2n-4).\label{eqn_remove_w2_from_u2_to_un_2}
\end{align}

Let $\{z_1,\cdots,z_{2n-1}\}\subset \hat S$ be the intersection of $\hat S$ with $\hat L'\cup \hat m \cup \hat K_n\cup \hat K_0$. Apply the singular excision theorem \cite[Theorem 6.4]{XZ:excision} on the following triple
$$
(S^1\times S^2, \hat L'\cup \hat m \cup \hat K_n,\hat u_1)  \sqcup (S^1\times\hat S, S^1\times \{z_1,\cdots,z_{2n-1}\}, \sum_{i=2}^{n-1} \hat u_i)
$$
along 
$\hat S$ in the first component, and a slice of $\hat S$ in the second component, and invoke \cite[Proposition 6.7]{XZ:excision}, we obtain
\begin{align}
& \dim_\bC E(\II(S^1\times S^2, \hat L'\cup \hat m \cup \hat K_n\cup \hat K_0,\sum_{i=1}^{n-1}\hat u_i),\muu(\hat S_0),2g+2n-3) 
\nonumber
\\
= ~ 
& \dim_\bC E(\II(S^1\times S^2, \hat L'\cup \hat m \cup \hat K_n\cup \hat K_0,\hat u_1),\muu(\hat S_0),2g+2n-3).
\label{eqn_remove_w2_from_u2_to_un_3}
\end{align}
Since $\muu(\hat S_0) = \muu(S^2)$, the first part of the lemma is proved by \eqref{eqn_remove_w2_from_u2_to_un_1}, \eqref{eqn_remove_w2_from_u2_to_un_2}, and \eqref{eqn_remove_w2_from_u2_to_un_3}. The second part of the lemma is proved by \eqref{eqn_Seifert-surface-bound-eigenvalue}.
\end{proof}

\begin{Lemma}\label{lem_two_versions_of_minimal_genus}
Let $\mathcal{L}_0\subset A_1$ be the annular link defined by \eqref{eqn_define_L0_with_earrings}. Suppose there exists a meridional surface $S$ (cf. Definition \ref{def_meridional_surface}) with genus $g$ such that $S$ intersects $\mathcal{L}_0$ transversely at $m$ points. Then there exsits a connected Seifert surface $\hat S$ of $L'$ in $S^3$, such that $\hat S$ is compatible with the orientation of $L'$ and is disjoint from $K_n$, and the genus of $\hat S$ is equal to $g+m/2-n+2$.
\end{Lemma}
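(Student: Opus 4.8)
The plan is to turn $S$ into a closed surface in $S^1\times S^2$ homologous to the fiber, correct its geometric intersections with the components of $\mathcal{L}_0$ by tubing, and then reverse the Dehn surgeries producing $S^1\times S^2$ from $S^3$ to obtain $\hat S$. First I would cap $S$ off inside $A_0$: since $\partial S$ is a meridian of $A_1$ and, for a genus-one Heegaard splitting of $S^1\times S^2$, the meridians of the two solid tori coincide, after an isotopy of $S$ supported near $\partial A_1$ we may assume $\partial S=\partial(\{p\}\times D^2)$, where $\{p\}\times D^2\subset A_0$ is a meridional disk; this disk meets $\hat K_1$ and $\hat m_1$ transversely once each and misses the other components. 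Set $\hat T:=S\cup(\{p\}\times D^2)$, smoothed along $\partial S$. Then $\hat T$ is a closed connected oriented surface of genus $g$ that meets $\hat K_1$ and $\hat m_1$ once each and meets $\mathcal{L}_0=\bigcup_{i=2}^{n-1}\hat K_i\cup\bigcup_{i=2}^{n-1}\hat m_i\cup\hat K_n$ transversely at the $m$ points coming from $S$. Because $\hat T$ and the fiber $\{p\}\times S^2$ differ by a closed surface contained in $A_1\cong S^1\times D^2$, one has $[\hat T]=\pm[\{p\}\times S^2]$; orienting $\hat T$ so that $\hat T\cdot\hat K_1=+1$ gives $[\hat T]=[\{p\}\times S^2]$, and since $\hat L'\cup\hat m$ is a braid this forces $\hat T\cdot\hat K_i=\hat T\cdot\hat m_i=1$ for all $i$, while $\hat T\cdot\hat K_n$ equals the algebraic intersection number of $K_n$ with the fiber, i.e.\ with $S_j$, which vanishes by the choice of $j$.

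Next I would reduce the geometric intersection numbers. Each of the components $\hat K_i$ $(2\le i\le n-1)$, $\hat m_i$ $(2\le i\le n-1)$, $\hat K_n$ is disjoint from all the other components of $\hat L'\cup\hat m\cup\hat K_n$; as long as $\hat T$ meets such a component in more points than the absolute value of the algebraic count, there is a pair of oppositely-signed intersection points that are innermost along the component, and tubing $\hat T$ along the short sub-arc between them — inside a thin regular neighborhood of that arc — removes these two points, raises the genus by $1$, preserves $[\hat T]$, and changes no other intersection. Iterating over all of these components yields a closed connected oriented surface $\Sigma\sim[\{p\}\times S^2]$ meeting each of $\hat K_1,\dots,\hat K_{n-1}$ transversely exactly once and disjoint from $\hat K_n$. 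Writing $a_i,b_i,c$ for the numbers of intersection points of $S$ with $\hat K_i,\hat m_i,\hat K_n$ respectively (so $m=\sum_{i=2}^{n-1}a_i+\sum_{i=2}^{n-1}b_i+c$, with each $a_i,b_i$ odd and $c$ even, hence $m$ even), the genus of $\Sigma$ is
\begin{equation*}
g+\sum_{i=2}^{n-1}\frac{a_i-1}{2}+\sum_{i=2}^{n-1}\frac{b_i-1}{2}+\frac{c}{2}=g+\frac{m}{2}-n+2 .
\end{equation*}

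Finally I would reverse the surgeries. They replace each $N(K_i)$ $(1\le i\le n-1)$ by a solid torus with core $\hat K_i$ whose meridian disk is bounded by $S_j\cap\partial N(K_i)$, a Seifert longitude $\lambda_i$ of $K_i$, and they leave the complement $M:=S^3\setminus\bigcup_{i=1}^{n-1}\operatorname{int}N(K_i)$ unchanged. Deleting a small disk neighborhood in $\Sigma$ of each point of $\Sigma\cap\hat K_i$ produces $\Sigma^\circ\subset M$ whose boundary component on $\partial N(K_i)$ is a meridian of the surgered solid torus, that is, the curve $\lambda_i$; regarding $M\subset S^3$ and attaching to $\Sigma^\circ$ the collar annulus in $N(K_i)$ running from $\lambda_i$ to $K_i$ gives a compact connected oriented surface $\hat S\subset S^3$ with $\partial\hat S=L'$. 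It is disjoint from $K_n$ because $\Sigma$ is disjoint from $\hat K_n=K_n\subset M$ and the collar annuli lie in the $N(K_i)$; its orientation is compatible with that of $L'$ because $\Sigma$ is homologous to the fiber, which caps off $S_j$; and an Euler-characteristic count ($\hat S$ is $\Sigma$ with $n-1$ open disks removed and $n-1$ annuli glued on) gives $g(\hat S)=g(\Sigma)=g+m/2-n+2$. I expect the main difficulty to be the bookkeeping of this last step — confirming that undoing the surgeries with the stated framings really yields a Seifert surface of $L'$ in $S^3$, with the correct boundary orientation and disjoint from $K_n$, and that the genus is preserved; the capping and tubing steps are routine but require the usual care with transversality and with the genus arithmetic.
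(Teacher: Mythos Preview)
Your proof is correct and follows essentially the same route as the paper's. The only differences are cosmetic: the paper tubes first (inside $A_1$, keeping $S$ meridional) and then caps off with a disk in $A_0$, whereas you cap off first and then tube on the closed surface; and you spell out the reverse-surgery step in detail, while the paper simply asserts that the closed surface ``gives rise to a Seifert surface of $L'$ in $S^3$ with the same genus that is disjoint from $K_n$.''
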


\begin{proof}
Suppose there is a component $K$ of $\mathcal{L}_0$ whose intersection with $S$ has different signs, then we can attach a tube to $S$ along a segment of $K$ to decrease the value of $m$ by $2$ and increase the value of $g$ by $1$. Repeating this process until the number of intersection points of $S$ with each component of $\mathcal{L}_1$ equals the absolute value of their algebraic intersection number, we obtain a new meridional surface $S'\subset A_1$, such that 
\begin{enumerate}
\item the genus of $S'$ equals $g+(m-2n+4)/2$,
\item $S'$ intersects each of  $\hat K_2,\cdots,\hat K_{n-1},\hat m_2,\cdots,\hat m_{n-1}$ transversely at one point,
\item $S'$ is disjoint from $\hat K_n$.
\end{enumerate}

Since $S'$ is a meridional surface, by attaching a disk in $A_0$, we can complete the surface $S'$  to a closed surface with the same genus that intersects each of $\hat K_1,\cdots,\hat K_{n-1}$ transversely at one point and is disjoint from $\hat K_n$, therefore it gives rise to a Seifert surface of $L'$ in $S^3$ with the same genus that is disjoint from $K_n$, hence the lemma is proved. 
\end{proof}

\begin{Corollary}\label{cor_instanton_detects_seifert_genus_Kn}
Let $g_0$ be the smallest integer with the following property. There exists a connected oriented Seifert surface $S\subset S^3$ of $L'$ that is compatible with the orientation of $L'$, such that $S$ has genus $g_0$ and is disjoint from $K_n$. Then we have
$$
 \dim_\bC E(\II(S^1\times S^2, \hat L'\cup \hat m \cup \hat K_n,\sum_{i=1}^{n-1}\hat u_i),\muu(S^2),2g_0+2n-4)\neq 0
$$
and
$$
 E(\II(S^1\times S^2, \hat L'\cup \hat m \cup \hat K_n,\sum_{i=1}^{n-1}\hat u_i),\muu(S^2),i)=0. 
$$
for all integers $i>2g_0+2n-4$.
\end{Corollary}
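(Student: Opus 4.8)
The plan is to read off this corollary from Lemma~\ref{lem_remove_w2_from_u2_to_un_L1}, Lemma~\ref{lem_two_versions_of_minimal_genus}, and Theorem~\ref{Theorem-2g+n}. By the definition of $g_0$ there is a connected oriented Seifert surface of $L'$ of genus $g_0$ that is compatible with the orientation of $L'$ and disjoint from $K_n$, so Lemma~\ref{lem_remove_w2_from_u2_to_un_L1} applies with $g=g_0$. Its second conclusion is exactly the vanishing assertion $E(\II(S^1\times S^2,\hat L'\cup\hat m\cup\hat K_n,\sum_{i=1}^{n-1}\hat u_i),\muu(S^2),i)=0$ for all integers $i>2g_0+2n-4$. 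Its first conclusion identifies $\dim_\bC E(\II(\dots,\sum_{i=1}^{n-1}\hat u_i),\muu(S^2),2g_0+2n-4)$ with $\dim_\bC E(\II(\dots,\hat u_1),\muu(S^2),2g_0+2n-4)$, which by \eqref{eqn_AHI_L0} equals $\dim_\bC\AHI(\mathcal{L}_0,2g_0+2n-4)$ (the isomorphism \eqref{eqn_AHI_L0} intertwines the two copies of $\muu(S^2)$). Thus the whole corollary is reduced to the single statement $\AHI(\mathcal{L}_0,2g_0+2n-4)\neq 0$.

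To establish this, let $\mu_0$ be the minimum of $2g(S)+|S\cap\mathcal{L}_0|$ over all meridional surfaces $S\subset A_1$ transverse to $\mathcal{L}_0$. Applying Theorem~\ref{Theorem-2g+n} to $\mathcal{L}_0\subset A_1\cong S^1\times D^2$ gives $\AHI(\mathcal{L}_0,\pm\mu_0)\neq 0$ and $\AHI(\mathcal{L}_0,i)=0$ for $|i|>\mu_0$, so it suffices to prove $\mu_0=2g_0+2n-4$. For the inequality $\mu_0\ge 2g_0+2n-4$: choose a meridional surface $S$ realizing $\mu_0$, of genus $g$ with $m=|S\cap\mathcal{L}_0|$; Lemma~\ref{lem_two_versions_of_minimal_genus} produces a connected Seifert surface of $L'$, compatible with its orientation and disjoint from $K_n$, of genus $g+m/2-n+2$, and minimality of $g_0$ forces $g+m/2-n+2\ge g_0$, i.e. $\mu_0=2g+m\ge 2g_0+2n-4$.

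For the reverse inequality I would run the construction in the proof of Lemma~\ref{lem_remove_w2_from_u2_to_un_L1} backwards. Take a connected minimal-genus surface $\Sigma$ (genus $g_0$, compatible with the orientation of $L'$, disjoint from $K_n$), isotope it to meet each $m_i$ transversely once, delete the $N(K_i)$, perform the same Dehn surgeries producing $S^1\times S^2$, and cap off the $n-1$ resulting longitudinal boundary curves by meridian disks of the surgery solid tori; this yields a closed surface $\hat\Sigma\subset S^1\times S^2$ of genus $g_0$, disjoint from $\hat K_n$, meeting each of $\hat K_1,\dots,\hat K_{n-1},\hat m_1,\dots,\hat m_{n-1}$ transversely at one point. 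Since $\hat\Sigma$ lies in the homology class of the $S^2$-fiber, it meets the core of $A_1$ algebraically once; after isotoping $\hat\Sigma$ transverse to $\partial A_1$ and minimizing $\hat\Sigma\cap\partial A_1$, the surface $S:=\hat\Sigma\cap A_1$ is a meridional surface of $A_1$ with $g(S)\le g_0$ and $|S\cap\mathcal{L}_0|=2(n-2)=2n-4$ (it meets $\hat K_2,\dots,\hat K_{n-1},\hat m_2,\dots,\hat m_{n-1}$ once each and misses $\hat K_n$). Hence $\mu_0\le 2g(S)+|S\cap\mathcal{L}_0|\le 2g_0+2n-4$, and combined with the previous paragraph $\mu_0=2g_0+2n-4$, so $\AHI(\mathcal{L}_0,2g_0+2n-4)\neq 0$, which finishes the proof.

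The main obstacle is this last construction: one has to verify that capping $\Sigma$ does not change its genus (it does not, because $\Sigma$ only touches the $N(K_i)$ along its boundary) and, more delicately, that $\hat\Sigma\cap\partial A_1$ can be reduced to a single curve that is a meridian of $A_1$, so that $S=\hat\Sigma\cap A_1$ is a genuine meridional surface with $g(S)\le g_0$ and $S$ still connected. This is the standard argument of cutting an essential surface along a torus in $S^1\times S^2$, but it must be carried out so as not to raise the genus or disconnect $S$; everything else is bookkeeping with the earrings $\hat m_i$ and the intersection numbers.
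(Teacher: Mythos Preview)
Your reduction to showing $\AHI(\mathcal{L}_0,2g_0+2n-4)\neq 0$ is correct, and your proof of the inequality $\mu_0\ge 2g_0+2n-4$ via Lemma~\ref{lem_two_versions_of_minimal_genus} is exactly what the paper does. The gap is in your reverse inequality $\mu_0\le 2g_0+2n-4$. The construction you sketch --- restricting the capped surface $\hat\Sigma$ to $A_1$ after minimizing $\hat\Sigma\cap\partial A_1$ --- is not routine: the isotopy that cleans up $\hat\Sigma\cap\partial A_1$ has to take place in the complement of $\hat K_1\cup\hat m_1\subset A_0$, and you have not explained why this can be done while keeping $\hat\Sigma\cap A_1$ connected and of genus $\le g_0$. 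The ``standard'' incompressible-surface argument does not obviously apply here, because both solid tori contain pieces of the link.

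The paper sidesteps this geometric difficulty entirely, and in fact the missing inequality already follows from what you have written. In your step~(3) you produced, from the minimal meridional surface, a Seifert surface of $L'$ of some genus $g'$ (disjoint from $K_n$, compatible with the orientation) with $\mu_0=2g'+2n-4$ and $g'\ge g_0$. Now apply Lemma~\ref{lem_remove_w2_from_u2_to_un_L1} a second time, with $g=g'$ rather than $g=g_0$: its first conclusion gives
\[
\dim_{\bC}E\bigl(\II(\dots,\textstyle\sum\hat u_i),\muu(S^2),2g'+2n-4\bigr)=\dim_{\bC}\AHI(\mathcal{L}_0,\mu_0)\neq 0.
\]
But you have already established, from the second conclusion of the same lemma at $g=g_0$, that the left-hand side vanishes whenever $2g'+2n-4>2g_0+2n-4$. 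Hence $g'\le g_0$, so $g'=g_0$ and $\mu_0=2g_0+2n-4$. This is precisely the paper's argument: use Lemma~\ref{lem_remove_w2_from_u2_to_un_L1} twice, once at $g_0$ and once at $g'$, and squeeze. No reverse construction of a meridional surface is needed.
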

\begin{proof}
By Theorem \ref{Theorem-2g+n} and \eqref{eqn_AHI_L0}, there are integers $g,m$, such that there exists a meridional surface in $A_1$ with genus $g$ and intersects $\mathcal{L}_1$ transversely at $m$ points, such that 
\begin{equation}\label{eqn_dim_positive_w2=u1_gm}
\dim_\bC E(\II(S^1\times S^2, \hat L'\cup \hat m \cup \hat K_n, \hat u_1),\muu(S^2),2g+m)> 0.
\end{equation}
Let $g':=g+m/2-n+2$.
By Lemma \ref{lem_two_versions_of_minimal_genus}, we may choose $g,m$ such that there exists a connected oriented Seifert surface of $L'$ in $S^3$ that is compatible with the orientation of $L'$, has genus $g'$, and is disjoint from $K_n$.
Since $2g+m=2g'+2n-4$, 
by Lemma \ref{lem_remove_w2_from_u2_to_un_L1} and \eqref{eqn_dim_positive_w2=u1_gm}, we have
\begin{align}
& \dim_\bC E(\II(S^1\times S^2, \hat L'\cup \hat m \cup \hat K_n,\sum_{i=1}^{n-1}\hat u_i),\muu(S^2),2g'+2n-4) 
\nonumber
\\
= ~ 
& \dim_\bC E(\II(S^1\times S^2, \hat L'\cup \hat m \cup \hat K_n, \hat u_1),\muu(S^2),2g+m)>0.
\label{eqn_dim_positive_w2=u1_g'n}
\end{align}
By the definition of $g_0$, we have $g_0\le g'$.
On the other hand, the second part of Lemma \ref{lem_remove_w2_from_u2_to_un_L1} implies that
\begin{equation}\label{eqn_dim_equal_zero_w2=u1_g0}
\dim_\bC E(\II(S^1\times S^2, \hat L'\cup \hat m \cup \hat K_n,\sum_{i=1}^{n-1}\hat u_i),\muu(S^2),i)  = 0
\end{equation}
for all integers $i > 2g_0+2n-4$. Therefore by \eqref{eqn_dim_positive_w2=u1_g'n} and \eqref{eqn_dim_equal_zero_w2=u1_g0}, we have $g_0\ge g'$. In conclusion, we must have $g=g_0$, and the lemma follows from \eqref{eqn_dim_positive_w2=u1_g'n} and \eqref{eqn_dim_equal_zero_w2=u1_g0}.
\end{proof}

Replacing $\hat K_n$ with $\hat U$ in the previous arguments, we also have the following lemma.
\begin{Lemma}\label{lem_instanton_detects_seifert_genus_U}
Let $g_1$ be the smallest integer with the following property. There exists a connected oriented Seifert surface $S\subset S^3$ of $L'$ that is compatible with the orientation of $L'$, such that $S$ has genus $g_1$ and is disjoint from $U$. Then we have
$$
 \dim_\bC E(\II(S^1\times S^2, \hat L'\cup \hat m \cup \hat U,\sum_{i=1}^{n-1}\hat u_i),\muu(S^2),2g_1+2n-4)\neq 0
$$
and
$$
E(\II(S^1\times S^2, \hat L'\cup \hat m \cup \hat U,\sum_{i=1}^{n-1}\hat u_i),\muu(S^2),i)=0. 
$$
for all integers $i>2g_1+2n-4$. \qed
\end{Lemma}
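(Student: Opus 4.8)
The plan is to run the entire chain of arguments that produced Corollary \ref{cor_instanton_detects_seifert_genus_Kn} once more, with $\hat K_n$ replaced throughout by $\hat U$ and the annular link $\mathcal{L}_0$ of \eqref{eqn_define_L0_with_earrings} replaced by the annular link $\mathcal{L}_1$ of \eqref{eqn_define_L1_with_earrings} (recall \eqref{eqn_AHI_L1}). The point is simply that every step in Lemmas \ref{lem_remove_w2_from_u2_to_un_L1} and \ref{lem_two_versions_of_minimal_genus} and in the proof of Corollary \ref{cor_instanton_detects_seifert_genus_Kn} used only features of $\hat K_n$ that $\hat U$ shares: $\hat U$ is unknotted, it has trivial algebraic intersection number with every meridional surface of $A_1$ (it lies in a $3$-ball disjoint from $\hat L'\cup \hat m$), and it can be made disjoint from the closed surface $\hat S$ built from a Seifert surface of $L'$ as well as from the earring arcs. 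In particular, since $U$ lies in a $3$-ball disjoint from $L'$, the integer $g_1$ is just the minimal genus of a connected oriented Seifert surface of $L'$ compatible with its orientation, and such a surface certainly exists.

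First I would record the analogue of Lemma \ref{lem_remove_w2_from_u2_to_un_L1}: if $S\subset S^3$ is a connected oriented Seifert surface of $L'$, compatible with the orientation and of genus $g$, with $S$ disjoint from $U$, then
\begin{align*}
& \dim_\bC E(\II(S^1\times S^2, \hat L'\cup \hat m \cup \hat U,\sum_{i=1}^{n-1}\hat u_i),\muu(S^2),2g+2n-4) \\
= ~ & \dim_\bC E(\II(S^1\times S^2, \hat L'\cup \hat m \cup \hat U, \hat u_1),\muu(S^2),2g+2n-4),
\end{align*}
and the $\muu(S^2)$-eigenspace of $\II(S^1\times S^2, \hat L'\cup \hat m \cup \hat U,\sum_{i=1}^{n-1}\hat u_i)$ vanishes for every integer $i > 2g+2n-4$. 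The proof would copy that of Lemma \ref{lem_remove_w2_from_u2_to_un_L1} line by line: isotope $S$ to meet each $m_i$ once, cap off the meridians to obtain a closed genus-$g$ surface $\hat S\subset S^1\times S^2$ disjoint from $\hat U$ and meeting each of $\hat K_1,\dots,\hat K_{n-1},\hat m_1,\dots,\hat m_{n-1}$ once, arrange the $\hat m_i$ to lie on $\hat S$, introduce the auxiliary circle $\hat K_0$ and the torus $T_1 = S^1\times c$, invoke \cite[Theorem 6.1]{XZ:excision} and the torus excision theorem to bound and then split off the $\muu(S^2)$-spectrum, and finally apply the singular excision theorem \cite[Theorem 6.4]{XZ:excision} with \cite[Proposition 6.7]{XZ:excision} to strip off the earrings $\hat u_2,\dots,\hat u_{n-1}$.

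Next I would record the analogue of Lemma \ref{lem_two_versions_of_minimal_genus} for $\mathcal{L}_1$: any meridional surface in $A_1$ of genus $g$ meeting $\mathcal{L}_1$ transversely at $m$ points gives rise to a connected Seifert surface of $L'$ in $S^3$, compatible with the orientation, disjoint from $U$, of genus $g+m/2-n+2$; the proof again copies that of Lemma \ref{lem_two_versions_of_minimal_genus}, tubing along the components of $\mathcal{L}_1$ where the intersection signs disagree, using that $\hat U$ has winding number $0$ in $A_1$ so that the resulting surface can be kept disjoint from $\hat U$, and capping off in $A_0$.

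Finally I would assemble these two ingredients exactly as in the proof of Corollary \ref{cor_instanton_detects_seifert_genus_Kn}. By Theorem \ref{Theorem-2g+n} applied to $\AHI(\mathcal{L}_1)\cong \II(S^1\times S^2, \hat L'\cup \hat m \cup \hat U, \hat u_1)$, there are integers $g,m$ and a meridional surface in $A_1$ of genus $g$ meeting $\mathcal{L}_1$ at $m$ points with $\dim_\bC E(\II(S^1\times S^2, \hat L'\cup \hat m \cup \hat U, \hat u_1),\muu(S^2),2g+m) > 0$; put $g' := g+m/2-n+2$, so that $2g+m = 2g'+2n-4$. The analogue of Lemma \ref{lem_two_versions_of_minimal_genus} produces a connected oriented Seifert surface of $L'$ disjoint from $U$ of genus $g'$, so $g_1\le g'$, while the analogue of Lemma \ref{lem_remove_w2_from_u2_to_un_L1} yields
$$
\dim_\bC E(\II(S^1\times S^2, \hat L'\cup \hat m \cup \hat U,\sum_{i=1}^{n-1}\hat u_i),\muu(S^2),2g'+2n-4) > 0
$$
together with the vanishing of this eigenspace for $i>2g_1+2n-4$; comparing the two forces $g' = g_1$, and the two displayed assertions of the lemma follow. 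I do not anticipate a genuine obstacle: the only content is to confirm that each excision computation is insensitive to the substitution of $\hat U$ for $\hat K_n$, which is immediate since both circles are unknotted, null-homologous in the relevant torus complements, and bound the surfaces used to control the $\muu(S^2)$-spectrum.
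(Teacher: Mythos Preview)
Your proposal is correct and is exactly the approach the paper takes: the paper states the lemma with a \qed and no proof, remarking only that one replaces $\hat K_n$ by $\hat U$ in the preceding arguments (Lemmas \ref{lem_remove_w2_from_u2_to_un_L1}, \ref{lem_two_versions_of_minimal_genus}, and Corollary \ref{cor_instanton_detects_seifert_genus_Kn}), which is precisely what you have spelled out.
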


\begin{proof}[Proof of Proposition \ref{Prop_disjoint_Seifert_surface}]
It is obvious that the minimal genus $g_1$ in Lemma \ref{lem_instanton_detects_seifert_genus_U} is zero, therefore by Lemma \ref{lem_eigenspace_BKn=BU}, Corollary \ref{cor_instanton_detects_seifert_genus_Kn}, and Lemma \ref{lem_instanton_detects_seifert_genus_U}, the genus $g_0$ in Corollary \ref{cor_instanton_detects_seifert_genus_Kn} is also zero.
As a result, there exists a connected oriented Seifert surface $S\subset S^3$ for $L'$ with genus zero that is disjoint from $K_n$ and is compatible with the orientation of $L'$. 
Since the minimal-genus Seifert surface for an oriented fibered link is unique up to isotopy, we conclude that there exists an ambient isotopy of $S^3$ that fixes $L'$ and takes $S$ to $S_j$. This ambient isotopy gives the desired isotopy from $K_n$ to $K_n'$.
\end{proof}

\section{The fundamental group of $\bR^3 - L'$}
\label{sec_fundamental_group}

This section takes a detour to study the properties of $\pi_1(\bR^3-L')$. The results in this section will be used in the proof of the non-existence of $L$. 

By the Wirtinger presentation, $\pi_1(\bR^3-L')$ is generated by 
$$g_1,\cdots,g_{n-1}, g_2',\cdots,g_{n-2}'$$ as shown in Figure \ref{fig_pi1}, where the base point is taken to be a point above and far away from the diagram. 
\begin{figure}  
  \includegraphics[width=0.8\linewidth]{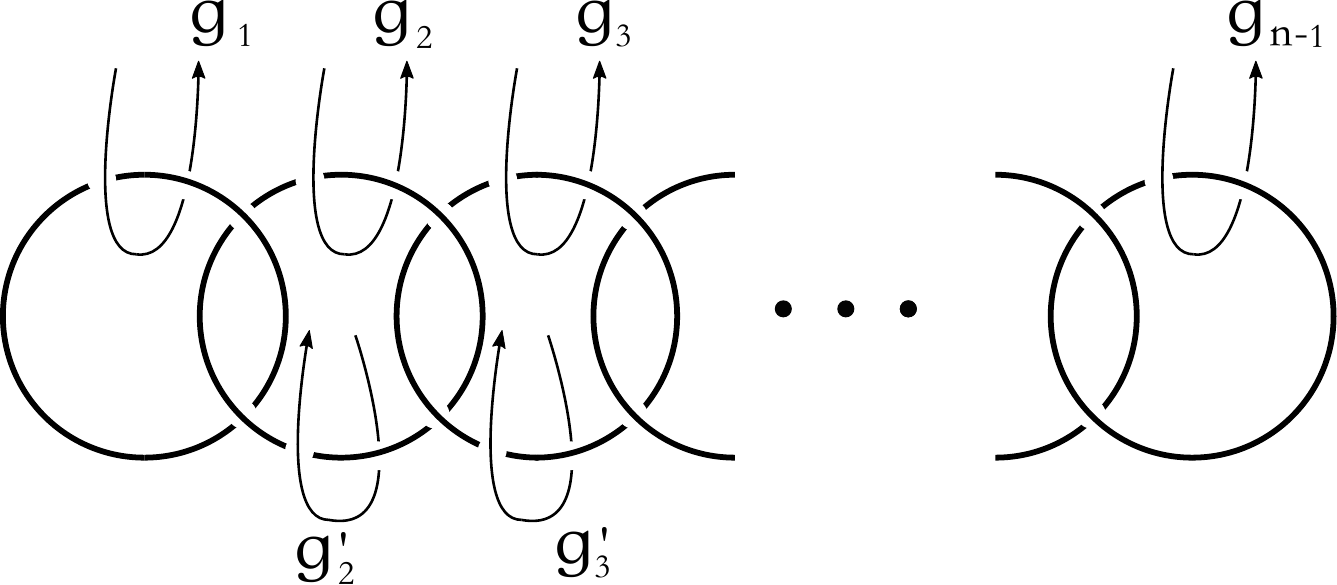}
  \caption{Generators of $\pi_1(\bR^3-L')$.}
  \label{fig_pi1}
\end{figure}
Notice that $g_i'$ and $g_i$ are homotopic relative to the base point because one can shrink $K_1\cup\cdots\cup K_{i-1}$ into a small neighborhood of $K_i$. Therefore $\pi_1(\bR^3-L')$ is generated by $g_1,\cdots , g_{n-1}$, and the Wirtinger presentation gives
$$
\pi_1(\bR^3-L') = \langle g_1,\cdots,g_{n-1} | [g_i,g_{i+1}]=1 \text{ for } i=1,\cdots,n-2\rangle.
$$

To simplify the notation, for the rest of this section we will use $m$ to denote $n-1$, and use $G$ to denote the group $\pi_1(\bR^3-L')$. For $i=1,\cdots,m$, define the set
$$
C_i :=\begin{cases}
\{g_1,g_2,g_1^{-1},g_2^{-1}\} & \text{ if } i=1, \\
\{g_{i-1},g_i,g_{i+1},g_{i-1}^{-1},g_i^{-1},g_{i+1}^{-1}\} & \text{ if } i=2,\cdots,m-1, \\
\{g_{m-1},g_m,g_{m-1}^{-1},g_m^{-1}\} & \text{ if } i=m.
\end{cases}
$$

The first part of this section solves the word problem for $G$.

\begin{Definition}
A \emph{word} is a sequence $(x_1,x_2,\cdots, x_N)$, such that 
$$x_i\in\{g_1,\cdots,g_m,g_1^{-1},\cdots,g_m^{-1}\}\text{ for all }i.$$ 
We call $x_1,\cdots, x_N$ the \emph{letters} of the word $(x_1,x_2,\cdots, x_N)$.
\end{Definition}
\begin{Definition}
The word $(x_1,x_2,\cdots, x_N)$ is called \emph{reduced}, if 
for every pair $u< v$ with $(x_u,x_v)=(g_i,g_i^{-1})$ or $(g_i^{-1},g_i)$, there exists $w\in(u,v)$, such that $x_w\notin C_i$.
\end{Definition}

\begin{Definition}
Define an equivalent relation $\sim$ on the set of words using the following relations as generators:
\begin{align*}
(x_1,\cdots,x_k,g_i,g_{i+1},x_{k+3},\cdots,x_N) &\sim (x_1,\cdots,x_k,g_{i+1},g_{i},x_{k+3},\cdots,x_N)
\\
(x_1,\cdots,x_k,g_i^{-1},g_{i+1},x_{k+3},\cdots,x_N) &\sim (x_1,\cdots,x_k,g_{i+1},g_{i}^{-1},x_{k+3},\cdots,x_N)
\\
(x_1,\cdots,x_k,g_i,g_{i+1}^{-1},x_{k+3},\cdots,x_N) &\sim (x_1,\cdots,x_k,g_{i+1}^{-1},g_{i},x_{k+3},\cdots,x_N)
\\
(x_1,\cdots,x_k,g_i^{-1},g_{i+1}^{-1},x_{k+3},\cdots,x_N) &\sim (x_1,\cdots,x_k,g_{i+1}^{-1},g_{i}^{-1},x_{k+3},\cdots,x_N).
\end{align*}
\end{Definition}
It is straightforward to verify that if two words are equivalent and one of them is reduced, then the other is also reduced. Therefore $\sim$ defines an equivalence relation on the set of reduced words.

Every word $(x_1,x_2,\cdots, x_N)$ represents an element of $G$ by taking the product $x_1x_2\cdots x_N$. By the definition of $G$, equivalent words represent the same element.

\begin{Proposition}
\label{prop_word_prob}
Every element of $G$ is represented by a reduced word. Two reduced words represent the same element if and only if they are equivalent.
\end{Proposition}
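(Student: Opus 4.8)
The plan is to dispose of the two easy halves first and then concentrate on uniqueness. That every element of $G$ is represented by a reduced word follows by induction on word length: given a word representing $g\in G$, if it is not reduced pick a non-reduced pair $(x_u,x_v)$ with $v-u$ minimal; minimality forces every letter strictly between $x_u$ and $x_v$ to lie outside $\{g_i^{\pm1}\}$ (where $\{x_u,x_v\}=\{g_i,g_i^{-1}\}$), hence to be one of $g_{i-1}^{\pm1},g_{i+1}^{\pm1}$, each of which commutes with $g_i$; using the generating relations for $\sim$ we slide $x_u$ rightward until it is adjacent to $x_v$, cancel $g_ig_i^{-1}$ (this changes the word but not the group element), and apply the inductive hypothesis to the shorter word. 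The ``if'' part of the second assertion is immediate, since every generating relation for $\sim$ is a consequence of the relations $[g_i,g_{i+1}]=1$. So the substance is: reduced words representing the same element of $G$ are $\sim$-equivalent.

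For this I will run a van der Waerden style argument. Let $\mathcal{W}$ denote the set of $\sim$-equivalence classes of reduced words. For each $j$ I will define a map $\lambda_j\colon\mathcal{W}\to\mathcal{W}$ encoding ``left multiplication by $g_j$ followed by reduction'': given a reduced word $w=(x_1,\dots,x_N)$, prepend $g_j$ and slide it rightward by shuffles as long as the letter it meets lies in $\{g_{j-1}^{\pm1},g_{j+1}^{\pm1}\}$; if this exhibits the prepended $g_j$ next to a $g_j^{-1}$, cancel that pair and output the class of the result, otherwise output the class of the word obtained once $g_j$ is blocked (or reaches the end). One checks that the output is always reduced, using only reducedness of $w$. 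Let $\lambda_j'$ be the analogous map for $g_j^{-1}$. The steps are then: (i) $\lambda_j$ is well defined on $\sim$-classes; (ii) $\lambda_j\lambda_j'=\lambda_j'\lambda_j=\mathrm{id}$, so each $\lambda_j$ is a bijection of $\mathcal{W}$; (iii) $\lambda_i\lambda_{i+1}=\lambda_{i+1}\lambda_i$ for each $i$. Granting these, $g_j\mapsto\lambda_j$ extends to a homomorphism $\phi\colon G\to\mathrm{Sym}(\mathcal{W})$.

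The argument then closes quickly: for a reduced word $w=(x_1,\dots,x_N)$, a direct induction on $N$ (using that no cancellation can occur when a letter of a reduced word is prepended to the reduced suffix following it) shows $\phi(x_1\cdots x_N)$ sends the class $[\emptyset]$ of the empty word to $[w]$. Hence if $w$ and $w'$ are reduced words with $x_1\cdots x_N=_G x_1'\cdots x_{N'}'$, then $\phi$ of this common element, applied to $[\emptyset]$, yields simultaneously $[w]$ and $[w']$, so $w\sim w'$.

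The main obstacle is carrying out steps (i)--(iii). Each is a finite case analysis, organized according to where—relative to the letter that blocks the sliding copy of $g_j$ (resp.\ the two sliding copies of $g_i$ and $g_{i+1}$)—a shuffle of $w$, or the second prepended generator, interacts with $w$; the cases in which two of the indices $j-1,j,j+1$ (resp.\ the indices involved in the two commuting generators) are themselves adjacent in the path are the delicate ones, and it is precisely the definition of the sets $C_j$—the signed generators commuting with $g_j$, together with $g_j$ itself—that keeps the bookkeeping consistent. An alternative to the van der Waerden route would be to orient the shuffle moves and adjoin free cancellation to obtain a terminating rewriting system on words, and then verify confluence via Newman's lemma; this requires resolving the same, essentially local, critical pairs.
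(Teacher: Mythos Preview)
Your proposal is correct and follows essentially the same van der Waerden strategy as the paper: the paper puts a group structure directly on the set of $\sim$-classes of reduced words (defining right multiplication by a single letter via exactly your case split, then extending inductively) and exhibits mutually inverse homomorphisms between $G$ and this group, whereas you package the same construction as a left action of $G$ on $\mathcal{W}$ evaluated at the empty word. The well-definedness checks you flag in (i)--(iii) are precisely those the paper summarizes as ``straightforward to verify''.
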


\begin{proof}
Define another group $\widetilde G$ as follows. The elements of $\widetilde G$ are the equivalence classes of reduced words. If $(x_1,\cdots,x_N)$ is a reduced word, we use $[x_1,\cdots,x_N]\in \widetilde G$ to denote the equivalence class of $(x_1,\cdots,x_N)$. Let $(x_1,\cdots,x_N)$ be a reduced word, let $y\in \{g_1,\cdots,g_m,g_1^{-1},\cdots,g_m^{-1}\}$. If the word $(x_1,\cdots,x_N,y)$ is reduced, define 
\begin{equation}
\label{eqn_def_prod_one_letter_1}
[x_1,\cdots,x_N]\cdot [y] := [x_1,\cdots,x_N,y].
\end{equation}
If the word $[x_1,\cdots,x_N,y]$ is not reduced, then there exists $u$ such that $x_uy=1$ and every letter in $(x_{u+1},\cdots,x_N)$ is commutative with both $x_u$ and $y$. In this case, define 
\begin{equation}
\label{eqn_def_prod_one_letter_2}
[x_1,\cdots,x_N]\cdot [y] := [x_1,\cdots,x_{u-1},x_{u+1},\cdots,x_N].
\end{equation}
For different choices of $x_u$, the right-hand side of \eqref{eqn_def_prod_one_letter_2} gives the same equivalence class. Moreover, if we take a different representative of $[x_1,\cdots,x_N]$, the right-hand sides of \eqref{eqn_def_prod_one_letter_1} and \eqref{eqn_def_prod_one_letter_2} remain the same. It is also straightforward to verify that if $y_1$ and $y_2$ are commutative generators of $G$, then
$$
[[x_1,\cdots,x_N]\cdot y_1]\cdot y_2= [[x_1,\cdots,x_N]\cdot y_2]\cdot y_1.
$$
Therefore, we obtain a well-defined product operator on $\widetilde G$ defined inductively by 
$$
[x_1,\cdots,x_N]\cdot [y_1,\cdots,y_M] := [[x_1,\cdots,x_N]\cdot [y_1,\cdots,y_{M-1}]]\cdot y_M.
$$
The associativity of the product operator is clear from the definition. For an element $[x_1,\cdots,x_N]\in \widetilde G$, we have $[x_1,\cdots,x_N]\cdot [x_N^{-1},\cdots,x_1^{-1}]=1$, hence every element in $\widetilde G$ has an inverse, therefore $\widetilde G$ is a group.
By the universal property, there is a unique homomorphism $\varphi$ from $G$ to $\widetilde G$ defined by $\varphi(g_i):=[g_i]$. We also have a map $\psi$ from $\widetilde G$ to $G$ defined by 
$$
\psi ( [x_1,\cdots,x_N]) := x_1\cdots x_N.
$$
Since
$$
\psi ( [x_1,\cdots,x_N]\cdot [y_1,\cdots,y_M]) = \psi ( [x_1,\cdots,x_N])\cdot \psi ([y_1,\cdots,y_M]),
$$
the map $\psi$ is a group homomorphism. It is obvious from the definitions that $\varphi$ and $\psi$ are inverse to each other, therefore $\varphi$ and $\psi$ are isomorphisms, hence the proposition is proved.
\end{proof}

\begin{Definition}
If $(x_1,\cdots,x_N)$ is a reduced word and $w=x_1\cdots x_N\in G$, we call $x_1\cdots x_N$ a \emph{reduced presentation} of $w$.
\end{Definition}

\begin{Definition}
For $w\in G$, define $\len(w)$ to be the length of a reduced presentation of $w$. 
\end{Definition}
By Proposition \ref{prop_word_prob}, $\len(\cdot)$ does not depend on the choice of the reduced presentation, hence it is well-defined.

\begin{Lemma}
\label{lem_generator_G}
For $C\subset \{g_1,\cdots,g_m,g_1^{-1},\cdots,g_m^{-1}\}$, let $G_C$ be the subgroup of $G$ generated by $C$. Suppose $(x_1,\cdots,x_N)$ is a reduced word, then $x_1\cdots x_N\in G_C$ if and only if $x_i\in C \cup C^{-1}$ for all $i$. 
\end{Lemma}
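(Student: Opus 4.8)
The plan is to prove Lemma~\ref{lem_generator_G} by induction on the length $N$ of the reduced word, using the explicit solution of the word problem from Proposition~\ref{prop_word_prob}. The ``if'' direction is trivial: if every $x_i\in C\cup C^{-1}$ then the product $x_1\cdots x_N$ is manifestly in $G_C$. So the content is the ``only if'' direction.

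First I would set up the induction. For $N=0$ the statement is vacuous. Suppose the claim holds for all reduced words of length less than $N$, and suppose $(x_1,\dots,x_N)$ is reduced with $w:=x_1\cdots x_N\in G_C$. Write $w=c_1c_2\cdots c_k$ with each $c_j\in C\cup C^{-1}$. I may assume this presentation of $w$ in terms of generators from $C$ is itself reduced as a word in $G$ (if not, cancel using the relations, which only removes letters lying in $C\cup C^{-1}$, keeping us inside $G_C$ and only shortening the word). Now apply Proposition~\ref{prop_word_prob}: the two reduced words $(x_1,\dots,x_N)$ and $(c_1,\dots,c_k)$ represent the same element, hence are equivalent under $\sim$. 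Since the generating moves for $\sim$ only permute adjacent commuting letters and never change the multiset of letters, the multiset $\{x_1,\dots,x_N\}$ equals the multiset $\{c_1,\dots,c_k\}\subset C\cup C^{-1}$. In particular $x_i\in C\cup C^{-1}$ for every $i$, which is exactly the conclusion. (Note this argument does not even require the induction hypothesis — the key observation is simply that $\sim$ preserves the letter-multiset of a reduced word, and that a word over the alphabet $C\cup C^{-1}$ stays over that alphabet after the reduction process.)

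The one point that needs care — and is the main obstacle — is justifying that a (possibly non-reduced) word $(c_1,\dots,c_k)$ over $C\cup C^{-1}$ can be reduced to a \emph{reduced} word still over $C\cup C^{-1}$. Here I would argue that the reduction procedure implicit in Proposition~\ref{prop_word_prob} (or the inductive construction of $\widetilde G$ via \eqref{eqn_def_prod_one_letter_1} and \eqref{eqn_def_prod_one_letter_2}) only ever deletes pairs of letters $x_u, x_v$ with $x_ux_v=1$ and reorders commuting letters; neither operation introduces a letter outside the original alphabet. Applying this to the word $(c_1,\dots,c_k)$ built from $C\cup C^{-1}$, we obtain a reduced word all of whose letters lie in $C\cup C^{-1}$, representing $w$. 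Then invoke Proposition~\ref{prop_word_prob} to compare it with $(x_1,\dots,x_N)$ as above. This completes the proof; the whole argument is short once the bookkeeping of ``moves preserve the letter-multiset'' is made precise.
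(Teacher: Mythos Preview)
Your proof is correct and follows essentially the same approach as the paper: express $w$ as a word in $C\cup C^{-1}$, reduce it by repeatedly deleting cancellable pairs (which only removes letters, so the alphabet stays inside $C\cup C^{-1}$), and then invoke Proposition~\ref{prop_word_prob} together with the observation that the equivalence $\sim$ preserves the multiset of letters. The induction framework you set up is indeed unnecessary, as you yourself note; the paper simply iterates the deletion step until the word is reduced and observes that the result is a subword of the original.
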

\begin{proof}
Let $w=x_1\cdots x_N$. Suppose $w\in G_C$, then $w=y_1\cdots y_M$ with $y_i\in C\cup C^{-1}$ for all $i$.  If $(y_1,\cdots,y_M)$ is not reduced, there exist letters $y_u$ and $y_v$ such that $y_uy_v=1$ and every letter between $y_u$ and $y_v$ are commutative with both $y_u$ and $y_v$. Removing $y_u$ and $y_v$ from the word yields a shorter word representing the same element of $G$. Repeating this process, we obtain a reduced word representing $w$ which is a sub-word of $(y_1,\cdots,y_M)$. By Proposition \ref{prop_word_prob}, this word is equivalent to $(x_1,\cdots,x_N)$, hence $x_i\in C\cup C^{-1}$ for all $i$.

The other direction of the lemma is obvious.
\end{proof}

\begin{Lemma}
\label{lem_com_gi}
For each $i$, the centralizer of $g_i$ in $G$ is generated by $C_i$.
\end{Lemma}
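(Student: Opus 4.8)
The plan is to prove both inclusions, the nontrivial one being $Z_G(g_i)\subseteq G_{C_i}$. The easy inclusion is immediate: each element of $C_i$ is one of $g_{i-1}^{\pm 1},g_i^{\pm 1},g_{i+1}^{\pm 1}$ (interpreted with the obvious truncation at the ends of the chain), and all of these commute with $g_i$ by the defining relations of $G$, so $G_{C_i}$ lies in the centralizer. For the reverse inclusion I would argue by induction on $\len(w)$ for $w\in Z_G(g_i)$: writing $w=x_1\cdots x_N$ as a reduced presentation (Proposition \ref{prop_word_prob}), I claim every letter $x_j$ lies in $C_i$, after which Lemma \ref{lem_generator_G} gives $w\in G_{C_i}$. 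Throughout I use the elementary fact that a contiguous sub-word of a reduced word is reduced, and that a product of elements of $C_i$ commutes with $g_i$.

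For the inductive step I would compare the two words $(x_1,\dots,x_N,g_i)$ and $(g_i,x_1,\dots,x_N)$, both presenting $wg_i=g_iw$. If $(x_1,\dots,x_N,g_i)$ is not reduced, then, since $(x_1,\dots,x_N)$ is reduced, the only possible "bad pair" is $(x_u,g_i)=(g_i^{-1},g_i)$, which forces $x_{u+1},\dots,x_N\in C_i$; then $x_u\cdots x_N$ commutes with $g_i$, hence so does $x_1\cdots x_{u-1}=w(x_u\cdots x_N)^{-1}$, which has a reduced presentation of length $u-1<N$, so the induction hypothesis makes $x_1,\dots,x_{u-1}\in C_i$, finishing this case; the case where $(g_i,x_1,\dots,x_N)$ is not reduced is symmetric. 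It remains to treat the case where both words are reduced; then by Proposition \ref{prop_word_prob} they are equivalent, so $wg_i$ admits a reduced presentation that begins with $g_i$.

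The core of the proof is then a combinatorial lemma about the relation $\sim$: if a reduced word $(z_1,\dots,z_M)$ is equivalent to a word beginning with $g_j^{\epsilon}$, then there is an index $k$ with $z_k=g_j^{\epsilon}$ and $z_1,\dots,z_{k-1}\in C_j$. I would prove this by induction on the number of elementary transpositions in a chain of equivalences, tracking a witnessing occurrence of $g_j^{\epsilon}$ and the letters before it under a single swap of two adjacent consecutive-index generators; the four positional cases (the swap lying entirely before $k$, entirely after $k$, or straddling position $k$ from the left or the right) are checked in turn, the only subtle one being when the swap moves the witnessing letter, where one uses that the two swapped letters commute with each other. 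Granting this lemma, applied to $(x_1,\dots,x_N,g_i)$ it yields $k$ with that word's $k$th letter equal to $g_i$ and all earlier letters in $C_i$: if $k=N+1$ we get $x_1,\dots,x_N\in C_i$ directly, and if $k\le N$ then $x_1\cdots x_k$ commutes with $g_i$, hence $x_{k+1}\cdots x_N$ does too and has length $N-k<N$, so the induction hypothesis applies and all letters land in $C_i$.

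I expect the main obstacle to be precisely this combinatorial lemma: it is the step that genuinely uses that the commutation graph is a path and the exact form of the moves generating $\sim$, and the delicate part is to phrase the inductive statement so that it survives an arbitrary single elementary swap; once that is done the rest of the argument is bookkeeping with reduced words and the induction on length.
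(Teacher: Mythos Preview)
Your proof is correct. The combinatorial lemma you isolate (``if a reduced word is $\sim$--equivalent to one beginning with $g_j^{\epsilon}$, then some letter $z_k=g_j^{\epsilon}$ has all preceding letters in $C_j$'') does go through by the single--swap induction you outline: the only case needing care is when the swap straddles the witnessing position, and there the fact that the two swapped letters have adjacent indices forces the new letter entering the prefix to lie in $C_j$.

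The paper takes a different and shorter route. Rather than comparing $wg_i$ with $g_iw$, it works with the conjugate: choose a minimal--length counterexample $w=x_1\cdots x_N$, observe (by stripping from either end and using minimality together with Lemma~\ref{lem_generator_G}) that \emph{every} reduced word equivalent to $(x_1,\dots,x_N)$ has first and last letters outside $C_i$, and then conclude that the palindromic word $(x_1,\dots,x_N,g_i,x_N^{-1},\dots,x_1^{-1})$ is itself reduced. Since it has length $2N+1>1$, Proposition~\ref{prop_word_prob} gives $wg_iw^{-1}\neq g_i$, a contradiction. What your explicit combinatorial lemma accomplishes is essentially packaged, in the paper's argument, into the verification that this long word is reduced (in particular, ruling out bad pairs that cross the central $g_i$). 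Your approach has the virtue of isolating a reusable statement about $\sim$; the paper's has the virtue of brevity, trading your lemma for a single reducedness check on the conjugate.
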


\begin{proof}
Suppose there exists an element $w$ in the centralizer of $g_i$ that is not generated by $C_i$, choose such a $w$ such that $N:=\len(w)$ is as small as possible. Let $w=x_1\cdots x_N$ be a reduced presentation of $w$, then there exists $u$ such that $x_u\notin C_i$. If $x_1\in C_i$, then $x_2\cdots x_N$ is an element in the centralizer of $g_i$, and by Lemma \ref{lem_generator_G}, the element $x_2\cdots x_N$ is not generated by $C_i$, which contradicts the minimality of $N$. Therefore $x_1\notin C_i$. Similarly, $x_N\notin C_i$. Moreover, the same property holds for every reduced word that is equivalent to $(x_1,\cdots,x_N)$. Therefore $(x_1,\cdots, x_N, g_i, x_N^{-1},\cdots, x_1^{-1})$ is a reduced word. By Proposition \ref{prop_word_prob}, $x_1\cdots x_Ng_ix_N^{-1}\cdots x_1^{-1} \neq g_i$, hence $w$ is not in the centralizer of $g_i$, contradicting the assumption.
\end{proof}

\begin{Lemma}
\label{lem_intersect_centralizer}
Suppose $m\ge 4$, then the only element that is commutative to both $g_1$ and $g_m$ is $1$.
\end{Lemma}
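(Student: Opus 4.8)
The plan is to identify the two centralizers explicitly using the earlier lemmas and then observe that, for $m\ge 4$, the corresponding generating sets are disjoint. First I would invoke Lemma \ref{lem_com_gi}: the centralizer of $g_1$ in $G$ is the subgroup generated by $C_1=\{g_1,g_2,g_1^{-1},g_2^{-1}\}$, i.e.\ the subgroup $G_{\{g_1,g_2\}}$, and likewise the centralizer of $g_m$ is $G_{\{g_{m-1},g_m\}}$. Hence any $w\in G$ commuting with both $g_1$ and $g_m$ lies in $G_{\{g_1,g_2\}}\cap G_{\{g_{m-1},g_m\}}$.

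Next I would fix a reduced presentation $w=x_1\cdots x_N$ of $w$ (which exists and is essentially unique by Proposition \ref{prop_word_prob}). Applying Lemma \ref{lem_generator_G} with $C=\{g_1,g_2\}$ shows that every letter $x_i$ lies in $\{g_1,g_2,g_1^{-1},g_2^{-1}\}$; applying the same lemma with $C=\{g_{m-1},g_m\}$ shows that every letter $x_i$ lies in $\{g_{m-1},g_m,g_{m-1}^{-1},g_m^{-1}\}$. Therefore each $x_i$ lies in the intersection
$$
\{g_1,g_2,g_1^{-1},g_2^{-1}\}\cap\{g_{m-1},g_m,g_{m-1}^{-1},g_m^{-1}\}.
$$
Since $m\ge 4$ we have $m-1\ge 3>2$, so $\{1,2\}\cap\{m-1,m\}=\emptyset$ and this intersection is empty. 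Consequently $N=0$, so $w$ has the empty reduced presentation and $w=1$ by Proposition \ref{prop_word_prob}.

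There is no real obstacle here beyond assembling the ingredients correctly; the one point that genuinely uses the hypothesis $m\ge 4$ is the disjointness of the two generating sets (for $m=3$ the generator $g_2$ is commutative with both $g_1$ and $g_3$, so the statement fails, which confirms that the hypothesis is sharp). I would also double-check that Lemma \ref{lem_generator_G} is being applied to a genuinely reduced word, which is guaranteed by the choice of a reduced presentation, and that the ``only if'' direction of that lemma is the one needed, namely that membership in $G_C$ forces all letters to lie in $C\cup C^{-1}$.
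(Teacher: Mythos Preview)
Your proof is correct and follows exactly the approach indicated in the paper, which merely states that the lemma is an immediate consequence of Proposition~\ref{prop_word_prob}, Lemma~\ref{lem_generator_G}, and Lemma~\ref{lem_com_gi}. You have faithfully unpacked these ingredients, and your observation about the sharpness of the hypothesis $m\ge 4$ is a nice sanity check.
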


\begin{proof}
The lemma is an immediate consequence of Proposition \ref{prop_word_prob}, Lemma \ref{lem_generator_G} and Lemma \ref{lem_com_gi}.
\end{proof}

\begin{Lemma}
\label{lem_diff_heading}
Suppose $(x_1,\cdots,x_N)$ and $(y_1,\cdots,y_N)$ are reduced words such that $x_1$, $y_1$ are not commutative, then $x_1\cdots x_N\neq y_1\cdots y_N$. 
\end{Lemma}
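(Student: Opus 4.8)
The plan is to use the word-problem machinery of Proposition \ref{prop_word_prob}, specifically the characterization of equality of reduced words via the equivalence relation $\sim$ generated by commutations of adjacent generators $g_i, g_{i+1}$ (and their inverses). The key observation is that $x_1$ and $y_1$ being non-commutative means they come from generators $g_i^{\pm 1}$ and $g_j^{\pm 1}$ with $|i-j|\ge 2$ (or $i=j$ but with $x_1 = y_1^{-1}$, though that case is handled separately since $x_1,y_1^{-1}$ commute only if... actually $g_i$ and $g_i^{-1}$ commute, so non-commutativity forces $|i-j|\ge 2$). First I would reduce to showing: if $(x_1,\dots,x_N)$ is a reduced word and $w = x_1\cdots x_N$ admits a reduced presentation $(y_1,\dots,y_N)$ with $y_1$ not commutative with $x_1$, we derive a contradiction. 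By Proposition \ref{prop_word_prob}, $(x_1,\dots,x_N)\sim (y_1,\dots,y_N)$, so there is a finite chain of elementary commutation moves connecting them.

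The crux is to track the first letter through such a chain. I would prove the following invariant: in any word $\sim$-equivalent to $(x_1,\dots,x_N)$, every letter occurring strictly before the first occurrence of a letter non-commutative with $x_1$ must itself be commutative with $x_1$ — in particular, the collection of letters that could possibly be moved into the first position is exactly the set of letters commutative with $x_1$ that appear in an initial "commuting block." More precisely, since an elementary move only swaps two adjacent letters that commute with each other, the letter $x_1$ can only be displaced from the front by a letter adjacent to it that commutes with it; inductively, the first letter of any equivalent reduced word must commute with $x_1$ or equal $x_1$ itself (reached by a sequence of moves each involving only letters commuting with $x_1$ in that prefix). Hence $y_1$ commutes with $x_1$ (or equals it), contradicting the hypothesis. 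The cleanest way to make this rigorous is to define, for a word $W$, the leftmost letter and argue by induction on the length of the $\sim$-chain: a single commutation move $(\dots,a,b,\dots)\to(\dots,b,a,\dots)$ changes the first letter only when it is applied at position $1$, and then the new first letter $b$ commutes with the old first letter $a$; composing, the first letter of the end word is connected to $x_1$ by a path in the "commutation graph," but any two elements so connected through letters drawn from a bounded-width window are mutually commutative by the structure of the $C_i$'s.

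Alternatively — and perhaps more robustly — I would argue directly: suppose $x_1\cdots x_N = y_1\cdots y_N$. Then $x_1\cdots x_N y_N^{-1}\cdots y_1^{-1} = 1$ in $G$. Consider the word $W = (x_1,\dots,x_N,y_N^{-1},\dots,y_1^{-1})$. If this word were reduced it would be nonempty, contradicting that it represents $1$; so it is not reduced, and by the cancellation argument in the proof of Lemma \ref{lem_generator_G} we can successively cancel adjacent-up-to-commutation pairs $(g_k, g_k^{-1})$ to reach the empty word. I would show that the very first such cancellation cannot involve $x_1$: for $x_1$ to cancel against some $x_u^{-1}$ (i.e. against a letter from the reversed $y$-block, say against $y_1^{-1}$ when everything between has been cancelled), all intermediate letters must commute with $x_1$; but tracking which letters survive, the first surviving letter before $x_1$ would cancel is $y_1$, forcing $y_1$ to commute with $x_1$ — contradiction. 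The bookkeeping for "which letters remain between $x_1$ and its cancellation partner" is the main obstacle, since cancellations can happen in a complicated order; I expect to need the normal-form uniqueness of Proposition \ref{prop_word_prob} to pin down that after all cancellations the residual structure forces $y_1 \in C_1'$ (the centralizer-generating set of $x_1$, in the notation of Lemma \ref{lem_com_gi}), which by definition of "commutative" contradicts the hypothesis. I would present whichever of these two arguments turns out to have the shorter bookkeeping, most likely the first (tracking the leftmost letter under $\sim$), stating the needed invariant as a sub-claim and proving it by induction on the number of elementary moves.
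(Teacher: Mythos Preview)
Your first approach is essentially on the right track, but the explicit inductive argument you sketch has a genuine gap. You correctly observe that a single elementary move changes the first letter from $a$ to $b$ only if $a$ and $b$ commute, so along a chain of moves the successive first letters $x_1=a_0,a_1,a_2,\dots$ satisfy $[a_k,a_{k+1}]=1$. But this only says that $a_k$ is connected to $x_1$ by a path in the commutation graph, and that graph is the path $g_1\text{--}g_2\text{--}\cdots\text{--}g_m$: being connected in it does \emph{not} imply commuting with $x_1$. Your parenthetical about ``bounded-width windows'' and the $C_i$'s does not rescue this. What actually works is a slightly different invariant: track the position $p$ of the \emph{original} letter $x_1$ and show by induction that every letter in positions $1,\dots,p-1$ commutes with $x_1$. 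This is easy to maintain under each elementary move and immediately gives that the first letter of any equivalent word commutes with $x_1$.

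The paper's argument avoids this bookkeeping entirely with a different and much shorter idea. It looks at the subword obtained by retaining only those letters equal to $x_1$ or to $y_1$. Since $x_1$ and $y_1$ do not commute, no elementary move can swap a copy of $x_1$ with a copy of $y_1$, and swaps involving at most one such letter do not change the relative order of the retained letters; hence this subword is an invariant of the equivalence class. The subword extracted from $(x_1,\dots,x_N)$ begins with $x_1$, while the one extracted from $(y_1,\dots,y_N)$ begins with $y_1$, so the two reduced words cannot be equivalent. This is a two-line proof; your approach, once patched, would also work but is longer and requires more care.
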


\begin{proof}
By Proposition \ref{prop_word_prob}, we only need to show that the two words $(x_1,\cdots,x_N)$ and $(y_1,\cdots,y_N)$ are not equivalent. Assume the contrary, let $w_x$ and $w_y$ be the sub-words of $(x_1,\cdots,x_N)$ and $(y_1,\cdots,y_N)$ respectively, consisting of all the letters in $\{x_1,y_1\}$. 
Since $x_1$ and $y_1$ are not commmutative, $w_x$ has to be equal to $w_y$ if $(x_1,\cdots,x_N)$ and $(y_1,\cdots,y_N)$ are equivalent.
On the other hand, $w_x$ starts with $x_1$, and $w_y$ starts with $y_1$, hence $w_x\neq w_y$, which is a contradiction.
\end{proof}

\begin{Lemma}
\label{lem_com_g1gm}
Suppose $m\ge 4$, then the centralizer of $g_1g_m$ is generated by $g_1g_m$.
\end{Lemma}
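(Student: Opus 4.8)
The plan is to use the normal-form description of $G$ provided by Proposition \ref{prop_word_prob} and the structure of centralizers established in Lemma \ref{lem_com_gi}. Suppose $w\in G$ commutes with $g_1g_m$. First I would observe that since $m\ge 4$, the generators $g_1$ and $g_m$ commute (indeed $g_1$ commutes with $g_i$ for all $i\le 2$, and in particular any word in $\{g_1,g_m,g_1^{-1},g_m^{-1}\}$ is automatically a reduced word, because $g_1\notin C_m$ and $g_m\notin C_1$, so no cancellation between a $g_1^{\pm 1}$ and a $g_m^{\pm 1}$ letter can be mediated). Thus the subgroup generated by $g_1$ and $g_m$ is free abelian of rank $2$: a reduced word using only these four letters can be brought, via the commutation relations, to the normal form $g_1^a g_m^b$, and by Proposition \ref{prop_word_prob} distinct $(a,b)$ give distinct elements.

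Next I would analyze the reduced presentation of an element $w$ centralizing $g_1g_m$. Write $w = x_1\cdots x_N$ in reduced form. I would argue, by an induction on $N$ analogous to the proof of Lemma \ref{lem_com_gi}, that the first letter $x_1$ must lie in $C_1\cup C_m = \{g_1^{\pm1},g_2^{\pm1},g_{m-1}^{\pm1},g_m^{\pm1}\}$ and similarly the last letter $x_N$ must lie in $C_1\cup C_m$ — otherwise $(x_1,\dots,x_N,g_1g_m,x_N^{-1},\dots,x_1^{-1})$ would be (after expanding $g_1g_m$ into two letters) a reduced word, forcing $w(g_1g_m)w^{-1}\ne g_1g_m$ by Proposition \ref{prop_word_prob}. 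One then wants to peel off the outer letter: if $x_1\in\{g_1^{\pm1},g_m^{\pm1}\}$, then $x_1^{-1}w x_1$ still commutes with $g_1g_m$ (since $x_1$ does) and has strictly smaller length, so by induction $x_1^{-1}wx_1$, hence $w$, is generated by $g_1g_m$ — but one must be careful: conjugating by $g_1$ only manifestly preserves commutation with $g_1g_m$ because $g_1$ commutes with $g_1g_m$. If instead $x_1\in\{g_2^{\pm1},g_{m-1}^{\pm1}\}$ I would derive a contradiction by a more careful normal-form argument: such a letter cannot be absorbed, and tracking it through the commutation relation $w(g_1g_m)=(g_1g_m)w$ at the level of reduced words (using the "leftmost non-commuting letter" idea of Lemma \ref{lem_diff_heading}) shows the two sides have incompatible reduced forms.

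An alternative, cleaner route that I would actually prefer: use the retraction homomorphisms. For $1\le i\le m$ there is a surjection $G\to \langle g_i\rangle\cong\mathbb Z$ killing all $g_j$ with $j\ne i$ (this is well-defined since the only relators are commutators $[g_j,g_{j+1}]$, each of which maps to $1$); call it $\phi_i$, and abuse notation to let $\phi_i(w)$ denote the exponent sum of $g_i$. Given $w$ commuting with $g_1g_m$, I would first use the abelianization to constrain $w$, then use the fact that $g_1, g_m$ generate a rank-$2$ free abelian subgroup $A$ with $g_1g_m\in A$, and show the centralizer of $g_1g_m$ is contained in $A$; combined with the elementary fact that in $\mathbb Z^2$ the centralizer of a primitive element $(1,1)$ is the cyclic group it generates, this gives the result. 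To show the centralizer of $g_1g_m$ lies in $A$, I would run the reduced-word argument above: the key claim is that if $w$ commutes with $g_1g_m$ then every letter of a reduced presentation of $w$ lies in $\{g_1^{\pm1},g_m^{\pm1}\}$, after which Lemma \ref{lem_generator_G} identifies $w$ as an element of $A$.

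The main obstacle will be the inductive step in the reduced-word analysis, specifically ruling out the possibility that the extreme letters of $w$ are $g_2^{\pm1}$ or $g_{m-1}^{\pm1}$ rather than $g_1^{\pm1}$ or $g_m^{\pm1}$; the commutation relations allow $g_2$ to slide past $g_1$, so one cannot argue purely by "no cancellation at the ends." I expect to handle this by examining the sub-word of a reduced presentation of $w$ consisting of the letters in $C_1$ (those that can interact with the $g_1$ in $g_1g_m$) together with the letters in $C_m$, and showing that conjugation by $g_1g_m$ acts on this data in a way that has no fixed points unless the sub-word consists solely of powers of $g_1$ and $g_m$ — essentially a bookkeeping argument in the spirit of the proof of Lemma \ref{lem_com_gi}, but tracking two "ends" simultaneously and using $m\ge 4$ (so that $g_2\ne g_{m-1}$ and $C_1\cap C_m=\emptyset$) to keep the two ends from interfering.
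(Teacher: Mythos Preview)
Your proposal rests on a false premise: $g_1$ and $g_m$ do \emph{not} commute in $G$. The defining relations are $[g_i,g_{i+1}]=1$ for $i=1,\dots,m-1$ only; there is no relation between $g_1$ and $g_m$. In fact your own observation that every word in $\{g_1^{\pm1},g_m^{\pm1}\}$ is reduced (because $g_1\notin C_m$ and $g_m\notin C_1$ when $m\ge4$), together with Proposition~\ref{prop_word_prob}, shows that $\langle g_1,g_m\rangle$ is \emph{free} of rank~$2$, not free abelian. Consequently neither $g_1$ nor $g_m$ commutes with $g_1g_m$, so your peeling step ``if $x_1\in\{g_1^{\pm1},g_m^{\pm1}\}$ then $x_1^{-1}wx_1$ still commutes with $g_1g_m$'' fails. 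Your alternative route compounds the error: in $\mathbb{Z}^2$ the centralizer of $(1,1)$ is all of $\mathbb{Z}^2$, not the cyclic subgroup it generates, so even granting the (incorrect) identification $A\cong\mathbb{Z}^2$ the conclusion would be the opposite of what you want.

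The paper's argument is also an induction on $\len(w)$, but the mechanism is different. From $g_1g_m w (g_1g_m)^{-1}=w$ one sees the word $(g_1,g_m,x_1,\dots,x_N,g_m^{-1},g_1^{-1})$ is not reduced, which forces (after ruling out a trivial case via Lemma~\ref{lem_intersect_centralizer}) either a $g_m^{-1}$ near the front or a $g_m$ near the back of $w$; the symmetric equation $(g_1g_m)^{-1}w(g_1g_m)=w$ forces a $g_1$ near the front or a $g_1^{-1}$ near the back. Because $C_1\cap C_m=\emptyset$, these constraints combine to show that, up to the involution $\sigma(g_k)=g_{m+1-k}$, one can write $w=g_1w'g_m$ with $\len(w')=\len(w)-2$ and $[w',g_mg_1]=1$; applying $\sigma$ and the inductive hypothesis finishes. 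The point is that one never conjugates by $g_1$ or $g_m$ alone; one peels off a $g_1$ from one end and a $g_m$ from the other simultaneously.
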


\begin{proof}
Let $w=x_1\cdots x_N$ be an element in the centralizer of $g_1g_m$, and assume $(x_1,\cdots,x_N)$ is a reduced word.  We use induction on $N$ to show that $w$ is a power of $g_1g_m$. If $N=0$, then $w=1$ and the property is trivial. From now, assume $N>0$, and assume that the claim is proved when $\len(w)<N$.

By the assumptions on $w$, 
we have $g_1g_mwg_m^{-1}g_1^{-1}=w,$
hence the word 
$$(g_1,g_m,x_1,\cdots,x_N,g_m^{-1},g_1^{-1})$$ is not reduced. Therefore there are three possibilities: 
\begin{enumerate}
\item[(a1)] $g_m^{-1}$ is a letter in $(x_1,\cdots,x_N)$, and every letter before the first appearance of $g_m^{-1}$ in $(x_1,\cdots,x_N)$ is in $C_m$; 
\item[(a2)] $g_m$ is a letter in $(x_1,\cdots,x_N)$, and every letter after the last appearance of $g_m$ in $(x_1,\cdots,x_N)$ is in $C_m$; 
\item[(a3)] every letter in $(x_1,\cdots,x_N)$ is contained in $C_m$.
\end{enumerate}
 Case (a3) implies $[w,g_m]=1$, therefore $[w,g_1]=1$, and by Lemma \ref{lem_intersect_centralizer}, $w=1$. Since we are assuming $N>0$, Case (a3) is impossible.

Similarly, since $g_m^{-1}g_1^{-1}wg_1g_m=w$, the word
$$(g_m^{-1},g_1^{-1},x_1,\cdots,x_N,g_1,g_m)$$ is not reduced. Applying the same argument as before, we conclude that there are two possibilities: 
\begin{enumerate}
\item[(b1)] $g_1$ is a letter in $(x_1,\cdots,x_N)$, and every letter before the first appearance of $g_1$ in $(x_1,\cdots,x_N)$ is in $C_1$; 
\item[(b2)] $g_1^{-1}$ is a letter in $(x_1,\cdots,x_N)$, and every letter after the last appearance of $g_1^{-1}$ in $(x_1,\cdots,x_N)$ is in $C_1$; 
\end{enumerate}

Since $m\ge 4$, we have $C_1\cap C_m=\emptyset$, hence (a1) and (b1) are exclusive, and (a2) and (b2) are exclusive. Therefore either (a2) and (b1) hold, or (a1) and (b2) hold. 

If (a2) and (b1) hold, $(x_1,\cdots,x_N)$ is equivalent to a reduced word of the form $(g_1,x_2',\cdots,x_{N-1}',g_m)$. 
Let $w':=x_2'\cdots x_{N-1}'$, then $[g_1w'g_m,g_1g_m]=[w,g_1g_m]=1$, hence $[w',g_mg_1]=1$. Let $\sigma:G\to G$ be the isomorphism of $G$ defined by $\sigma(g_k):=g_{m+1-k}$, then $[\sigma(w'),g_1g_m]=[\sigma(w'),\sigma(g_mg_1)]=1$. By the induction hypothesis, $\sigma(w')$ is a power of $g_1g_m$, therefore $w'$ is a power of $g_mg_1$, hence $w=g_1w'g_m$ is a power of $g_1g_m$. 

If (a1) and (b2) hold, $(x_1,\cdots,x_N)$ 
is equivalent to a reduced word of the form $(g_m^{-1},x_2',\cdots,x_{N-1}',g_1^{-1})$, and the result follows from a similar argument.
\end{proof}

\begin{Lemma}
\label{lem_sol_eqn}
Suppose $m\ge 4$.
The solutions to the equation
\begin{equation}
\label{eqn_conjugation}
u\,g_1 u^{-1} \cdot v\, g_m\, v^{-1}= g_1 g_m
\end{equation}
for $u,v\in G$ are given by
\begin{align}
u &= (g_1g_m)^k u',  \label{eqn_conjugation_u}\\
v &= (g_1 g_m)^k v',  \label{eqn_conjugation_v}
\end{align}
where $k\in\bZ$, $u'$ is in the centralizer of $g_1$, and $v'$ is in the centralizer of $g_m$.
\end{Lemma}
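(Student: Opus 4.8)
The strategy is to reduce \eqref{eqn_conjugation} to the centralizer computation of Lemma~\ref{lem_com_g1gm}. Observe first that if $(u,v)$ is a solution, then so is $(g_1g_m u, g_1g_m v)$: indeed, conjugating by $g_1g_m$ on both sides and using the commutation relations among adjacent generators is not quite enough, but a cleaner route is to note that $(g_1g_m)u\,g_1\,u^{-1}(g_1g_m)^{-1}$ and $(g_1g_m) v\,g_m\,v^{-1}(g_1g_m)^{-1}$ are again a conjugate of $g_1$ and a conjugate of $g_m$ whose product is $(g_1g_m)(g_1g_m)(g_1g_m)^{-1}=g_1g_m$. So the solution set is invariant under left multiplication of both $u$ and $v$ by $g_1g_m$; this is what the factor $(g_1g_m)^k$ in \eqref{eqn_conjugation_u}, \eqref{eqn_conjugation_v} records. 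Hence it suffices to classify solutions up to this action, i.e. to show that modulo simultaneously multiplying $u,v$ by a power of $g_1g_m$, we may assume $u$ centralizes $g_1$ and $v$ centralizes $g_m$.

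\textbf{Key steps.} First, rewrite \eqref{eqn_conjugation} as $u\,g_1\,u^{-1} = g_1 g_m v g_m^{-1} v^{-1}$, and set $w := u^{-1} g_1 = g_1^{-1} u g_1 u^{-1}$ — more usefully, consider $z:=g_1^{-1}\,u\,g_1\,u^{-1}$, which measures the failure of $u$ to centralize $g_1$. Rearranging \eqref{eqn_conjugation} gives $g_1^{-1}\,u\,g_1\,u^{-1} = g_m\,v\,g_m^{-1}\,v^{-1}\cdot (\text{something})$; I would instead proceed as follows. From \eqref{eqn_conjugation}, $(g_1g_m)^{-1} u g_1 u^{-1} = (v g_m v^{-1})^{-1} = v g_m^{-1} v^{-1}$. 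Take the commutator with $g_1g_m$: since the right-hand side is $v g_m^{-1} v^{-1}$, I would argue via a length/reduced-word analysis (Proposition~\ref{prop_word_prob}) that the element $t := u^{-1}(g_1g_m)$ satisfies $t\,g_1\,t^{-1}$ related to... Actually the cleanest implementation: define $a := u^{-1} g_1 u$ and $b := v^{-1} g_m v$; \eqref{eqn_conjugation} says $g_1 g_m = u a u^{-1} \cdot$ — no. Let me state the genuine key step: \eqref{eqn_conjugation} can be rewritten as $v^{-1}\,g_1^{-1}\,u\,g_1\,u^{-1}\,v = v^{-1}\,g_1^{-1}\,g_1\,g_m\,v\,g_m^{-1}\,v^{-1}\,v = v^{-1} g_m v \cdot g_m^{-1}$, wait — the honest approach is: set $x := u^{-1}v$. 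Then conjugating, one derives that $u$ conjugates $g_1$ to an element, and using that $g_1, g_m$ generate with the partial-commutation structure, apply the normal-form uniqueness to pin down the "syllable structure" of $u$ and $v$. I would carry out the reduced-word bookkeeping exactly as in the proof of Lemma~\ref{lem_com_g1gm}: examine which letters of a reduced presentation of $u$ can survive, using $C_1\cap C_m=\emptyset$ (valid since $m\ge 4$), to conclude $u = (g_1g_m)^k u'$ with $u'$ in the centralizer of $g_1$ (Lemma~\ref{lem_com_gi}), and similarly for $v$. Finally, substitute \eqref{eqn_conjugation_u}, \eqref{eqn_conjugation_v} back into \eqref{eqn_conjugation} to verify that every such pair is indeed a solution: $u g_1 u^{-1} = (g_1g_m)^k u' g_1 u'^{-1} (g_1g_m)^{-k} = (g_1g_m)^k g_1 (g_1g_m)^{-k}$, and similarly $v g_m v^{-1} = (g_1g_m)^k g_m (g_1g_m)^{-k}$, so their product telescopes to $(g_1g_m)^k (g_1 g_m)(g_1g_m)^{-k} = g_1g_m$.

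\textbf{Main obstacle.} The hard part is the forward direction — showing every solution has the stated form. The verification that the stated pairs solve \eqref{eqn_conjugation} is immediate. For the converse, the delicate point is the reduced-word analysis: one must show that if $u g_1 u^{-1} \cdot v g_m v^{-1} = g_1 g_m$, then after stripping off a common power of $g_1 g_m$ from the left of both $u$ and $v$, the remaining factor $u'$ commutes with $g_1$ and $v'$ commutes with $g_m$. I expect the cleanest argument routes through Lemma~\ref{lem_com_g1gm}: rewrite \eqref{eqn_conjugation} so that a suitable element is seen to centralize $g_1 g_m$. Concretely, from \eqref{eqn_conjugation} we get $u g_1 u^{-1} = g_1 g_m \cdot v g_m^{-1} v^{-1}$; conjugating $g_1 g_m$ by $u^{-1}$ and comparing, or equivalently showing $(g_1 g_m)$ commutes with $g_1^{-1} u$ up to the centralizer of $g_1$, should let me invoke Lemma~\ref{lem_com_g1gm} to extract the $(g_1 g_m)^k$ factor, after which the residual identity forces $u' \in Z(g_1)$ and $v' \in Z(g_m)$ directly from \eqref{eqn_conjugation}. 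Making this reduction precise — choosing exactly the right auxiliary element to feed into Lemma~\ref{lem_com_g1gm} and handling the interaction of the two conjugations — is where the real work lies, but it is of the same flavor as the normal-form arguments already established in this section.
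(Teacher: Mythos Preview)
Your proposal is not a proof: it is a sequence of abandoned attempts (``no'', ``wait'', ``the honest approach is'') followed by the admission that ``choosing exactly the right auxiliary element to feed into Lemma~\ref{lem_com_g1gm} \dots\ is where the real work lies''. You never identify that element, and in fact the paper's argument does \emph{not} proceed by directly exhibiting an element that centralizes $g_1g_m$ and then invoking Lemma~\ref{lem_com_g1gm}. The hoped-for shortcut---rewriting the equation so that some natural combination of $u$ and $v$ visibly commutes with $g_1g_m$---does not present itself, which is why your repeated attempts to set up such an element (``$z:=g_1^{-1}ug_1u^{-1}$'', ``$t:=u^{-1}(g_1g_m)$'', ``$x:=u^{-1}v$'') all stall.

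The paper's actual proof is by induction on $\len(u)+\len(v)$. One first strips from $u$ any right factor lying in the centralizer of $g_1$, and likewise from $v$; this reduces to the case where $ug_1u^{-1}$ and $vg_m^{-1}v^{-1}$ are reduced words. The equation then becomes $g_1^{-1}ug_1u^{-1}=g_mvg_m^{-1}v^{-1}$, and one does a four-case analysis comparing the lengths of the two sides. In the substantive cases one deduces, via the reduced-word machinery, an identity such as $g_mv=\hat ug_1$ and $g_1\hat u=vg_m$ (where $u=g_1\hat u$), which combine to give $g_1g_mv=vg_mg_1$, i.e.\ $[g_1g_m,vg_m]=1$. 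Only at this point does Lemma~\ref{lem_com_g1gm} enter, yielding $vg_m=(g_1g_m)^k$. The remaining cases are handled by the induction hypothesis together with the involution $\sigma(g_i)=g_{m+1-i}$. Your sketch has neither the inductive framework nor the case analysis that produces the commuting element; these are the missing ideas.
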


\begin{remark}
The expressions on the right-hand side of \eqref{eqn_conjugation_u} and \eqref{eqn_conjugation_v} are not required to be reduced. For example, we may have $k=1$, $u'=1$, $v'=g_m^{-1}$. 
\end{remark}

\begin{proof}
It is clear that every pair $(u,v)$ given by \eqref{eqn_conjugation_u} and \eqref{eqn_conjugation_v} is a solution to \eqref{eqn_conjugation}. To prove the reverse, we use induction on $\len(u)+\len(v)$. If $\len(u)+\len(v)=0$, then $u=v=1$, and the result is obvious. 

Suppose $\len(u)+\len(v)=N>0$, and assume the result is proved when $\len(u)+\len(v)<N$. 
We can write $u$ as $u=u_1u_2$ with the following properties:
\begin{enumerate}
\item $\len(u)=\len(u_1)+\len(u_2)$,
\item $u_2$ is in the centralizer of $g_1$,
\item $u_1$ does not have a reduced presentation that ends with a letter in $C_1$.
\end{enumerate}
Notice that $(u_1,v)$ is also a solution to \eqref{eqn_conjugation}, therefore the result is proved by the induction hypothesis if $u_2\neq 1$. 

Similarly, we can write $v$ as $v=v_1v_2$ with the following properties:
\begin{enumerate}
\item $\len(v)=\len(v_1)+\len(v_2)$,
\item $v_2$ is in the centralizer of $g_m$,
\item $v_1$ does not have a reduced presentation that ends with a letter in $C_m$.
\end{enumerate}
Since $(u,v_2)$ is also a solution to \eqref{eqn_conjugation}, the result is proved by the induction hypothesis if $v_2\neq 1$.

From now on, we assume $u_2=v_2=1$.
This implies $u=u_1$, $v=v_1$, hence both $ug_1u^{-1}$ and $v g_m^{-1} v^{-1}$ are reduced presentations, and we have 
$$\len(ug_1u^{-1}) = 2\len(u)+1,$$
$$\len(v g_m^{-1} v^{-1})=2\len(v)+1.$$
As a result,
$$\len(g_1^{-1}ug_1u^{-1}) = 2\len(u)+2\quad\text{or}\quad  2\len(u),$$
$$\len(g_m v g_m^{-1} v^{-1})=2\len(v)+2\quad\text{or} \quad 2\len(v).$$
By \eqref{eqn_conjugation}, 
$$
g_1^{-1}\,u\,g_1\,u^{-1} = g_m \, v\, g_m^{-1}\,v^{-1},
$$
therefore there are four possibilities;

Case 1. $\len(u)=\len(v)+1$, the expression $g_m v g_m^{-1} v^{-1}$ is reduced, and $g_1^{-1}ug_1u^{-1}$ is not reduced. By the previous assumption on $u$, the element $u$ cannot be represented by a reduced word that ends with a letter in $C_1$, therefore for $g_1^{-1}ug_1u^{-1}$ to be not reduced, $u$ must have a presentation of the form $u=g_1 \hat u$, where $\len(\hat u) = \len(u)-1$. Thus we have
\begin{equation}
\label{eqn_hat_u_conj}
g_m v g_m^{-1} v^{-1} = \hat u g_1 \hat u^{-1} g_1^{-1}.
\end{equation}

Since the left-hand side of \eqref{eqn_hat_u_conj} is reduced, and the right-hand side of \eqref{eqn_hat_u_conj} is given by a word with the same length, the right-hand side of \eqref{eqn_hat_u_conj} is also reduced. Therefore by Proposition \ref{prop_word_prob}, the corresponding words given by the two sides of \eqref{eqn_hat_u_conj} are equivalent. By the assumption $v=v_1$, every reduced presentation of $g_m v g_m^{-1} v^{-1}$ has the property that the product of the first $\len(v)+1$ terms is $g_mv$. Similarly, by $u=u_1$, every reduced presentation of $\hat u g_1 \hat u^{-1} g_1^{-1}$ has the property that the product of the first $\len(\hat u)+1$ terms is $\hat ug_1$. Therefore
\begin{align*}
g_m v &= \hat u g_1, \\
g_m^{-1} v^{-1} &= \hat u^{-1} g_1^{-1}.
\end{align*}
Hence
$g_1 \hat u = v g_m$, and
$$
g_1g_mv = g_1\hat u g_1 = v g_m g_1,
$$
hence $[g_1g_m,vg_m]=1$.
By Lemma \ref{lem_com_g1gm}, we have $v=(g_1g_m)^kg_m^{-1}$ for some integer $k$. By the previous equations, $\hat u= g_1^{-1}(g_mg_1)^k$, and $u=g_1\hat u = (g_1g_m)^k$, therefore the desired result is proved. 

Case 2. $\len(v)=\len(u)+1$, and $g_1^{-1}ug_1u^{-1}$ is reduced,  $g_m v g_m^{-1} v^{-1}$ is not reduced. This case follows from the same argument as Case 1.

Case 3. $\len(u)=\len(v)$, both $g_1^{-1}ug_1u^{-1}$ and $g_m v g_m^{-1} v^{-1}$  are reduced. This is impossible by Lemma \ref{lem_diff_heading}. 

Case 4. $\len(u)=\len(v)$, neither $g_1^{-1}ug_1u^{-1}$ nor $g_m v g_m^{-1} v^{-1}$  is reduced. By the previous assumption that $u=u_1$, for $g_1^{-1}ug_1u^{-1}$ to be not reduced, $u$ must have a presentation of the form $u=g_1 \hat u$, where $\len(\hat u) = \len(u)-1$. Similarly by the assumption that $v=v_1$, there is a presentation of $v$ given by $v=g_m^{-1} \hat v$, where $\len(\hat v)=\len(v)-1$. Equation \eqref{eqn_conjugation} gives
$$
\hat u g_1 \hat u^{-1} g_1^{-1} g_m^{-1} \hat v g_m \hat v^{-1} =1,
$$
therefore
$$
\hat v g_m \hat v^{-1} \hat u g_1 \hat u = g_m g_1.
$$
Let $\sigma:G\to G$ be the isomorphism of $G$ defined by $\sigma(g_k):=g_{m+1-k}$, then 
$$
\sigma(\hat v) g_1 \sigma(\hat v)^{-1} \sigma(\hat u) g_m \sigma(\hat u)^{-1} = g_1 g_m.
$$
By the induction hypothesis, $\sigma(\hat v) = (g_1g_m)^k \hat v'$,  $\sigma(\hat u) = (g_1g_m)^k \hat u'$, where $k\in \bZ$, $\hat v'$ is in the centralizer of $g_1$, and $\hat u'$ is in the centralizer of $g_m$. Therefore 
\begin{align*}
u = g_1 \hat u &= g_1(g_mg_1)^k \sigma(\hat u')=(g_1g_m)^k g_1\sigma(\hat u')\\
v=g_m^{-1}\hat v &= g_m^{-1}(g_mg_1)^k \sigma(\hat v')
=(g_1g_m)^k g_m^{-1}\sigma(\hat v').
\end{align*}
Since $g_1\sigma(\hat u')$ is in the centralizer of $g_1$, and $g_m^{-1}\sigma(\hat v')$ is in the centralizer of $g_m$, the desired result is proved.

In conclusion, every solution of \eqref{eqn_conjugation} can be written as \eqref{eqn_conjugation_u} and \eqref{eqn_conjugation_v}.
\end{proof}

\begin{Corollary}
\label{cor_sol_eqn}
Suppose $m\ge 4$.
The solutions to the equation
\begin{equation}
\label{eqn_conjugation'}
u\,g_1 u^{-1} \cdot v\, g_m^{-1}\, v^{-1}= g_1 g_m^{-1}
\end{equation}
for $u,v\in G$ are given by
\begin{align}
u &= (g_1g_m^{-1})^k u',  \label{eqn_conjugation_u'}\\
v &= (g_1 g_m^{-1})^k v',  \label{eqn_conjugation_v'}
\end{align}
where $k\in\bZ$, $u'$ is in the centralizer of $g_1$, and $v'$ is in the centralizer of $g_m$.
\end{Corollary}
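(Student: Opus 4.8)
The plan is to deduce Corollary \ref{cor_sol_eqn} from Lemma \ref{lem_sol_eqn} by transporting equation \eqref{eqn_conjugation'} to equation \eqref{eqn_conjugation} along a suitable automorphism of $G$. The point is that the two equations differ only by replacing $g_m$ with $g_m^{-1}$, and this replacement is realized by an automorphism of $G$.

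First I would introduce the map $\tau\colon G\to G$ defined on generators by $\tau(g_i)=g_i$ for $1\le i\le m-1$ and $\tau(g_m)=g_m^{-1}$. To see that $\tau$ is well-defined, I would check that it respects the defining relations $[g_i,g_{i+1}]=1$: the relations with $i\le m-2$ do not involve $g_m$ and are preserved, while $[g_{m-1},g_m]=1$ is sent to $[g_{m-1},g_m^{-1}]=1$, which is equivalent to $[g_{m-1},g_m]=1$. Since $\tau$ visibly squares to the identity, it is an automorphism of $G$. Moreover $\tau(g_1g_m^{-1})=g_1g_m$, and $\tau$ carries the centralizer of $g_1$ onto itself and the centralizer of $g_m$ onto itself (it fixes $g_1$, and it sends $g_m$ to $g_m^{-1}$, which has the same centralizer as $g_m$).

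Next, given a solution $(u,v)$ of \eqref{eqn_conjugation'}, applying $\tau$ to both sides yields
$$
\tau(u)\,g_1\,\tau(u)^{-1}\cdot \tau(v)\,g_m\,\tau(v)^{-1}=g_1g_m,
$$
so $(\tau(u),\tau(v))$ is a solution of \eqref{eqn_conjugation}. By Lemma \ref{lem_sol_eqn} (using $m\ge 4$), there is $k\in\bZ$ with $\tau(u)=(g_1g_m)^k u''$ and $\tau(v)=(g_1g_m)^k v''$, where $u''$ lies in the centralizer of $g_1$ and $v''$ lies in the centralizer of $g_m$. Applying $\tau$ once more and using $\tau^2=\id$ gives $u=(g_1g_m^{-1})^k\tau(u'')$ and $v=(g_1g_m^{-1})^k\tau(v'')$; by the remarks above $\tau(u'')$ centralizes $g_1$ and $\tau(v'')$ centralizes $g_m$, which is exactly the asserted form \eqref{eqn_conjugation_u'}--\eqref{eqn_conjugation_v'}. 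For the converse, I would simply substitute $u=(g_1g_m^{-1})^ku'$ and $v=(g_1g_m^{-1})^kv'$ into \eqref{eqn_conjugation'} and verify equality using $[u',g_1]=1$ and $[v',g_m]=1$.

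Since the entire argument is a change of variables along an evident automorphism, I do not anticipate any real obstacle: the only step requiring a moment's care is checking that $\tau$ is well-defined, i.e.\ that sending $g_m$ to $g_m^{-1}$ is compatible with the single relation $[g_{m-1},g_m]=1$ in which $g_m$ appears.
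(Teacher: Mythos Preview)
Your proposal is correct and follows essentially the same approach as the paper: the paper also applies the automorphism $\sigma:G\to G$ defined by $\sigma(g_i)=g_i$ for $i<m$ and $\sigma(g_m)=g_m^{-1}$ to transport Lemma \ref{lem_sol_eqn} to the present setting. Your write-up is more detailed (you verify well-definedness, the behavior on centralizers, and the converse direction), but the idea is identical.
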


\begin{proof}
Notice that there is an isomorphism $\sigma:G\to G$ defined by $\sigma(g_i):=g_i$ for $i<m$, and $\sigma(g_m):=g_m^{-1}$. Apply $\sigma$ to Lemma \ref{lem_sol_eqn} yields this result.
\end{proof}

\section{Arcs on compact surfaces}
\label{sec_arc}
This section collects several results about arcs on surfaces that will be used later. We first recall the following result of Feustel.

\begin{Proposition}[\cite{feustel1966homotopic}]
\label{prop_homotopy_isotopy}
Let $S$ be a smooth compact surface with boundary, let $\gamma_1$, $\gamma_2$ be two smoothly embedded arcs in $S$ such that $\gamma_i\cap \partial S=\partial \gamma_i$ and $\gamma_i$ is transverse to $\partial S$ for $i=1,2$. 
Suppose $\gamma_1$ and $\gamma_2$ are homotopic to each other in $S$ relative to $\partial S$, then $\gamma_1$ and $\gamma_2$ are isotopic to each other in $S$ relative to $\partial S$.
\end{Proposition}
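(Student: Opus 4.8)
This is Feustel's theorem, a classical fact about arcs on surfaces, so the plan is to recall the standard \emph{innermost bigon} argument. First I would put $\gamma_1$ and $\gamma_2$ in general position, so that $\gamma_1\cap\gamma_2$ consists of the common endpoints $\partial\gamma_1=\partial\gamma_2$ together with finitely many transverse interior double points (after sliding endpoints along $\partial S$, if the homotopy rel $\partial S$ only keeps them in the same boundary components, we may assume the endpoints coincide). The proof then proceeds by induction on the number $k$ of interior intersection points.

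For the base case $k=0$, the interiors of $\gamma_1$ and $\gamma_2$ are disjoint, so (excluding the trivial case $\gamma_1=\gamma_2$) the union $C:=\gamma_1\cup\gamma_2$ is an embedded circle in $S$ meeting $\partial S$ exactly in $\partial\gamma_1$. Since $\gamma_1$ is homotopic to $\gamma_2$ rel $\partial S$, the circle $C$ is null-homotopic, hence bounds an embedded disk $\Delta\subset S$ (a null-homotopic simple closed curve in any surface bounds a disk). As a boundary component cannot meet the interior of $S$, we have $\Delta\cap\partial S=\partial\gamma_1$, so sweeping $\gamma_1$ across $\Delta$ to $\gamma_2$ defines an ambient isotopy of $S$ supported in a neighborhood of $\Delta$ and fixing $\partial S$ pointwise; this gives the desired isotopy.

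For the inductive step $k\ge 1$, I would use the homotopy to produce a bigon. Passing to the universal cover $\widetilde{S}$, which is simply connected (and whose interior is homeomorphic to $\bR^2$, since $\pi_1 S$ is free when $\partial S\ne\emptyset$), lift $\gamma_1$ to an arc $\widetilde\gamma_1$ and, using the homotopy, lift $\gamma_2$ to an arc $\widetilde\gamma_2$ with the same endpoints. Two transverse arcs with common endpoints in a simply connected surface that meet in more than their endpoints must bound an innermost bigon, by a Jordan curve argument. Projecting, and then choosing among all bigons on $S$ one with the fewest intersection points in its interior, yields an embedded bigon (or a half-bigon with a corner on $\partial S$, when that corner lies over a point of $\partial\gamma_i$) $B\subset S$, cobounded by a subarc of $\gamma_1$ and a subarc of $\gamma_2$ with $\operatorname{int}B\cap(\gamma_1\cup\gamma_2)=\emptyset$. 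Pushing the corresponding subarc of $\gamma_1$ across $B$, by an isotopy supported near $B$ and fixing $\partial S$, removes at least two interior intersection points and creates none, so the induction hypothesis applies.

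The main obstacle is the bigon-existence step: one must verify that the homotopy, not merely a homological relation, forces an innermost bigon, and one must carefully handle the boundary cases --- half-bigons with a corner on $\partial S$, arcs that are $\partial$-parallel, and low-complexity surfaces such as the annulus and the M\"obius band --- so that at every stage the isotopies can be taken to fix $\partial S$ pointwise.
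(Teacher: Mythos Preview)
The paper does not prove this proposition; it is quoted as a result of Feustel \cite{feustel1966homotopic} and used as a black box, so there is no argument in the paper to compare against. Your innermost-bigon sketch is the standard proof and is essentially correct as an outline; the caveats you flag (half-bigons at the boundary, making sure the disk $\Delta$ in the base case really sits in $S$ with $\Delta\cap\partial S=\partial\gamma_1$, and the low-complexity cases) are exactly the places where care is needed, but none of them is a genuine obstruction.
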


We also need the following result in Section \ref{sec_nonexistence}.

\begin{Lemma}
\label{lem_annulus_remove_disk}
Let $p\in S^1$, let $D$ be a closed disk in $(S^1-\{p\})\times [0,1]$. Let $S:=S^1\times[0,1]-D$, let $\gamma_0 := \{p\}\times [0,1]$. Let $f_1:S\to S$ be the Dehn twist along a curve parallel to $S^1\times\{0\}$, and let $f_2:S\to S$ be the Dehn twist along a curve parallel to $S^1\times\{1\}$. Suppose $\gamma$ is an arc on $S$ from $(p,0)$ to $(p,1)$, then there exist integers $u$ and $v$ such that $\gamma$ is isotopic to $f_1^u f_2^v(\gamma_0)$ in $S$ relative to $\partial S$.
\end{Lemma}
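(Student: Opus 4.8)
The idea is to reduce the statement to the classification of embedded arcs in a once-punctured annulus up to isotopy, and to identify the isotopy classes with the solutions of an equation in $\pi_1$ of the complement, which we already understand. First I would compute the fundamental group of $S = S^1\times[0,1] - D$. Since $D$ is a disk removed from the interior, $S$ is homotopy equivalent to a wedge of two circles, so $\pi_1(S)$ is free of rank $2$; concretely, take as generators the class $a$ of $S^1\times\{0\}$ (equivalently $S^1\times\{1\}$, after pushing across) and the class $b$ of a small loop around $\partial D$, based at $(p,0)$. It is convenient instead to use the free generators $\alpha = [\gamma_0 \cdot (S^1\times\{1\})^{-1}\cdot \gamma_0^{-1}]$-type classes; more simply, note $\gamma_0$ gives a preferred path from $(p,0)$ to $(p,1)$, so any arc $\gamma$ from $(p,0)$ to $(p,1)$ determines the loop $\gamma\cdot\gamma_0^{-1}\in\pi_1(S,(p,0))$, and by Proposition~\ref{prop_homotopy_isotopy} (Feustel) the isotopy class of $\gamma$ rel $\partial S$ is determined by its homotopy class rel $\partial S$, i.e.\ by this element of the free group $\pi_1(S)$, \emph{provided} that class is represented by an embedded arc.

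The next step is to determine which free-group elements are realized by embedded arcs from $(p,0)$ to $(p,1)$, and to show they are exactly the classes of $f_1^u f_2^v(\gamma_0)$. One way: pass to the double of $S$ along its boundary, or use the change-of-coordinates principle for arcs on surfaces (cf.\ Farb--Margalit); the relevant fact is that the mapping class group of $S$ fixing $\partial S$ pointwise acts transitively on isotopy classes of essential embedded arcs with a fixed homotopy type of endpoints, once one quotients by the obvious obstruction, and here $\mathrm{MCG}(S,\partial S)$ is generated by the two boundary Dehn twists $f_1$ and $f_2$ together with the Dehn twist around $\partial D$ — but the latter twist fixes $\gamma_0$ up to isotopy rel $\partial S$ since $\gamma_0$ can be isotoped off a neighborhood of $\partial D$. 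Concretely I would argue directly: given an embedded arc $\gamma$, put it in minimal position with respect to the vertical arc $\gamma_0$ and with respect to a vertical arc on the ``other side'' of $D$; an innermost-disk / bigon argument shows that after applying suitable powers of $f_1$ and $f_2$ we may assume $\gamma$ is disjoint from $D$ and has minimal intersection with $\gamma_0$, hence $\gamma$ is isotopic to $\gamma_0$ in the annulus $S^1\times[0,1]$ rel endpoints; tracking the powers of $f_1,f_2$ used gives the desired $u,v$.

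The cleanest route, and the one I would write up, is the homotopy-class computation: the homotopy classes rel $\partial S$ of maps $\gamma_0\rightsquigarrow\gamma$ of arcs from $(p,0)$ to $(p,1)$ form a torsor over $\pi_1(S,(p,0))$, the boundary twists $f_1,f_2$ act on this torsor, and one checks that the classes $[f_1^u f_2^v(\gamma_0)]$ for $(u,v)\in\bZ^2$ are precisely the classes $b^j$ that are \emph{simple}, where $b=[S^1\times\{0\}]=[S^1\times\{1\}]$ is one of the two free generators — more precisely, $f_1$ adds a copy of the boundary-loop class at the bottom and $f_2$ at the top, and an embedded arc cannot ``wrap around'' $\partial D$, so its class avoids the second generator. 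Then Feustel's proposition upgrades homotopy to isotopy. I expect the main obstacle to be the honest justification that an embedded arc's homotopy class lies in the $\bZ^2$ spanned by the two twists — i.e.\ ruling out classes that genuinely link $\partial D$ — which requires a careful innermost-bigon argument between $\gamma$ and the vertical arcs on both sides of the puncture, or equivalently a careful invocation of the change-of-coordinates principle; the bookkeeping of which twist absorbs which wrapping is the fiddly part, while everything else is standard surface topology plus the cited result of Feustel.
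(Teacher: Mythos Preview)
Your middle paragraph actually contains the paper's proof, but you bury it and then propose something harder. The paper's argument is exactly this: cutting $S$ along any embedded arc $\gamma$ from $(p,0)$ to $(p,1)$ yields (by the Jordan curve theorem, since $S$ sits in the plane) an annulus with corners, the same surface one gets by cutting along $\gamma_0$; hence there is a diffeomorphism $\varphi:S\to S$ with $\varphi|_{\partial S}=\id$ and $\varphi(\gamma_0)=\gamma$. Then, as you note, $\mathrm{MCG}(S,\partial S)$ is generated by the three boundary Dehn twists, and the twist about $\partial D$ fixes $\gamma_0$ up to isotopy, so $\varphi$ is isotopic to some $f_1^u f_2^v$ and we are done. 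That is the entire proof; Feustel's result is not needed.

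What you are missing is precisely the transitivity step: you invoke a vague ``change-of-coordinates principle'' rather than the concrete cutting argument, and then you abandon the MCG approach for a $\pi_1$ computation that has a real gap. Your claimed characterization --- that embedded arcs realize exactly the classes $b^j$ because ``an embedded arc cannot wrap around $\partial D$'' --- is not correct as stated: the loop $\gamma_0\cdot(S^1\times\{1\})\cdot\gamma_0^{-1}$, which is what $f_2$ contributes, is \emph{not} a power of the $S^1\times\{0\}$ generator in $\pi_1(S)$ (it involves the $\partial D$ generator), so the image of $(u,v)\mapsto[f_1^u f_2^v(\gamma_0)\cdot\gamma_0^{-1}]$ is not the cyclic subgroup you describe. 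Determining from scratch which free-group elements are represented by embedded arcs is exactly the hard part you are trying to avoid, and the innermost-bigon argument you sketch would amount to reproving the transitivity statement anyway. Just use the cutting argument to get transitivity of the MCG action, and the rest is the two-line computation you already wrote down.
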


\begin{proof}
Notice that $S$ can be embedded in $\bR^2$.
By the Jordan curve theorem, cutting $S$ open along $\gamma$ yields a closed annulus with corners, which is diffeomorphic to the manifold obtained by cutting $S$ open along $\gamma_0$. Hence there exists an orientation-preserving diffeomorphism $\varphi:S\to S$ such that $\varphi(\gamma_0) = \gamma$ and $\varphi|_{\partial S}=\id$.  Since 
\begin{enumerate}
\item the mapping class group of $S$ is generated by Dehn twists (see, for example, \cite{au2010lectures} or \cite[Corollary 4.16]{farb2011primer}), 
\item  every simple closed curve on $S$ is parallel to the boundary, 
\item Dehn twists along curves parallel to $\partial D$ preserve the isotopy class of $\gamma_0$,
\end{enumerate}
 the desired result is proved.
\end{proof}

The rest of this section studies arcs on Seifert surfaces.

\begin{Definition}
\label{def_K(S,gamma)}
Let $L_0$ be a link in $\bR^3$, let $S$ be a Seifert surface of $L_0$, and let $\gamma$ be an arc on $S$ such that $\gamma$ intersects $\partial S$ transversely in $S$ and $\gamma\cap S=\partial \gamma$. Define $K(S,\gamma)$ to be the knot in $\bR-L_0$ which satisfies the the property that $K(S,\gamma)$ bounds an embedded disk $D$ in $\bR^3$ that intersects $S$ transversely at $\gamma$.
\end{Definition}

\begin{remark}
Since $K(S,\gamma)$ can be isotoped to a neighborhood of $\gamma$ in $D-\gamma$, the knot $K(S,\gamma)$ satisfying  Definition \ref{def_K(S,gamma)} is unique up to isotopy in $\bR^3-L_0$.
 An example of $K(S,\gamma)$ can be constructed as follows.
Let $S'$ be an extension of $S$ to a slightly bigger embedded surface such that $S$ is in the interior of $S'$. Let $N(S')\subset \bR^3$ be a small neighborhood of the zero section of the normal bundle of $S'$, then $N(S')$ is a neighborhood of $S$. Let $\pi:N(S')\to S'$ be the bundle projection. Let $\gamma'$ be an extension of $\gamma$ in $S'$. Then $K(S,\gamma)$ can be taken to be the boundary of a neighborhood of $\gamma$ in 
$\pi^{-1}(\gamma')$. 

By definition, $K(S,\gamma)$ is always an unknot in $\bR^3$.
\end{remark}

\begin{Lemma}
\label{lem_disk_intersect_arc}
Let $S$, $\gamma$ be as in Definition \ref{def_K(S,gamma)}. Suppose $K$ is a knot in $\bR^3-L_0$ such that $K$ bounds an embedded disk $D$ in $\bR^3$. Moreover, assume $D$ intersects $S$ transversely, and that $D\cap S$ is the disjoint union of $\gamma$ and a family of circles. Then $K$ is isotopic to $K(S,\gamma)$ in $\bR^3-L_0$.
\end{Lemma}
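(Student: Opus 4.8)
The statement is essentially that the isotopy class of $K(S,\gamma)$ in $\mathbb{R}^3 - L_0$ depends only on the arc $\gamma$ and not on the auxiliary disk witnessing the intersection, even when the witnessing disk $D$ is allowed to meet $S$ in circles in addition to $\gamma$. The plan is to remove those extra circles one at a time by an innermost-circle argument, reducing to the defining situation in Definition \ref{def_K(S,gamma)} and then invoking the uniqueness remark stated there.

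\medskip

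First I would set up the innermost-circle induction. Consider the family $\mathcal{C}$ of circles in $D \cap S$; if $\mathcal{C} = \emptyset$ we are exactly in the situation of Definition \ref{def_K(S,gamma)}, so $K = \partial D$ is isotopic to $K(S,\gamma)$ and we are done. Otherwise, among the circles of $\mathcal{C}$, together with $\gamma$, consider the subdisks of $D$ they cut out; pick a circle $c \in \mathcal{C}$ that is innermost on $D$, i.e.\ bounds a subdisk $D_c \subset D$ whose interior is disjoint from $S$ (so in particular $D_c$ does not contain $\gamma$). Now $c$ also bounds a subdisk inside $S$ or is essential in $S$: in our situation $c$ is a simple closed curve on the surface $S$ and it bounds the disk $D_c$ in $\mathbb{R}^3$ with interior disjoint from $S$. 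I would use $D_c$ to do an ambient surgery (a "finger-move"/compression) on $D$: push the part of $D$ near $c$ across $D_c$ and slightly off to one side of $S$, which removes the circle $c$ from $D \cap S$ (and introduces no new intersections, since $\mathrm{int}(D_c) \cap S = \emptyset$), while changing $D$ only by an isotopy rel $\partial D = K$ in $\mathbb{R}^3 - L_0$ — the isotopy is supported in a neighborhood of $D_c$, which is disjoint from $L_0$ because $D_c \subset D \subset \mathbb{R}^3 - L_0$. This strictly decreases $\#\mathcal{C}$, so by induction $K$ is isotopic to $K(S,\gamma)$ in $\mathbb{R}^3 - L_0$.

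\medskip

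The one point that needs care — and what I expect to be the main obstacle — is verifying that the compression across an innermost $D_c$ genuinely does not create new intersection points with $S$ and genuinely keeps the boundary and the complement of $L_0$ fixed. The subtlety is that $c$ may be non-trivial (non-separating, or separating in a complicated way) on $S$; but this does not matter, because we are never trying to compress $S$ — we are compressing $D$, and all that is used is that the interior of the innermost subdisk $D_c$ misses $S$. Pushing $D$ across $D_c$ to the side of $S$ replaces a collar of $c$ in $D$ by (a parallel copy of) $D_c$ together with a small annular piece, all contained in an arbitrarily small neighborhood of $D_c$, which can be chosen to miss $S \setminus (\text{collar of }c)$ and to miss $L_0$. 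After the isotopy, the new $D$ still intersects $S$ transversely, still contains $\gamma$ in $D \cap S$, and has one fewer circle of intersection. Iterating and then applying the uniqueness clause of Definition \ref{def_K(S,gamma)} (which says $K(S,\gamma)$ is the unique-up-to-isotopy-in-$\mathbb{R}^3 - L_0$ unknot bounding a disk meeting $S$ transversely exactly along $\gamma$) finishes the proof.
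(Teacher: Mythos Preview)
Your innermost-circle argument has a genuine gap. The move you describe --- an isotopy of $D$ supported in a regular neighborhood $N$ of the innermost subdisk $D_c$ that removes $c$ from $D\cap S$ --- does not exist. Inside $N\cong D^3$, the intersection $S\cap N$ is an annulus $A$, and the properly embedded disk $D\cap N$ meets $A$ transversely along $c$, so it separates $N$ into two balls, one containing each half of $A$; in particular the two circles of $\partial A$ lie on opposite sides of the circle $\partial(D\cap N)$ in $\partial N\cong S^2$. Pairing against an arc of $A$ running between its two boundary circles shows that \emph{every} properly embedded disk in $N$ with boundary $\partial(D\cap N)$ must meet $A$. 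So no isotopy of $D$ supported near $D_c$ can eliminate $c$. Your phrase ``push the part of $D$ near $c$ across $D_c$'' does not name a well-defined move ($D_c$ is already part of $D$), and the fallback you offer --- that $c$ may be essential on $S$ but this ``does not matter'' --- is precisely where the difficulty lies: when $c$ does not bound on $S$ there is no disk on $S$ through which to push $D_c$, and the argument stalls. (There is also a small slip: you write $D\subset\bR^3-L_0$, but in fact $D\cap L_0=\partial\gamma$; only $D_c$, not $D$, avoids $L_0$.)

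The paper bypasses the circles entirely with a one-line argument. Since $D\cap L_0=\partial\gamma$, the set $D\setminus\gamma$ lies in $\bR^3-L_0$. Take a small disk neighborhood $N(\gamma)$ of $\gamma$ in $D$, small enough to miss all the extra circles of $D\cap S$, and let $K'=\partial N(\gamma)$. Then $D\setminus\interior N(\gamma)$ is an annulus contained in $D\setminus\gamma\subset\bR^3-L_0$, realizing an isotopy from $K$ to $K'$ there; and $N(\gamma)$ is a disk meeting $S$ exactly in $\gamma$, so $K'=K(S,\gamma)$ by definition. The point is that you never needed to clean up $D\cap S$: you only needed the isotopy of $K$ to avoid $L_0$, and shrinking $K$ inward along $D$ already accomplishes that.
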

\begin{proof}
By the assumptions, $\gamma$ is an arc in the interior of $D$, and $D\cap L_0 = \partial (D\cap S) = \partial \gamma$. Let $K'$ be the boundary of a small neighborhood of $\gamma$ in $D$, then $K$ is isotopic to $K'$ in $D-\gamma$. Since $(D-\gamma) \cap L_0 = \emptyset$, the isotopy remains in $\bR^3-L_0$. By the definition of $K(S,\gamma)$, the knot $K'$ is isotopic to $K(S,\gamma)$ in $\bR^3-L_0$, hence the lemma is proved.
\end{proof}

\begin{Lemma}
\label{lem_monodromy_arc}
Let $S$, $\gamma$ be as in Definition \ref{def_K(S,gamma)}. Suppose $L_0$ is a fibered link with respect to the Seifert surface $S$ and with monodromy $f:S\to S$. Then $K(S,\gamma)$ is isotopic to $K(S,f(\gamma))$ in $\bR^3-L_0$.
\end{Lemma}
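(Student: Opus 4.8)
The plan is to use that the complement of a fibered link is a mapping torus, so that the monodromy of $L_0$ is realized by an ambient isotopy of the ambient $3$-manifold which leaves $L_0$ setwise invariant. Applying such an isotopy to the disk $D$ with $\partial D=K(S,\gamma)$ and $D\cap S=\gamma$ produces an embedded disk which, by Lemma~\ref{lem_disk_intersect_arc}, realizes $K(S,f(\gamma))$; since the isotopy fixes $L_0$, the knot $K(S,\gamma)$ and its image are isotopic in the complement of $L_0$, and the lemma follows.

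Concretely, normalize $f$ to be the identity on a collar of $\partial S$. Since $L_0$ is fibered with fiber $S$ and monodromy $f$, its complement is an open book with binding $L_0$, and the suspension flow of this fibration — rotating the normal disks near the binding by the standard model — gives, after a preliminary isotopy, a smooth family of diffeomorphisms $\tau_s$ of $S^3$, $s\in[0,1]$, with $\tau_0=\id$, such that $\tau_s(L_0)=L_0$ for all $s$ (in fact $\tau_s$ fixes $L_0$ pointwise, since the points over the binding are collapsed in the open book), and such that $\tau_1(S)=S$ with $\tau_1|_S=f$ for a suitable orientation of the fibration. Because $\tau_s(L_0)=L_0$ and $K(S,\gamma)\cap L_0=\emptyset$, the path $s\mapsto\tau_s(K(S,\gamma))$ is an isotopy in $S^3-L_0$ from $K(S,\gamma)$ to $\tau_1(K(S,\gamma))$; after a general-position perturbation pushing this $2$-parameter family off the point at infinity (a harmless transversality adjustment of a surface in a $3$-manifold), it becomes an isotopy in $\bR^3-L_0$.

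Next, $\tau_1(D)$ is an embedded disk with boundary $\tau_1(K(S,\gamma))$, and since $\tau_1$ is a diffeomorphism with $\tau_1(S)=S$ while $D$ meets $S$ transversely,
$$
\tau_1(D)\cap S \;=\; \tau_1(D)\cap\tau_1(S)\;=\;\tau_1(D\cap S)\;=\;\tau_1(\gamma)\;=\;f(\gamma),
$$
a single arc, met transversely. By Lemma~\ref{lem_disk_intersect_arc}, $\tau_1(K(S,\gamma))$ is isotopic to $K(S,f(\gamma))$ in $\bR^3-L_0$. Composing with the isotopy of the previous paragraph gives $K(S,\gamma)\simeq K(S,f(\gamma))$ in $\bR^3-L_0$. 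If the orientation conventions instead yield $\tau_1|_S=f^{-1}$, one runs the same argument with the arc $f(\gamma)$ in place of $\gamma$ to reach the same conclusion.

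The only genuine technical point is the construction of the global ambient isotopy $\{\tau_s\}$ of $S^3$ from the suspension flow of the open book — extending the flow over a tubular neighborhood of the binding by the standard model and checking smoothness across the interface — which is routine; the remainder is formal transversality bookkeeping together with a single application of Lemma~\ref{lem_disk_intersect_arc}.
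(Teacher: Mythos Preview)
Your proof is correct and uses the same idea as the paper: push the fiber around the open book to realize the monodromy. The paper's implementation is a touch more direct—rather than extending to an ambient isotopy of the whole $3$-manifold and then invoking Lemma~\ref{lem_disk_intersect_arc}, it just takes the isotopy of embeddings $\tau:S\times[0,1]\to\bR^3$ given by the fibration (with $\tau(\cdot,0)=\id$, $\tau(\cdot,1)=f$, and $\partial S$ fixed) and notes that $t\mapsto K(\tau(S,t),\tau(\gamma,t))$ is already an isotopy in $\bR^3-L_0$ from $K(S,\gamma)$ to $K(S,f(\gamma))$, so the ambient extension, the $S^3$-versus-$\bR^3$ bookkeeping, and the appeal to Lemma~\ref{lem_disk_intersect_arc} are all unnecessary.
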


\begin{proof}
By the definition of monodromy, there exists an isotopy of $S$ given by 
$$\tau: S\times [0,1]\to \bR^3,$$ such that 
\begin{enumerate}
 \item $\tau(x,t)$ is independent of $t$ for $x\in \partial S$, 
 \item $\tau(x,0)=x$ for all $x\in S$,
 \item $\tau(x,1)=f(x)$ for all $x\in S$.
 \end{enumerate}
 The map $\tau$ induces an isotopy from $K(S,\gamma)$ to $K(S,f(\gamma))$ in $\bR^3-L_0$ by the family of knots $K(\tau(S,t),\tau(\gamma,t))$.
\end{proof}

\section{The link $L_{u,v}$}
\label{sec_Luv}


This section defines a family of links $L_{u,v}$ and computes their Jones polynomials at $t=-1$. The computation will be used in the proof of the non-existence of the hypothetical link $L$ satisfying Condition \ref{cond_cycle}.

\begin{Definition}
\label{def_Luv}
For a pair of integers $(u,v)$ with $u\ge 3$, we define a link $L_{u,v}$ as follows. If $v\ge 0$, define $L_{u,v}$ to be the link given by Figure \ref{fig_Luv1} with $u$ components such that there are $v$ crossings in the dotted rectangle. If $v<0$, define $L_{u,v}$ to be the link given by Figure \ref{fig_Luv2} with $u$ components  such that there are $|v|$ crossings in the dotted rectangle.
\end{Definition}
\begin{figure}  
  \includegraphics[width=0.7\linewidth]{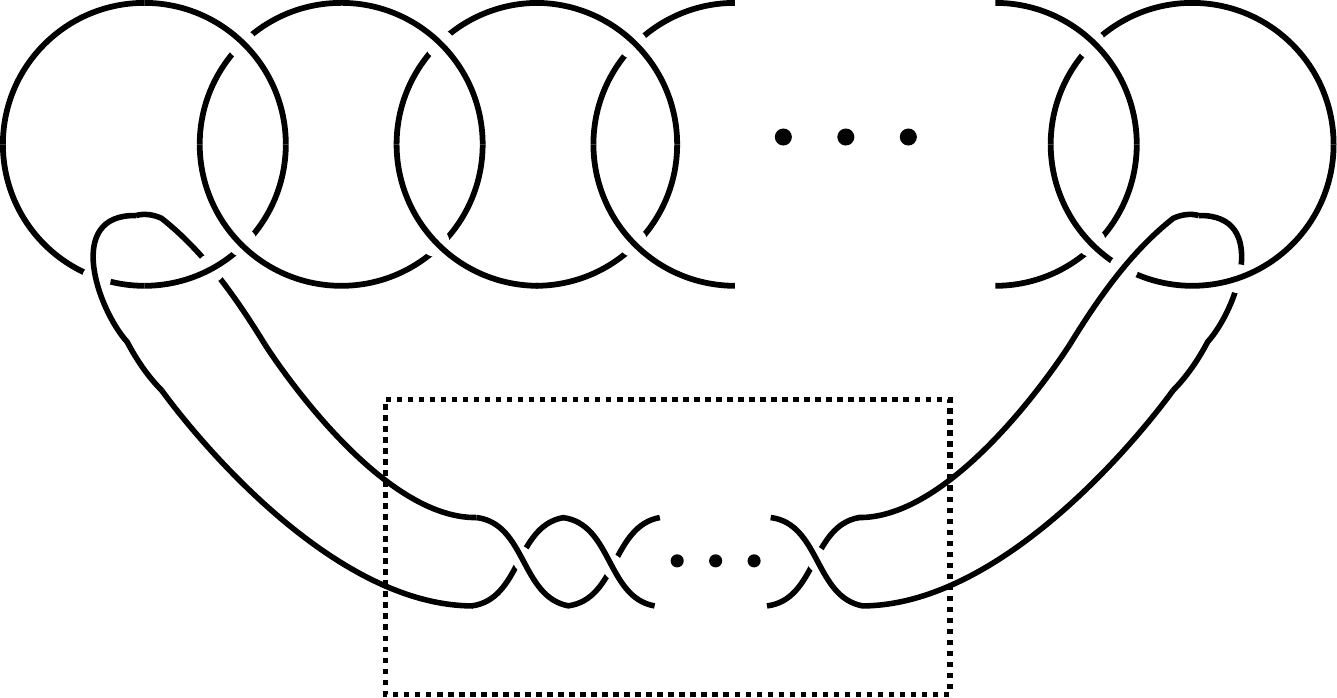}
  \caption{$L_{u,v}$ when $v\ge 0$.}
  \label{fig_Luv1}
  \vspace{\baselineskip}
  \includegraphics[width=0.7\linewidth]{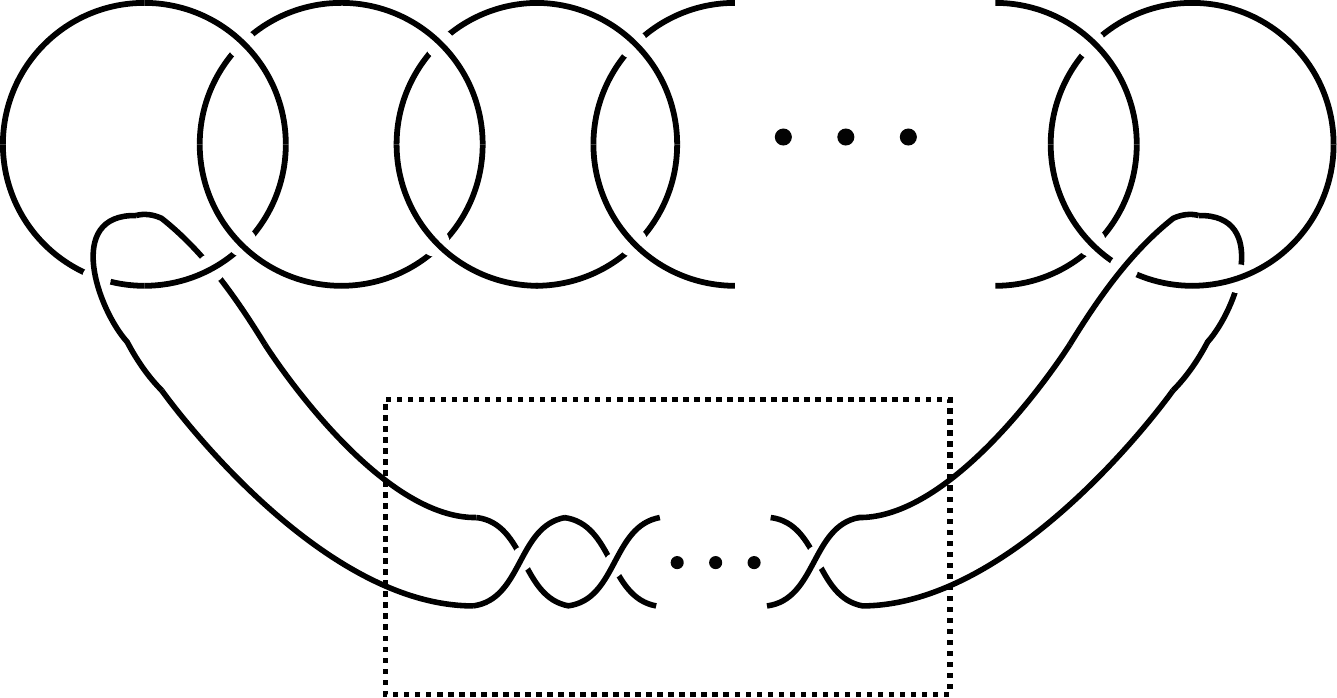}
  \caption{$L_{u,v}$ when $v< 0$.}
  \label{fig_Luv2}
\end{figure}

The only difference between Figure \ref{fig_Luv1} and Figure \ref{fig_Luv2} is that the crossings in the dotted rectangles are reversed. Notice that $L_{u,v}$ is alternating if $v\ge 0$. 

Let $V(L_{u,v})$ be the (reduced) Jones polynomial of $L_{u,v}$ with the orientation given by Figure \ref{fig_Luv_orientation}. Let $V_{u,v}$ be the value of $V(L_{u,v})$ when plugging in $t^{1/2} = -i$. 
\begin{figure}  
  \includegraphics[width=0.7\linewidth]{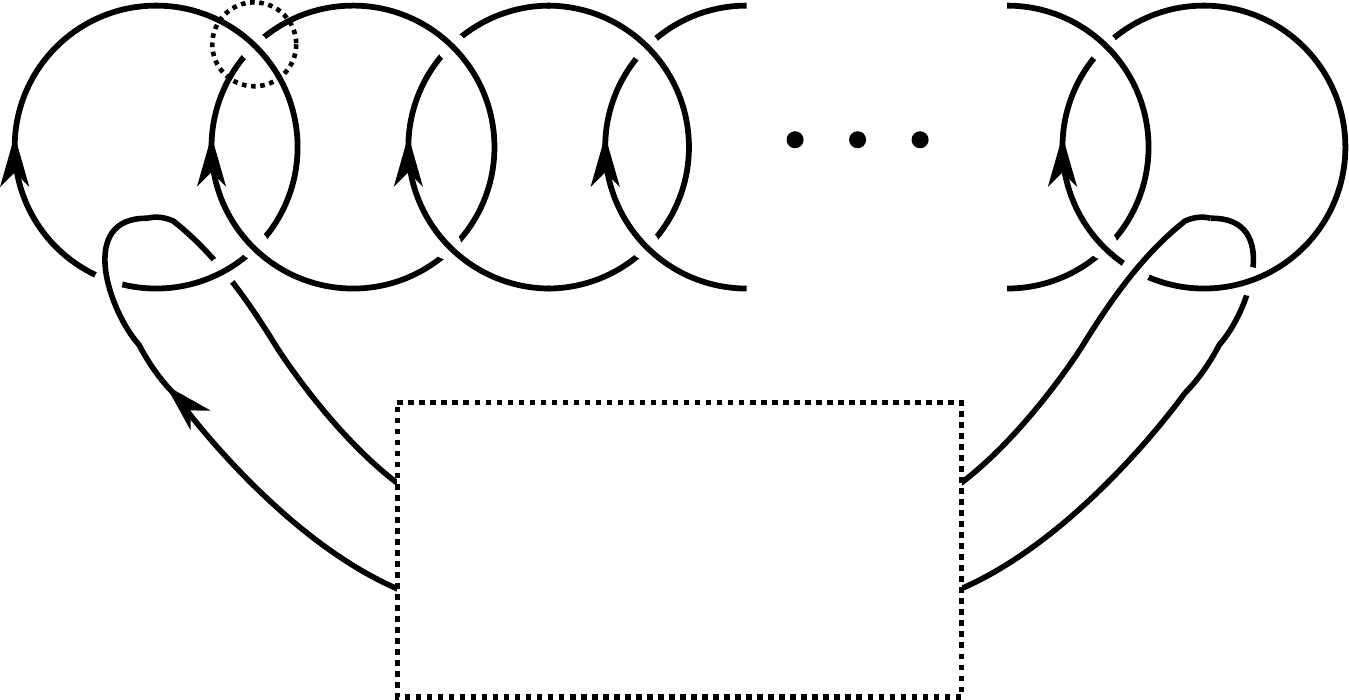}
  \caption{An orientation of $L_{u,v}$.}
  \label{fig_Luv_orientation}
\end{figure}

Notice that the Hopf link with linking number $1$ has Jones polynomial $-t^{1/2}-t^{5/2}$; and the Hopf link with linking number $-1$ has Jones polynomial $-t^{-1/2}-t^{-5/2}$. Moreover, the Jones polynomial of the connected sum of links is the product of the Jones polynomial of each summand. 
Therefore, if $v$ is even, 
by the skein relation at the dotted circle in Figure \ref{fig_Luv_orientation}, we have
$$
(t^{1/2}-t^{-1/2})V(L_{u-1,v}) = t^{-1} V(L_{u,v}) - t (-t^{1/2}-t^{5/2})^{u-1}.
$$
If $v$ is odd, then the skein relation gives
$$
(t^{1/2}-t^{-1/2})V(L_{u-1,v}) = t^{-1} V(L_{u,v}) - t (-t^{1/2}-t^{5/2})^{u-2} (-t^{-1/2}-t^{-5/2}).
$$
Hence 
$$
V_{u,v} = \begin{cases*}
 (2i)V_{u-1,v} + (2i)^{u-1} &\text{if} v \text{is even,}\\
 (2i)V_{u-1,v} - (2i)^{u-1} &\text{if} v \text{is odd.}
 \end{cases*}
$$
On the other hand, if $v$ is even, the skein relation at a crossing in the dotted box in Figure \ref{fig_Luv_orientation} yields
$$
(t^{1/2}-t^{-1/2})(-t^{1/2}-t^{5/2})^u = t^{-1} V(L_{u,v-2}) - t V(L_{u,v}).
$$
If $v$ is odd, then the skein relation gives
$$
(t^{1/2}-t^{-1/2})(-t^{1/2}-t^{5/2})^{u-1}(-t^{-1/2}-t^{-5/2}) = t^{-1} V(L_{u,v-2}) - t V(L_{u,v}).
$$
Therefore
$$
V_{u,v} = \begin{cases*}
 V_{u,v-2} - (2i)^{u+1} &\text{if} v \text{is even,}\\
 V_{u,v-2} + (2i)^{u+1} &\text{if} v \text{is odd.}
 \end{cases*}
$$

It can be directly computed that
\begin{align*}
&V(L_{3,-1}) = 2 + t^2 + t^4, \\
&V(L_{3,0}) = t^7-t^6+3t^5-t^4+3t^3-2t^2+t,
\end{align*}
hence
$$
V_{3,-1} = 4, V_{3,0} = -12.
$$

Combining the computations above, we have
\begin{equation}\label{eqn_V_u,v}
V_{u,v} = (-1)^v(2i)^{u-1}(u+2v).
\end{equation}

As a consequence, we have the following result.

\begin{Corollary}
\label{cor_Jones_Luv}
If $|u+2v|>1$, then $\rank_{\bZ/2} \Kh(L_{u,v};{\mathbb{Z}/2}) > 2^u$.
\end{Corollary}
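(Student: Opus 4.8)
The plan is to derive the inequality directly from the value $V_{u,v}$ computed in \eqref{eqn_V_u,v}, together with the standard lower bound on the rank of Khovanov homology coming from the Jones polynomial. As recalled in the introduction, for any link $L$ one has $\rank_{\bZ/2}\Khr(L;\bZ/2)\ge d(L)$, where $d(L)$ denotes the sum of the absolute values of the coefficients of the reduced Jones polynomial $V(L)$, and by \eqref{eqn_relation_Kh_Khr} one has $\rank_{\bZ/2}\Kh(L;\bZ/2)=2\rank_{\bZ/2}\Khr(L;\bZ/2)$. Hence it suffices to show that $d(L_{u,v})>2^{u-1}$ whenever $|u+2v|>1$.

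The key step will be to bound $d(L_{u,v})$ from below by $|V_{u,v}|$. Writing $V(L_{u,v})=\sum_k a_k\,t^{k/2}$ with $a_k\in\bZ$, the quantity $V_{u,v}$ is obtained by substituting $t^{1/2}=-i$, so $V_{u,v}=\sum_k a_k(-i)^k$; since $|-i|=1$, the triangle inequality gives $|V_{u,v}|\le\sum_k|a_k|=d(L_{u,v})$. By \eqref{eqn_V_u,v} we have $|V_{u,v}|=2^{u-1}\,|u+2v|$, and therefore $d(L_{u,v})\ge 2^{u-1}\,|u+2v|$.

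Finally, since $u+2v\in\bZ$, the hypothesis $|u+2v|>1$ forces $|u+2v|\ge 2$. Combining the displayed inequalities,
$$
\rank_{\bZ/2}\Kh(L_{u,v};\bZ/2)=2\,\rank_{\bZ/2}\Khr(L_{u,v};\bZ/2)\ge 2\,d(L_{u,v})\ge 2^{u}\,|u+2v|\ge 2^{u+1}>2^u,
$$
which is the desired conclusion. There is essentially no obstacle in this argument: the only points that require a moment's care are fixing the normalization of the (reduced) Jones polynomial so that $V_{u,v}$ really is the evaluation of $V(L_{u,v})$ at a point of modulus one, and making sure the inequality $\rank_{\bZ/2}\Khr\ge d$ is invoked with $\bZ/2$-coefficients — both of which are already in place from the introduction.
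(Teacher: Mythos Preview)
Your proof is correct and follows essentially the same approach as the paper: bound $\rank_{\bZ/2}\Khr(L_{u,v};\bZ/2)$ below by $|V_{u,v}|$ via the Euler-characteristic interpretation of the Jones coefficients, then invoke \eqref{eqn_relation_Kh_Khr}. The paper omits the intermediate quantity $d(L_{u,v})$ and the explicit triangle-inequality step, writing directly $\rank_{\bZ/2}\Khr\ge |V_{u,v}|$, but the underlying logic is identical; your version simply makes that step explicit and also records the slightly sharper bound $\ge 2^{u+1}$.
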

\begin{proof}
Since the coefficients of the Jones polynomial $V(L_{u,v})$ are the Euler characteristics of $\Khr(L_{u,v})$ at different $q$-gradings, we have
$$
\rank_{\bZ/2} \Khr(L_{u,v};{\mathbb{Z}/2}) \ge |V_{u,v}| = |2^{u-1} (u+2v)|.
$$
If $|u+2v|>1$, then
$$\rank_{\bZ/2} \Kh(L_{u,v};{\mathbb{Z}/2}) = 2\cdot \rank_{\bZ/2}\Khr(L_{u,v}; {\mathbb{Z}/2}) > 2^u.\phantom\qedhere\makeatletter\displaymath@qed$$ 
\end{proof} 

\section{The non-existence of $L$}
\label{sec_nonexistence}
This section combines the results from Sections \ref{sec_top_properties}, \ref{sec_fundamental_group}, \ref{sec_arc} and \ref{sec_Luv} to prove that the hypothetical link $L$ satisfying Condition \ref{cond_cycle} does not exist. We will proceed by showing more properties of $L$ and eventually deduce a contradiction. By Lemma \ref{thm_modulo_cond_cycle}, this will finish the proof of Theorem \ref{The_main_theorem}.

Recall that the components of $L$ are $K_1,\cdots,K_n$, and $L'=K_1\cup\cdots\cup K_{n-1}$. We have defined $S_1$ and $S_2$ to be the Seifert surfaces of $L'$ given by Figure \ref{fig_fiber1} and Figure \ref{fig_fiber2} respectively. By the conditions on the linking numbers of $L$, there are two possibilities:

{\bf Case 1.} The algebraic intersection number of $S_1$ and $K_n$ is zero;

{\bf Case 2.} The algebraic intersection number of $S_2$ and $K_n$ is zero.

By Proposition \ref{Prop_disjoint_Seifert_surface}, for $j\in\{1,2\}$, if the algebraic intersection number of $S_j$ and $K_n$ is zero, then $K_n$ can be isotopically deformed in $\bR^3-L'$ into $\bR^3-S_j$. The first half of this section will focus on Case 1. The argument for Case 2 is similar and will be sketched afterwards.

Let $\gamma_0$ be the arc on $S_1$ as shown in Figure \ref{fig_standard_arc}, where $\gamma_0$ starts from a point $p_1\in K_1$ and travels from left to right, goes through the crossings of $L'$ in an alternating way, and ends at a point $p_2\in K_{n-1}$.
\begin{figure}  
  \includegraphics[width=0.95\linewidth]{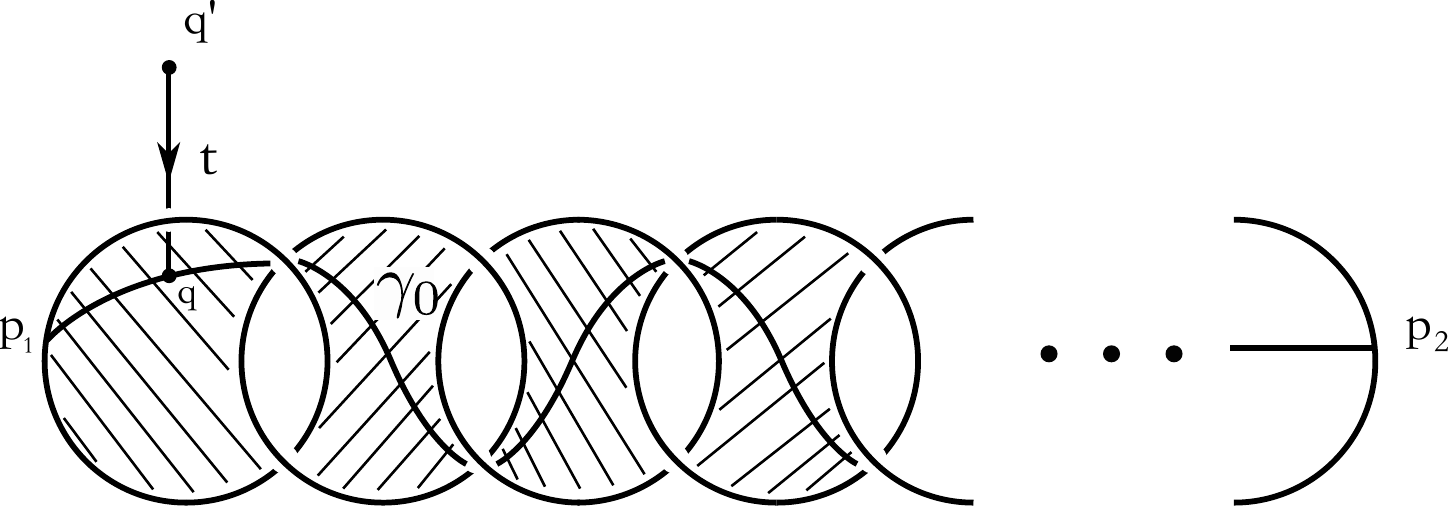}
  \caption{The arc $\gamma_0$ on $S_1$.}
  \label{fig_standard_arc}
\end{figure}

\begin{Lemma}
\label{lem_existence_gamma}
Suppose Case 1 holds,
then there exists an arc $\gamma\subset S_1$ from $p_1$ to $p_2$ such that $K_n$ is isotopic to $K(S_1,\gamma)$ in $\bR^3-L'$.
\end{Lemma}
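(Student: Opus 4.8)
The plan is to realize $K_n$ as $K(S_1,\gamma)$ by controlling how a disk bounded by a push-off of $K_n$ meets $S_1$. Since Case 1 holds, Proposition \ref{Prop_disjoint_Seifert_surface} provides a knot $K_n'$ that is disjoint from $S_1$ and isotopic to $K_n$ in $\bR^3-L'$. By Proposition \ref{Prop-miminal_rank_unknot}, $K_n$ is an unknot, hence so is $K_n'$, and it bounds an embedded disk $D'\subset\bR^3$; after a small perturbation we may assume $D'$ is transverse to $S_1$. Because $\partial D'=K_n'$ is disjoint from $S_1$, and in particular from $\partial S_1=L'$, the intersection $D'\cap S_1$ is a compact $1$-manifold whose only boundary points are the points of $D'\cap L'$. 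Thus $D'\cap S_1$ is a disjoint union of circles and arcs, where each arc runs between two points at which $D'$ punctures $L'$.

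The main step is to simplify $D'\cap S_1$, by an isotopy of $D'$ in $\bR^3$ keeping $\partial D'=K_n'$ fixed, until it consists of a single arc together with some circles. Here I would use that $L'$ is fibered with fiber $S_1$ (established after Figure \ref{fig_fiber2} via Lemma \ref{lem_connect_sum_fiber}), so $S_1$ is incompressible and boundary-incompressible; a standard innermost-disk argument then handles the circle components (which, in any case, are permitted by Lemma \ref{lem_disk_intersect_arc}), and a standard outermost-arc/boundary-compression argument removes every boundary-parallel arc. Iterating, I expect exactly one arc $\alpha$ to survive. To see it must join $K_1$ to $K_{n-1}$, note that the algebraic intersection number of $D'$ with $K_i$ equals $\lk(K_n',K_i)=\lk(K_n,K_i)$, which by part (3) of Condition \ref{cond_cycle} is $\pm 1$ for $i\in\{1,n-1\}$ and $0$ otherwise; hence $D'$ meets $K_1$ and $K_{n-1}$ in an odd number of points and every other component in an even number, and a lone surviving arc is forced to connect the two odd components, namely $K_1$ and $K_{n-1}$.

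Finally, I would slide the endpoints of $\alpha$ along $K_1$ and $K_{n-1}$ to the prescribed points $p_1$ and $p_2$; this replaces $\alpha$ by an arc $\gamma\subset S_1$ from $p_1$ to $p_2$ isotopic to $\alpha$ in $S_1$, and by dragging $D'$ along this isotopy we obtain an embedded disk meeting $S_1$ exactly in $\gamma$ together with circles. Lemma \ref{lem_disk_intersect_arc} then gives that $K_n'$ is isotopic to $K(S_1,\gamma)$ in $\bR^3-L'$, and since $K_n$ is isotopic to $K_n'$ in $\bR^3-L'$, the lemma follows. The step I expect to be delicate is the simplification of $D'\cap S_1$: guaranteeing that a single arc survives, rather than several essential arcs, requires careful use of the incompressibility and boundary-incompressibility of the fiber surface $S_1$ together with the parity constraint above.
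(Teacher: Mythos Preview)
Your overall strategy is the same as the paper's, but you have missed the key shortcut and, as you yourself suspect, the simplification step cannot be completed with the tools you propose. The paper does not start from an arbitrary Seifert disk for $K_n'$ and then try to minimize $D'\cap S_1$ by incompressibility arguments. Instead it invokes Proposition~\ref{prop_embedded_disks} (whose proof already used instanton Floer homology via Corollary~\ref{Cor-minimal_rank-meridional_surface}) to obtain directly an embedded disk $D_n$ with $\partial D_n=K_n$ that meets $K_1$ and $K_{n-1}$ in exactly one point each and is disjoint from $K_2\cup\cdots\cup K_{n-2}$. Carrying this disk along the isotopy from $K_n$ to $K_n'$ and then making it transverse to $S_1$, one has $D_n\cap L'=\{p_1,p_2\}$ from the start, so $D_n\cap S_1$ is automatically one arc together with some circles, and Lemma~\ref{lem_disk_intersect_arc} finishes.

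By contrast, in your argument the arcs of $D'\cap S_1$ have both endpoints in the \emph{interior} of $D'$ (at points of $D'\cap L'$), not on $\partial D'$, so there is no outermost-arc picture on $D'$ to exploit. Working on $S_1$, boundary-incompressibility lets you remove boundary-parallel arcs, but essential arcs on $S_1$ can join $K_i$ to $K_{i+1}$ for any $i$, and the parity constraint from linking numbers does not force a unique surviving arc. There is no purely topological mechanism here to cut the number of essential arcs down to one; that is exactly the content supplied by Proposition~\ref{prop_embedded_disks}. So the fix is simply to cite Proposition~\ref{prop_embedded_disks} for the disk, after which your last two paragraphs go through verbatim.
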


\begin{proof}
By Proposition \ref{Prop_disjoint_Seifert_surface}, there exists a knot $K_n'\subset \bR^3-S_1$ such that $K_n$ is isotopic to $K_n'$ in $\bR^3-L'$.
By Proposition \ref{prop_embedded_disks}, $K_n'$ bounds a disk $D_n$ such that $D_n$ intersects $K_1$ and $K_{n-1}$  respectively at one point, and is disjoint from $K_2\cup\cdots\cup K_{n-2}$. After a further isotopy, we may assume that $D_n\cap L' =\{p_1,p_2\}$, and that $D_n$ intersects $S_1$ transversely. Therefore $D_n\cap S_1$ consists of an arc $\gamma\subset S_1$ from $p_1$ to $p_2$ and a union of circles. By Lemma \ref{lem_disk_intersect_arc}, $K_n'$ is isotopic to $K(S_1,\gamma)$ in $\bR^3-L'$.
\end{proof}

\begin{Lemma}
\label{lem_relation_gamma_gamma0}
Suppose Case 1 holds.
Fix an orientation on $S_1$,
let $f_1$, $f_2$ be the Dehn twists on $S_1$ along an oriented curve parallel to $K_1$ and an oriented curve parallel to $K_{n-1}$ respectively, and let $f_3:S_1\to S_1$ be the monodromy of the fibered structure of $L'$. Let $\gamma$ be the arc given by Lemma \ref{lem_existence_gamma}, then there exist integers $a,b,c$ such that $\gamma$ is isotopic to $f_1^af_2^bf_3^c(\gamma_0)$ relative to $\{p_1,p_2\}$ on $S_1$. 
\end{Lemma}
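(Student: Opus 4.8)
The plan is to reduce to a homotopy-theoretic statement and then exploit the explicit chain-of-annuli structure of $S_1$. First I would apply Proposition~\ref{prop_homotopy_isotopy}: since $\gamma$ and each $f_1^a f_2^b f_3^c(\gamma_0)$ are smoothly embedded arcs meeting $\partial S_1$ transversely precisely in their endpoints $\{p_1,p_2\}$, it suffices to produce integers $a,b,c$ for which $\gamma$ is \emph{homotopic} rel $\{p_1,p_2\}$ to $f_1^a f_2^b f_3^c(\gamma_0)$. By Corollary~\ref{cor_cable} and the realization of $S_1$ (Figure~\ref{fig_standard_arc}) as a boundary-connected sum of the annular Seifert surfaces of the $n-2$ Hopf-link summands, $S_1$ is a planar surface, namely a disc with $n-2$ holes, whose boundary circles $K_1$ and $K_{n-1}$ are the two ``ends'' of the chain. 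In particular the set of homotopy classes rel $\{p_1,p_2\}$ of paths from $p_1$ to $p_2$ is a torsor over the free group $\pi_1(S_1)$, with $\gamma_0$ taken as its distinguished point.

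Next I would record the algebra of the three maps. By Lemma~\ref{lem_connect_sum_fiber} the monodromy $f_3$ is the boundary-connected sum of the Hopf monodromies, hence a product $T_{c_1}^{\varepsilon_1}\cdots T_{c_{n-2}}^{\varepsilon_{n-2}}$ of Dehn twists along the cores $c_1,\dots,c_{n-2}$ of the annular pieces of $S_1$; and $f_1,f_2$ are Dehn twists along curves parallel to $K_1$ and $K_{n-1}$, which are isotopic on $S_1$ to $c_1$ and $c_{n-2}$ respectively. The curves $c_1,\dots,c_{n-2}$ are pairwise disjoint, so $f_1,f_2,f_3$ commute; thus $\langle f_1,f_2,f_3\rangle$ is abelian, the ordering in $f_1^a f_2^b f_3^c$ is immaterial, and this subgroup equals $\langle T_{c_1},\,T_{c_{n-2}},\,T_{c_2}^{\varepsilon_2}\cdots T_{c_{n-3}}^{\varepsilon_{n-3}}\rangle$ inside the group generated by all the core twists.

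The heart of the proof is to show that $\gamma$ lies in the $\langle f_1,f_2,f_3\rangle$-orbit of $\gamma_0$. By the change-of-coordinates principle, cutting $S_1$ along $\gamma$ and cutting along $\gamma_0$ yield diffeomorphic planar surfaces with matching boundary patterns (both arcs join the end circles $K_1$ and $K_{n-1}$), so there is an orientation-preserving diffeomorphism $\varphi$ of $S_1$ fixing $\partial S_1$ pointwise with $\varphi(\gamma_0)=\gamma$. It then remains to see that $\varphi$ can be taken inside $\langle f_1,f_2,f_3\rangle$ modulo the stabilizer of $[\gamma_0]$. I would establish this by induction on $n$, peeling off one annular piece of the chain at a time and applying a pair-of-pants analysis in the spirit of Lemma~\ref{lem_annulus_remove_disk}; the base case $n=3$ is the annulus, where $f_1,f_2,f_3$ are all powers of the single generating twist and every arc from $p_1$ to $p_2$ is manifestly of the desired form. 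The mechanism is that the generating Dehn twists of $\operatorname{MCG}(S_1,\partial S_1)$ break into three kinds: twists about curves surrounding a single intermediate boundary circle $K_j$ with $2\le j\le n-2$, which fix $[\gamma_0]$ because $\gamma_0$ can be isotoped off a neighbourhood of $K_j$; the two ``end'' twists, which are (up to sign) $f_1$ and $f_2$; and the ``through'' twist assembled from the cores, which is $f_3$ modulo the previous two kinds.

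The step I expect to be the main obstacle is precisely the last one: verifying that $\gamma_0$ can be pushed off neighbourhoods of the intermediate boundary circles, and controlling how the monodromy's expression as a product of core twists recombines with $f_1$ and $f_2$, so that a general $\varphi$ with $\varphi(\gamma_0)=\gamma$ is rewritten as a genuine word in $f_1,f_2,f_3$. The remaining bookkeeping -- tracking the effect on the free-group torsor and normal-form arguments for reduced words -- is routine. A parallel argument using the Seifert surface $S_2$ (Figure~\ref{fig_fiber2}) in place of $S_1$ will handle Case~2.
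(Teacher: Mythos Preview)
Your argument has a genuine gap: after invoking Lemma~\ref{lem_existence_gamma} you never again use the hypothesis that $K(S_1,\gamma)$ is isotopic to $K_n$ in $\mathbb{R}^3-L'$. Everything from the change-of-coordinates step onward applies to an \emph{arbitrary} embedded arc $\gamma$ on $S_1$ from $p_1$ to $p_2$, and for such arcs the conclusion is simply false once $n\ge 5$. The group $\langle f_1,f_2,f_3\rangle$ is abelian (it sits inside $\langle T_{c_1},\dots,T_{c_{n-2}}\rangle\cong\mathbb{Z}^{n-2}$), so its orbit on arc classes has at most a $\mathbb{Z}^3$ worth of points, whereas the set of homotopy classes of arcs from $p_1$ to $p_2$ is a torsor over the free group $\pi_1(S_1)\cong F_{n-2}$. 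Your proposed mechanism for collapsing this discrepancy --- that the generating Dehn twists of $\operatorname{MCG}(S_1,\partial S_1)$ come in only three kinds --- is incorrect: for $n\ge 5$ there are essential simple closed curves on $S_1$ enclosing several interior boundary components (for $n=5$, e.g.\ a curve separating $\{K_1,K_3\}$ from $\{K_2,K_4\}$), the twists along them do not lie in the abelian group generated by the core twists, and $\gamma_0$ cannot be isotoped off such a curve since $p_1\in K_1$ and $p_2\in K_{n-1}$ lie on opposite sides. Thus these twists move $[\gamma_0]$ to new classes outside the $\langle f_1,f_2,f_3\rangle$-orbit.

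The paper's proof is therefore not a variant of yours but uses an essentially different idea. It exploits the constraint you dropped: by Corollary~\ref{cor_cable} the free-homotopy class of $K_n$ in $\mathbb{R}^3-L'$ is $g_1^{\pm 1}g_{n-1}^{\pm 1}$, while $K(S_1,\gamma)$ represents $g_1\,[w]\,g_{n-1}^{\pm 1}\,[w]^{-1}$ for a loop $w$ on $S_1$. Equating these conjugacy classes yields an equation in $\pi_1(\mathbb{R}^3-L')$ which is solved using the group-theoretic Lemma~\ref{lem_sol_eqn} and Corollary~\ref{cor_sol_eqn} (this is the real content of Section~\ref{sec_fundamental_group}); the solution pins down $[w]$ to the very restricted form~\eqref{eqn_alpha_theta_eta}, and injectivity of $\pi_1(\operatorname{interior}(S_1))\hookrightarrow\pi_1(\mathbb{R}^3-L')$ coming from the fibration pulls this back to $S_1$. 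Only then does one recognize the resulting word in $\pi_1(S_1)$ as exactly the effect of $f_1^af_2^bf_3^c$ on $\gamma_0$. Your peeling-off-an-annulus induction cannot substitute for this, because the restriction on $\gamma$ comes from the ambient $3$-manifold, not from the intrinsic topology of $S_1$.
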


\begin{proof}
If $n=3$, then $S_1$ is an annulus, and every arc from $p_1$ to $p_2$ is isotopic to $f_1^a\gamma_0$ for some integer $a$. If $n=4$, then $S_1$ is an annulus with a disk removed, and the result follows from Lemma \ref{lem_annulus_remove_disk} with $c=0$. From now we assume $n\ge 5$. 

Fix a point $q$ in the interior of $\gamma_0$ as shown in Figure \ref{fig_standard_arc}. Let $\gamma_1$ be the sub-arc of $\gamma_0$ from $p_1$ to $q$, and let $\gamma_2$ be the sub-arc of $\gamma_0$ from $q$ to $p_2$. Then  there exists a closed curve $w$ in the interior of $S_1$ bases on $q$, such that $\gamma$ is homotopic to $\gamma_1\cdot w\cdot \gamma_2$ relative to $\{p_1,p_2\}$ on $S_1$. The loop $w$ is not necessarily simple.

Let $g_1,\cdots,g_{n-1}$ be the generators of $\pi_1(\bR^3-L',q')$ defined in Section \ref{sec_fundamental_group}, where $q'$ is their base point. Fix an arc $t$ from $q'$ to $q$ as given by Figure \ref{fig_standard_arc}, let $ t^{-1}$ be the same arc with the reversed orientation, let $[w]\in \pi_1(\bR^3-L',q')$ be the homotopy class of $t\cdot w\cdot t^{-1}$.  

Every oriented knot in $\bR^3-L'$ defines a conjugation class in $\pi_1(\bR^3-L',q')$. By Corollary \ref{cor_cable}, the conjugation class defined by $K_n$ has the form $g_1^u g_{n-1}^v$, where $u,v\in\{-1,1\}$ depend on the signs of the linking numbers and the orientation of $K_n$. On the other hand, under a suitable orientation, the conjugation class defined by $K(S_1,\gamma)$ is given by $g_1 [w] g_{n-1}^{b'} [w]^{-1}$, where $b'=(-1)^{n+1}$. Therefore there exists $r\in \pi_1(\bR^3-L',q')$ such that
$$
r\,g_1\,[w]\, g_{n-1}^{b'}\, [w]^{-1}\, r^{-1} = g_1^a \,g_{n-1}^b.
$$
Comparing the images of both sides in $H_1(\bR^3-L';\bZ)$ yields $a=1$, $b=b'$, thus the equation can be rewritten as
$$
r\,g_1\,r^{-1}\cdot (r[w])\,g_{n-1}^{b'}\,(r[w])^{-1} = g_1 \,g_{n-1}^{b'}.
$$
Apply Lemma \ref{lem_sol_eqn} and Corollary \ref{cor_sol_eqn} for $u=r$, $v=r[w]$, and invoke Lemma \ref{lem_com_gi}, we have
$$
[w] = g_1^\alpha \, g_2^\beta \, g_{n-2}^\delta \, g_{n-1}^\eta,
$$
for $\alpha,\beta,\delta,\eta\in\bZ$. Notice that the image of $H_1(\interior(S_1);\bZ)$ in $H_1(\bR^3-L';\bZ)$ is generated by $[g_1]+[g_2], [g_2]+[g_3], [g_3]+[g_4], \cdots, [g_{n-2}]+[g_{n-1}]$, therefore we have 
$$
\alpha - \beta + (-1)^{n-1}\delta + (-1)^n \eta = 0,
$$
hence 
\begin{equation}
\label{eqn_alpha_theta_eta}
[w] = (g_1g_2)^{\beta} \, g_1^{\theta}\, (g_{n-1}^{(-1)^{n-1}})^{\theta} \,(g_{n-2}g_{n-1})^{\delta},
\end{equation}
where $\theta := \alpha-\beta = (-1)^{n-1}(\eta-\delta)$.

We construct a set of generators of $\pi_1(\interior(S_1),q)$ as follows. Let $u_1,\cdots,u_{n-2}$ be the oriented simple closed curves on $S_1$ as given by Figure \ref{fig_pi1_S1}. Each $u_i$ intersects $\gamma_0$ at one point near one of the crossings of $L'$. Let $q_i$ be the intersection point of $u_i$ and $\gamma_0$, let $v_i$ be the sub-arc of $\gamma_0$ from $q$ to $q_i$, let $v_i^{-1}$ be the same arc with the reversed orientation. Let $u_i'$ be the loop based at $q$ defined by $v_i\cdot u_i\cdot v_i^{-1}$. Then $\pi_1(\interior(S_1),q)$ is a free group generated by $[u_1'],\cdots,[u_{n-2}']$. Equation \eqref{eqn_alpha_theta_eta} implies that $w$ is based homotopic to  
\begin{equation}\label{eqn_u_beta_theta_delta}
u_1'^\beta\cdot (u_1'^\theta u_2'^{-\theta} u_3'^{\theta}\cdots u_{n-2}'^{(-1)^{n-1}\theta})\cdot u_{n-2}'^\delta
\end{equation}
in $\pi_1(\bR^3-L',q)$.
\begin{figure}  
  \includegraphics[width=0.95\linewidth]{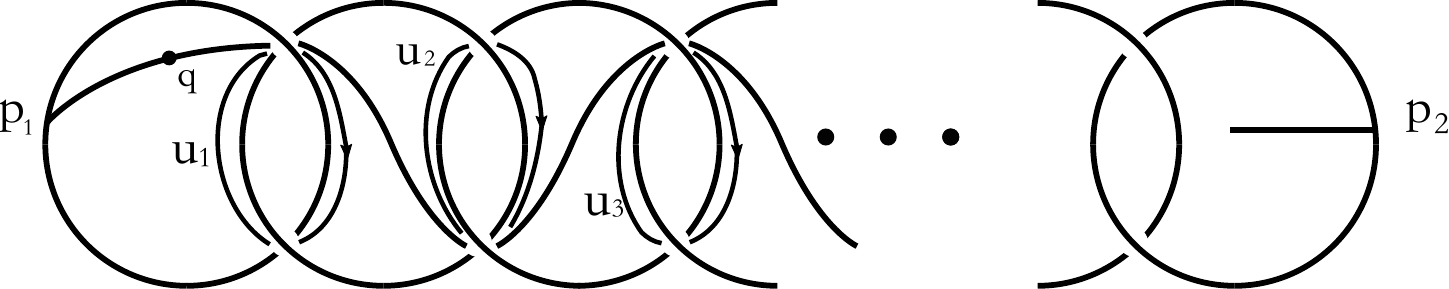}
  \caption{The generators of $\pi_1(\interior(S_1),q)$.}
  \label{fig_pi1_S1}
\end{figure}

Since $\bR^3-L'$ is a fiber bundle over $S^1$ with a fiber being the interior of $S_1$, the map from $\pi_1(\interior(S_1),q)$ to $\pi_1(\bR^3-L',q)$ is injective, hence $w$ is based homotopic to \eqref{eqn_u_beta_theta_delta} in $S_1$. By Lemma \ref{lem_connect_sum_fiber}, the monodromy $f_3$ is given by the composition of the Dehn twists along $u_1,\cdots,u_{n-2}$. 
Therefore, under a suitable choice of the orientation for the monodromy $f_3$, the image of $\gamma_0$ under $f_3^c$ is homotopic to $\gamma_1\cdot(u_1'^c u_2'^{-c} u_3'^{c}\cdots u_{n-2}'^{(-1)^{n-1}c}) \cdot \gamma_2 $ relative to $\{p_1,p_2\}$, where the alternating signs in front of $c$ come from the fact that the normal vector field of $S_1$ switches directions at each crossing of the diagram. 
As a consequence, $\gamma$ is homotopic to $f_1^af_2^bf_3^c(\gamma_0)$ relative to $\{p_1,p_2\}$ on $S_1$ with $a=\pm \beta$, $b=\pm \delta$, $c=\theta$, where the signs depend on the orientations of the Dehn twists in the definitions of $f_1,f_2$. By Proposition \ref{prop_homotopy_isotopy}, $\gamma_0$ is isotopic to $f_1^af_2^bf_3^c(\gamma_0)$ relative to $\{p_1,p_2\}$ on $S_1$.
\end{proof}

\begin{Corollary}
\label{cor_isotopic_to_standard_Kn}
Under the condition of Case 1, the knot $K_n$ is isotopic to $K(S_1,\gamma_0)$ in $\bR^3-L'$.
\end{Corollary}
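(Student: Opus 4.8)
The plan is to combine the two preceding lemmas with the fact that $K(S_1,\cdot)$ depends only on the isotopy class of the arc, and then to check that each of the three mapping classes produced by Lemma~\ref{lem_relation_gamma_gamma0} preserves the isotopy class of $K(S_1,\cdot)$ in the complement $\bR^3-L'$. First I would invoke Lemma~\ref{lem_existence_gamma} to get an arc $\gamma\subset S_1$ from $p_1$ to $p_2$ with $K_n$ isotopic to $K(S_1,\gamma)$ in $\bR^3-L'$, and then Lemma~\ref{lem_relation_gamma_gamma0} to write $\gamma\simeq f_1^af_2^bf_3^c(\gamma_0)$ relative to $\{p_1,p_2\}$ on $S_1$ for some $a,b,c\in\bZ$. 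By the remark following Definition~\ref{def_K(S,gamma)}, $K(S_1,\cdot)$ is unchanged up to isotopy in $\bR^3-L'$ when the arc is replaced by an arc isotopic to it on $S_1$, so it remains to show $K(S_1,f_1^af_2^bf_3^c(\gamma_0))\simeq K(S_1,\gamma_0)$ in $\bR^3-L'$.

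To do this I would prove the following statement: for every arc $\beta\subset S_1$ with $\partial\beta\subset\partial S_1$ and every $\phi$ among $f_1^{\pm1},f_2^{\pm1},f_3^{\pm1}$, one has $K(S_1,\phi(\beta))\simeq K(S_1,\beta)$ in $\bR^3-L'$. Granting this, the mapping classes $\phi\in\operatorname{MCG}(S_1,\partial S_1)$ with $K(S_1,\phi(\beta))\simeq K(S_1,\beta)$ for all such $\beta$ form a subgroup (closure under composition and inverses is immediate), and since it contains $f_1,f_2,f_3$ it contains $f_1^af_2^bf_3^c$, which finishes the proof. For $\phi=f_3^{\pm1}$, the monodromy of the fibered link $L'$ with respect to $S_1$, the statement is precisely Lemma~\ref{lem_monodromy_arc} (read in both directions). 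For $\phi=f_1^{\pm1}$ and $\phi=f_2^{\pm1}$, which are Dehn twists along curves $c_1,c_{n-1}\subset S_1$ parallel to the boundary components $K_1$ and $K_{n-1}$, I would realize $f_i$ by an ambient diffeomorphism $\Psi_i$ of $\bR^3$ that restricts to $f_i$ on $S_1$, fixes $L'$ pointwise, and is isotopic to $\id_{\bR^3}$ through diffeomorphisms fixing $L'$: the twist is supported in a solid-torus collar of $c_i$ that can be taken disjoint from $L'-K_i$ and meeting $K_i$ only along its boundary, $\Psi_i$ is the ``meridional rotation'' of that solid torus, and one isotopes it to the identity by letting the rotation angle run from $2\pi$ to $0$. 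Then $\Psi_i\bigl(K(S_1,\beta)\bigr)=K(S_1,f_i(\beta))$ is isotopic to $K(S_1,\beta)$ in $\bR^3-L'$, as required.

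The main obstacle is this last point: verifying carefully that a Dehn twist along a curve parallel to a boundary component of the fiber surface $S_1$ extends to an ambient isotopy of the pair $(\bR^3,L')$ — equivalently, that it does not change the isotopy class of $K(S_1,\cdot)$ in the link complement. Here I expect to use that $K_1$ and $K_{n-1}$ are unknots (Proposition~\ref{Prop-miminal_rank_unknot}), so that the relevant collar neighborhoods are unknotted solid tori, and to keep track of the framing of $c_i$ induced by $S_1$ so that the meridional rotation closes up correctly; a concrete model for $\Psi_i$ can be extracted from the local product structure of the fibration $\bR^3-L'\to S^1$ near $c_i$. Everything else is routine bookkeeping once Lemmas~\ref{lem_existence_gamma}, \ref{lem_relation_gamma_gamma0} and \ref{lem_monodromy_arc} are in hand.
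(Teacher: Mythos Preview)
Your strategy is right and matches the paper's: reduce to showing that each of $f_1,f_2,f_3$ preserves the isotopy class of $K(S_1,\cdot)$ in $\bR^3-L'$, handling $f_3$ via Lemma~\ref{lem_monodromy_arc}. You also correctly flag the $f_1,f_2$ step as the main obstacle. However, the construction of $\Psi_i$ you sketch does not work as stated. A Dehn twist along $c_i\subset\interior(S_1)$ does \emph{not} extend to a diffeomorphism of a tubular neighborhood $T\cong S^1\times D^2$ of $c_i$ fixing $\partial T$ pointwise: the annulus $S_1\cap T$ cuts $T$ into two solid tori, and on the boundary torus of either piece the map restricts to a Dehn twist along a longitude, which does not extend over a solid torus. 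If instead you enlarge $T$ so that $K_i$ lies on $\partial T$ or is the core, then either the ``meridional rotation'' fails to restrict to $f_i$ on $S_1\cap T$, or the ``let the angle run from $2\pi$ to $0$'' isotopy moves $K_i$ (respectively, rotates $\partial T$ and hence the rest of $L'$ in the complementary solid torus). In no reading do you get an isotopy through diffeomorphisms fixing $L'$.

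The paper sidesteps this with a one-line observation. Since $c_1$ and $c_{n-1}$ are parallel to boundary components, $f_1^a f_2^b$ acts on an arc only by winding its endpoints around $K_1$ and $K_{n-1}$; hence $f_1^a f_2^b f_3^c(\gamma_0)$ is isotopic to $f_3^c(\gamma_0)$ on $S_1$ once one allows the endpoints to slide along $\partial S_1$. And $K(S_1,\gamma)$ visibly depends only on $\gamma$ up to this weaker isotopy, since sliding an endpoint along $\partial S_1$ drags $K(S_1,\gamma)$ along by an isotopy in $\bR^3-L'$. This gives $K(S_1,\gamma)\simeq K(S_1,f_3^c(\gamma_0))$ directly, and Lemma~\ref{lem_monodromy_arc} finishes. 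If you prefer to repair your route, this endpoint-sliding picture is also the cleanest way to see that $f_i$ is in fact realized by an ambient isotopy of the pair $(\bR^3,L')$; the point is that the isotopy is not supported near $c_i$ but genuinely moves $K_i$ around itself.
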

\begin{proof}
Let $f_1,f_2,f_3$ be as in Lemma \ref{lem_relation_gamma_gamma0}. 
By Lemma \ref{lem_existence_gamma} and Lemma \ref{lem_relation_gamma_gamma0}, there exist integers $a,b,c$ such that $K_n$ is isotopic to $K(S_1,\gamma)$ in $\bR^3-L'$, where $\gamma$ is an arc on $S_1$ that is isotopic to $f_1^af_2^bf_3^c(\gamma_0)$ relative to $\{p_1,p_2\}$. Therefore $\gamma$ is isotopic to $f_3^c(\gamma_0)$ on $S_1$ if we allow its boundary points to move on $\partial S_1$. Hence $K(S_1,\gamma)$ is isotopic to $K(S_1,f_3^c(\gamma_0))$ in $\bR^3-L'$. By Lemma \ref{lem_monodromy_arc}, $K(S_1,f_3^c(\gamma_0))$ is isotopic to $K(S_1,\gamma_0)$ in $\bR^3-L'$, hence the result is proved.
\end{proof}

Recall that for a pair of integers $u,v$ with $u\ge 3$, the link $L_{u,v}$ is defined by Definition \ref{def_Luv}.

\begin{Lemma}
\label{lem_exist_Cn}
Under the condition of Case 1, there exists an integer $C(n)$ depending on $n$, such that $L$ is isotopic to $L_{n,C(n)}$. 
\end{Lemma}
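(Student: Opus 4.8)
The plan is to use Corollary~\ref{cor_isotopic_to_standard_Kn}, which in Case 1 identifies $K_n$ with the standard knot $K(S_1,\gamma_0)$ up to isotopy in $\bR^3-L'$, and then to recognize the link $L'\cup K(S_1,\gamma_0)$ diagrammatically. First I would promote the isotopy given by Corollary~\ref{cor_isotopic_to_standard_Kn} to an ambient statement: since $K_n$ is compact and that isotopy takes place entirely in the open manifold $\bR^3-L'$, the isotopy extension theorem yields an ambient isotopy of $\bR^3$ that is supported in $\bR^3-L'$, carries $K_n$ to $K(S_1,\gamma_0)$, and fixes $L'$ pointwise. Hence $L=L'\cup K_n$ is isotopic, as a link in $S^3$, to $L'\cup K(S_1,\gamma_0)$, and it remains only to identify the latter link with some $L_{n,v}$.

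Next I would draw an explicit diagram of $L'\cup K(S_1,\gamma_0)$. By Corollary~\ref{cor_cable} the link $L'$ is the standard chain of $n-2$ Hopf links of Figure~\ref{fig_standard_link}, and $S_1$ is the standard fiber surface of Figure~\ref{fig_fiber1}, a boundary connected sum of $n-2$ Hopf-band annuli. Using the concrete model of $K(S,\gamma)$ from the remark after Definition~\ref{def_K(S,gamma)} — namely, $K(S_1,\gamma_0)$ is the boundary of a thin neighborhood of $\gamma_0$ inside the normal line bundle of a small extension $S_1'$ of $S_1$ — the knot $K(S_1,\gamma_0)$ is a band running parallel to $\gamma_0$. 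Since $\gamma_0$ starts on $K_1$, ends on $K_{n-1}$, and threads the $n-2$ crossing regions of the chain in the alternating pattern of Figure~\ref{fig_standard_arc}, the strand $K(S_1,\gamma_0)$ encircles $K_1$ and $K_{n-1}$ exactly once each, runs alongside the whole chain, and accumulates a definite set of crossings as the normal direction of $S_1$ flips from one band to the next.

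Finally I would isotope this diagram into standard position. Outside a ball containing the crossings contributed by $K(S_1,\gamma_0)$, the picture is precisely the standard chain of Figure~\ref{fig_standard_link} closed into a cycle by a parallel strand; an isotopy of $\bR^3-L'$ supported in that ball collects those crossings into a single box. Comparing with Definition~\ref{def_Luv}, this is exactly the diagram of Figure~\ref{fig_Luv1} when the signed number of collected crossings is nonnegative and of Figure~\ref{fig_Luv2} when it is negative, so $L'\cup K(S_1,\gamma_0)\cong L_{n,C(n)}$ with $C(n)$ equal to that signed crossing count. Because $S_1$, $\gamma_0$, and the entire construction depend only on $n$, the integer $C(n)$ depends only on $n$, and the lemma follows.

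The step I expect to be the main obstacle is the bookkeeping in the last two paragraphs: tracking the normal framing of $S_1$ as $\gamma_0$ passes through the $n-2$ alternating crossings, so as to count correctly and with signs the crossings that $K(S_1,\gamma_0)$ contributes to the box. The relevant feature is that $S_1$ is orientable, so its normal line field is globally coherent, yet it reverses its apparent direction at each crossing of the alternating diagram; keeping this straight is what pins down $C(n)$, and (as I expect) what will subsequently force $|n+2C(n)|$ to be large, so that Corollary~\ref{cor_Jones_Luv} contradicts $\rank_{\bZ/2}\Kh(L;\bZ/2)=2^n$.
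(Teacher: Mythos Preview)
Your proposal is correct and follows essentially the same route as the paper: invoke Corollary~\ref{cor_isotopic_to_standard_Kn} to replace $K_n$ by $K(S_1,\gamma_0)$, then read off a diagram of $L'\cup K(S_1,\gamma_0)$ and isotope it into the form $L_{n,v}$ with the extra crossings collected in a box. The only cosmetic difference is that the paper first redraws $(S_1,\gamma_0)$ in an alternative planar diagram (Figure~\ref{fig_twist_S1}) chosen so that the ``pull-down'' of $K(S_1,\gamma_0)$ to the boxed form (Figure~\ref{fig_pull_down_gamma}) is visually immediate; your verbal description of tracking the normal framing through the alternating crossings accomplishes the same thing, and your anticipation that the precise value of $C(n)$ is deferred to a separate bookkeeping step is exactly how the paper proceeds.
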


\begin{proof}
Notice that $(S_1,\gamma_0)$ is isotopic to Figure \ref{fig_twist_S1}.
\begin{figure}
  \includegraphics[width=\linewidth]{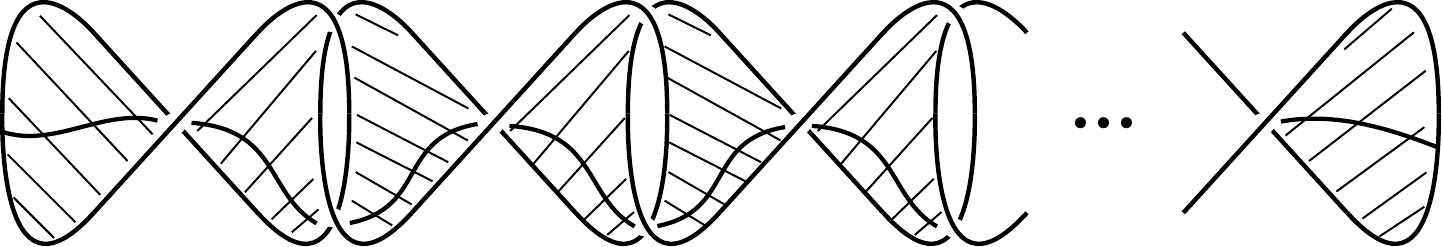}
  \caption{Another diagram for $S_1$ and $\gamma_0$.}
  \label{fig_twist_S1}
  \vspace{\baselineskip}
  \includegraphics[width=\linewidth]{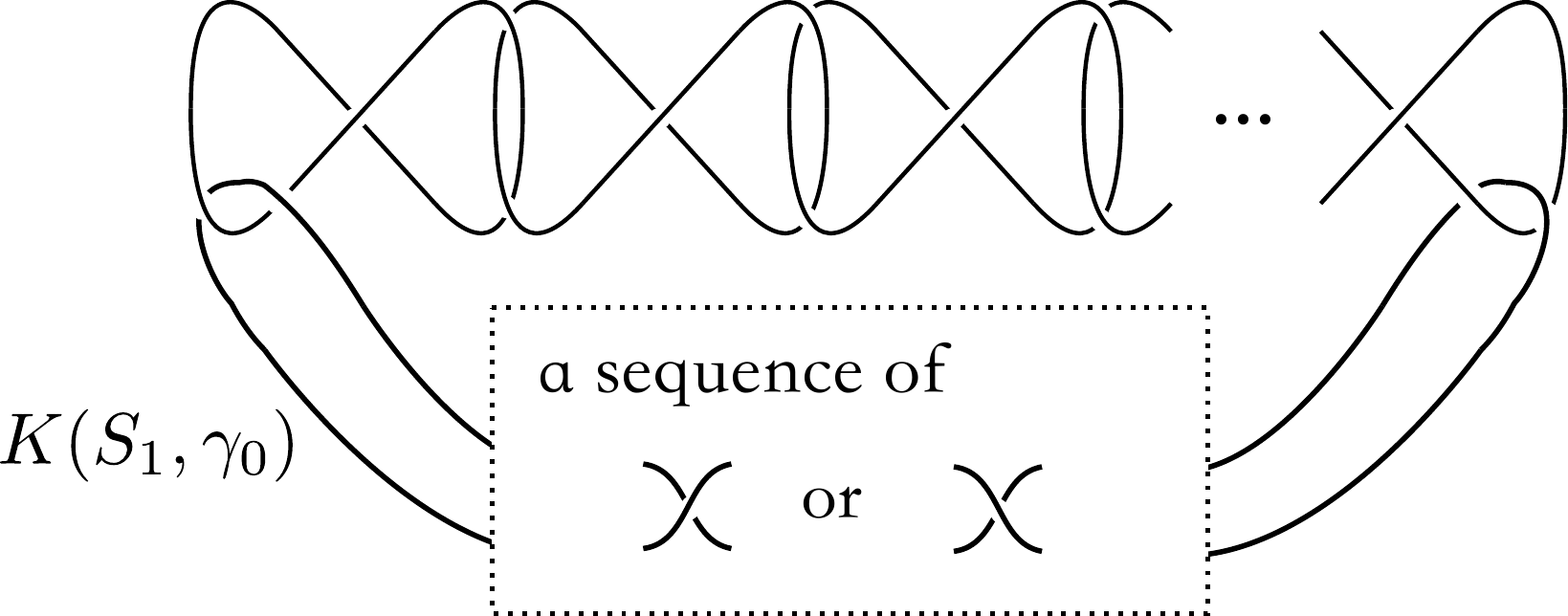}
  \caption{$K(S_1,\gamma_0)$.}
  \label{fig_pull_down_gamma}
\end{figure}
Therefore, we can shrink  $K(S_1,\gamma_0)$ to a neighborhood of $\gamma_0$, then pull down the knot and obtain a link that is given by Figure \ref{fig_pull_down_gamma}, hence the result is proved.
\end{proof}

A priori, the function $C(n)$ satisfying the statement of Lemma \ref{lem_exist_Cn} may not be unique. However, since the proof of the lemma is constructive, 
we will define $C(n)$ to be the function given by the proof of Lemma \ref{lem_exist_Cn}.
It is possible to write down a formula for $C(n)$ directly by tracing the crossings changes from Figure \ref{fig_twist_S1} to Figure \ref{fig_pull_down_gamma}. We take a slightly different approach that is less prone to mistakes.

\begin{Lemma}
The braid in the dotted rectangle of Figure \ref{fig_pull_down_gamma} which is given by the proof of Lemma \ref{lem_exist_Cn} is independent of $n$.
\end{Lemma}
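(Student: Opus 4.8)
The plan is to trace carefully through the constructive proof of Lemma~\ref{lem_exist_Cn} and isolate exactly which portion of the resulting diagram depends on $n$, and which portion is universal. The key observation is that the construction in Lemma~\ref{lem_exist_Cn} consists of three stages: (1) the passage from the standard diagram of $(S_1,\gamma_0)$ to the diagram in Figure~\ref{fig_twist_S1}, (2) shrinking $K(S_1,\gamma_0)$ to a tubular neighborhood of $\gamma_0$, and (3) pulling the knot down and rearranging crossings to arrive at Figure~\ref{fig_pull_down_gamma}. In the final diagram, the link $L_{n,C(n)}$ has $n$ components arranged in a ``chain'' configuration, and the only data not already fixed by the chain shape is the braid word inside the dotted rectangle. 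The claim is that this braid word is the same regardless of $n$.

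The first step is to recall the precise structure of $\gamma_0$ from Figure~\ref{fig_standard_arc}: it passes through the crossings of $L'$ in an alternating over/under fashion, and $L'$ itself is a linear connected sum of $n-2$ Hopf links. In the surface $S_1$ (Figure~\ref{fig_fiber1}), $\gamma_0$ is a ``straight'' arc running across the plumbing. I would argue that a neighborhood of $\gamma_0$ in the ambient $\bR^3$ — together with the portions of $L'$ that $\gamma_0$ interacts with — has a local structure near each crossing of $L'$ that is the \emph{same} building block, repeated $n-2$ times and glued linearly. When we form $K(S_1,\gamma_0)$ as the boundary of a thin disk intersecting $S_1$ along $\gamma_0$ and then pull it down, each such building block contributes a fixed local tangle to the picture, and these assemble along the chain. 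Thus the global diagram of $L$ changes with $n$ only through the number of repeated chain links, while the "interesting" crossings localized in the dotted rectangle of Figure~\ref{fig_pull_down_gamma} come from the two \emph{ends} of $\gamma_0$ (the endpoints $p_1\in K_1$ and $p_2\in K_{n-1}$), not from the interior. Since the two ends are attached to $K_1$ and $K_{n-1}$ in a way that is insensitive to how many chain links lie between them, the braid they produce is $n$-independent.

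Concretely, I would make this rigorous by setting up an explicit isotopy. Write $S_1 = A_1 \natural P \natural P \natural \cdots \natural P \natural A_2$ as an iterated boundary-connected sum (plumbing) of annuli, where $A_1, A_2$ are the ``end'' annuli carrying $p_1, p_2$ and $P$ denotes the middle plumbing block repeated $n-4$ times (adjusting indices for small $n$). The arc $\gamma_0$ restricted to each block is a standard arc connecting the two plumbing regions of that block. Then $K(S_1,\gamma_0)$, being the boundary of a neighborhood of $\gamma_0$ in a bicollar of $S_1$, decomposes correspondingly: in each middle block $P$ the knot runs straight through (contributing no crossings that survive after isotopy, by a direct local check as in the passage Figure~\ref{fig_twist_S1} $\to$ Figure~\ref{fig_pull_down_gamma}), while the two end blocks $A_1, A_2$ each contribute a fixed local tangle. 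Gathering these local tangles into the dotted rectangle and observing that the middle blocks contribute only the ``chain'' part of $L_{n,C(n)}$, we conclude that the braid inside the rectangle is assembled purely from the two end contributions and hence does not depend on $n$.

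\textbf{The main obstacle} I anticipate is bookkeeping: making the ``local building block'' argument airtight requires a careful verification that the isotopy pulling $K(S_1,\gamma_0)$ down interacts with each middle plumbing block in a way that introduces no $n$-dependent crossings — i.e. that all the genuinely nontrivial crossings are confined to neighborhoods of $p_1$ and $p_2$. This is the kind of statement that is ``visually obvious'' from Figures~\ref{fig_twist_S1} and~\ref{fig_pull_down_gamma} but tedious to write out in full generality; the cleanest way to handle it is probably to identify a standard neighborhood of $\gamma_0 \cup (K_2 \cup \cdots \cup K_{n-2})$ and observe it is the $(n-4)$-fold iterate of a single model, so that the construction of Lemma~\ref{lem_exist_Cn} applied inside this neighborhood is literally the same local move repeated, with the $n$-dependence entering only through the repetition count and not through the rectangle's contents.
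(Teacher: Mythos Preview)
Your approach is correct and rests on the same observation as the paper's: the alternating over/under pattern of $\gamma_0$ through the crossings of $L'$ causes the middle portion to contribute nothing to the braid in the dotted rectangle. The paper's proof, however, is far more economical. Instead of setting up a block decomposition $S_1 = A_1 \natural P \natural \cdots \natural P \natural A_2$ and arguing that each middle block $P$ contributes a trivial tangle, the paper simply works inductively: passing from $n$ to $n+1$ makes $\gamma_0$ traverse exactly two additional crossings in Figure~\ref{fig_twist_S1}, and since $\gamma_0$ alternates, these two new crossings are of opposite type and cancel in the braid. This incremental check is the entire proof (two sentences) and sidesteps all of the bookkeeping you flag as the main obstacle; your decomposition is really the ``unrolled'' version of this induction.
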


\begin{proof}
When $n$ is increased by $1$, the arc $\gamma_0$ goes through two more crossings in Figure \ref{fig_twist_S1}. The two crossings are reverse to each other therefore they do not change the braid type in the dotted rectangle of Figure \ref{fig_pull_down_gamma}. 
\end{proof}

\begin{Corollary}
\label{cor_C(n+1)}
The function $C(n)$ satisfies $C(n+1)=C(n)-1$. \qed
\end{Corollary}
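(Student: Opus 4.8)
The plan is to read off a formula for $C(n)$ from the explicit construction in the proof of Lemma \ref{lem_exist_Cn} and then compare consecutive values using the lemma just established. Recall that $C(n)$ is, by definition, the integer for which the diagram of Figure \ref{fig_pull_down_gamma} with $n$ strands --- obtained by shrinking the unknot $K(S_1,\gamma_0)$ into a bicollar of the arc $\gamma_0$ and pulling it underneath $L'$ --- becomes, after the standard normalizing isotopy of the proof, the model link $L_{n,C(n)}$ of Definition \ref{def_Luv}. So the first step is to record the output of the preceding lemma: the braid that construction places in the dotted rectangle of Figure \ref{fig_pull_down_gamma} is a fixed $2$-braid $\sigma^{k_0}$ with $k_0$ independent of $n$. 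The second step is to describe the normalizing isotopy carrying Figure \ref{fig_pull_down_gamma} into the normal form of Figure \ref{fig_Luv1} (or Figure \ref{fig_Luv2} when the count is negative): it drags the long component $K_n$ along the chain $L'=K_1\cup\cdots\cup K_{n-1}$ so that each clasp crossing of $K_n$ with $L'$ is swept into the single dotted box prescribed by Definition \ref{def_Luv}.

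Because $\gamma_0$ runs through the crossings of $L'$ in the alternating fashion of Figure \ref{fig_standard_arc}, every one of the $n-2$ Hopf clasps of $L'$ that $K_n$ is pulled across should contribute exactly one crossing, all of the same sign, to the count in the box; hence the normalization changes the box crossing number by an affine function of $n$ of slope $\pm1$, so that $C(n)=k_0+(\text{affine in }n)$ with $n$-coefficient $-1$, i.e. $C(n+1)-C(n)=-1$. The route I actually expect to write, however, avoids computing $k_0$ and the affine constant altogether, and is precisely why the preceding lemma was isolated: passing from $n$ to $n+1$ changes Figure \ref{fig_pull_down_gamma} only by splicing one extra Hopf clasp into the chain while, by that lemma, leaving the $2$-braid in the dotted box untouched; and splicing one clasp into the chain of the model $L_{u,v}$ and re-normalizing lowers the parameter $v$ by exactly $1$ (equivalently, the result is isotopic to $L_{u+1,v-1}$). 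The latter is a single local picture to check, and the corollary follows at once. In combination with Corollary \ref{cor_Jones_Luv} this is the mechanism that will ultimately obstruct the hypothetical $L$ satisfying Condition \ref{cond_cycle}, since it forces $n+2C(n)$ to leave $\{-1,0,1\}$ once $n$ is large.

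The only delicate point is the bookkeeping of crossing signs: one must confirm that the clasp appearing when $L'$ gains a component contributes $-1$, not $+1$, to the box, which is where the alternating pattern of $\gamma_0$, the signs of the monodromy twists $f_1,f_2,f_3$ of Lemma \ref{lem_relation_gamma_gamma0}, and the orientation convention of Figure \ref{fig_Luv_orientation} all have to be reconciled. Since the preceding lemma rules out any hidden contribution inside the dotted box, pinning down this one local sign completes the argument.
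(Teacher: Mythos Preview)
Your plan is the paper's: the corollary is meant to be read off Figure~\ref{fig_pull_down_gamma} together with the preceding lemma, which is why it carries a bare \qed.

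One caution about your phrasing. The assertion that ``splicing one clasp into the chain of the model $L_{u,v}$ and re-normalizing lowers $v$ by $1$'' is not a property of the abstract model link: the skein recursion in Section~\ref{sec_Luv} (at the dotted circle of Figure~\ref{fig_Luv_orientation}) relates $L_{u,v}$ and $L_{u-1,v}$ with the \emph{same} $v$, so inserting a clasp into the chain of $L_{u,v}$ away from the box simply produces $L_{u+1,v}$. The $-1$ therefore does not come from an abstract re-normalization of the model; it comes from the specific local geometry of how Figure~\ref{fig_pull_down_gamma} changes when $n\to n+1$. The new Hopf band is inserted at the end of the chain where the pulled-down $K_n$ attaches, and the way $K_n$ extends along that band contributes exactly one crossing of the relevant sign to the $v$-count in the $L_{n,\cdot}$ normal form. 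This is precisely the ``single local picture'' you isolate in your last paragraph, and with the figures in hand it is indeed a one-glance check; your outline has correctly located where the content lies, but you should not expect the step to reduce to a statement about $L_{u,v}$ alone.
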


\begin{Lemma}
Under the condition of Case 1, the link $L$ is isotopic to $L_{n,1-n}$.
\end{Lemma}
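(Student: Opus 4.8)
The plan is to reduce the statement to a single base-case computation via the recursion already in hand. By Lemma~\ref{lem_exist_Cn}, under the condition of Case~1 the link $L$ is isotopic to $L_{n,C(n)}$, where $C(n)$ is the integer (with sign) read off from the diagram of Figure~\ref{fig_pull_down_gamma} in the proof of that lemma; this integer depends only on $n$, since it is extracted from the explicit pair $(S_1,\gamma_0)$ and the knot $K(S_1,\gamma_0)$, not from $L$ itself. Corollary~\ref{cor_C(n+1)} gives $C(n+1)=C(n)-1$, so the quantity $C(n)+n$ is independent of $n$, and hence $C(n)=C(3)+3-n$ for all $n\ge 3$. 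Therefore it suffices to prove $C(3)=-2$: then $C(n)=1-n$ and $L_{n,C(n)}=L_{n,1-n}$, as desired.

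First I would unwind the construction in the case $n=3$, where everything is explicit. There $L'=K_1\cup K_2$ is a single Hopf link, the surface $S_1$ of Figure~\ref{fig_fiber1} is an annulus, and $\gamma_0$ of Figure~\ref{fig_standard_arc} is an arc across $S_1$ from $p_1\in K_1$ to $p_2\in K_2$. By Corollary~\ref{cor_isotopic_to_standard_Kn}, $K_n$ is isotopic in $\bR^3-L'$ to $K(S_1,\gamma_0)$, so $L$ is isotopic to $K_1\cup K_2\cup K(S_1,\gamma_0)$. Specializing the diagrams of Figures~\ref{fig_twist_S1} and~\ref{fig_pull_down_gamma} to $n=3$ and carrying out the isotopy described in the proof of Lemma~\ref{lem_exist_Cn} (shrink $K(S_1,\gamma_0)$ into a neighborhood of $\gamma_0$ and pull it down below the diagram of $L'$), one reads off that the resulting diagram is that of Figure~\ref{fig_Luv2} with $u=3$ and two crossings in the dotted rectangle, i.e.\ $L_{3,-2}$. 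Hence $C(3)=-2$, and the lemma follows from the first paragraph.

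The main obstacle is precisely this base-case bookkeeping: verifying the sign (so that one lands in Figure~\ref{fig_Luv2} rather than Figure~\ref{fig_Luv1}) and the exact count of two crossings requires keeping careful track of the orientation of $S_1$ and of the Dehn twists $f_1,f_2,f_3$ occurring in Lemma~\ref{lem_relation_gamma_gamma0}. This is exactly why the argument is routed through the recursion $C(n+1)=C(n)-1$ together with one explicit value, rather than through a direct computation of $C(n)$ for general $n$. As a partial consistency check, formula \eqref{eqn_V_u,v} gives $V_{3,-2}=(2i)^2(3-4)=4$, the expected value at $t^{1/2}=-i$ for a $3$-component link whose Khovanov homology has the minimal rank $2^3$; note, however, that $V_{3,-1}=4$ as well, so this evaluation does not by itself distinguish $C(3)=-2$ from $C(3)=-1$, and the diagram computation cannot be bypassed.
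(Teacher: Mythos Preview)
Your overall strategy matches the paper's: use the recursion $C(n+1)=C(n)-1$ from Corollary~\ref{cor_C(n+1)} to reduce to the single value $C(3)$, and then determine that value. Your write-up is correct as far as it goes.

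The one substantive point on which the paper's argument differs from yours is precisely the step you flag as the ``main obstacle''. You assert that specializing Figures~\ref{fig_twist_S1} and~\ref{fig_pull_down_gamma} to $n=3$ lets one ``read off'' two negative crossings, hence $C(3)=-2$; but you do not actually carry out that count, and you correctly observe that the Jones evaluation $V_{3,-2}=V_{3,-1}=4$ does not separate the two candidates. The paper resolves this ambiguity by a different, cleaner mechanism that you miss: it first exhibits an explicit isotopy (Figure~\ref{fig_isotopy1}) from $L'\cup K(S_1,\gamma_0)$ to $L_{3,-2}$, concludes that $L_{3,C(3)}$ is isotopic to $L_{3,-2}$, uses \eqref{eqn_V_u,v} to get $C(3)\in\{-2,-1\}$, and then invokes a \emph{parity} constraint coming from Case~1 itself. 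Namely, the algebraic intersection number of $K_n$ with $S_1$ being zero forces a particular sign pattern on the linking numbers $\lk(K_n,K_1)$ and $\lk(K_n,K_{n-1})$, which in the model $L_{n,v}$ translates into $v\equiv n+1\pmod 2$; for $n=3$ this gives $C(3)$ even, hence $C(3)=-2$. This parity argument is short and robust, and it removes the need to track signs and crossing counts through the pull-down isotopy. If you want your version to stand on its own, either carry out the $n=3$ crossing count explicitly, or add this parity observation.
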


\begin{proof}
It is straightforward to verify that when $n=3$, the link $L'\cup K(S_1,\gamma_0)$ is isotopic to $L_{3,-2}$, see Figure \ref{fig_isotopy1}. Therefore $L_{3,C(3)}$ is isotopic to $L_{3,-2}$. By equation \eqref{eqn_V_u,v}, we have $C(3) = -2$ or $-1$. On the other hand, the condition of Case 1 implies that $C(n)$ has the same parity as $n+1$, therefore $C(3) = -2$, hence by Corollary \ref{cor_C(n+1)}, we have $C(n) = 1-n$.
\begin{figure}
  \includegraphics[width=0.8\linewidth]{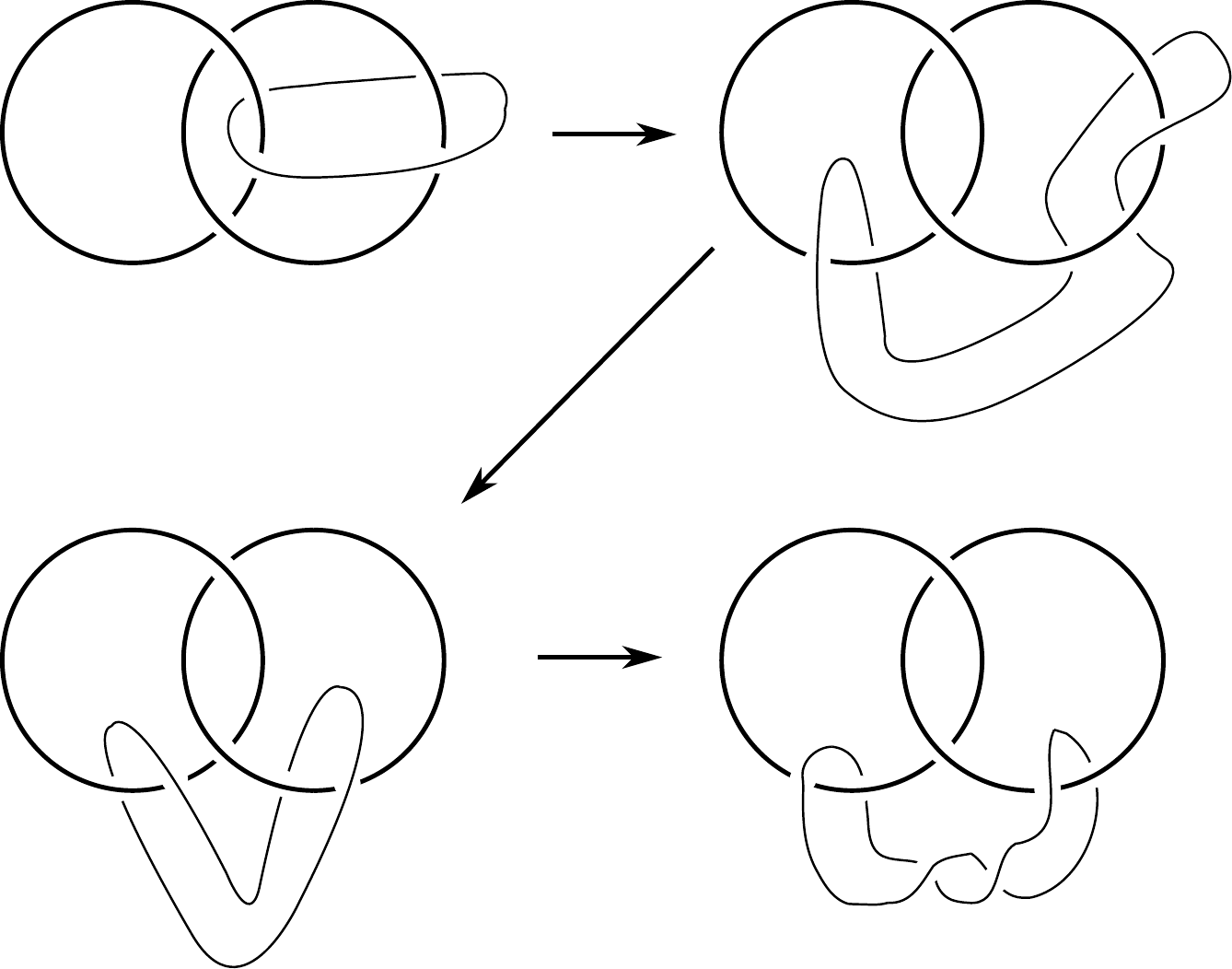}
  \caption{Isotopy from $L'\cup K(S_1,\gamma_0)$ to $L_{3,-2}$ when $n=3$.}
  \label{fig_isotopy1}
\end{figure}
\end{proof}

By Corollary \ref{cor_Jones_Luv}, if $n\ge 4$ then $\rank_{\bZ/2}\Kh(L_{n,1-n};{\mathbb{Z}/2})>2^n$.
It can be directly verified that $L_{3,-2}$ is isotopic (up to mirror image) to the link $L6n1$ in the Thistlethwaite link table, and the rank of $\Kh(L_{3,-2};{\mathbb{Z}/2})$ equals $12$. Therefore all the links $L_{n,1-n}$ fail to satisfy Part (2) of Condition \ref{cond_cycle}. This proves the non-existence of $L$ for Case 1.

To prove the result for Case 2, let $\gamma_0$ be the arc on $S_2$ given by Figure \ref{fig_standard_arc_2}. Then the same argument as in Lemma \ref{lem_relation_gamma_gamma0} and Corollary \ref{cor_isotopic_to_standard_Kn} shows that $K_n$ is isotopic to $K(S_2,\gamma_0)$ in $\bR^3-L'$. 
\begin{figure}
  \includegraphics[width=0.9\linewidth]{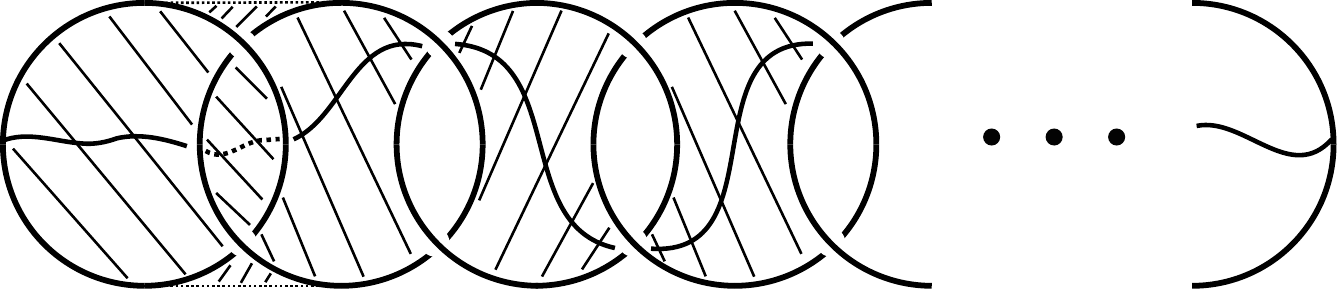}
  \caption{The arc $\gamma_0$ on $S_2$.}
  \label{fig_standard_arc_2}
\end{figure}
To see that $L$ is isotopic to $L_{n,C'(n)}$ for some function $C'(n)$, we use the diagram of $(S_2,\gamma_0)$ given by Figure \ref{fig_twist_S2}. 
\begin{figure}
  \includegraphics[width=\linewidth]{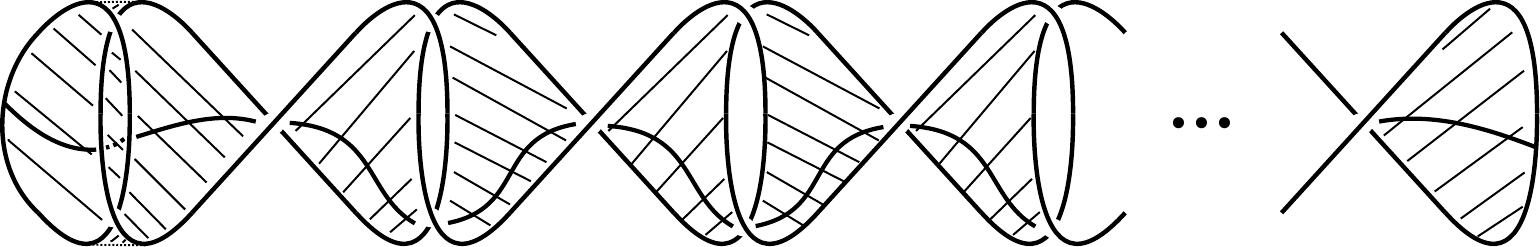}
  \caption{Another diagram for $S_2$ and $\gamma_0$.}
  \label{fig_twist_S2}
\end{figure}
Similar to Corollary \ref{cor_C(n+1)}, the function $C'(n)$ satisfies $C'(n+1)=C'(n)-1$ . Moreover, when $n=3$, the link $L'\cup K(S_2,\gamma_0)$ is isotopic to $L_{3,-1}$, see Figure \ref{fig_isotopy2}. Therefore equation \eqref{eqn_V_u,v} implies $C'(3) = -1$ or $-2$. On the other hand, the condition of Case 2 implies that $C'(n)$ has the same parity as $n$, therefore $C'(3) = -1$, hence $C'(n) = 2-n$.
\begin{figure}
  \includegraphics[width=\linewidth]{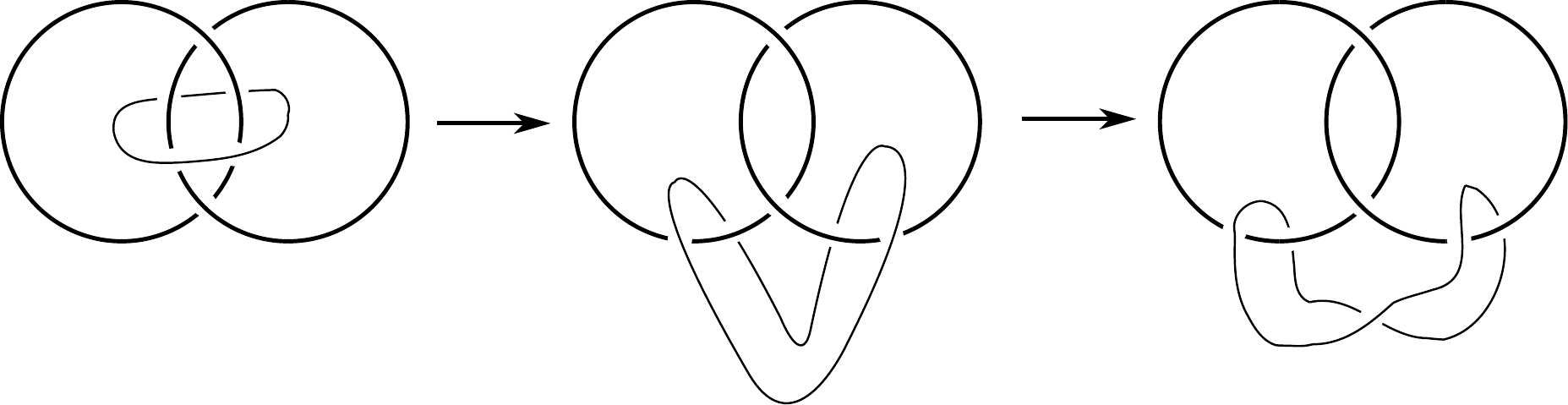}
  \caption{Isotopy from $L'\cup K(S_2,\gamma_0)$ to $L_{3,-1}$ when $n=3$.}
  \label{fig_isotopy2}
\end{figure}
By Corollary \ref{cor_Jones_Luv}, when $n\ge 6$ we have $\rank_{\bZ/2}\Kh(L_{n,2-n};{\mathbb{Z}/2})>2^n$. The link $L_{3,-1}$ is isotopic up to mirror image to the link L6n1 in the Thistlethwaite link table, and the rank of $\Kh(L_{3,-1};{\mathbb{Z}/2})$ equals $12$. The link $L_{4,-2}$ is isotopic up to mirror image to L8n8, and the rank of $\Kh(L_{4,-2};{\mathbb{Z}/2})$ equals $24$. The link $L_{5,-3}$ is isotopic up to mirror image to L10n113, and the rank of $\Kh(L_{5,-3};{\mathbb{Z}/2})$ equals $60$. This proves the desired result for Case 2.

In conclusion, we have proved that the link $L$ satisfying Condition \ref{cond_cycle} does not exist, therefore Theorem \ref{The_main_theorem} follows from Lemma \ref{thm_modulo_cond_cycle}.

\bibliographystyle{amsalpha}
\bibliography{references}

@book{farb2011primer,
  title={A primer on mapping class groups (pms-49)},
  author={Farb, Benson and Margalit, Dan},
  year={2011},
  publisher={Princeton University Press}
}

@article{au2010lectures,
  title={Lectures on the mapping class group of a surface},
  author={Au, Thomas Kwok-Keung and Luo, Feng and Yang, Tian},
  journal={Transformation groups and moduli spaces of curves},
  volume={16},
  pages={21--61},
  year={2010}
}

@article{AHI,
  title={Instantons and Annular {K}hovanov Homology},
  author={Xie, Yi},
  journal={arXiv preprint, arXiv:1809.01568},
  year={2018}
}

@article {Kh-unlink,
    AUTHOR = {Batson, Joshua and Seed, Cotton},
     TITLE = {A link-splitting spectral sequence in {K}hovanov homology},
   JOURNAL = {Duke Math. J.},
  FJOURNAL = {Duke Mathematical Journal},
    VOLUME = {164},
      YEAR = {2015},
    NUMBER = {5},
     PAGES = {801--841},
      ISSN = {0012-7094},
   MRCLASS = {57M27 (57M25)},
  MRNUMBER = {3332892},
MRREVIEWER = {Jacob Mostovoy},
       DOI = {10.1215/00127094-2881374},
       URL = {https://doi.org/10.1215/00127094-2881374},
}

@article{Kr-ob,
  title={An obstruction to removing intersection points in immersed surfaces},
  author={Kronheimer, Peter B},
  journal={Topology},
  volume={36},
  number={4},
  pages={931--962},
  year={1997},
  publisher={Pergamon}
}

@article {KM:YAFT,
    AUTHOR = {Kronheimer, P. B. and Mrowka, T. S.},
     TITLE = {Knot homology groups from instantons},
   JOURNAL = {J. Topol.},
  FJOURNAL = {Journal of Topology},
    VOLUME = {4},
      YEAR = {2011},
    NUMBER = {4},
     PAGES = {835--918},
      ISSN = {1753-8416},
   MRCLASS = {57R58 (57M27)},
  MRNUMBER = {2860345},
MRREVIEWER = {Nikolai N. Saveliev},
       DOI = {10.1112/jtopol/jtr024},
       URL = {https://doi.org/10.1112/jtopol/jtr024},
}

@article {KM:Kh-unknot,
    AUTHOR = {Kronheimer, P. B. and Mrowka, T. S.},
     TITLE = {Khovanov homology is an unknot-detector},
   JOURNAL = {Publ. Math. Inst. Hautes \'Etudes Sci.},
  FJOURNAL = {Publications Math\'ematiques. Institut de Hautes \'Etudes
              Scientifiques},
    NUMBER = {113},
      YEAR = {2011},
     PAGES = {97--208},
      ISSN = {0073-8301},
   MRCLASS = {57M27 (18G40 57R58)},
  MRNUMBER = {2805599},
MRREVIEWER = {Raphael Zentner},
       DOI = {10.1007/s10240-010-0030-y},
       URL = {http://dx.doi.org/10.1007/s10240-010-0030-y},
}

@article {KM:Alexander,
    AUTHOR = {Kronheimer, P. B. and Mrowka, T. S.},
     TITLE = {Instanton {F}loer homology and the {A}lexander polynomial},
   JOURNAL = {Algebr. Geom. Topol.},
  FJOURNAL = {Algebraic \& Geometric Topology},
    VOLUME = {10},
      YEAR = {2010},
    NUMBER = {3},
     PAGES = {1715--1738},
      ISSN = {1472-2747},
   MRCLASS = {57R58 (57M27)},
  MRNUMBER = {2683750},
MRREVIEWER = {Nikolai N. Saveliev},
       DOI = {10.2140/agt.2010.10.1715},
       URL = {https://doi.org/10.2140/agt.2010.10.1715},
}

@article {KM-Ras,
    AUTHOR = {Kronheimer, P. B. and Mrowka, T. S.},
     TITLE = {Gauge theory and {R}asmussen's invariant},
   JOURNAL = {J. Topol.},
  FJOURNAL = {Journal of Topology},
    VOLUME = {6},
      YEAR = {2013},
    NUMBER = {3},
     PAGES = {659--674},
      ISSN = {1753-8416},
   MRCLASS = {57R58 (57M25)},
  MRNUMBER = {3100886},
MRREVIEWER = {Brendan E. Owens},
       DOI = {10.1112/jtopol/jtt008},
       URL = {https://doi.org/10.1112/jtopol/jtt008},
}

@article {Kh-Jones,
    AUTHOR = {Khovanov, Mikhail},
     TITLE = {A categorification of the {J}ones polynomial},
   JOURNAL = {Duke Math. J.},
  FJOURNAL = {Duke Mathematical Journal},
    VOLUME = {101},
      YEAR = {2000},
    NUMBER = {3},
     PAGES = {359--426},
      ISSN = {0012-7094},
   MRCLASS = {57M27 (57R56)},
  MRNUMBER = {1740682},
       DOI = {10.1215/S0012-7094-00-10131-7},
       URL = {https://doi.org/10.1215/S0012-7094-00-10131-7},
}

@article {HN-unlink,
    AUTHOR = {Hedden, Matthew and Ni, Yi},
     TITLE = {Khovanov module and the detection of unlinks},
   JOURNAL = {Geom. Topol.},
  FJOURNAL = {Geometry \& Topology},
    VOLUME = {17},
      YEAR = {2013},
    NUMBER = {5},
     PAGES = {3027--3076},
      ISSN = {1465-3060},
   MRCLASS = {57M27},
  MRNUMBER = {3190305},
       DOI = {10.2140/gt.2013.17.3027},
       URL = {https://doi.org/10.2140/gt.2013.17.3027},
}

@article{XZ:excision,
  title={Instanton {F}loer homology for sutured manifolds with tangles},
  author={Xie, Yi and Zhang, Boyu},
  journal={arXiv preprint,  arXiv:1907.00547},
  year={2019}
}

@article {Shu:torsion_Kh,
    AUTHOR = {Shumakovitch, Alexander N.},
     TITLE = {Torsion of {K}hovanov homology},
   JOURNAL = {Fund. Math.},
  FJOURNAL = {Fundamenta Mathematicae},
    VOLUME = {225},
      YEAR = {2014},
    NUMBER = {1},
     PAGES = {343--364},
      ISSN = {0016-2736},
   MRCLASS = {57M27 (57M25)},
  MRNUMBER = {3205577},
MRREVIEWER = {Pedro Vaz},
       DOI = {10.4064/fm225-1-16},
       URL = {https://doi.org/10.4064/fm225-1-16},
}

@incollection {AP:Kh_torsion,
    AUTHOR = {Asaeda, Marta M. and Przytycki, J\'{o}zef H.},
     TITLE = {Khovanov homology: torsion and thickness},
 BOOKTITLE = {Advances in topological quantum field theory},
    SERIES = {NATO Sci. Ser. II Math. Phys. Chem.},
    VOLUME = {179},
     PAGES = {135--166},
 PUBLISHER = {Kluwer Acad. Publ., Dordrecht},
      YEAR = {2004},
   MRCLASS = {57M27 (57M25)},
  MRNUMBER = {2147419},
MRREVIEWER = {Justin Sawon},
       DOI = {10.1007/978-1-4020-2772-7_6},
       URL = {https://doi.org/10.1007/978-1-4020-2772-7_6},
}

@incollection {Mor-braid,
    AUTHOR = {Morton, H. R.},
     TITLE = {Exchangeable braids},
 BOOKTITLE = {Low-dimensional topology ({C}helwood {G}ate, 1982)},
    SERIES = {London Math. Soc. Lecture Note Ser.},
    VOLUME = {95},
     PAGES = {86--105},
 PUBLISHER = {Cambridge Univ. Press, Cambridge},
      YEAR = {1985},
   MRCLASS = {57M25},
  MRNUMBER = {827298},
MRREVIEWER = {Jean-Pierre Otal},
       DOI = {10.1017/CBO9780511662744.003},
       URL = {https://doi.org/10.1017/CBO9780511662744.003},
}

@book {Birman,
    AUTHOR = {Birman, Joan S.},
     TITLE = {Braids, links, and mapping class groups},
      NOTE = {Annals of Mathematics Studies, No. 82},
 PUBLISHER = {Princeton University Press, Princeton, N.J.; University of
              Tokyo Press, Tokyo},
      YEAR = {1974},
     PAGES = {ix+228},
   MRCLASS = {55A25},
  MRNUMBER = {0375281},
MRREVIEWER = {Wilbur Whitten},
}

@article{Xie-earring,
  title={Earrings, sutures and pointed links},
  author={Xie, Yi},
  journal={arXiv preprint, arXiv:1809.09254},
  year={2018}
}

@article {KM:embedded-surfaces-II,
    AUTHOR = {Kronheimer, P. B. and Mrowka, T. S.},
     TITLE = {Gauge theory for embedded surfaces. {II}},
   JOURNAL = {Topology},
  FJOURNAL = {Topology. An International Journal of Mathematics},
    VOLUME = {34},
      YEAR = {1995},
    NUMBER = {1},
     PAGES = {37--97},
      ISSN = {0040-9383},
   MRCLASS = {57R57 (14D20 57N13 57R40 58D29)},
  MRNUMBER = {1308489},
MRREVIEWER = {Stefan A. Bauer},
       DOI = {10.1016/0040-9383(94)E0003-3},
       URL = {https://doi.org/10.1016/0040-9383(94)E0003-3},
}

@article{BSX,
  title={Khovanov homology detects the {H}opf links},
  author={Baldwin, John A and Sivek, Steven and Xie, Yi},
  journal={to appear in Math. Res. Lett.; arXiv preprint, arXiv:1810.05040 },
  year={2018}
}

@article{BS,
  title={Khovanov homology detects the trefoils},
  author={Baldwin, John A and Sivek, Steven},
  journal={arXiv preprint arXiv:1801.07634},
  year={2018}
}

@article {OS:HFK,
    AUTHOR = {Ozsv\'{a}th, Peter and Szab\'{o}, Zolt\'{a}n},
     TITLE = {Holomorphic disks and knot invariants},
   JOURNAL = {Adv. Math.},
  FJOURNAL = {Advances in Mathematics},
    VOLUME = {186},
      YEAR = {2004},
    NUMBER = {1},
     PAGES = {58--116},
      ISSN = {0001-8708},
   MRCLASS = {57M27 (57R58)},
  MRNUMBER = {2065507},
MRREVIEWER = {Stanislav Jabuka},
       DOI = {10.1016/j.aim.2003.05.001},
       URL = {https://doi.org/10.1016/j.aim.2003.05.001},
}

@book {Ras:HFK,
    AUTHOR = {Rasmussen, Jacob Andrew},
     TITLE = {Floer homology and knot complements},
      NOTE = {Thesis (Ph.D.)--Harvard University},
 PUBLISHER = {ProQuest LLC, Ann Arbor, MI},
      YEAR = {2003},
     PAGES = {126},
      ISBN = {978-0496-39374-9},
   MRCLASS = {Thesis},
  MRNUMBER = {2704683},
       URL =
              {http://gateway.proquest.com/openurl?url_ver=Z39.88-2004&rft_val_fmt=info:ofi/fmt:kev:mtx:dissertation&res_dat=xri:pqdiss&rft_dat=xri:pqdiss:3091665},
}

@article {OS:HFL,
    AUTHOR = {Ozsv\'{a}th, Peter and Szab\'{o}, Zolt\'{a}n},
     TITLE = {Holomorphic disks, link invariants and the multi-variable
              {A}lexander polynomial},
   JOURNAL = {Algebr. Geom. Topol.},
  FJOURNAL = {Algebraic \& Geometric Topology},
    VOLUME = {8},
      YEAR = {2008},
    NUMBER = {2},
     PAGES = {615--692},
      ISSN = {1472-2747},
   MRCLASS = {57M27 (57M25 57R58)},
  MRNUMBER = {2443092},
MRREVIEWER = {Stanislav Jabuka},
       DOI = {10.2140/agt.2008.8.615},
       URL = {https://doi.org/10.2140/agt.2008.8.615},
}

@article {Sar-HFL,
    AUTHOR = {Sarkar, Sucharit},
     TITLE = {A note on sign conventions in link {F}loer homology},
   JOURNAL = {Quantum Topol.},
  FJOURNAL = {Quantum Topology},
    VOLUME = {2},
      YEAR = {2011},
    NUMBER = {3},
     PAGES = {217--239},
      ISSN = {1663-487X},
   MRCLASS = {57M27 (57R58)},
  MRNUMBER = {2812456},
MRREVIEWER = {Adam M. Lowrance},
       DOI = {10.4171/QT/20},
       URL = {https://doi.org/10.4171/QT/20},
}

@article{Dowlin,
  title={A spectral sequence from {K}hovanov homology to knot {F}loer homology},
  author={Dowlin, Nathan},
  journal={arXiv preprint   arXiv:1811.07848},
  year={2018}
 }

@book {freedman1990Topology,
    AUTHOR = {Freedman, Michael H. and Quinn, Frank},
     TITLE = {Topology of 4-manifolds},
    SERIES = {Princeton Mathematical Series},
    VOLUME = {39},
 PUBLISHER = {Princeton University Press, Princeton, NJ},
      YEAR = {1990},
     PAGES = {viii+259},
      ISBN = {0-691-08577-3},
   MRCLASS = {57N13 (57-02)},
  MRNUMBER = {1201584},
MRREVIEWER = {Ian Hambleton},
}

@article {feustel1966homotopic,
    AUTHOR = {Feustel, C. D.},
     TITLE = {Homotopic arcs are isotopic},
   JOURNAL = {Proc. Amer. Math. Soc.},
  FJOURNAL = {Proceedings of the American Mathematical Society},
    VOLUME = {17},
      YEAR = {1966},
     PAGES = {891--896},
      ISSN = {0002-9939},
   MRCLASS = {55.25},
  MRNUMBER = {196724},
MRREVIEWER = {Robert Freeman Brown},
       DOI = {10.2307/2036278},
       URL = {https://doi.org/10.2307/2036278},
}

@article {Khovanov-pattern,
    AUTHOR = {Khovanov, Mikhail},
     TITLE = {Patterns in knot cohomology. {I}},
   JOURNAL = {Experiment. Math.},
  FJOURNAL = {Experimental Mathematics},
    VOLUME = {12},
      YEAR = {2003},
    NUMBER = {3},
     PAGES = {365--374},
      ISSN = {1058-6458},
   MRCLASS = {57M27 (18G60 57M25 57R56)},
  MRNUMBER = {2034399},
MRREVIEWER = {Jacob Andrew Rasmussen},
       URL = {http://projecteuclid.org/euclid.em/1087329238},
}

@article {EKT:trivial_Jones,
    AUTHOR = {Eliahou, Shalom and Kauffman, Louis H. and Thistlethwaite,
              Morwen B.},
     TITLE = {Infinite families of links with trivial {J}ones polynomial},
   JOURNAL = {Topology},
  FJOURNAL = {Topology. An International Journal of Mathematics},
    VOLUME = {42},
      YEAR = {2003},
    NUMBER = {1},
     PAGES = {155--169},
      ISSN = {0040-9383},
   MRCLASS = {57M27},
  MRNUMBER = {1928648},
MRREVIEWER = {Wilbur Whitten},
       DOI = {10.1016/S0040-9383(02)00012-5},
       URL = {https://doi.org/10.1016/S0040-9383(02)00012-5},
}

@article{LS-split,
  title={Khovanov homology also detects split links},
  author={ Lipshitz, Robert and  Sarkar, Sucharit},
  journal={arXiv preprint,   arXiv:1910.04246},
  year={2019}
 }

\end{document}